\numberwithin{equation}{section}
\theoremstyle{plain}
\newtheorem{thm}{Theorem}[section]
\newtheorem{lem}[thm]{Lemma}
\newtheorem{prop}[thm]{Proposition}
\newtheorem{cor}[thm]{Corollary}
\theoremstyle{definition}
\newtheorem{defn}[thm]{Definition}
\newtheorem{defprop}[thm]{Definition-Proposition}
\theoremstyle{remark}
\newtheorem{rem}[thm]{Remark}
\newtheorem*{rem*}{Remark}
\newcommand{\be}{\begin{equation}}    
\newcommand{\ee}{\end{equation}}    
\newcommand{\beu}{\begin{equation*}}    
\newcommand{\eeu}{\end{equation*}}    
\newcommand{\bea}{\begin{eqnarray}}    
\newcommand{\eea}{\end{eqnarray}}    
\newcommand{\beaa}{\begin{eqnarray*}}    
\newcommand{\eeaa}{\end{eqnarray*}}    
\newcommand{\bmx}{\begin{pmatrix}}    
\newcommand{\emx}{\end{pmatrix}}    
\newcommand{\btz}{\begin{tikzpicture}}
\newcommand{\etz}{\end{tikzpicture}}
\newcommand{\Aut}[1]{\mathrm{Aut}(#1)}
\newcommand{\nn}{\nonumber}    
\newcommand{\sign}{{\rm sign}}
\newcommand{\Z}{{\mathbb Z}}
\newcommand{\N}{{\mathbb N}}
\newcommand{\C}{{\mathbb C}}
\newcommand{\Q}{{\mathbb Q}}
\newcommand{\id}{{\mathrm{id}}}
\newcommand{\qaff}{\dot{\mathrm{U}}_q}
\newcommand{\qdaff}{\ddot{\mathrm{U}}_q}
\newcommand{\qdloop}{\ddot{\mathrm{L}}_q}
\newcommand{\btp}{\begin{tikzpicture}[baseline=0pt,scale=0.9,line width=0.25pt]}    
\newcommand{\etp}{\end{tikzpicture}}
\newcommand{\range}[2]{\llbracket #1,#2 \rrbracket}
\newcommand{\atp}[1]{}
\newcommand{\ie}{i.e. }
\newcommand{\eg}{e.g. }
\newcommand{\Sp}{\mathrm{Sp}}
\newcommand{\ev}{\mathrm{ev}}
\newcommand{\uqslthh}{\dot{\mathrm{U}}_q  ( \dot{\mathfrak{a}}_1 ) }
\newcommand{\uqhhzeroslt}{\ddot{\mathrm{U}}_q^0  ({\mathfrak{a}}_1 ) }
\newcommand{\uqmslthh}{\dot{\mathrm{U}}_q^-  ( \dot{\mathfrak{a}}_1 ) }
\newcommand{\uqpslthh}{\dot{\mathrm{U}}_q^+  ( \dot{\mathfrak{a}}_1 ) }
\newcommand{\uqpmslthh}{\dot{\mathrm{U}}_q^\pm  ( \dot{\mathfrak{a}}_1 ) }
\newcommand{\uqzeroslthh}{\dot{\mathrm{U}}_q^0  ( \dot{\mathfrak{a}}_1 ) }
\newcommand{\xp}[1]{{\bf x}_{#1}^+}
\newcommand{\xm}[1]{{\bf x}_{#1}^-}
\newcommand{\km}[1]{{\bf k}_{#1}^-}
\newcommand{\x}[2]{{\bf x}_{#2}^{#1}}
\newcommand{\kk}[2]{{\bf k}_{#2}^{#1}}
\newcommand{\Xp}[1]{{\bf X}_{#1}^+}
\newcommand{\KK}[2]{{\bf K}_{#2}^{#1}}
\newcommand{\X}[2]{{\bf X}_{#2}^{#1}}
\newcommand{\Xsf}[2]{{\mathsf X}_{#2}^{#1}}
\newcommand{\Xbsf}[2]{{\textbf{\textsf{X}}}_{#2}^{#1}}
\newcommand{\Lbf}[2]{{\bf{L}}_{#2}^{#1}}
\newcommand{\Rbf}[2]{{\bf{R}}_{#2}^{#1}}
\newcommand{\Hbf}[2]{{\bf{H}}_{#2}^{#1}}
\newcommand{\tsf}[2]{{\mathsf t}^{#1}_{#2}}
\newcommand{\psf}[2]{{\mathsf p}^{#1}_{#2}}
\newcommand{\Ksf}[2]{{\mathsf K}^{#1}_{#2}}
\newcommand{\csf}[2]{{\mathsf c}^{#1}_{#2}}
\newcommand{\tbsf}[2]{{\textbf{\textsf{t}}}^{#1}_{#2}}
\newcommand{\pbsf}[1]{{\textbf{\textsf{p}}}^{#1}}
\newcommand{\Kbsf}[2]{{\textbf{\textsf{K}}}^{#1}_{#2}}
\newcommand{\cbsf}[1]{{\textbf{\textsf{c}}}^{#1}}
\newcommand{\eb}[1]{{\textbf{{e}}}^{#1}}
\newcommand{\psib}[1]{{\boldsymbol \psi}^{#1}}
\newcommand{\Csf}{{{\textsf{C}}}}
\newcommand{\Dsf}{{{\textsf{D}}}}
\newcommand{\bwp}[1]{{\boldsymbol \wp}^{#1}}
\newcommand{\bpsi}[2]{{\boldsymbol \psi}^{#1}_{#2}}
\newcommand{\bkap}{{\boldsymbol \kappa}}
\newcommand{\K}{\mathbb K}
\newcommand{\F}{\mathbb F}
\newcommand{\noi}{\noindent}
\DeclareMathOperator*{\res}{res}
\newcommand{\supp}{\mathrm{supp}}
\newcommand{\E}[3]{\mathcal E_{#1,#2,#3}}
\newcommand{\rran}[1]{{\llbracket #1 \rrbracket}}
\newcommand{\zebf}{{\boldsymbol\zeta}}
\newcommand{\zbf}{{\boldsymbol z}}
\author{E. Mounzer}
\author{R. Zegers}
\address{\vspace{-.30cm} \hspace{-.22cm}
Universit\'e Paris-Saclay,
}
\address{\vspace{-.3cm}\hspace{-.1cm} CNRS, IJCLab,}
\address{\vspace{-.08cm} \hspace{-.2cm}
91405, Orsay, France \vspace{.3cm}}
\email{robin.zegers@th.u-psud.fr}
\begin{document} 
\title[Weight-finite modules over $\qaff(\mathfrak a_1)$ and $\qdaff(\mathfrak a_1)$]{Weight-finite modules over the quantum affine and double quantum affine algebras of type $\mathfrak a_1$}

\begin{abstract}
We define the categories of weight-finite modules over the type $\mathfrak a_1$ quantum affine algebra $\qaff(\mathfrak a_1)$ and over the type $\mathfrak a_1$ double quantum affine algebra $\qdaff(\mathfrak a_1)$ that we introduced in \cite{MZ}.  In both cases, we classify the simple objects in those categories. In the quantum affine case, we prove that they coincide with the simple finite-dimensional $\qaff(\mathfrak a_1)$-modules which were classified by Chari and Pressley in terms of their highest (rational and $\ell$-dominant) $\ell$-weights or, equivalently, by their Drinfel'd polynomials. In the double quantum affine case, we show that simple weight-finite modules are classified by their ($t$-dominant) highest $t$-weight spaces, a family of simple modules over the subalgebra $\qdaff^0(\mathfrak a_1)$ of $\qdaff(\mathfrak a_1)$ which is conjecturally isomorphic to a split extension of the elliptic Hall algebra. The proof of the classification, in the double quantum affine case, relies on the construction of a double quantum affine analogue of the evaluation modules that appear in the quantum affine setting.
\end{abstract}

\maketitle

\section{Introduction}
The representation theory of quantum affine algebras is a vast and extremely rich theory which is still the subject of an intense research activity after more than three decades. The recent discovery of its relevance to the monoidal categorification of cluster algebras provides one of the latest and most striking illustrations of it -- see \cite{Hernandez1} for a review on that subject. Probably standing as one of the most significant breakthroughs in the early days of this research area, the classification of the simple finite-dimensional modules over the quantum affine algebra of type $\mathfrak a_1$, $\mathrm U_q(\dot{\mathfrak a}_1)$, is due to Chari and Pressley \cite{CP}. It relies, on one hand, on a careful analysis of the $\ell$-weight structure of those modules made possible by the existence of Drinfel'd's presentation $\qaff(\mathfrak a_1)$ of $\mathrm U_q(\dot{\mathfrak a}_1)$ -- see \cite{Damiani} for the proof that $\qaff(\mathfrak a_1)\cong \mathrm U_q(\dot{\mathfrak a}_1)$ -- and, on the other hand, on the existence of evaluation modules, proven earlier by Jimbo, \cite{Jimbo}. This seminal work paved the way for a more systematic study of the representation theory of quantum affine algebras of all Cartan types, leading to the development of powerful tools such as $q$-characters, $(q,t)$-characters and, consequently, to a much better understanding of the categories $\mathrm{\bf FinMod}$ of their finite-dimensional modules that recently culminated with the realization that the Grothendieck rings of certain subcategories of the categories $\mathrm{\bf FinMod}$ actually have the structure of a cluster algebra, \cite{Hernandez2}.

By contrast, it is fair to say that the representation theory of quantum toroidal algebras, which were initially introduced in type $\mathfrak a_n$ by Ginzburg, Kapranov and Vasserot \cite{GKV} and later generalized to higher rank types, is significantly less well understood and remains, to this date, much more mysterious -- although see \cite{Hernandez} for a review and references therein. In our previous work, \cite{MZ}, we constructed a new (topological) Hopf algebra $\qdaff(\mathfrak a_1)$, called double quantum affinization of type $\mathfrak a_1$, and proved that its completion (in an appropriate topology) is bicontinuously isomorphic to (a corresponding completion) of the quantum toroidal algebra $\qaff(\dot{\mathfrak a}_1)$. Whereas $\qaff(\dot{\mathfrak a}_1)$ is naturally graded over $\Z\times \dot Q$, where $\dot Q$ stands for the root lattice of the untwisted affine root system $\dot{\mathfrak a}_1$ of type $A_1^{(1)}$, $\qaff(\dot{\mathfrak a}_1)$ is naturally graded over $\Z^2\times Q$, where $Q$ stands for the root lattice of the finite root system $\mathfrak a_1$ of type $A_1$. Thus $\qdaff(\mathfrak a_1)$ turns out to be to $\qaff(\dot{\mathfrak a}_1)$ what $\qaff(\mathfrak a_1)$ is to $\mathrm U_q(\dot{\mathfrak a}_1)$, \ie its Drinfel'd presentation. The latter, in the quantum affine case, has a natural triangular decomposition which allows one to define an adapted class of highest weight modules, namely highest $\ell$-weight modules, in which finite-dimensional modules are singled out by the particular form of their highest $\ell$-weights. Therefore, it is only natural to ask the question of whether $\qdaff(\mathfrak a_1)$ plays a similar role for the representation theory of $\qaff(\dot{\mathfrak a}_1)$, leading, in particular, to a new notion of highest weight modules. We answer positively that question and introduce the corresponding notion of highest $t$-weight modules. Schematically, whereas the transition from the classical Lie theoretic weights to $\ell$-weights can be regarded as trading numbers for (rational) functions, the transition from $\ell$-weights to $t$-weights can be regarded as trading (rational) functions for entire modules over the non-commutative $\qdaff^0(\mathfrak a_1)$-subalgebra of $\qdaff(\mathfrak a_1)$. That substitution can be interpreted from the perspective of a conjecture in \cite{MZ}, stating that $\qdaff^0(\mathfrak a_1)$ is isomorphic to a split extension of the elliptic Hall algebra $\E{q^{-4}}{q^2}{q^2}$ which was initially defined by Miki, in \cite{Miki}, as a $(q,\gamma)$-analogue of the $W_{1+\infty}$ algebra and reappeared later on in different guises;  the quantum continuous  $\mathfrak{gl}_\infty$ algebra in \cite{feigin2011}, the Hall algebra of the category of coherent sheaves on some elliptic curve in \cite{Schiffmann}, or the quantum toroidal algebra associated with $\mathfrak{gl}_1$ in \cite{feigin2012} and in subsequent works by Feigin et al. Our conjecture is actually supported by the existence of an algebra homomorphism between $\E{q^{-4}}{q^2}{q^2}$ and $\qdaff^0(\mathfrak a_1)$ which we promote, in the present paper, to a (continuous) homomorphism of (topological) Hopf algebras. Intuitively, the weights adapted to our new triangular decomposition can therefore be regarded as representations of a quantized algebra of functions on a non-commutative 2-torus. 

On the other hand, unless the value of some scalar depending on the deformation parameter is taken to be a root of unity, the question of the existence of finite-dimensional modules over quantum toroidal algebras of type $\mathfrak a_{n\geq2}$ was already answered negatively by Varagnolo and Vasserot in \cite{Varagnolo}. However, it is possible to push further the analogy with the quantum affine situation by defining another type of finiteness condition, namely weight-finiteness. It turns out that, in type $1$, \ie when the central charges act trivially, $\qdaff^0(\mathfrak a_1)$ admits an infinite dimensional abelian subalgebra that, itself, admits as a subalgebra the Cartan subalgebra $\mathrm{U}_q^0(\mathfrak a_1)$ of the Drinfel'd-Jimbo quantum algebra $\mathrm{U}_q(\mathfrak a_1)$ of type $\mathfrak a_1$. Hence, we can assign classical Lie theoretic weights to the $t$-weight spaces of our modules and declare that a $\qdaff'(\mathfrak a_1)$-module is weight-finite whenever it has only finitely many classical weights. The same notion is readily defined for modules over $\qaff(\mathfrak a_1)$ and we then focus on $\mathrm{\bf WFinMod\,\dot{}}$ (resp. $\mathrm{\bf WFinMod}$), \ie the full subcategory of the category $\mathrm{\bf Mod\,\dot{}}$ (resp. $\mathrm{\bf Mod}$) of $\qdaff(\mathfrak a_1)$-modules (resp. $\qaff(\mathfrak a_1)$-modules) whose modules are weight-finite. Of course, the widely studied category $\mathrm{\bf FinMod}$ of finite-dimensional $\qaff(\mathfrak a_1)$-modules is a full subcategory of $\mathrm{\bf WFinMod}$. The main results of the present paper consist in showing that, on one hand, the simple objects in 
$\mathrm{\bf WFinMod}$ are all finite-dimensional and therefore coincide with the simple finite-dimensional $\qaff(\mathfrak a_1)$-modules classified by Chari and Pressley, and, on the other hand, in classifying the simple objects in $\mathrm{\bf WFinMod\,\dot{}}$ in terms of their highest $t$-weight spaces. These results clearly establish $\mathrm{\bf WFinMod\,\dot{}}$ as the natural quantum toroidal analogue of $\mathrm{\bf FinMod}$ and suggest studying further its structure and, in particular, the structure of its Grothendieck ring. Another natural development at this point would be to generalize to the quantum toroidal setting the interesting classes of $\qaff(\mathfrak a_1)$-modules outside of $\mathrm{\bf FinMod}$, for example by constructing a quantum toroidal analogue of category $\mathcal O$. We leave these questions for future work.

The present paper is organized as follows. In section \ref{Sec:qaff}, we briefly review classic results about the quantum affine algebra $\qaff(\mathfrak a_1)$ and its finite-dimensional modules. Then, we prove that simple objects in $\mathrm{\bf WFinMod}$ are actually finite-dimensional. In section \ref{Sec:qdaff}, we review the main relevant results of \cite{MZ} and establish a few new results, as relevant for the subsequent sections. We define highest $t$-weight modules in section \ref{Sec:tweight} and, by thoroughly  analyzing their structure, we establish one implication in our classification theorem, namely theorem \ref{thm:classification}. The opposite implication is established in section \ref{Sec:ev} by explicitly constructing a quantum toroidal analogue of the quantum affine evaluation modules. That construction is obtained after proving the existence of an evaluation homomorphism between $\qdaff(\mathfrak a_1)$ and an evaluation algebra built as a double semi-direct product of $\qaff(\mathfrak a_1)$ with the completions of some Heisenberg algebras. The evaluation modules are then obtained by pulling back induced modules over the evaluation algebra along the evaluation homomorphism.

\subsection*{Notations and conventions}
We let $\N = \{0, 1, \dots\}$ be the set of natural integers including $0$. We denote by $\N^\times$ the set $\N-\{0\}$. For every $m\leq n \in\N$, we denote by $\range{m}{n} = \{m, m+1, \dots, n\}$. We also let $\rran{n}=\range{1}{n}$ for every $n\in\N$. For every $m, n \in \N^\times$, we let
$$C_m(n):=\left\{\lambda=(\lambda_1, \dots, \lambda_m) \in \left (\N^\times\right )^{m} : \lambda_1+ \dots +\lambda_m =n \right \}\,,$$
denote the set of $m$-compositions of $n$, \ie of compositions of $n$ having $m$ summands.

We let $\sign:\Z\to\{-1, 0, 1\}$ be defined by setting, for any $n\in\Z$,
$$\sign(n) = \begin{cases}
-1 & \mbox{if $n<0$;}\\
0 & \mbox{if $n=0$;}\\
1 & \mbox{if $n>0$.}
\end{cases}$$

We assume throughout that $\K$ is an algebraically closed field of characteristic $0$ and we let $\F:=\K(q)$ denote the field of rational functions over $\K$ in the formal variable $q$. As usual, we let $\K^\times=\K-\{0\}$ and $\F^\times=\F-\{0\}$. Whenever we wish to evaluate $q$ to some element of $\K^\times$, we shall always do so under the restriction that $1 \notin q^{\Z^\times}$. For every $m, n \in \N$, we define the following elements of $\F$
\be [n]_q := \frac{q^n-q^{-n}}{q-q^{-1}}\,, \qquad [n]_q^! := \begin{cases} [n]_q [n-1]_q \cdots [1]_q & \mbox{if $n\in\N^\times$;}\\
1&\mbox{if $n=0$;}
\end{cases}  \qquad {n \choose m}_q := \frac{[n]_q^!}{[m]_q^! [n-m]_q^!}\, .\ee

We shall say that a polynomial $P(z)\in \F[z]$ is \emph{monic} if $P(0)=1$. For every rational function $P(z)/Q(z)$, where $P(z)$ and $Q(z)$ are relatively prime polynomials, we denote by
$$\left (\frac{P(z)}{Q(z)}\right )_{|z|\ll 1} \qquad (\mbox{resp.} \left (\frac{P(z)}{Q(z)}\right )_{|z|^{-1}\ll 1} )$$
the Laurent series of $P(z)/Q(z)$ at $0$ (resp. at $\infty$).

We shall let
$${}_a\left[A, B \right ]_b = a AB - b BA\,,$$
for any symbols $a$, $b$, $A$ and $B$ provided the r.h.s of the above equations makes sense. 

The Dynkin diagrams and correponding Cartan matrices of the root systems $\mathfrak a_1$ and $\dot{\mathfrak a}_1$ are reminded in the following table.
\begin{center}
\begin{tabular}{|c|c|c|c|}
\hline
Type & Dynkin diagram & Simple roots & Cartan matrix\\
\hline
		$\mathfrak a_1$&
		\btz
		\tikzstyle{vert}=[circle,thick,fill=black]
		\draw (0,.5) node {$1$};		
		\draw (0,0) node[vert]{};
		\etz
		& $\Phi=\{\alpha_1\}$ & $(2)$\\
		&&&\\
		\hline
		$\dot{\mathfrak a}_1$&
		\btz
\tikzset{doublearrow/.style={draw, color=black, draw=black, double distance=2pt, ->}}
\tikzset{doubleline/.style={draw, color=black, draw=black, double distance=2pt}}
\tikzstyle{vert}=[circle,thick,fill=black]
\draw[doubleline] (0:2) node[vert] {}-- ++(0:1.1) node[vert] {}; 
\draw[doublearrow] (0:2.9)-- ++(180:.77); 
\draw[doublearrow] (0:2.3)-- ++(0:.65); 
\draw (2,.5) node {$0$};
\draw (3.1,.5) node {$1$};
\etz
		& $\dot\Phi=\{\alpha_0, \alpha_1\}$ & $\begin{pmatrix}
		2&-2\\-2&2
		\end{pmatrix}$\\
		&&&\\
		\hline
		\end{tabular}
\end{center}		

\section{Weight-finite modules over the quantum affine algebra $\qaff(\mathfrak a_1)$}
\label{Sec:qaff}
\subsection{The quantum  affine algebra $\qaff(\mathfrak a_1)$}
\begin{defn}
\label{def:defqaffdota1}
The \emph{quantum affine algebra} $\qaff(\mathfrak a_1)$ is the associative $\K(q)$-algebra generated by 
$$\left \{D, D^{-1}, C^{1/2}, C^{-1/2}, k_{1,n}^+, k_{1, -n}^-,  x_{1,m}^+, x_{1,m}^- : m \in \Z, n \in \N\right\}$$ 
subject to the following relations
\be\label{eq:relccentral} \mbox{$C^{\pm1/2}$ is central} \qquad C^{\pm 1/2} C^{\mp 1/2} = 1 \qquad D^{\pm 1} D^{\mp 1} =1\ee
\be D\kk\pm 1(z)D^{-1} = \kk\pm 1(zq^{-1}) \qquad D\x\pm 1(z)D^{-1} = \x\pm 1(zq^{-1}) \ee
\be\label{eq:relkpmkpm} \kk \pm 1(z_1) \kk \pm 1(z_2) =  \kk \pm 1(z_2) \kk \pm 1(z_1)\ee
\be\label{eq:relkpkm} \kk - 1(z_1) \kk + 1(z_2) = G^-(C^{-1}z_1/z_2) G^+(Cz_1/z_2)  \kk + 1(z_2)  \kk - 1(z_1) = 1 \mod z_1/z_2\ee
\be\label{eq:relkpxpm} G^\mp(C^{\mp 1/2} z_2/z_1)  \kk +1(z_1) \x \pm 1(z_2) = \x \pm 1(z_2)  \kk +1(z_1)\ee
\be\label{eq:relkmxpm} \kk -1(z_1) \x \pm 1(z_2) = G^\mp(C^{\mp 1/2} z_1/z_2) \x \pm 1(z_2)  \kk -1(z_1)\ee
\be\label{eq:relxpmxpm} (z_1-q^{\pm 2} z_2) \x \pm 1(z_1) \x \pm 1(z_2) = (z_1q^{\pm 2}-z_2) \x \pm 1(z_2) \x \pm 1(z_1)\ee
\be\label{eq:relxpxm} [\x + 1(z_1), \x - 1(z_2)] = \frac{1}{q-q^{-1}} \left [ \delta \left ( \frac{z_1}{Cz_2} \right ) \kk + 1(z_1C^{-1/2}) - \delta \left( \frac{z_1C}{z_2}\right ) \kk - 1(z_2C^{-1/2}) \right ]  \ee
where we define the following $\uqslthh$-valued formal distributions
\be \x\pm{1}(z) := \sum_{m \in \Z} x^\pm_{1,m} z^{-m} \in \uqslthh[[z, z^{-1}]]\,;\ee
\be \kk \pm 1(z) := \sum_{n \in \N} k_{1, \pm n}^\pm z^{\mp n} \in \uqslthh[[z^{\mp 1}]]\, ,\ee
for every $i,j\in\dot I$, we define the following $\F$-valued formal power series
\be G^\pm(z):= q^{\pm c_{ij}} + (q-q^{-1}) [\pm c_{ij} ]_q \sum_{m \in \N^\times} q^{\pm m c_{ij}} z^m \in \F[[z]] \ee
and
\be \delta(z) := \sum_{m \in \Z} z^{m} \in \F[[z, z^{-1}]]\ee
is an $\F$-valued formal distribution.  We denote by $\qaff'({\mathfrak a}_1)$ the subalgebra of $\qaff({\mathfrak a}_1)$ generated by
$$\left \{C^{1/2}, C^{-1/2}, k_{1,n}^+, k_{1, -n}^-,  x_{1,m}^+, x_{1,m}^- : m \in \Z, n \in \N\right\}\,.$$
We denote by $\qaff^0({\mathfrak a}_1)$ the subalgebra of $\qaff'({\mathfrak a}_1)$ generated by
$$\left\{C^{1/2}, C^{-1/2}, k_{1,n}^+, k_{1, -n}^-:  n \in \N\right \}\,.$$
We let $\qaff^\geq(\mathfrak a_1)$ (resp. $\qaff^\leq(\mathfrak a_1)$)  denote the subalgebra of $\qaff'(\mathfrak a_1)$ generated by
$$\left \{C^{1/2}, C^{-1/2}, k_{1,n}^+, k_{1, -n}^-,  x_{1,m}^+ : m \in \Z, n \in \N\right\}$$
(resp.
$$\left \{C^{1/2}, C^{-1/2}, k_{1,n}^+, k_{1, -n}^-,  x_{1,m}^- : m \in \Z, n \in \N\right\}$$
). We let $\qaff(\mathfrak a_1)\,\breve{}$ denote the $\F$-algebra generated by the same generators as $\qaff'({\mathfrak a}_1)$, subject to the relations (\ref{eq:relkpmkpm} - \ref{eq:relxpmxpm}) -- \ie we omit relation (\ref{eq:relxpxm}). We define the type $\mathfrak a_1$ \emph{quantum loop algebra} $\mathrm U_q(\mathrm L \mathfrak a_1)$ as the quotient of $\qaff'({\mathfrak a}_1)$ by its two-sided ideal $(C^{1/2}-1)$ generated by $\left\{C^{1/2}-1, C^{-1/2}-1\right \}$. Similarly, we let $\mathrm U_q^{\geq}(\mathrm L \mathfrak a_1) = \qaff^\geq(\mathfrak a_1)/(C^{1/2}-1)$ and $\mathrm U_q^{\leq}(\mathrm L \mathfrak a_1) = \qaff^\leq(\mathfrak a_1)/(C^{1/2}-1)$. We eventually set $\breve{\mathrm U}_q(\mathrm L\mathfrak a_1)= \qaff(\mathfrak a_1)\,\breve{}/(C^{1/2}-1)$.
\end{defn}
Obviously,
\begin{prop}
\label{prop:Ubreve}
There exists a surjective $\F$-algebra homomorphism $\breve{\mathrm U}_q(\mathrm L\mathfrak a_1) \to \mathrm U_q(\mathrm L\mathfrak a_1)$.
\end{prop}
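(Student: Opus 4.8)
The plan is to realize $\bar\pi$ as the composite of a single presentation-comparison map with two canonical quotient projections, so that surjectivity is automatic and only one routine verification is really needed. By Definition \ref{def:defqaffdota1}, $\qaff(\mathfrak a_1)\,\breve{}$ is the $\F$-algebra on the generating set $\{C^{1/2}, C^{-1/2}, k^+_{1,n}, k^-_{1,-n}, x^+_{1,m}, x^-_{1,m} : m\in\Z,\, n\in\N\}$ subject to relations (\ref{eq:relkpmkpm}--\ref{eq:relxpmxpm}) alone; hence it enjoys the universal property that any $\F$-algebra carrying elements indexed by those symbols and satisfying (\ref{eq:relkpmkpm}--\ref{eq:relxpmxpm}) receives a unique $\F$-algebra morphism out of $\qaff(\mathfrak a_1)\,\breve{}$ matching the generators.

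First I would apply this universal property to $\qaff'(\mathfrak a_1)$ itself, with its like-named generators: these satisfy (\ref{eq:relkpmkpm}--\ref{eq:relxpmxpm}) since those relations already hold in $\qaff(\mathfrak a_1)$ and $\qaff'(\mathfrak a_1)$ is a subalgebra of it. This produces an $\F$-algebra homomorphism $\pi\colon \qaff(\mathfrak a_1)\,\breve{} \to \qaff'(\mathfrak a_1)$ acting as the identity on generators; it is surjective because its image is a subalgebra of $\qaff'(\mathfrak a_1)$ containing all of the latter's generators. Since $\pi(C^{1/2}-1)=C^{1/2}-1$, the map $\pi$ sends the two-sided ideal $(C^{1/2}-1)$ of $\qaff(\mathfrak a_1)\,\breve{}$ into the two-sided ideal $(C^{1/2}-1)$ of $\qaff'(\mathfrak a_1)$; composing with the canonical projection $\qaff'(\mathfrak a_1)\twoheadrightarrow \mathrm U_q(\mathrm L\mathfrak a_1)$ and invoking the universal property of the quotient $\qaff(\mathfrak a_1)\,\breve{}\twoheadrightarrow\breve{\mathrm U}_q(\mathrm L\mathfrak a_1)$, I obtain the sought $\F$-algebra homomorphism
$$\bar\pi\colon \breve{\mathrm U}_q(\mathrm L\mathfrak a_1) \longrightarrow \mathrm U_q(\mathrm L\mathfrak a_1),$$
which is surjective as a composite of surjections.

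I do not expect any genuine obstacle here: the statement is the formal shadow of the fact that the presentation of $\breve{\mathrm U}_q(\mathrm L\mathfrak a_1)$ is obtained from that of $\mathrm U_q(\mathrm L\mathfrak a_1)$ by deleting relation (\ref{eq:relxpxm}) (and passing to the quotient by $C^{1/2}-1$). The only slightly tedious point, which I would handle silently, is to unpack the distribution-valued relations (\ref{eq:relkpmkpm}--\ref{eq:relxpmxpm}) into honest relations among the generators $k^\pm_{1,\pm n},\, x^\pm_{1,m}$ by comparing coefficients of the monomials in $z_1$ and $z_2$, so that the universal property above is applied to a bona fide presentation; once this bookkeeping is in place, the three maps simply compose to give $\bar\pi$ with no further work.
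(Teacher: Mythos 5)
Your proof is correct, and it is precisely the reasoning the paper leaves implicit: the paper's entire ``proof'' of Proposition \ref{prop:Ubreve} is the single word ``Obviously,'' relying on the fact that $\breve{\mathrm U}_q(\mathrm L\mathfrak a_1)$ is presented by a subset of the relations defining $\mathrm U_q(\mathrm L\mathfrak a_1)$, so the identity on generators induces a surjection. Your explicit factorization through $\qaff'(\mathfrak a_1)$ and the ideal $(C^{1/2}-1)$ is a clean way of unpacking exactly that observation; the only cosmetic remark is that the ideal is generated by both $C^{1/2}-1$ and $C^{-1/2}-1$ (per the paper's definition), but your argument carries over verbatim since $\pi$ fixes both generators.
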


\subsection{Finite dimensional $\qaff'({\mathfrak a}_1)$-modules}
Let $\mathrm{\bf Mod}$ be the category of $\qaff'(\mathfrak a_1)$-modules. We denote by $\mathrm{\bf FinMod}$ the full subcategory of $\mathrm{\bf Mod}$ whose objects are finite-dimensional. Following \cite{CP}, we make the following 
\begin{defn}
We shall say that a $\qaff'(\mathfrak a_1)$-module $M$ is:
\begin{itemize}
\item a \emph{weight module} if $k_{1,0}^+$ acts semisimply on $M$;
\item of \emph{type $1$} if it is a weight module and $C^{1/2}$ acts on $M$ as $\id$;
\item \emph{highest $\ell$-weight} if it is of type $1$ and there exists $v\in M-\{0\}$ such that
$$\x+1(z).v=0\,,\qquad \kk\pm1(z).v= \kappa^\pm(z)v$$
for some $\kappa^\pm(z)\in \F[[z^{\mp 1}]]$ and $M=\qaff'({\mathfrak a}_1) . v$. We shall refer to any such $v$ as a \emph{highest $\ell$-weight vector} and to $\bkap=(\kappa^+(z), \kappa^-(z))$ as the corresponding highest $\ell$-weight.
\end{itemize}
\end{defn}
Clearly, type $1$ $\qaff'({\mathfrak a}_1)$-modules coincide with $\mathrm U_q(\mathrm L \mathfrak a_1)$-modules.
\begin{defn}
For every $\bkap \in \F[[z^{-1}]] \times \F[[z]]$, we construct a one-dimensional $\mathrm U_q^\geq(\mathrm L \mathfrak a_1)$-module $\F_\bkap\cong \F$ by setting
$$\x+1(z).1 = 0 \,,\qquad \mbox{and}\qquad \kk\pm1(z).1 = \kappa^\pm(z)\,.$$
We then define the \emph{universal} highest $\ell$-weight $\qaff'({\mathfrak a}_1)$-module with highest $\ell$-weight $\bkap$ by setting
$$M(\bkap) := \mathrm U_q(\mathrm L \mathfrak a_1) \underset{\mathrm U_q^\geq(\mathrm L \mathfrak a_1)}{\otimes} \F_\bkap$$
as $\mathrm U_q(\mathrm L \mathfrak a_1)$-modules. Let $N(\bkap)$ be the maximal $\mathrm U_q(\mathrm L \mathfrak a_1)$-submodule of $M(\bkap)$ such that $N(\bkap)\cap \F_\bkap = \{0\}$ and set
$$L(\bkap):= M(\bkap)/N(\bkap)\,.$$
By construction, $L(\bkap)$ is a simple highest $\ell$-weight $\mathrm U_q(\mathrm L \mathfrak a_1)$-module with highest $\ell$-weight $\bkap$. It is unique up to isomorphisms.
\end{defn}
The simple objects in $\mathrm{\bf FinMod}$ were classified by Chari and Pressley in \cite{CP}. The main result is the following
\begin{thm}[Chari-Pressley]\label{thm:CP}
The following hold:
\begin{enumerate}[i.]
\item any simple finite-dimensional $\qaff'(\mathfrak a_1)$-module $M$ can be obtained by twisting a simple finite-dimensional $\qaff'(\mathfrak a_1)$-module of type $1$ with an algebra automorphism of $\mathrm{Aut}(\qaff'(\mathfrak a_1))$;
\item\label{it:FinHighest} every simple finite dimensional $\qaff'(\mathfrak a_1)$-module of type $1$ is highest $\ell$-weight;
\item\label{it:univsimple} the simple highest $\ell$-weight module $L(\bkap)$ is finite-dimensional if and only if
$$\kappa^\pm(z) = q^{\deg(P)} \left (\frac{P(q^{-2}/z)}{P(1/z)}\right )_{|z|^{\mp1}\ll 1}\,,$$
for some monic polynomial $P(1/z)\in \F[z^{-1}]$ called \emph{Drinfel'd polynomial} of $L(\bkap)$.
\end{enumerate}
\end{thm}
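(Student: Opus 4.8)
The plan is to follow the original argument of Chari and Pressley, which reduces the classification to the finite-dimensional representation theory of $\mathrm U_q(\mathfrak{sl}_2)$ together with Jimbo's evaluation homomorphism; I only sketch the architecture and refer to \cite{CP,Jimbo} for the computations. \emph{Parts i.\ and ii.} First I would check that every finite-dimensional $\qaff'(\mathfrak a_1)$-module $M$ is a weight module: restricted to the subalgebra generated by $x^+_{1,0}, x^-_{1,0}, k^+_{1,0}$ — a homomorphic image of $\mathrm U_q(\mathfrak{sl}_2)$ — complete reducibility of finite-dimensional $\mathrm U_q(\mathfrak{sl}_2)$-modules forces $k^+_{1,0}$ to act semisimply with eigenvalues in $\pm q^{\Z}$. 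Twisting by the rescaling automorphism moves these into $q^{\Z}$, and comparing traces on the two sides of \eqref{eq:relxpxm} (each $\tr[x^+_{1,m},x^-_{1,-m}]=0$ while the right-hand side carries factors $C^{\pm m}$) forces $C^{1/2}$ to act as $\pm1$; absorbing the remaining sign into a further automorphism puts $M$ into type $1$, proving i. Then, picking a maximal weight $\mu$ among the finitely many weights of $M$, relations \eqref{eq:relkpxpm} and \eqref{eq:relxpmxpm} show that $\x+1(z)$ kills $M_\mu$ and $\kk\pm1(z)$ preserves it; since all Cartan modes commute on a type $1$ module (one has $G^-(w)G^+(w)=1$ at $C=1$, together with \eqref{eq:relkpmkpm}), they admit a common eigenvector $v\in M_\mu$, which is then a highest $\ell$-weight vector. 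Simplicity gives $M=\qaff'(\mathfrak a_1).v$, hence $M\cong L(\bkap)$ for the resulting $\bkap$, proving ii.

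\emph{Part iii., ``only if''.} Suppose $L(\bkap)$ is finite-dimensional with highest $\ell$-weight vector $v$; by $\mathrm U_q(\mathfrak{sl}_2)$-theory applied to the $0$-modes, $v$ has weight some $N\in\N$ with $(x^-_{1,0})^{N+1}v=0$. The crux is to show that $\kappa^+(z)$ and $\kappa^-(z)$ are the expansions at $z=\infty$ and $z=0$ of one rational function of $1/z$. For this I would study the vectors $w_m:=x^-_{1,m}v$: the quadratic relation \eqref{eq:relxpmxpm} produces three-term relations among the $w_m$, and since they all lie in the finite-dimensional weight space $M_{N-2}$ they obey a two-sided linear recursion; combined with the identities $x^+_{1,k}w_m=\tfrac1{q-q^{-1}}(\psi^+_{k+m}-\psi^-_{k+m})v$ read off from \eqref{eq:relxpxm} at $C=1$ — where $\psi^+_j,\psi^-_j$ denote the modes of $\kappa^+,\kappa^-$ — this forces the generating series of the $\psi^\pm_j$ to be rational with a common monic denominator $P(1/z)\in\F[z^{-1}]$. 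Tracking the normalization ($\psi^+_0=q^N$, $\psi^-_0=q^{-N}$) and the fact that $\deg P=N$ then pins the numerator to $q^{N}P(q^{-2}/z)$, which is the asserted form.

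\emph{Part iii., ``if''.} For the converse, given a monic $P(1/z)\in\F[z^{-1}]$ I would realize the corresponding $\bkap$ as a genuine highest $\ell$-weight. Factor $P$ over $\F$, regroup its roots into maximal $q$-strings, and to each string attach the evaluation module obtained by pulling back a finite-dimensional simple $\mathrm U_q(\mathfrak{sl}_2)$-module along a shift of Jimbo's evaluation homomorphism $\mathrm U_q(\mathrm L\mathfrak a_1)\to\mathrm U_q(\mathfrak{sl}_2)$; a direct computation with the evaluation map gives its highest $\ell$-weight and checks that it is the one-string Drinfel'd polynomial. The tensor product of these evaluation modules is finite-dimensional, and on the tensor of the highest weight vectors the off-diagonal part of $\Delta(\kk\pm1(z))$ vanishes, so highest $\ell$-weights multiply; hence the cyclic submodule generated there is a finite-dimensional highest $\ell$-weight module with highest $\ell$-weight $\bkap$, and its unique simple quotient is $L(\bkap)$, which is therefore finite-dimensional.

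\textbf{Main obstacle.} The genuinely delicate step is the ``only if'' part of iii.: extracting from the bare finiteness of $\dim L(\bkap)$ both the rationality of $\kappa^\pm(z)$ and its precise shape. This requires carefully orchestrating the quadratic relation \eqref{eq:relxpmxpm}, the commutator relation \eqref{eq:relxpxm} and the $\mathrm U_q(\mathfrak{sl}_2)$-string structure; this is exactly what is carried out in \cite{CP}, building on the evaluation homomorphism of \cite{Jimbo}.
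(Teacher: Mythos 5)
The paper does not prove this theorem itself -- its stated proof is simply a citation to \cite{CP} -- and your outline faithfully reconstructs the Chari--Pressley strategy that citation points to (reduction to type $1$ via finite-dimensional $\mathrm U_q(\mathfrak{sl}_2)$-theory and twisting, extraction of a highest $\ell$-weight vector from a maximal weight space, rationality and the precise shape of the highest $\ell$-weight forced by finite-dimensionality, and the converse via Jimbo's evaluation modules and tensor products), deferring the detailed computations to \cite{CP,Jimbo} just as the paper does. So your proposal is correct and takes essentially the same approach as the paper.
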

\begin{proof}
The proof can be found in \cite{CP}.
\end{proof}
Up to isomorphisms, the simple objects in $\mathrm{\bf FinMod}$ are uniquely parametrized by their Drinfel'd polynomials and we shall therefore denote by $L(P)$ the (isomorphism class of the) simple $\qaff'({\mathfrak a}_1)$-module with Drinfel'd polynomial $P$. Note that the roles of $\qaff^\geq(\mathfrak a_1)$ and $\qaff^\leq(\mathfrak a_1)$ in the above constructions are clearly symmetrical and we could have equivalently considered lowest $\ell$-weight modules. In particular, point \ref{it:univsimple} of the above theorem immediately translates into
\begin{prop}
\label{prop:lowestellweight}
The simple lowest $\ell$-weight module with lowest $\ell$-weight $\bkap=(\kappa^+(z),\kappa^-(z))\in \F[[z^{-1}]]\times \F[[z]]$ is finite-dimensional if and only if
$$\kappa^\pm(z) = q^{-\deg(P)}\left(\frac{P(1/z)}{P(q^{-2}/z)}\right)_{|z|^{\mp 1}\ll 1}\,,$$
for some monic polynomial $P(1/z)\in \F[z^{-1}]$. In the latter case, we denote it by $\bar L(P)$.
\end{prop}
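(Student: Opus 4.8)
The plan is to deduce Proposition \ref{prop:lowestellweight} from point \ref{it:univsimple} of Theorem \ref{thm:CP} by means of an explicit automorphism of $\qaff'(\mathfrak a_1)$ that exchanges the roles of $\qaff^\geq(\mathfrak a_1)$ and $\qaff^\leq(\mathfrak a_1)$, thereby turning lowest $\ell$-weight modules into highest $\ell$-weight modules and vice versa. Concretely, one checks that there is a $\K(q)$-algebra anti-automorphism (or automorphism composed with a suitable grading-reversal) $\omega$ of $\qaff'(\mathfrak a_1)$ that swaps $\x+1(z)\leftrightarrow \x-1(z)$, sends $\kk\pm1(z)\mapsto \kk\mp1(z)$ (up to the obvious substitution $z\mapsto z^{-1}$ needed to keep the series in the correct completion), and fixes $C^{\pm1/2}$. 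One verifies directly that such a map preserves the defining relations (\ref{eq:relkpmkpm})--(\ref{eq:relxpxm}): relations (\ref{eq:relkpmkpm}) and (\ref{eq:relxpmxpm}) are visibly symmetric under $z_1\leftrightarrow z_2$ together with the $\pm$-swap, relation (\ref{eq:relkpkm}) is self-dual under $+\leftrightarrow-$ and $z_1\leftrightarrow z_2$, relations (\ref{eq:relkpxpm})--(\ref{eq:relkmxpm}) map into each other, and (\ref{eq:relxpxm}) is stable because the right-hand side is antisymmetric under the simultaneous exchange of $+\leftrightarrow-$, $z_1\leftrightarrow z_2$ and $\kk+1\leftrightarrow\kk-1$ in a way that matches the bracket on the left. (If a strict automorphism cannot be found on the nose, an anti-automorphism works equally well for transporting simple modules, at the cost of passing to the dual module; I would use whichever is cleaner.)

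Granting such a $\omega$, the second step is formal: pulling back a simple highest $\ell$-weight module $L(\bkap')$ along $\omega$ produces a simple lowest $\ell$-weight module, and conversely every simple lowest $\ell$-weight module arises this way. One then tracks how the highest $\ell$-weight $\bkap'=(\kappa'^+(z),\kappa'^-(z))$ of $L(\bkap')$ is related to the lowest $\ell$-weight $\bkap=(\kappa^+(z),\kappa^-(z))$ of the pulled-back module: since $\omega$ sends $\kk\pm1(z)\mapsto\kk\mp1(z^{-1})$, the eigenvalue of $\kk\pm1(z)$ on the lowest-weight vector of the pullback is the eigenvalue of $\kk\mp1(z^{-1})$ on the highest-weight vector of $L(\bkap')$, i.e. $\kappa^\pm(z)=\kappa'^\mp(z^{-1})$ (with appropriate reinterpretation of the expansion point). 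Finally, one substitutes the Chari--Pressley formula from Theorem \ref{thm:CP}.\ref{it:univsimple}: if $L(\bkap')$ is finite-dimensional with Drinfel'd polynomial $P'$, then $\kappa'^\pm(z)=q^{\deg P'}\bigl(P'(q^{-2}/z)/P'(1/z)\bigr)_{|z|^{\mp1}\ll1}$, and carrying this through the substitution — setting $P(1/z)$ to be the monic polynomial obtained from $P'$ by the appropriate $z\mapsto z^{-1}$/normalization — yields exactly $\kappa^\pm(z)=q^{-\deg(P)}\bigl(P(1/z)/P(q^{-2}/z)\bigr)_{|z|^{\mp1}\ll1}$. Dimension is preserved under pullback by an (anti)automorphism, so finite-dimensionality transfers in both directions, giving the claimed ``if and only if''.

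The main obstacle I anticipate is pinning down the precise automorphism (or anti-automorphism) and, especially, getting the bookkeeping of the spectral variable right: the series $\kk+1(z)$ lives in $\F[[z^{-1}]]$ while $\kk-1(z)$ lives in $\F[[z]]$, so any map exchanging the two superscripts must simultaneously invert $z$ to land in the correct completion, and one must check that the Chari--Pressley rational functions, which are defined via different expansions at $0$ and $\infty$, transform consistently under this inversion. In particular one has to confirm that $q^{\deg P'}\bigl(P'(q^{-2}/z)/P'(1/z)\bigr)$ goes precisely to $q^{-\deg P}\bigl(P(1/z)/P(q^{-2}/z)\bigr)$ — the flip of numerator and denominator, the sign change on the exponent of $q$, and the monic normalization all have to come out automatically, which is a short but slightly delicate computation with Laurent expansions. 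Everything else (preservation of relations, behaviour of simplicity and of the triangular decomposition under the map) is routine.
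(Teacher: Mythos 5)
Your proposal is correct and takes essentially the same route as the paper, which disposes of the statement in one line by observing that the roles of $\qaff^\geq(\mathfrak a_1)$ and $\qaff^\leq(\mathfrak a_1)$ are symmetric so that point \ref{it:univsimple} of theorem \ref{thm:CP} translates immediately; your explicit $\pm$-swapping map with $z\mapsto 1/z$ is precisely one realization of that symmetry, and your transport of the Chari--Pressley weight (inverting the ratio, flipping the sign of the exponent of $q$, with the new $P$ obtained from $P'$ by sending each root $a$ to $q^2/a$) does come out exactly as stated. The only small correction is that on $\qaff'(\mathfrak a_1)$ such a map must send $C^{\pm 1/2}\mapsto C^{\mp 1/2}$ rather than fix it, which is immaterial here since highest and lowest $\ell$-weight modules are of type $1$ and one may work in $\mathrm U_q(\mathrm L\mathfrak a_1)$.
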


\subsection{Weight-finite simple $\mathrm U_q(\mathrm L\mathfrak a_1)$-modules}
\label{sec:wfqaff}
We now wish to consider a slightly broader family of modules over $\qaff'({\mathfrak a}_1)$. In particular, we want to allow these modules to be infinite-dimensional, while retaining some of the nice features of finite dimensional $\qaff'({\mathfrak a}_1)$-modules such as the fact that they decompose into $\ell$-weight spaces. This is achieved by introducing the following notion.
\begin{defn}\label{def:ellweight}
We shall say that a (not necessarily finite-dimensional) $\qaff'({\mathfrak a}_1)$-module $M$ is \emph{$\ell$-weight} if there exists a countable set $\left \{ M_\alpha: \alpha\in A\right \}$ of indecomposable locally finite-dimensional $\qaff^0({\mathfrak a}_1)$-modules, called the \emph{$\ell$-weight spaces} of $M$, such that, as $\qaff^0({\mathfrak a}_1)$-modules, 
$$M\cong \bigoplus_{\alpha\in A} M_\alpha\,.$$
We shall say that $M$ is of type $1$ if $C^{1/2}$ acts on $M$ by $\id$.
\end{defn}
\begin{defprop}\label{defprop:ellweight}
Let $M$ be an $\ell$-weight $\qaff'({\mathfrak a}_1)$-module. Then:
\begin{enumerate}[i.]
\item\label{qaffellwi} $C^2$ acts as $\id$ over $M$;
\item\label{qaffellwii} for every $\ell$-weight space $M_\alpha$, $\alpha\in A$, of $M$, there exists $\kappa_{\alpha,0}\in\F^\times$ and $(\kappa^\pm_{\alpha, \pm m})_{m\in\N^\times} \in\F^{\N^\times}$ such that
$$M_\alpha \subseteq \left\{ v\in M : \exists n\in\N^\times\,,\forall m\in\N \quad \left (k^\pm_{1, \pm m} - \kappa^\pm_{\alpha, \pm m} \id \right ) ^n.v=0\right \}\,,$$
where we have set $\kappa^\pm_{\alpha,0} = \kappa_{\alpha, 0}^{\pm 1}$. 
\end{enumerate}
We let $\Sp(M) = \left\{\kappa_{\alpha, 0} : \alpha\in A\right \}$ and refer to the formal power series
$$\kappa^\pm_\alpha(z) = \sum_{m\in\N} \kappa^\pm_{\alpha, \pm m} z^{\mp m}$$
as the \emph{$\ell$-weight} of the $\ell$-weight space $M_\alpha$.
\end{defprop}
\begin{proof}
Let $M_\alpha$ be an $\ell$-weight space of $M$ and let $v\in M_\alpha-\{0\}$. By definition, there exists a finite dimensional $\qaff^0(\mathfrak a_1)$-submodule $\tilde M_\alpha$ of $M_\alpha$ such that $v\in \tilde M_\alpha$. Over $\tilde M_\alpha$, $C$ must admit an eigenvector and, since $C$ is central, it follows that $C$ acts over $\tilde M_\alpha$ by a scalar mutliple of $\id$. Assume for a contradiction that $C-C^{-1}$ does not act by multiplication by zero. Then, it is possible to pull back $\tilde M_\alpha$ into a finite-dimensional module over the Weyl algebra $\mathcal A_1(\K) = \K\langle x, y\rangle /(xy-yx-1)$ by the obvious algebra homomorphism $\mathcal A_1(\K) \hookrightarrow \qaff^0(\mathfrak a_1)$. But the Weyl algebra is known to admit no finite-dimensional modules. A contradiction. It follows that $C^2$ acts as $\id$ over $\tilde M_\alpha$. But this could be repeated for any non-zero vector in any $\ell$-weight space of $M$. \ref{qaffellwi} follows. As for \ref{qaffellwii}, observe that, as a consequence of \ref{qaffellwi} and of the defining relations (\ref{eq:relkpmkpm}) and (\ref{eq:relkpkm}), $\left\{k^+_{1,m}, k^-_{1,-m} : m\in\N \right \}$ acts by a family of commuting linear operators over $M$. Thus \ref{qaffellwii} follows from the decomposition of locally finite-dimensional vector spaces into the generalized eigenspaces of a commuting family of linear operators; the indecomposability of $M_\alpha$ further imposing that it coincides with a single block in a single generalized eigenspace.
\end{proof}

\begin{rem}\label{rem:ellweight}
It is worth emphasizing that definition \ref{def:ellweight} and definition-proposition \ref{defprop:ellweight} straightforwardly generalize to (topological) modules over any (topological) algebra $\mathcal A$ containing $\qaff^0(\mathfrak a_1)$ as a (closed) subalgebra.
\end{rem}

\begin{defn}
We shall say that an $\ell$-weight $\qaff'({\mathfrak a}_1)$-module $M$ is \emph{weight-finite} if $Sp(M)$ is a finite set. We let  $\mathrm{\bf WFinMod}$ denote the full subcategory of the category  $\mathrm{\bf Mod}$ of $\qaff'({\mathfrak a}_1)$-modules whose objects are weight-finite.
\end{defn}
Clearly, finite dimensional $\qaff'({\mathfrak a}_1)$-modules are objects in $\mathrm{\bf WFinMod}$, but not every object in $\mathrm{\bf WFinMod}$ is in $\mathrm{\bf FinMod}$. However we have
\begin{thm}
\label{thm:wffdqaffsl2} The following hold:
\begin{enumerate}[i.]
\item\label{it:type1} every simple $\ell$-weight $\qaff'(\mathfrak a_1)$-module can be obtained by twisting a simple $\ell$-weight $\qaff'(\mathfrak a_1)$-module of type $1$ with an algebra automorphism of $\mathrm{Aut}(\qaff(\mathfrak a_1))$;
\item\label{it:qaffWF} every weight-finite simple $\mathrm U_q(\mathrm L\mathfrak a_1)$-module is highest $\ell$-weight;
\item\label{it:findim} every weight-finite simple $\mathrm U_q(\mathrm L\mathfrak a_1)$-module is finite dimensional.
\end{enumerate}
\end{thm}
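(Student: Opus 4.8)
The plan is to establish the three parts in order, with parts \ref{it:type1} and \ref{it:qaffWF} serving as preparation for the substantive statement \ref{it:findim}. For part \ref{it:type1}, I would mimic the argument behind point (i) of Theorem \ref{thm:CP}: the spectral-shift automorphisms of $\qaff'(\mathfrak a_1)$ act transitively on the possible values of $C^{1/2}$ (once we know, by Definition-Proposition \ref{defprop:ellweight}\ref{qaffellwi}, that $C^2$ acts as $\id$, so $C^{1/2}$ acts by a fourth root of unity times the identity on a simple module), so twisting a given simple $\ell$-weight module by a suitable automorphism of $\Aut(\qaff(\mathfrak a_1))$ brings it to type $1$. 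The only point to check is that the twist of an $\ell$-weight module is again $\ell$-weight, which is immediate since the relevant automorphisms rescale the $\ell$-weight spaces' eigenvalues without disturbing the $\qaff^0(\mathfrak a_1)$-module decomposition.

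For part \ref{it:qaffWF}, let $M$ be a weight-finite simple $\mathrm U_q(\mathrm L\mathfrak a_1)$-module. By Definition-Proposition \ref{defprop:ellweight}, $M$ decomposes into $\ell$-weight spaces $M_\alpha$, each carrying a weight $\kappa_{\alpha,0}\in\F^\times$, and $\Sp(M)=\{\kappa_{\alpha,0}:\alpha\in A\}$ is finite. The operators $\{x_{1,m}^\pm\}$ shift the classical weight (the exponent of $q$ in $\kappa_{\alpha,0}$) by $\pm 2$, so the finite set $\Sp(M)$, together with the fact that weights are compatible with a $\Z$-grading by $q$-power, means there is a maximal classical weight; pick an $\ell$-weight space $M_{\alpha_0}$ realizing it. Then $\x+1(z)$ maps $M_{\alpha_0}$ into $\ell$-weight spaces of strictly higher classical weight, which are zero, so $\x+1(z).M_{\alpha_0}=0$. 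Since $M_{\alpha_0}$ is a finite-dimensional indecomposable $\qaff^0(\mathfrak a_1)$-module and the $k_{1,\pm m}^\pm$ commute, it has a common eigenvector $v$ for all of them; this $v$ is then a highest $\ell$-weight vector, and $\qaff'(\mathfrak a_1).v$ is a nonzero submodule of the simple module $M$, hence all of $M$.

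The real work is part \ref{it:findim}, and the anticipated main obstacle is exactly here: showing that the highest $\ell$-weight $\bkap$ of a weight-finite simple module is forced into the rational Drinfel'd-polynomial form of Theorem \ref{thm:CP}\ref{it:univsimple}, so that finite-dimensionality follows. By part \ref{it:qaffWF}, $M\cong L(\bkap)$ for some $\bkap=(\kappa^+(z),\kappa^-(z))$. Symmetrically, using the subalgebra $\qaff^\leq(\mathfrak a_1)$ and the lowest-classical-weight $\ell$-weight space (which exists because $\Sp(M)$ is finite), $M$ is also lowest $\ell$-weight, with some lowest $\ell$-weight $\bkap'$; I would then apply Proposition \ref{prop:lowestellweight}. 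The strategy is to exploit the coexistence of a highest and a lowest $\ell$-weight vector in the single simple module $M$: the $\ell$-weights of all $\ell$-weight spaces are obtained from $\kappa^\pm(z)$ by the action of the $x_{1,m}^-$'s, and the defining relations (\ref{eq:relkpxpm})--(\ref{eq:relxpxm}) with $C$ acting as $\id$ constrain how $\kappa^\pm(z)$ can change; since only finitely many classical weights occur, applying the lowering operators sufficiently many times must annihilate the top space after finitely many steps, and chasing this through relation (\ref{eq:relxpxm}) — the $q$-analogue of the $\Sl 2$ computation used by Chari--Pressley — forces $\kappa^+(z)$ to be the Laurent expansion of a rational function $q^{\deg P}P(q^{-2}/z)/P(1/z)$ with $P$ monic, and likewise identifies $\bkap'$ with the lowest $\ell$-weight attached to the same $P$. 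Once $\bkap$ has this form, Theorem \ref{thm:CP}\ref{it:univsimple} identifies $L(\bkap)$ with the finite-dimensional simple module $L(P)$, completing the proof. The delicate point will be running the Chari--Pressley-style finiteness argument purely from the weight-finiteness hypothesis (finiteness of $\Sp(M)$) rather than from finite-dimensionality of $M$ itself; I expect this to require isolating a suitable $\mathrm U_q(\mathfrak{sl}_2)$-type subalgebra generated by $x_{1,0}^\pm$ and the relevant Cartan elements, showing the top $\ell$-weight space generates a module on which that subalgebra acts with bounded weights, and bootstrapping from there.
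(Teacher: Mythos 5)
Parts \ref{it:type1} and \ref{it:qaffWF} match the paper's argument in substance: for \ref{it:type1} the paper likewise invokes Definition-Proposition \ref{defprop:ellweight}.\ref{qaffellwi} to get $C^2=\id$ on a simple module, concludes $C$ acts by $\pm\id$, and twists as in the finite-dimensional case; for \ref{it:qaffWF} the paper simply defers to the Chari--Pressley argument, which is essentially what you sketch.

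For part \ref{it:findim}, you correctly identify the germ of the idea --- weight-finiteness forces $\xm1(z)$ to annihilate $v(z_1,\dots,z_N)$ for some finite $N$, and ``chasing through relation (\ref{eq:relxpxm})'' must then constrain $\kappa^\pm(z)$ --- but your proposal stops where the real work begins, and the endgame you outline differs from the paper's. Concretely, what you are missing is the mechanism by which the constraint yields rationality: after acting with $\xp1(\zeta_N)\cdots\xp1(\zeta_1)$ on the vanishing vector and obtaining the sum over $\sigma\in S_{N+1}$ displayed in (\ref{eq:constr}), the paper multiplies by carefully chosen products $\prod(z_i-\zeta_j)$ to isolate the single permutation $(N,N-1,\dots,0)$, then extracts a Vandermonde-type determinant whose vanishing gives a linear dependence of the rows, hence a nonzero polynomial $P$ with $P(z)[\kappa^+(z)-\kappa^-(z)]=0$. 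This determinant step is the crux, and nothing in your proposal anticipates it; invoking ``a $\mathrm U_q(\mathfrak{sl}_2)$-type subalgebra generated by $x_{1,0}^\pm$'' is not what carries it. Moreover, the paper does not route through the lowest $\ell$-weight structure and Theorem \ref{thm:CP}.\ref{it:univsimple} as you propose: once it has the polynomial relation, it sets $w_m=\res_{z_0}z_0^{-1-m}P(z_0)v(z_0)$, observes that $\bigoplus_m\qaff(\mathfrak a_1).w_m$ would be a proper submodule (hence zero by simplicity), and bootstraps finite-dimensionality of the linear spans of the $v_{m_1,\dots,m_p}$ directly, without needing to match $\bkap$ to the exact Drinfel'd form first. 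Your alternative of exploiting simultaneity of highest and lowest $\ell$-weights could plausibly be made to work, but as written it is a strategy rather than a proof, and it would still need an analogue of the determinant argument to establish rationality of $\kappa^\pm$.
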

\begin{proof}
In view of definition-proposition \ref{defprop:ellweight}, $C^2$ acts as $\id$ over $M$. Since the latter is simple and since $C^{1/2}$ is central, it is clear that $C$ acts over $M$ either as $\id$ or as $-\id$. In the former case, there is nothing to do; whereas in the latter, upon twisting as in the finite-dimensional case -- see \cite{CP} --, we can ensure that $C^{1/2}$ acts as $\id$. This proves \ref{it:type1}. As for \ref{it:qaffWF}, the same proof as for part \ref{it:FinHighest} of theorem \ref{thm:CP} can be used. So, we eventually prove \ref{it:findim}. Let $M$ be a weight-finite simple $\mathrm U_q(\mathrm L\mathfrak a_1)$-module. By \ref{it:qaffWF} it is highest $\ell$-weight. Hence, there exists $v\in M-\{0\}$ such that $M\cong \mathrm U_q(\mathrm L\mathfrak a_1).v$, $\xp{1}(z).v=0\,$ and $\kk\pm{1}(z).v= \kappa^\pm(z) v$, for some $\kappa^\pm(z)\in \F[[z^{\mp 1}]]$ with
$\res_{z_1,z_2} z_1^{-1} z_2^{-1} \kappa^+(z_1)\kappa^-(z_2) = 1\,.$
The triangular decomposition of $\mathrm U_q(\mathrm L\mathfrak a_1)$ implies that $M = \mathrm U_q^-(\mathrm L\mathfrak a_1).v$ and, setting for every $n\in\N$
\be\label{eq:vzdef} v(z_1, \dots, z_n) = \xm{1}(z_1) \cdots \xm{1}(z_n). v\,,\ee
it is clear that
\be\label{eq:spanset}\left\{v_{m_1, \dots, m_n}=\res_{z_1, \dots, z_n} z_1^{-1-m_1} \cdots z_n^{-1-m_n} v(z_1, \dots, z_n) : n\in\N, m_1,\dots, m_n \in\Z \right\}\ee
is a spanning set of $M$. The defining relations (\ref{eq:relkpxpm}) and (\ref{eq:relkmxpm}) of $\mathrm U_q(\mathrm L\mathfrak a_1)$ easily imply that, for every $n\in\N$,
\be\label{eq:kvzns}\kk\pm{1}(z) . v(z_1,\dots, z_n)= \kappa^\pm(z) \prod_{p=1}^n G^\mp_{}\left (\left (z_p/z\right )^{\mp 1}\right  ) v(z_1,\dots, z_n) \ee
and, in particular,
$$k^\pm_0 .v(z_1, \dots, z_n)  = (\kappa^+_0)^{\pm 1} q^{-2n} v(z_1, \dots, z_n)\,.$$
Therefore, $M$ being weight-finite, there must exist an $N\in\N$ such that
\be\label{eq:weightfinite}\xm{1}(z). v(z_1, \dots, z_N) = 0\,.\ee
Making use of (\ref{eq:relxpxm}), one easily proves that, for every $n\in\rran{N}$
\bea\label{eq:xpvz} \xp{1}(z).v(z_0,\dots, z_n) &=& \frac{1}{q-q^{-1}} \sum_{p=0}^n \delta\left(\frac{z_p}{z}\right ) \left [\kappa^+(z) \prod_{r=p+1}^n G^-_{}(z_r/z) \right. \\
&&\qquad\qquad\qquad\qquad\qquad\qquad\qquad \left .- \kappa^-(z)\prod_{r=p+1}^n G^+_{} (z/z_r)  \right ] v(z_0, \dots, \widehat{z_p}, \dots, z_n)\,,\nn\eea
where a hat over a variable indicates that that variable should be omitted. Combining (\ref{eq:weightfinite}) and (\ref{eq:relxpxm}), we get
\bea - \xm{1}(z_0) \xp{1}(z).v(z_1, \dots, z_N) &=& [\xp{1}(z), \xm{1}(z_0)].v(z_1, \dots, z_N) \nn\\
&=& \frac{1}{q-q^{-1}} \delta\left(\frac{z_0}{z}\right ) \left [\kappa^+(z) \prod_{p=1}^N G^-_{} (z_p/z) - \kappa^-(z) \prod_{p=1}^N G^+(z/z_p)\right ] v(z_1,\dots, z_N)\,.\nn\eea
Making use of (\ref{eq:xpvz}) and (\ref{eq:vzdef}), the above equation eventually yields
$$\sum_{p=0}^N \delta\left (\frac{z_p}{z}\right ) \left [\kappa^+(z_p) \prod_{r=p+1}^N G^-_{}(z_r/z_p) - \kappa^-(z_p) \prod_{r=p+1}^N G^+_{}(z_p/z_r) \right ] v(z_0, \dots, \widehat{z_p},\dots, z_N)=0\,.$$
Acting on the l.h.s of the above equation with $\xp{1} (\zeta_N) \cdots \xp{1}(\zeta_1)$ and making repeated use of (\ref{eq:xpvz}), one easily shows that
\be\label{eq:constr}\sum_{\sigma\in S_{N+1}} \prod_{i=0}^N \delta \left (\frac{z_i}{\zeta_{\sigma(i)}}\right ) \left [\kappa^+(z_i) \prod_{\substack{r=i+1\\ \sigma(r) >\sigma(i)}}^N G^-_{}(z_r/z_i) -\kappa^-(z_i) \prod_{\substack{r=i+1\\ \sigma(r) >\sigma(i)}}^N G^+_{}(z_i/z_r)  \right ] v= 0\,.\ee
Since $v\neq0$, its prefactor in the above equation must vanish. Now, it is clear that multiplication of the latter by $\prod_{j=0}^{N-1} (z_0 - \zeta_j)$ annihilates all the summands with $\sigma$ such that $\sigma(0)\neq N$. Similarly, multiplication by $\prod_{i=0}^1 \prod_{j=0}^{N-2} (z_i - \zeta_j)$ annihilates all the summands with $\sigma$ such that $\sigma(0) \neq N$ and $\sigma(1)\neq N-1$. Repeating the argument finitely many times, we arrive at the fact that multiplication by $\prod_{i=0}^N  \prod_{j=0}^{N-i-1} (z_i -\zeta_j)$ annihilates all the summands with $\sigma\neq(N, N-1, \dots, 0)$, so that, eventually,
$$0= \prod_{i=0}^N \delta \left (\frac{z_i}{\zeta_{N-i}}\right ) \prod_{j=0}^{N-i-1} (z_i -\zeta_j)\left [\kappa^+(z_i)  -\kappa^-(z_i) \right ] = \prod_{i=0}^N \delta \left (\frac{z_i}{\zeta_{N-i}}\right ) \prod_{j=0}^{N-i-1} (z_i -z_{N-j})\left [\kappa^+(z_i)  -\kappa^-(z_i) \right ]\,.$$
Taking the zeroth order term in $\zeta_j$ for $j=0,\dots, N$, we get
\bea 0&=& \prod_{i=0}^N \prod_{j=i+1}^{N} (z_i -z_{j})\left [\kappa^+(z_i)  -\kappa^-(z_i) \right ] \nn\\
&=& \begin{vmatrix}
 \left [\kappa^+(z_0)  -\kappa^-(z_0) \right ]&\left [\kappa^+(z_1)  -\kappa^-(z_1) \right ] & \dots & \left [\kappa^+(z_N)  -\kappa^-(z_N) \right ]\\
z_0 \left [\kappa^+(z_0)  -\kappa^-(z_0) \right ]& z_1 \left [\kappa^+(z_1)  -\kappa^-(z_1) \right ] & \dots & z_N \left [\kappa^+(z_N)  -\kappa^-(z_N) \right ]\\
\vdots & \vdots & \dots &\vdots \\
z_0^{N-1} \left [\kappa^+(z_0)  -\kappa^-(z_0) \right ]& z_1^{N-1}\left [\kappa^+(z_1)  -\kappa^-(z_1) \right ] & \dots & z_N^{N-1}\left [\kappa^+(z_N)  -\kappa^-(z_N) \right ]
\end{vmatrix}\,.\nn\eea
Hence, the rows of the matrix on the r.h.s. of the above equation are linearly dependent and it follows that there exists a $P(z) \in \F[z]-\{0\}$ of degree at most $N-1$, such that
\be\label{eq:Pkappa} P(z) \left [\kappa^+(z)  -\kappa^-(z) \right ] = 0\,.\ee
As a consequence, there clearly exists $Q(z) \in \F[z]$ such that $\deg Q = \deg P$ and
$$\kappa^\pm(z) = \left (\frac{ Q(z)}{P(z)}\right )_{|z|^{\mp 1}\ll 1}\,.$$
Now considering (\ref{eq:xpvz}) with $n=0$ and multiplying it by $P(z_0)$ obviously yields
 \be\label{eq:whighest} \xp{1}(z). P(z_0) v(z_0) = 0\,.\ee
 Set for very $m\in\Z$, $w_m = \res_{z_0} z_0^{-1-m} P(z_0) v(z_0)$. 
 Then, (\ref{eq:whighest}), together with (\ref{eq:kvzns}) for $n=1$, implies that
 $$\bigoplus_{m\in \Z} \qaff(\mathfrak a_1) . w_m$$
 is a strict submodule of the simple $\mathrm U_q(\mathrm L\mathfrak a_1)$-module $M$ and it follows that $w_m=0$ for every $m\in\Z$. All the vectors in $\left\{v_m : m\in\Z\right \}$ -- see (\ref{eq:spanset}) -- can therefore be expressed as linear combinations of the vectors in, say, $\left \{v_1, \dots, v_{\deg(P)} \right \}$ and the linear span of $\left\{v_m : m\in\Z\right \}$ turns out to be finite dimensional. Repeating that argument finitely many times for the linear spans of $\left\{v_{m_1, \dots, m_p}: m_1, \dots, m_p\in\Z\right \}$ with $p=1,\dots, N$ eventually concludes the proof.
\end{proof}

\begin{cor}
\label{cor:simplewffinite}
Let $M$ be a weight-finite simple highest (resp. lowest) $\ell$-weight $\mathrm U_q(\mathrm L\mathfrak a_1)$-module. Then $M\cong L(P)$ (resp. $M\cong \bar L(P)$), for some monic polynomial $P$. 
\end{cor}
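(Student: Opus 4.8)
The plan is to deduce Corollary \ref{cor:simplewffinite} directly from Theorem \ref{thm:wffdqaffsl2} together with the classification results of Chari and Pressley recalled in Theorem \ref{thm:CP} and Proposition \ref{prop:lowestellweight}. First I would treat the highest $\ell$-weight case. Let $M$ be a weight-finite simple highest $\ell$-weight $\mathrm U_q(\mathrm L\mathfrak a_1)$-module. By part \ref{it:findim} of Theorem \ref{thm:wffdqaffsl2}, $M$ is finite-dimensional; being simple, highest $\ell$-weight, and of type $1$ (it is a $\mathrm U_q(\mathrm L\mathfrak a_1)$-module), it is in particular a simple object of $\mathrm{\bf FinMod}$ which is highest $\ell$-weight with some highest $\ell$-weight $\bkap=(\kappa^+(z),\kappa^-(z))$. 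Since $M$ is a simple highest $\ell$-weight module with highest $\ell$-weight $\bkap$, it is isomorphic to $L(\bkap)$ by the uniqueness stated in the construction of $L(\bkap)$. As $L(\bkap)$ is finite-dimensional, point \ref{it:univsimple} of Theorem \ref{thm:CP} forces $\kappa^\pm(z)=q^{\deg P}\bigl(P(q^{-2}/z)/P(1/z)\bigr)_{|z|^{\mp1}\ll1}$ for some monic polynomial $P(1/z)\in\F[z^{-1}]$, so that $M\cong L(\bkap)=L(P)$ in the notation introduced after Theorem \ref{thm:CP}.

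Next I would handle the lowest $\ell$-weight case, which is entirely symmetric. The roles of $\qaff^\geq(\mathfrak a_1)$ and $\qaff^\leq(\mathfrak a_1)$ being interchangeable, a weight-finite simple lowest $\ell$-weight $\mathrm U_q(\mathrm L\mathfrak a_1)$-module $M$ is again finite-dimensional by the lowest-weight analogue of Theorem \ref{thm:wffdqaffsl2} (obtained from the stated one by the obvious symmetry), hence a simple finite-dimensional lowest $\ell$-weight module; by Proposition \ref{prop:lowestellweight} its lowest $\ell$-weight has the prescribed rational form for some monic $P(1/z)\in\F[z^{-1}]$, and therefore $M\cong\bar L(P)$.

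The only genuine subtlety — and the step I expect to require the most care — is the bookkeeping needed to invoke Theorem \ref{thm:wffdqaffsl2}\ref{it:findim} in the lowest $\ell$-weight case: the theorem as stated concerns highest $\ell$-weight modules, so one must either spell out the symmetric statement (replacing $\xp{1}$ by $\xm{1}$ throughout the proof, with the obvious exchange of $G^+$ and $G^-$ and of the two $\delta$-terms in (\ref{eq:relxpxm})) or observe that pulling back $M$ along a suitable anti-automorphism of $\mathrm U_q(\mathrm L\mathfrak a_1)$ exchanging the two Borel halves turns a lowest $\ell$-weight module into a highest $\ell$-weight one. Everything else is a matter of quoting the results already established. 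I would keep the write-up to a few lines, simply chaining these citations.
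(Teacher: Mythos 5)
Your proof is correct and follows essentially the same route as the paper: quote Theorem \ref{thm:wffdqaffsl2}.\ref{it:findim} to get finite-dimensionality, then Theorem \ref{thm:CP}.\ref{it:univsimple} (resp.\ Proposition \ref{prop:lowestellweight}) to pin down the form of the highest (resp.\ lowest) $\ell$-weight and conclude $M\cong L(P)$ (resp.\ $M\cong\bar L(P)$). One small remark: the ``subtlety'' you flag in the lowest $\ell$-weight case does not actually arise, because Theorem \ref{thm:wffdqaffsl2}.\ref{it:findim} is stated for \emph{every} weight-finite simple $\mathrm U_q(\mathrm L\mathfrak a_1)$-module, with no highest/lowest $\ell$-weight hypothesis — a weight-finite simple lowest $\ell$-weight module is in particular a weight-finite simple module, so \ref{it:findim} applies verbatim and no symmetric restatement or twist by an anti-automorphism is needed to get finite-dimensionality. (Your alternative arguments do work, they are just unnecessary.) Otherwise the chain of citations is exactly what the paper intends.
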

\begin{proof}
In the highest $\ell$-weight case, this follows directly by the previous theorem and the classification of the simple finite dimensional $\mathrm U_q(\mathrm L\mathfrak a_1)$-modules, theorem \ref{thm:CP}. In the lowest $\ell$-weight case, see proposition \ref{prop:lowestellweight}.
\end{proof}

\section{Double quantum affinization of type $\mathfrak a_1$}
\label{Sec:qdaff}
\subsection{Definition of $\qdaff(\mathfrak a_1)$}
\begin{defn}
\label{defn:qdaff}
The \emph{double quantum affinization} $\qdaff(\mathfrak{a}_1)$ 
of type $\mathfrak{a}_1$ is defined as the $\F$-algebra generated by $$\{\Dsf_1, \Dsf_1^{-1}, \Dsf_2, \Dsf_2^{-1},\Csf^{1/2}, \Csf^{-1/2},   \csf+{m},\csf-{-m}, \Ksf+{1,0,m}, \Ksf-{1,0,-m}, \Ksf+{1,n,r}, \Ksf-{1,-n,r}, \Xsf+{1,r,s}, \Xsf-{1,r,s}:m\in\N, n\in \N^\times, r,s\in\Z\}$$
subject to the relations
\be\label{eq:Csfcentral} \mbox{$\Csf^{\pm 1/2}$ and $\cbsf\pm(z)$ are central} \ee
\be\label{eq:csbf} \res_{v,w} \frac{1}{vw}\cbsf\pm(v) \cbsf\mp(w) =  1 \,,  \ee
\be \Dsf^{\pm 1}_1 \Dsf^{\mp 1}_1 = 1 \qquad \Dsf^{\pm 1}_2 \Dsf^{\mp 1}_2 = 1 \qquad \Dsf_1\Dsf_2= \Dsf_2\Dsf_1\ee
\be \Dsf_1 \Kbsf\pm{1,\pm m}(z) \Dsf_1^{-1} = q^{\pm m} \Kbsf\pm{1,\pm m}(z) \qquad \Dsf_1 \Xbsf\pm{1,r}(z) \Dsf_1^{-1} = q^{r} \Xbsf\pm{1,r}(z)\,, \ee
\be \Dsf_2 \Kbsf\pm{1,\pm m}(z) \Dsf_2^{-1} = \Kbsf\pm{1,\pm m}(zq^{-1}) \qquad \Dsf_2 \Xbsf\pm{1,r}(z) \Dsf_2^{-1} =  \Xbsf\pm{1,r}(zq^{-1})\,, \ee
\be \res_{v,w} \frac{1}{vw} \Kbsf\pm{1,0}(v) \Kbsf\mp{1,0}(w) 
= 1 \,,  \ee
\be\label{eq:K+K+} (v-q^{\pm 2}z)(v-q^{2(m-n \mp 1)}z) \Kbsf\pm{1,\pm m}(v) \Kbsf\pm{1,\pm n}(z) = (vq^{\pm 2}-z)(vq^{\mp2}-q^{2(m-n)}z) \Kbsf\pm{1,\pm n}(z) \Kbsf\pm{1,\pm m}(v)\,,\ee
\be\label{eq:K+K-} (\Csf q^{2(1-m)}v-w)(q^{2(n-1)}v-\Csf w) \Kbsf+{1, m}(v) \Kbsf-{1,-n}(w) = (\Csf q^{-2m}v - q^{2}w)(q^{2n}v-\Csf q^{-2}w) \Kbsf-{1,-n}(w)\Kbsf+{1, m}(v) \,,\ee
\be\label{eqbf:K+X+} (v-q^{\pm 2}z) \Kbsf\pm{1,\pm m}(v) \Xbsf\pm{1,r}(z) = (q^{\pm 2}v-z)\Xbsf\pm{1,r}(z)\Kbsf\pm{1,\pm m}(v)\,,\ee
\be\label{eq:K+X-} (\Csf v - q^{2(m\mp 1)}z) \Kbsf\pm{1,\pm m}(v) \Xbsf\mp{1,r}(z) = (\Csf q^{\mp 2} v - q^{2m}z) \Xbsf\mp{1,r}(z) \Kbsf\pm{1,\pm m}(v)\,,\ee
\be
(v-q^{\pm 2} w) \Xbsf\pm{1,r}(v) \Xbsf\pm{1,s}(w) =(vq^{\pm 2}- w) \Xbsf\pm{1,s}(w) \Xbsf\pm{1,r}(v)  \,,\label{eq:X+rX+s}\ee
\bea[\Xbsf+{1,r}(v),\Xbsf-{1,s}(z)] &=& \frac{1}{q-q^{-1}} \left \{\delta\left ( \frac{\Csf v}{q^{2(r+s)}z}  \right ) \prod_{p=1}^{|s|} \cbsf-\left(\Csf^{-1/2}q^{\left (2p-1\right ) \sign(s)-1}
z\right )^{-\sign(s)} \Kbsf+{1,r+s}(v) \right .\nn\\
&& \left . -\delta\left ( \frac{\Csf^{-1} v}{q^{2(r+s)}z}  \right ) \prod_{p=1}^{|r|} \cbsf+\left(\Csf^{-1/2}q^{\left (1-2p\right)\sign(r)-1}
v\right )^{\sign(r)} \Kbsf-{1,r+s}(z)\right \}\,,
\label{eq:X+X-}
\label{eq:X+X-KK}
\eea
where $m,n \in \N$, $r,s\in\Z$ and we have set
\be \cbsf\pm(z) = \sum_{m\in\N} \csf\pm{\pm m} z^{\mp m}\,,\ee
\be \Kbsf\pm{1,0}(z) = \sum_{m\in\N} \Ksf\pm{1,0,\pm m} z^{\pm m}\,,\ee
and, for every $m\in\N^\times$ and $r\in\Z$,
\be \Kbsf\pm{1,\pm m} (z) = \sum_{s\in\Z} \Ksf\pm{1,\pm m, s} z^{-s}\,,\ee
\be \Xbsf\pm{1,r}(z) = \sum_{s\in\Z} \Xsf\pm{1,r,s} z^{-s}\,.\ee
In (\ref{eq:X+X-KK}), we further assume that $\Kbsf\pm{1,\mp m}(z) =0$ for every $m\in\N^\times$.
\end{defn}
\begin{defn}
We denote by $\qdaff'(\mathfrak a_1)$ the subalgebra of $\qdaff(\mathfrak a_1)$ generated by
$$\{\Dsf_2, \Dsf_2^{-1},\Csf^{1/2}, \Csf^{-1/2},   \csf+{m},\csf-{-m}, \Ksf+{1,0,m}, \Ksf-{1,0,-m}, \Ksf+{1,n,r}, \Ksf-{1,-n,r}, \Xsf+{1,r,s}, \Xsf-{1,r,s}:m\in\N, n\in \N^\times, r,s\in\Z\}\,,$$
i.e. the subalgebra generated by all the generators of $\qdaff(\mathfrak a_1)$ except $\Dsf_1$ and $\Dsf_1^{-1}$. We shall denote by
$$\jmath:\qdaff'(\mathfrak a_1) \hookrightarrow \qdaff(\mathfrak a_1)$$
the natural injective algebra homomorphism.
\end{defn}

\begin{defn}
\label{def:qdaff0}
We denote by $\qdaff^0(\mathfrak a_1)$ the subalgebra of $\qdaff(\mathfrak{a}_1)$ generated by 
$$\left\{ \Csf^{1/2}, \Csf^{-1/2}, \csf+{m},\csf-{-m}, \Ksf+{1,0,m} , \Ksf-{1,0,-m}, \Ksf+{1,n,r} , \Ksf-{1,-n,r} : m\in\N, n\in\N^\times,r\in\Z \right \}$$
and by $\qdaff^{0,0}(\mathfrak a_1)$ the subalgebra of $\qdaff^0(\mathfrak{a}_1)$ generated by 
$$\left\{ \Csf^{1/2}, \Csf^{-1/2}, \csf+{m},\csf-{-m}, \Ksf+{1,0,m} , \Ksf-{1,0,-m} : m\in\N \right \}\,.$$
Similarly, we denote by $\qdaff^\pm(\mathfrak{a}_1)$ the subalgebra of $\qdaff(\mathfrak{a}_1)$ generated by $\left \{\Xsf\pm{1,r,s} : r,s\in \Z\right \}$. We eventually denote by $\qdaff^\geq(\mathfrak{a}_1)$ (resp. $\qdaff^\leq(\mathfrak{a}_1)$) the subalgebra of $\qdaff(\mathfrak{a}_1)$ generated by
$$ \left \{\Csf^{1/2}, \Csf^{-1/2}, \csf+{m},\csf-{-m}, \Ksf+{1,0,m} , \Ksf-{1,0,-m}, \Ksf+{1,n,r} , \Ksf-{1,-n,r},\Xsf+{1,r,s}  : m\in\N, n\in\N^\times,r, s\in\Z\right \} $$
(resp. $$\left \{\Csf^{1/2}, \Csf^{-1/2}, \csf+{m},\csf-{-m}, \Ksf+{1,0,m} , \Ksf-{1,0,-m}, \Ksf+{1,n,r} , \Ksf-{1,-n,r},\Xsf-{1,r,s}  : m\in\N, n\in\N^\times,r,s \in\Z \right \} $$)
\end{defn}
\begin{rem}
Obviously, $\qdaff^\pm(\mathfrak{a}_1)$ is graded over $Q^\pm$ whereas $\qdaff(\mathfrak a_1)$ is graded over the root lattice $Q$ of $\mathfrak a_1$. $\qdaff(\mathfrak a_1)$ is also graded over $\Z^2 = \Z_{(1)}\times \Z_{(2)}$;
$$\qdaff(\mathfrak a_1) = \bigoplus_{(n_1, n_2)\in\Z^2} \qdaff(\mathfrak a_1)_{(n_1, n_2)}\,,$$
where, for every $(n_1, n_2)\in\Z^2$, we let
$$\qdaff(\mathfrak a_1)_{(n_1, n_2)} = \left\{x\in\qdaff(\mathfrak a_1) : \Dsf_1 x\Dsf_1^{-1} = q^{n_1} x, \quad \Dsf_2 x\Dsf_2^{-1} = q^{n_2} x \right \}\,.$$
\end{rem}

\begin{prop}\label{prop:uq0subalg}
The set $\left\{ \Csf^{1/2}, \Csf^{-1/2}, \Ksf+{1,0,m} , \Ksf-{1,0,-m} : m\in\N \right \}$ generates a subalgebra of $\qdaff^{0,0}(\mathfrak a_1)$ that is isomorphic to $\qaff^0(\mathfrak a_1)$.
\end{prop}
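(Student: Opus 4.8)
The plan is to construct the isomorphism explicitly. Write $\mathcal B$ for the subalgebra of $\qdaff^{0,0}(\mathfrak a_1)$ generated by the given set, and consider the assignment $\kk\pm1(z)\mapsto\Kbsf\pm{1,0}(1/z)$ (equivalently $k_{1,\pm m}^\pm\mapsto\Ksf\pm{1,0,\pm m}$ for $m\in\N$) together with $C^{\pm1/2}\mapsto\Csf^{\mp1/2}$. I claim this extends to an $\F$-algebra isomorphism $\Phi\colon\qaff^0(\mathfrak a_1)\isom\mathcal B$; surjectivity onto $\mathcal B$ is immediate, so the content is well-definedness and injectivity. For well-definedness I would first recall that, by the triangular decomposition of the Drinfel'd realization $\qaff(\mathfrak a_1)\cong\uqh{\widehat{\mathfrak{sl}}_2}$ (cf.\ \cite{Damiani}), $\qaff^0(\mathfrak a_1)$ is presented by its generators subject exactly to the relations (\ref{eq:relccentral}), (\ref{eq:relkpmkpm}) and (\ref{eq:relkpkm}); hence it suffices to check that $\Csf^{\mp1/2}$ and $\Kbsf\pm{1,0}(1/z)$ satisfy the images of these three relations inside $\qdaff^{0,0}(\mathfrak a_1)$.

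That last verification is a short computation that I would carry out as follows. Centrality of $\Csf^{\pm1/2}$ with $\Csf^{\pm1/2}\Csf^{\mp1/2}=1$ takes care of (\ref{eq:relccentral}), and the normalisation ``$\equiv 1$'' in (\ref{eq:relkpkm}) becomes $\Ksf-{1,0,0}\Ksf+{1,0,0}=1$, i.e.\ the residue relation of Definition \ref{defn:qdaff}. Using the closed forms $G^\pm(z)=(q^{\pm2}-z)(1-q^{\pm2}z)^{-1}$ and the identity $(v-q^{\pm2}z)(v-q^{\mp2}z)=(q^{\pm2}v-z)(q^{\mp2}v-z)$, relation (\ref{eq:K+K+}) at $m=n=0$ --- after cancelling this common polynomial --- reduces to $\Kbsf\pm{1,0}(v)\Kbsf\pm{1,0}(z)=\Kbsf\pm{1,0}(z)\Kbsf\pm{1,0}(v)$, the image of (\ref{eq:relkpmkpm}); and relation (\ref{eq:K+K-}) at $m=n=0$, rewritten through $v=z_2^{-1}$, $w=z_1^{-1}$, is exactly the image of (\ref{eq:relkpkm}) once one identifies $C$ with $\Csf^{-1}$. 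Concretely this amounts to the rational-function identity $G^-(C^{-1}z_1/z_2)\,G^+(Cz_1/z_2)=\dfrac{(\Csf q^2z_1-z_2)(q^{-2}z_1-\Csf z_2)}{(\Csf z_1-q^2z_2)(z_1-\Csf q^{-2}z_2)}$ evaluated at $\Csf=C^{-1}$, which one checks by clearing denominators (both sides equal $1$ at $z_1=0$ and agree as rational functions). This yields the homomorphism $\Phi$ with image $\mathcal B$.

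For injectivity I would produce a left inverse. By the PBW theorem for $\qdaff(\mathfrak a_1)$ of \cite{MZ}, the subalgebra $\qdaff^{0,0}(\mathfrak a_1)$ is itself presented by its generators subject to (\ref{eq:Csfcentral}), (\ref{eq:csbf}), the residue relation on $\Kbsf\pm{1,0}$, and the $m=n=0$ instances of (\ref{eq:K+K+}) and (\ref{eq:K+K-}). Against this presentation the assignment $\csf\pm{\pm m}\mapsto\delta_{m,0}$, $\Csf^{\pm1/2}\mapsto C^{\mp1/2}$, $\Ksf\pm{1,0,\pm m}\mapsto k_{1,\pm m}^\pm$ visibly respects every relation: the ones involving $\cbsf\pm$ collapse to $1=1$, and the rest become (\ref{eq:relccentral}), (\ref{eq:relkpmkpm}), (\ref{eq:relkpkm}) by running the computation of the previous paragraph backwards. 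Hence it defines an $\F$-algebra homomorphism $\rho\colon\qdaff^{0,0}(\mathfrak a_1)\to\qaff^0(\mathfrak a_1)$, and since $\rho\circ\Phi$ fixes every generator of $\qaff^0(\mathfrak a_1)$ it is the identity. Therefore $\Phi$ is injective, so $\Phi\colon\qaff^0(\mathfrak a_1)\isom\mathcal B$.

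The only inputs above that are not routine bookkeeping are the two presentation statements --- that no unexpected relations hold among the Cartan-type generators of $\qaff(\mathfrak a_1)$, respectively $\qdaff(\mathfrak a_1)$ --- which are precisely the relevant triangular-decomposition and PBW facts; I expect isolating exactly what is needed from those (and keeping track of the change of variables $z\mapsto z^{-1}$, $C\mapsto\Csf^{-1}$) to be the main point, the remainder being elementary manipulation of the series $G^\pm$. As an alternative to the left-inverse argument, injectivity can also be read off by matching PBW bases directly: ordered monomials in $\Csf^{\pm1/2}$, $(\Ksf+{1,0,0})^{\pm1}$ and the Heisenberg modes extracted from $\log\Kbsf\pm{1,0}(z)$ span $\mathcal B$ and extend to a PBW basis of $\qdaff^{0,0}(\mathfrak a_1)$ --- the modes of $\cbsf\pm$ supplying the complementary generators --- while $\Phi$ carries the analogous PBW basis of $\qaff^0(\mathfrak a_1)$ bijectively onto the first family.
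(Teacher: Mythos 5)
Your proof takes a genuinely different route from the paper's. The paper's own argument is a one-liner: it restricts the bicontinuous isomorphism $\widehat\Psi:\widehat{\qaff(\dot{\mathfrak a}_1)}\isom\widehat{\qdaff'(\mathfrak a_1)}$ of Theorem~\ref{thm:main} to the copy of $\qaff^0(\mathfrak a_1)$ sitting inside $\widehat{\qaff(\dot{\mathfrak a}_1)}$, noting that $\widehat\Psi(C^{\pm1/2})=\Csf^{\pm1/2}$ and $\widehat\Psi(\kk\pm1(z))=-\Kbsf\mp{1,0}(\Csf^{-1/2}z)$, so that the image is exactly the subalgebra in question; injectivity then comes for free from the injectivity of $\widehat\Psi$. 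You instead build an explicit map $\Phi$ (which is a different isomorphism from the paper's restriction --- no sign, and you invert $z$ and $C$ rather than twisting by $\Csf^{-1/2}$) and verify the three Cartan relations of $\qaff^0(\mathfrak a_1)$ directly against the $m=n=0$ instances of (\ref{eq:K+K+})--(\ref{eq:K+K-}). That computation is correct: the symmetric polynomial $(v-q^{2}z)(v-q^{-2}z)$ can indeed be cancelled from (\ref{eq:K+K+}) (its coefficient ring being $\F$, and the lowest-order argument works on formal power series in $v,z$), and your rational-function identity translating (\ref{eq:relkpkm}) into (\ref{eq:K+K-}) at $\Csf=C^{-1}$ checks out.

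The gap is in the injectivity step. Your left inverse $\rho$ is defined against a claimed presentation of $\qdaff^{0,0}(\mathfrak a_1)$ by its listed generators modulo the restricted relations, attributed to ``the PBW theorem for $\qdaff(\mathfrak a_1)$ of \cite{MZ}.'' That is the one genuinely non-routine input, and it is not established by anything stated in the present paper: Proposition~\ref{prop:triang} records the triangular decomposition and gives a presentation for the nilpotent halves $\qdaff^\pm(\mathfrak a_1)$, but it does not assert a presentation of the Cartan piece $\qdaff^0(\mathfrak a_1)$, let alone of its subalgebra $\qdaff^{0,0}(\mathfrak a_1)$; in general a triangular decomposition alone does not tell you that the zero part is presented by the ``internal'' relations. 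So as written you are invoking a result whose precise form is not in front of the reader. (The alternative PBW-matching argument you sketch at the end rests on the same unproven fact.) By contrast, the $\widehat\Psi$-restriction argument sidesteps the presentation question entirely --- you never need to know the relations of $\qdaff^{0,0}(\mathfrak a_1)$, only that $\widehat\Psi$ is injective and carries the generators $C^{\pm1/2}$, $\kk\pm1(z)$ onto a generating set of $\mathcal B$. If you want to keep your explicit $\Phi$, the cleanest repair is to compare it with $\widehat\Psi$ on generators and deduce injectivity from that of $\widehat\Psi$, rather than from a presentation of $\qdaff^{0,0}(\mathfrak a_1)$.
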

\begin{proof}
This can be directly checked from the defining relations. Otherwise, it suffices to observe that the algebra isomorphism $\widehat\Psi : \widehat\qaff(\dot{\mathfrak a}_1) \to \widehat{\qdaff'(\mathfrak a_1)}$ -- see theorem \ref{thm:main} -- restricts on that set to
$$\widehat\Psi (C^{\pm 1/2})=\Csf^{\pm 1/2} \qquad \mbox{and}\qquad  \widehat\Psi(\kk\pm1(z))= - \Kbsf\mp{1,0}(\Csf^{-1/2} z)\, .$$
\end{proof}

\subsection{$\qdaff(\mathfrak a_1)$ as a topological algebra}
\label{sec:topology}
Because of relation (\ref{eq:X+X-}), the definition of $\qdaff(\mathfrak{a}_1)$ is not purely algebraic. Indeed, the r.h.s. of (\ref{eq:X+X-}) involves two infinite series. One way to make sense of that relation is to equip $\qdaff(\mathfrak a_1)$ -- and, for later use, its tensor powers -- with a topology, such that both series be convergent in the corresponding completion $\widehat{\qdaff(\mathfrak a_1)}$ of $\qdaff(\mathfrak a_1)$. Making use of the natural $\Z_{(2)}$-grading of the tensor algebras $\qdaff(\mathfrak a_1)^{\otimes m}$, $m\in\N^\times$, we let, for every $n\in\N$, 
$$\dot\Omega_{n}^{(m)} := \bigoplus_{\substack{r\geq n\\s \geq n}} \qdaff(\mathfrak a_1)^{\otimes m} \cdot \left (\qdaff(\mathfrak a_1)^{\otimes m}\right )_{-r} \cdot \qdaff(\mathfrak a_1)^{\otimes m}\cdot \left (\qdaff(\mathfrak a_1)^{\otimes m}\right )_{s} \cdot \qdaff(\mathfrak a_1)^{\otimes m}  \,.$$
One easily checks that
\begin{prop}
\label{prop:Omegan}
The following hold true for every $m\in\N^\times$:
\begin{enumerate}
\item [i.] For every $n\in\N$, $\dot\Omega_{n}^{(m)}$ is a two-sided ideal of $\qdaff({\mathfrak a}_1)$;
\item[ii.] For every $n\in\N$, $\dot\Omega_{n}^{(m)}\supseteq \dot\Omega_{n+1}^{(m)}$;
\item[iii.] $\dot\Omega_{0}^{(m)} := \bigcup_{n\in\N}\dot\Omega_{n}^{(m)} = \qdaff({\mathfrak a}_1)$;
\item [iv.] $\bigcap_{n\in\N} \dot\Omega_{n}^{(m)} =\{0\}$;
\item[v.] For every $n,p\in\N$, $\dot\Omega_{n}^{(m)} + \dot\Omega_{p}^{(m)} \subseteq \dot\Omega_{\min(n,p)}$;
\item[vi.] For every $n,p\in\N$, $\dot\Omega_{n}^{(m)} \cdot \dot\Omega_{p}^{(m)} \subseteq \dot\Omega_{\max(n,p)}$.
\end{enumerate}
\end{prop}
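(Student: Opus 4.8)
The plan is to dispose of (i)--(iii), (v) and (vi) by formal manipulations with graded ideals, and to single out (iv) as the one assertion that genuinely uses the structure theory of $\qdaff(\mathfrak a_1)$ developed in \cite{MZ}. Throughout write $A=\qdaff(\mathfrak a_1)^{\otimes m}$ and, for $k\in\Z$, let $A_k$ be its $\Z_{(2)}$-homogeneous component of degree $k$ (the total $\Z_{(2)}$-degree over the tensor factors). For (i), each summand $A\cdot A_{-r}\cdot A\cdot A_s\cdot A$ with $r,s\geq n$ has the form $A\cdot Y\cdot A$ for the subspace $Y=A_{-r}\cdot A\cdot A_s$, hence is a two-sided ideal, and $\dot\Omega_n^{(m)}$, being the sum of these, is a two-sided ideal. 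For (ii), $\{(r,s):r,s\geq n+1\}\subseteq\{(r,s):r,s\geq n\}$, so $\dot\Omega_{n+1}^{(m)}\subseteq\dot\Omega_n^{(m)}$. For (iii), since $1\in A_0$, taking $r=s=0$ gives $A=A\cdot 1\cdot A\cdot 1\cdot A\subseteq\dot\Omega_0^{(m)}\subseteq A$, so $\dot\Omega_0^{(m)}=A$; by (ii) the family $(\dot\Omega_n^{(m)})_n$ is decreasing, so its union is $\dot\Omega_0^{(m)}=A$. For (v), by (ii) both $\dot\Omega_n^{(m)}$ and $\dot\Omega_p^{(m)}$ lie in $\dot\Omega_{\min(n,p)}^{(m)}$, hence so does their sum. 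For (vi), using (i) one has $\dot\Omega_n^{(m)}\dot\Omega_p^{(m)}\subseteq\dot\Omega_n^{(m)}A\subseteq\dot\Omega_n^{(m)}$ and likewise $\dot\Omega_n^{(m)}\dot\Omega_p^{(m)}\subseteq A\,\dot\Omega_p^{(m)}\subseteq\dot\Omega_p^{(m)}$, so by (ii) the product sits in $\dot\Omega_n^{(m)}\cap\dot\Omega_p^{(m)}=\dot\Omega_{\max(n,p)}^{(m)}$.

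It remains to prove (iv), $\bigcap_n\dot\Omega_n^{(m)}=\{0\}$, which is the heart of the matter. First I would refine the $\Z_{(2)}$-grading to the full $\Z^2\times Q$-grading of $\qdaff(\mathfrak a_1)$, and hence of $A$: each generating subspace $A_{-r}\cdot A\cdot A_s$ of $\dot\Omega_n^{(m)}$ is then spanned by $\Z^2\times Q$-homogeneous elements, so $\dot\Omega_n^{(m)}$ is a graded ideal for the finer grading. Writing $A=\bigoplus_{\mathbf d}A_{\mathbf d}$ over multidegrees $\mathbf d\in\Z^2\times Q$, gradedness gives $\dot\Omega_n^{(m)}=\bigoplus_{\mathbf d}\big(\dot\Omega_n^{(m)}\cap A_{\mathbf d}\big)$ and therefore $\bigcap_n\dot\Omega_n^{(m)}=\bigoplus_{\mathbf d}\bigcap_n\big(\dot\Omega_n^{(m)}\cap A_{\mathbf d}\big)$, so it suffices to show that, for each fixed $\mathbf d$, no non-zero $\mathbf d$-homogeneous element of $A$ lies in every $\dot\Omega_n^{(m)}$. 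Here I would invoke the triangular decomposition and PBW-type basis of $\qdaff(\mathfrak a_1)$ established in \cite{MZ}: expand such a putative element over that basis and, tracking the $\Z_{(2)}$-degrees of the PBW generators occurring in a basis monomial, control how far the running $\Z_{(2)}$-degree can dip below and climb back above its endpoints; since a typical element of $\dot\Omega_n^{(m)}$ is a sum of products $a\,b\,c\,d\,e$ with the $\Z_{(2)}$-degree of $b$ at most $-n$ and that of $d$ at least $n$, each such product, once rewritten over the PBW basis, has a leading (most spread out) monomial still retaining $b$ and $d$ as factors, whence of $\Z_{(2)}$-excursion growing with $n$; the claim is that the lower-excursion correction terms produced by the normal-ordering relations, at fixed multidegree $\mathbf d$, cannot conspire to land a fixed non-zero element of $A_{\mathbf d}$ in all $\dot\Omega_n^{(m)}$.

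That last step is the main obstacle; everything else is formal. The delicate point is precisely that rewriting $a\,b\,c\,d\,e$ over \cite{MZ}'s basis produces, through the commutator relations (\ref{eq:K+X-}), (\ref{eq:X+X-KK}) and their consequences, correction terms of \emph{strictly smaller} $\Z_{(2)}$-excursion than $a\,b\,c\,d\,e$ itself, so one cannot simply argue that $\dot\Omega_n^{(m)}$ is supported on high-excursion monomials; instead one must combine the $\Z_{(2)}$-filtration by excursion with the fixed multidegree $\mathbf d$ and show that the leading terms at each level, being non-trivial, obstruct membership in $\dot\Omega_n^{(m)}$ once $n$ is large relative to the (finite) set of PBW monomials occurring in the given element. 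One cannot shortcut this by a naive finiteness argument: each $A_{\mathbf d}$ is already infinite-dimensional --- for instance all the $\csf\pm{\pm m}$ lie in the multidegree-$0$ component, and the relations (\ref{eq:X+rX+s}) among the $\Xsf\pm{1,r,s}$ do not collapse the fixed-multidegree components of $\qdaff^\pm(\mathfrak a_1)$ --- so the chain $\big(\dot\Omega_n^{(m)}\cap A_{\mathbf d}\big)_n$ need not stabilize, and genuine input from the basis of \cite{MZ} is required.
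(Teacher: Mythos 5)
Your arguments for (i), (ii), (iii), (v) and (vi) are correct and genuinely are just formal manipulations with the graded ideals $\dot\Omega_n^{(m)}$; there is nothing to add or object to there. The reduction in (iv) via the fine $\Z^2\times Q$-grading is also correct: each $\dot\Omega_n^{(m)}$ is graded for it, so the intersection does split over multidegrees, and it suffices to rule out a fixed nonzero homogeneous element lying in every $\dot\Omega_n^{(m)}$.

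But you have, as you yourself concede, not actually proved that last claim. The sketch stops at exactly the point where content is needed: you assert that, after expanding a product $a\,b\,c\,d\,e$ (with $\deg_{(2)}(b)\le -n$, $\deg_{(2)}(d)\ge n$) over the PBW basis, the leading monomial still ``retains $b$ and $d$ as factors,'' and that the normal-ordering corrections produced by relations such as (\ref{eq:K+X-}) and (\ref{eq:X+X-KK}) have strictly smaller $\Z_{(2)}$-excursion; neither of these is substantiated, and the second is precisely the difficulty (since, for instance, in (\ref{eq:X+X-KK}) the commutator of modes of degrees $-n$ and $n$ produces, among other things, modes $\Ksf{\pm}_{1,\cdot,0}$ of excursion $0$, so the ``excursion'' of the rewritten element collapses rather than being controlled). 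Your own honest conclusion -- ``genuine input from the basis of \cite{MZ} is required'' -- confirms that the proof is not complete. For comparison: the paper itself does not prove this proposition either; its entire proof is the sentence ``See \cite{MZ} for a proof in the $\qaff(\dot{\mathfrak a}_1)$ case that can be transposed to the present situation.'' So for (i)--(iii), (v)--(vi) your writeup is actually more explicit than the paper, and you correctly isolate (iv) as the only non-formal assertion; but the non-formal assertion is left unproved, and that is a genuine gap in your proposal, just as it is a gap the paper chooses to outsource to \cite{MZ}.
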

\begin{proof}
See \cite{MZ} for a proof in the $\qaff(\dot{\mathfrak a}_1)$ case that can be transposed to the present situation.
\end{proof}

\begin{defprop}
\label{defprop:topol}
We endow $\qdaff({\mathfrak a}_1)$ with the topology $\tau$ whose open sets are either $\emptyset$ or nonempty subsets $\mathcal O\subseteq \qdaff({\mathfrak a}_1)$ such that for every $x\in \mathcal O$, $x+\dot\Omega_{n}\subseteq \mathcal O$ for some $n\in\N$. Similarly, we endow each tensor power $\qdaff({\mathfrak a}_1)^{\otimes m\geq 2}$ with the topology induced by $\{\dot\Omega_n^{(m)}:n\in\N\}$. These turn $\qdaff({\mathfrak a}_1)$ into a (separated) topological algebra. We then let $\widehat{\qdaff({\mathfrak a}_1)}$ denote its completion and we extend by continuity to $\widehat{\qdaff({\mathfrak a}_1)}$ all the (anti)-automorphisms defined over $\qdaff({\mathfrak a}_1)$ and its subalgebras in the previous section
In particular, we extend  $\jmath : {\qdaff'(\mathfrak a_1)} \hookrightarrow \qdaff(\mathfrak a_1)$ into
$$\widehat \jmath : \widehat{\qdaff'(\mathfrak a_1)} \hookrightarrow \widehat{\qdaff(\mathfrak a_1)}\,.$$
Similarly, we denote with a hat the completion of any subalgebra of $\widehat{\qdaff(\mathfrak a_1)}$, like \eg $\widehat{\qdaff^-(\mathfrak a_1)}$, $\widehat{\qdaff^0(\mathfrak a_1)}$ and $\widehat{\qdaff^+(\mathfrak a_1)}$.  We eventually denote by $\qdaff({\mathfrak a}_1)^{\widehat\otimes m\geq 2}$ the corresponding completions of $\qdaff({\mathfrak a}_1)^{\otimes m\geq 2}$.
\end{defprop}
\begin{proof}
This was proven in \cite{MZ}.
\end{proof}
\begin{rem}
As was noted in \cite{MZ}, the above defined topology is actually ultrametrizable. 
\end{rem}

\subsection{The double quantum loop algebra}
\label{rem:qdafftopology}
An alternative way to make sense of relations (\ref{eq:X+X-}) consists in observing that $\qdaff(\mathfrak a_1)$ is \emph{proalgebraic}. Indeed, for every $N\in\N$, let $\qdaff(\mathfrak a_1)^{(N)}$ be the $\F$-algebra generated by 
$$\{\Csf^{1/2}, \Csf^{-1/2},   \csf+{n},\csf-{-n}, \Ksf+{1,0,m}, \Ksf-{1,0,-m}, \Ksf+{1,p,r}, \Ksf-{1,-p,r}, \Xsf+{1,r,s}, \Xsf-{1,r,s}: m\in\N, n\in\range{0}{N}, p\in\N^\times, r,s\in\Z\}$$ 
subject to relations ((\ref{eq:Csfcentral}) -- (\ref{eq:X+X-})), where, this time,
\be \cbsf\pm(z) = \sum_{m=0}^N \csf\pm{\pm m} z^{\mp m}\,.\ee
Now clearly, each $\qdaff(\mathfrak a_1)^{(N)}$ is algebraic since the sums on the r.h.s. of (\ref{eq:X+X-}) are both finite -- whenever $\cbsf\pm(z)^{-1}$ is involved, just multiply through by $\cbsf\pm(z)$ to get an equivalent algebraic relation. Moreover, letting $\mathcal I_N$ be the two-sided ideal of $\qdaff(\mathfrak a_1)^{(N)}$ generated by $\{\csf+N, \csf-{-N}\}$ (resp. $\{\csf+0-1, \csf-0-1\}$) for every $N>1$ (resp. for $N=0$), we obviously have a surjective algebra homomorphism
\be \qdaff(\mathfrak a_1)^{(N)} \longrightarrow \qdaff(\mathfrak a_1)^{(N-1)} \cong \frac{\qdaff(\mathfrak a_1)^{(N)}}{\mathcal I_N}\ee
and we can define $\qdaff(\mathfrak a_1)$ as the inverse limit 
$$\qdaff(\mathfrak a_1) = \lim_{\longleftarrow} \qdaff(\mathfrak a_1)^{(N)}$$
of the system of algebras
$$\cdots \longrightarrow \qdaff(\mathfrak a_1)^{(N)} \longrightarrow \qdaff(\mathfrak a_1)^{(N-1)} \longrightarrow \cdots \longrightarrow \qdaff(\mathfrak a_1)^{(0)}\longrightarrow \qdaff(\mathfrak a_1)^{(-1)}\,.$$

\begin{defn}
We shall refer to the quotient of $\qdaff(\mathfrak a_1)^{(-1)}$ by the two-sided ideal generated by $\left\{\Csf^{1/2}-1 \right\}$ as the \emph{double quantum loop algebra} of type $\mathfrak a_1$ and denote it by $\qdloop(\mathfrak a_1)$. Correspondingly, we denote by $\qdloop^\pm(\mathfrak a_1)$ and $\qdloop^0(\mathfrak a_1)$, the subalgebras of $\qdloop(\mathfrak a_1)$ respectively generated by $\left\{\Xsf\pm{1,r,s} : r,s\in \Z\right\}$
and 
$$\left\{\Ksf+{1,0,m} , \Ksf-{1,0,-m}, \Ksf+{1,n,r} , \Ksf-{1,-n,r} : m\in\N, n\in\N^\times,r\in\Z \right\}\,.$$
We denote by $\qdloop^{0,0}(\mathfrak a_1)$ the subalgebra of $\qdloop^0(\mathfrak a_1)$ generated by
$$\left\{\Ksf+{1,0,m} , \Ksf-{1,0,-m}: m\in\N\right\}\,.$$
It is worth emphasizing that $\qdloop^{0,0}(\mathfrak a_1)$ is abelian.
\end{defn}

\subsection{Triangular decomposition of $\widehat{\qdaff'(\mathfrak a_1)}$}
In \cite{MZ}, we proved that $\widehat{\qdaff'(\mathfrak a_1)}$ has a triangular decomposition in the following sense.
\begin{defn}
Let $A$ be a complete  topological algebra with closed subalgebras $A^\pm$ and $A^0$. We shall say that $(A^-,A^0,A^+)$ is a \emph{triangular decomposition} of $A$ if the multiplication induces a bicontinuous isomorphism of vector spaces $A^-\widehat\otimes A^0 \widehat\otimes A^+ \stackrel{\sim}{\rightarrow} A$.  
\end{defn}
\noi Recalling the definitions of $\qdaff^\pm(\mathfrak a_1)$ and $\qdaff^0(\mathfrak a_1)$ from definition \ref{defn:qdaff}, we have 
\begin{prop}
\label{prop:triang}
$(\qdaff^-(\mathfrak a_1), \qdaff^0(\mathfrak a_1), \qdaff^+(\mathfrak a_1))$ is a triangular decomposition of $\widehat{\qdaff'(\mathfrak a_1)}$ and $\qdaff^\pm(\mathfrak a_1)$ is bicontinuously isomorphic to the algebra generated by 
$\{\Xsf\pm{1,r, s} : r,s\in\Z\}$ subject to relation (\ref{eq:X+rX+s}).
\end{prop}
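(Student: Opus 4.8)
The plan is to establish both assertions of Proposition \ref{prop:triang} at once, following the classical pattern for Drinfel'd-type presentations: a spanning (straightening) argument, then a linear-independence (PBW-type) argument, while carrying along at every step the compatibility with the $\dot\Omega_n$-adic topology of Definition-Proposition \ref{defprop:topol}. The second assertion — that $\qdaff^\pm(\mathfrak a_1)$ has only the relations (\ref{eq:X+rX+s}) — will drop out of the PBW step for free, so I do not treat it separately.

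\textbf{Step 1 (surjectivity of multiplication).} One considers the continuous multiplication map
$$\mu\colon \widehat{\qdaff^-(\mathfrak a_1)}\,\widehat\otimes\,\widehat{\qdaff^0(\mathfrak a_1)}\,\widehat\otimes\,\widehat{\qdaff^+(\mathfrak a_1)}\;\longrightarrow\;\widehat{\qdaff'(\mathfrak a_1)}$$
and shows its image is dense; since the target is complete, this already yields surjectivity. Density amounts to showing that any monomial in the generators of $\qdaff'(\mathfrak a_1)$ is, modulo $\dot\Omega_n^{(1)}$ for arbitrarily large $n$, a convergent sum of ordered monomials of the form (word in the $\Xsf-$)(word in $\qdaff^0(\mathfrak a_1)$)(word in the $\Xsf+$). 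This is an induction on word length using relation (\ref{eq:K+X-}) to push an $\Xsf-$ left past a generator of $\qdaff^0(\mathfrak a_1)$, relation (\ref{eqbf:K+X+}) to push an $\Xsf+$ right past such a generator, and relation (\ref{eq:X+X-KK}) to push an $\Xsf+$ right past an $\Xsf-$; relations (\ref{eq:K+K+})--(\ref{eq:K+K-}) and (\ref{eq:X+rX+s}) keep each of the three sectors closed under internal reordering. The only delicate point is that (\ref{eq:X+X-KK}) generates an infinite sum of $\qdaff^0(\mathfrak a_1)$-valued correction terms; but the topology on $\qdaff(\mathfrak a_1)$ is engineered exactly (Proposition \ref{prop:Omegan}) so that this sum converges and so that every correction term is strictly deeper in the $\Z_{(2)}$-grading, which makes the induction terminate modulo any fixed $\dot\Omega_n^{(1)}$.

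\textbf{Step 2 (injectivity).} Filter $\widehat{\qdaff'(\mathfrak a_1)}$ by total degree in the generators $\Xsf\pm{1,r,s}$: every defining relation lies in a single filtered piece, so this is a ring filtration, and in the associated graded the cross relation (\ref{eq:X+X-KK}) degenerates to $[\Xbsf+{1,r}(v),\Xbsf-{1,s}(z)]=0$, since its whole right-hand side has $\Xsf$-degree $0$. Hence $\mathrm{gr}\,\widehat{\qdaff'(\mathfrak a_1)}$ is a quotient of $\widehat{\mathcal A^-}\,\widehat\otimes\,\widehat{\qdaff^0(\mathfrak a_1)}\,\widehat\otimes\,\widehat{\mathcal A^+}$, where $\mathcal A^\pm$ is the $\F$-algebra on generators $\Xsf\pm{1,r,s}$ subject only to (\ref{eq:X+rX+s}) — a genuine tensor product of algebras, in which ordered monomials are linearly independent provided one has a PBW/basis theorem for each of $\mathcal A^-$, $\qdaff^0(\mathfrak a_1)$ and $\mathcal A^+$ (see Step 3). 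Granting that, ordered monomials span $\mathrm{gr}\,\widehat{\qdaff'(\mathfrak a_1)}$ by Step 1 and are independent, so the comparison surjection is an isomorphism; lifting back through the filtration, $\mu$ is a bicontinuous linear isomorphism, and reading off the extreme filtration pieces simultaneously identifies $\qdaff^\pm(\mathfrak a_1)$ bicontinuously with $\mathcal A^\pm$, which is the second assertion.

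\textbf{Step 3 (the freeness input, and the main obstacle).} The genuinely hard point is the PBW statement invoked in Step 2; it really requires producing enough representations. I would build a generic Verma-type module $M(\bkap)=\widehat{\qdaff'(\mathfrak a_1)}\,\widehat\otimes_{\widehat{\qdaff^\geq(\mathfrak a_1)}}\F_{\bkap}$ for a sufficiently generic character $\bkap$ of $\widehat{\qdaff^\geq(\mathfrak a_1)}$, show it is topologically free over $\widehat{\qdaff^-(\mathfrak a_1)}$ on its highest-weight vector, and then prove that the action of $\widehat{\qdaff^-(\mathfrak a_1)}$ on that vector is faithful by realizing $\mathcal A^-$ (and symmetrically $\mathcal A^+$) inside a functional/quantum-shuffle algebra of Feigin--Odesskii--Negut type, where linear independence of the proposed ordering basis is transparent; the PBW statement for $\qdaff^0(\mathfrak a_1)$ is handled from relations (\ref{eq:K+K+})--(\ref{eq:K+K-}) (and is part of the structure theory of that subalgebra conjecturally linked to the elliptic Hall algebra). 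The real technical burden throughout is the completion bookkeeping: that the $\Xsf$-degree filtration is compatible with the $\dot\Omega_n$-adic topology, that passing to $\mathrm{gr}$ commutes with completion here, and that every infinite sum produced by (\ref{eq:X+X-KK}) truly raises the $\Xsf$-degree, so that it vanishes in the associated graded. As the excerpt indicates, all of this was carried out in \cite{MZ} for the bicontinuously isomorphic quantum toroidal algebra $\widehat{\qaff(\dot{\mathfrak a}_1)}$; alternatively, once Theorem \ref{thm:main} is available, one may simply transport a triangular decomposition of $\widehat{\qaff(\dot{\mathfrak a}_1)}$ along the isomorphism $\widehat\Psi$, after checking that $\widehat\Psi$ matches the three sectors.
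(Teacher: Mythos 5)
The paper's own proof of Proposition~\ref{prop:triang} is a bare citation to \cite{MZ}, so there is nothing in the present text to compare your argument against in detail; your Steps~1--3 reconstruct the expected Drinfel'd-presentation template (straightening for density, an $\Xsf$-degree filtration together with a PBW input for injectivity, with the completion bookkeeping deferred). Taken as a sketch that ultimately defers the hard freeness statement to \cite{MZ}, this is consistent with what the paper does.

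The one genuine error is your closing fallback: transporting a triangular decomposition of $\widehat{\qaff(\dot{\mathfrak a}_1)}$ along $\widehat\Psi$ cannot work, and the check you yourself flag ("after checking that $\widehat\Psi$ matches the three sectors") in fact fails. Theorem~\ref{thm:main} gives
$$\widehat\Psi(\x+{0}(z)) = -\cbsf-(\Csf^{1/2}z)\,\Kbsf+{1,0}(z)^{-1}\,\Xbsf-{1,1}(\Csf z)\,,\qquad
\widehat\Psi(\x-{0}(z)) = -\Xbsf+{1,-1}(\Csf z)\,\cbsf+(\Csf^{1/2}z)\,\Kbsf-{1,0}(z)^{-1}\,,$$
so the node-$0$ raising current of $\qaff(\dot{\mathfrak a}_1)$ lands in $\qdaff^0(\mathfrak a_1)\cdot\qdaff^-(\mathfrak a_1)$ and the node-$0$ lowering current in $\qdaff^+(\mathfrak a_1)\cdot\qdaff^0(\mathfrak a_1)$. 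Hence $\widehat\Psi$ does \emph{not} carry the toroidal triple $(\uqmslthh,\uqzeroslthh,\uqpslthh)$ onto $(\qdaff^-(\mathfrak a_1),\qdaff^0(\mathfrak a_1),\qdaff^+(\mathfrak a_1))$. This is not a defect of $\widehat\Psi$ but the very point of introducing $\qdaff(\mathfrak a_1)$: just as the Drinfel'd presentation of a quantum affine algebra carries a triangular decomposition genuinely different from the Drinfel'd--Jimbo one, the decomposition $(\qdaff^-,\qdaff^0,\qdaff^+)$ is a \emph{new} one, and it is precisely this new decomposition that underlies the notion of highest $t$-weight developed in Section~\ref{Sec:tweight}. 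There is therefore no relabelling shortcut; the triangular decomposition of $\widehat{\qdaff'(\mathfrak a_1)}$ has to be established directly, which is what \cite{MZ} does and what your Steps~1--3 outline.
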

\begin{proof}
See \cite{MZ}.\end{proof}

\subsection{The closed subalgebra $\widehat{\uqhhzeroslt}$ as a topological Hopf algebra}
\begin{defn}
\label{def:uq0+}
In $\widehat{\qdaff^0(\mathfrak a_1)}$, we define
\be\label{eq:defppm}\pbsf\pm (z) = \sum_{m\in\N} \psf\pm{\pm m} z^{\mp m} = \cbsf\pm(z) \Kbsf\mp{1,0}(\Csf^{-1/2}z)^{-1} \Kbsf\mp{1,0}(\Csf^{-1/2}zq^2)\ee
and for every $m\in\N^\times$,
\be\label{eq:deftp}\tbsf+{1,m}(z) =  \sum_{n\in\N} \tsf+{1,m,n} z^{-n} =-\frac{1}{q-q^{-1}}\Kbsf+{1,0}(zq^{-2m})^{-1}\Kbsf+{1,m}(z)\,,\ee
\be\label{eq:deftm}\tbsf-{1,-m}(z)= \sum_{n\in\N} \tsf-{1,- m,n} z^{n} = \frac{1}{q-q^{-1}} \Kbsf-{1,-m}(z) \Kbsf-{1,0}(zq^{-2m})^{-1}\,.\ee
Then, we let ${\qdaff^{0^+}(\mathfrak a_1)}$ be the subalgebra of $\widehat{\qdaff^0(\mathfrak a_1)}$ generated by 
$$\{\Csf^{1/2},\Csf^{-1/2}, \psf+{m},\psf-{-m}, \tsf +{1,p,n}, \tsf -{1,-p,n}: m\in\N, n\in\Z, p \in \N^\times\}\,.$$
and we let $\widehat{\qdaff^{0^+}(\mathfrak a_1)}$ be its completion in the inherited topology.
\end{defn}
Clearly, the closed subalgebra $\widehat{\qdaff^0(\mathfrak a_1)}$ can be presented as in definition \ref{def:qdaff0} or, equivalently, in terms of the generators in
$$\{\Csf^{1/2},\Csf^{-1/2},  \csf+{m},\csf-{-m}, \psf+{m},\psf-{-m}, \tsf +{1,p,n}, \tsf -{1,-p,n}: m\in\N, n\in\Z, p \in \N^\times\}\,.$$
In section \ref{sec:Hopfalg}, we will endow $\widehat{\qdaff'(\mathfrak a_1)}$ with a topological Hopf algebraic structure. It turns out that, for that structure, the closed subalgebra $\widehat{\uqhhzeroslt}$ is not a closed Hopf subalgebra of $\widehat{\qdaff(\mathfrak a_1)}$. However, it is possible to endow $\widehat{\qdaff^0(\mathfrak a_1)}$ with its own topological Hopf algebraic structure as follows.
\begin{defprop}
We endow $\widehat{\uqhhzeroslt}$ with:
\begin{enumerate}[i.]
\item the comultiplication $\Delta^0:\widehat\uqhhzeroslt \to \uqhhzeroslt\widehat\otimes\uqhhzeroslt$ defined by
\be
\label{eq:coprodC}
\Delta^0(\Csf^{\pm 1/2}) = \Csf^{\pm1/2} \otimes \Csf^{\pm 1/2}
\ee
\be
\label{eq:coprodcm}
\Delta^0(\cbsf\pm(z)) = \cbsf\pm(z\Csf_{(2)}^{\pm1/2} ) \otimes \cbsf\pm(z\Csf_{(1)}^{\mp1/2})\,, 
\ee
\be
\label{eq:coprodppm}
\Delta^0(\pbsf\pm(z)) = \pbsf\pm(z\Csf_{(2)}^{\pm1/2}) \otimes \pbsf\pm(z\Csf_{(1)}^{\mp1/2})\,,
\ee
\bea
\Delta^0(\tbsf+{1,m}(z))&=& \tbsf+{1,m}(z)\otimes 1 + \prod_{k=1}^m \pbsf-(zq^{-2k}\Csf_{(1)}^{1/2}) \widehat\otimes\tbsf+{1,m}(z\Csf_{(1)})\nn\\
&&-(q-q^{-1}) \sum_{k=1}^{m-1} \prod_{l=k+1}^m \pbsf-(zq^{-2l}\Csf_{(1)}^{1/2}) \tbsf+{1,k}(z) \widehat\otimes\tbsf+{1,m-k} (zq^{-2k} \Csf_{(1)})\,, \label{eq:coprodtp}
\eea
\bea
\Delta^0(\tbsf-{1,-m}(z))&=& \tbsf-{1,-m}(z \Csf_{(2)}) \widehat\otimes  \prod_{k=1}^m \pbsf+(zq^{-2k}\Csf_{(2)}^{1/2})+1\otimes \tbsf-{1,-m}(z)\nn\\
&&+(q-q^{-1}) \sum_{k=1}^{m-1} \tbsf-{1,-(m-k)}(zq^{-2k}\Csf_{(2)}) \widehat\otimes\tbsf-{1,-m}(z) \prod_{l=+1}^m\pbsf+(zq^{-2l} \Csf_{(2)}^{1/2}) \,,\label{eq:coprodtm}
\eea
for every $m\in\N$, where $\Csf_{(1)}^{\pm1/2} = \Csf^{\pm1/2} \otimes 1$ and $\Csf_{(2)}^{\pm1/2} = 1\otimes \Csf^{\pm 1/2}$,
\item the counit $\varepsilon(\Csf) = \varepsilon^0(\cbsf\pm(z))=\varepsilon^0(\pbsf\pm(z))= 1$, $\varepsilon^0(\tbsf\pm{1,\pm m}(z))=0$, for every $m\in\N$,
\item and the antipode defined by
\be S^0(\Csf^{\pm 1/2}) = \Csf^{\mp1/2}\,, \ee
\be S^0(\cbsf\pm(z)) = \cbsf\pm(z)^{-1}\,,\ee
\be\label{eq:antipppm}
S^0(\pbsf\pm(z))  = \pbsf\pm(z) ^{-1}\,,
\ee
\be\label{eq:antiptp}
S^0(\tbsf+{1,m} (z)) = - \prod_{k=1}^m \pbsf-(zq^{-2k} \Csf^{-1/2})^{-1} \sum_{n=1}^m \sum_{\lambda\in C_n(m)} (-1)^{n-1} c_{m,\lambda} \tbsf+{1,\lambda} (z\Csf^{-1})\,,
\ee
\be\label{eq:antiptm}
S^0(\tbsf-{1,-m}(z)) = -\sum_{n=1}^m \sum_{\lambda\in C_n(m)} c_{m,\lambda} \tbsf-{1,-\lambda}(z\Csf^{-1}) \prod_{k=1}^m \pbsf+(zq^{-2k})^{-1}\,,
\ee
where we have set, for every $m\in\N^\times$ and every $\lambda\in C_n(m)$,
$$c_{m,\lambda} = (q-q^{-1})^{n-1} \frac{[m+1]_q}{[m-1]_q} \prod_{i=1}^n \frac{[\lambda_i-1]_q}{[\lambda_i+1]_q}$$
and
$$\tbsf+{1,\lambda}(z\Csf^{-1}) = \overleftarrow{\prod_{i\in\rran{n}}} \tbsf+{1,\lambda_i} (zq^{-2\sum_{k=i+1}^n\lambda_k}\Csf^{-1})\,,$$
$$\tbsf-{1,-\lambda}(z\Csf^{-1}) = \overrightarrow{\prod_{i\in\rran{n}}} \tbsf-{1,-\lambda_i} (zq^{-2\sum_{k=i+1}^n\lambda_k}\Csf^{-1})\,.$$
for every $m\in\N$.
\end{enumerate}
With these operations, $\widehat\uqhhzeroslt$ is a topological Hopf algebra.
\end{defprop}
\begin{proof}
One easily checks that $\Delta^0$ as defined by (\ref{eq:coprodC} -- \ref{eq:coprodtm}) is compatible with the defining relations of $\widehat\uqhhzeroslt$ and that $S^0$ is compatible with both the multiplication and the comultiplication.
\end{proof}
In that presentation, one readily checks that
\begin{prop}
$\widehat{\qdaff^{0^+}(\mathfrak a_1)}$ is a closed Hopf subalgebra of $\widehat\uqhhzeroslt$.
\end{prop}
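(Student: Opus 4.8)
The plan is to verify that the closed subalgebra $\widehat{\qdaff^{0^+}(\mathfrak a_1)}$ is stable under the three structure maps $\Delta^0$, $\varepsilon^0$ and $S^0$ of $\widehat{\uqhhzeroslt}$; closedness itself is essentially built into the definition, since $\widehat{\qdaff^{0^+}(\mathfrak a_1)}$ is the completion of $\qdaff^{0^+}(\mathfrak a_1)$ in the topology inherited from the complete algebra $\widehat{\qdaff^0(\mathfrak a_1)} = \widehat{\uqhhzeroslt}$, hence coincides with the closure of $\qdaff^{0^+}(\mathfrak a_1)$ there. Because $\Delta^0$ and $S^0$ are continuous (anti)homomorphisms, it will be enough to check stability on a set of topological generators, for which I take the elements $\Csf^{\pm 1/2}$ together with the coefficients of the series $\pbsf\pm(z)$ and $\tbsf\pm{1,\pm m}(z)$, $m\in\N^\times$. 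Once this is done, the Hopf algebra axioms for $\widehat{\qdaff^{0^+}(\mathfrak a_1)}$ hold automatically, being inherited from those of $\widehat{\uqhhzeroslt}$.

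The counit presents no difficulty: it sends $\Csf^{\pm 1/2}$ and the coefficients of $\pbsf\pm(z)$ to scalars, and those of $\tbsf\pm{1,\pm m}(z)$ to $0$. For the coproduct, I would simply read off the formulas (\ref{eq:coprodC})--(\ref{eq:coprodtm}): every summand on their right-hand sides is a (finite, possibly completed) product of factors each of which is either $\Csf^{\pm 1/2}\otimes\Csf^{\pm 1/2}$, or one of $\pbsf\pm(\cdot)$, $\tbsf\pm{1,\pm k}(\cdot)$ sitting on a single tensor leg with the other leg equal to $1$, evaluated at arguments built only from $z$ and $\Csf_{(1)}^{\pm 1/2}$, $\Csf_{(2)}^{\pm 1/2}$. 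All such factors lie in (the closure of $\qdaff^{0^+}(\mathfrak a_1)\otimes\qdaff^{0^+}(\mathfrak a_1)$, that is) $\widehat{\qdaff^{0^+}(\mathfrak a_1)}\,\widehat\otimes\,\widehat{\qdaff^{0^+}(\mathfrak a_1)}$, so $\Delta^0$ maps the generating series of $\qdaff^{0^+}(\mathfrak a_1)$, and hence all of $\widehat{\qdaff^{0^+}(\mathfrak a_1)}$ by continuity, into that completed tensor square.

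The only point needing a genuine computation is the antipode, since by (\ref{eq:antipppm})--(\ref{eq:antiptm}) the images $S^0(\pbsf\pm(z)) = \pbsf\pm(z)^{-1}$ and $S^0(\tbsf\pm{1,\pm m}(z))$ involve the inverse series $\pbsf\pm(z)^{-1}$, so one must check $\pbsf\pm(z)^{-1}\in\widehat{\qdaff^{0^+}(\mathfrak a_1)}$. I would first extract the leading ($z^0$) coefficient in the defining relation (\ref{eq:defppm}): using that $\Ksf\mp{1,0,0}$ is invertible -- its two-sided inverse being $\Ksf\pm{1,0,0}$, read off from both orderings of the residue relation $\res_{v,w}\frac{1}{vw}\Kbsf\pm{1,0}(v)\Kbsf\mp{1,0}(w) = 1$ -- the outer factors $\Kbsf\mp{1,0}(\Csf^{-1/2}z)^{-1}$ and $\Kbsf\mp{1,0}(\Csf^{-1/2}zq^2)$ have mutually cancelling leading coefficients, whence $\psf\pm0 = \csf\pm0$. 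By (\ref{eq:csbf}) one has $\csf+0\csf-0 = \csf-0\csf+0 = 1$, and $\csf\pm0$ is central by (\ref{eq:Csfcentral}); therefore $\psf+0$ is invertible in $\qdaff^{0^+}(\mathfrak a_1)$ with $(\psf+0)^{-1} = \psf-0$, and symmetrically $(\psf-0)^{-1} = \psf+0$. Writing $\pbsf\pm(z) = \psf\pm0(1 + (\psf\pm0)^{-1}(\pbsf\pm(z) - \psf\pm0))$, the coefficient-wise (formal power series in $z$) inverse is given by
$$\pbsf\pm(z)^{-1} = \sum_{k\geq 0} (-1)^k (\psf\mp0\,(\pbsf\pm(z) - \psf\pm0))^k\,\psf\mp0\,,$$
where the coefficient of each power of $z$ receives contributions from only finitely many $k$ (since $\pbsf\pm(z) - \psf\pm0$ has no constant term) and is therefore a finite polynomial in the generators $\psf\pm{\pm m}$; hence every coefficient of $\pbsf\pm(z)^{-1}$ lies in $\qdaff^{0^+}(\mathfrak a_1)$. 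Substituting this back into (\ref{eq:antipppm})--(\ref{eq:antiptm}) shows that $S^0$ maps each generating series of $\qdaff^{0^+}(\mathfrak a_1)$ into $\widehat{\qdaff^{0^+}(\mathfrak a_1)}$, so $S^0(\widehat{\qdaff^{0^+}(\mathfrak a_1)})\subseteq\widehat{\qdaff^{0^+}(\mathfrak a_1)}$ by continuity.

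Putting the three items together, $\widehat{\qdaff^{0^+}(\mathfrak a_1)}$ is a closed subalgebra stable under $\Delta^0$, $\varepsilon^0$ and $S^0$, hence a closed Hopf subalgebra of $\widehat{\uqhhzeroslt}$. The main obstacle, and really the only non-formal step, is the identity $\psf\pm0 = \csf\pm0$ together with $\csf+0\csf-0 = 1$: it is precisely what keeps the antipode of $\pbsf\pm(z)$, and thereby of $\tbsf\pm{1,\pm m}(z)$, from leaving the smaller algebra. The treatments of $\Delta^0$ and $\varepsilon^0$, and the remainder of the argument for $S^0$, amount to a term-by-term reading of the formulas defining the Hopf structure on $\widehat{\uqhhzeroslt}$.
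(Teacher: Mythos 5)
Your proof is correct and follows essentially the same route as the paper's (which is a one-line assertion that the subalgebra is stable under $\Delta^0$ and $S^0$), but you fill in the one genuinely non-formal point that the paper leaves implicit: that $\pbsf\pm(z)^{-1}$ lands in $\widehat{\qdaff^{0^+}(\mathfrak a_1)}$. You establish this cleanly by extracting $\psf\pm0=\csf\pm0$ from (\ref{eq:defppm}), noting it is a central unit with inverse $\psf\mp0$ via (\ref{eq:csbf}) and (\ref{eq:Csfcentral}), and then writing the geometric series for $\pbsf\pm(z)^{-1}$ coefficient-wise inside $\qdaff^{0^+}(\mathfrak a_1)$.
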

\begin{proof}
$\widehat{\qdaff^{0^+}(\mathfrak a_1)}$ is a closed subalgebra of $\widehat\uqhhzeroslt$ and it is clearly stable under $\Delta^0$ and $S^0$.
\end{proof}

\subsection{The closed subalgebra $\widehat{\uqhhzeroslt}$ and the elliptic Hall algebra}
As emphasized in \cite{MZ}, another remarkable feature of $\widehat{\qdaff'(\mathfrak a_1)}$ and, more particularly of its closed subalgebra $\widehat{\qdaff^0(\mathfrak a_1)}$, is the existence of an algebra homomorphism onto it, from the elliptic Hall algebra that we now define.
\begin{defn}
Let $q_1, q_2, q_3$ be three (dependent) formal variables such that $q_1q_2q_3=1$. The \emph{elliptic Hall algebra} $\mathcal E_{q_1,q_2,q_3}$ is the $\Q(q_1,q_2,q_3)$-algebra generated by $\left \{ C^{1/2}, \C^{-1/2}, \psi^+_m, \psi^-_{-m}, e^+_n, e^-_n : m\in\N, n\in\Z\right \}$, with $\psi^\pm_0$ invertible, subject to the relations
\be \mbox{$C^{\pm 1/2}$ is central\,,}\ee
\be\label{eq:psi+psi+} \psib\pm (z) \psib\pm(w) = \psib\pm(w)\psib\pm(z)\,,\ee
\be\label{eq:psi+psi-} g(Cz,w)g(Cw,z)\psib+(z)\psib-(w)=g(z,Cw)g(w,Cz) \psib-(w)\psib+(z)\,,\ee
\be\label{eq:psie+} g(C^{\frac{1\pm 1}{2}}z,w)\psib\pm(z) \eb+(w) = - g(w, C^{\frac{1\pm 1}{2}}z) \eb+(w) \psib\pm(z)  \,,\ee
\be\label{eq:psie-} g(w, C^{\frac{1\mp 1}{2}}z)  \psib\pm(z) \eb-(w)=  -g(C^{\frac{1\mp 1}{2}}z,w) \eb-(w) \psib\pm(z)\,, \ee
\be\label{eq:e+e-} [\eb+(z),\eb-(w) ] = \frac{1}{g(1,1)} \left [\delta\left (\frac{Cw}{z} \right )\psib+(w) -\delta\left (\frac{w}{Cz} \right )\psib-(z)\right ]\,,\ee
\be\label{eq:e+e+} g(z,w)\eb+(z)\eb+(w)= -g(w,z)\eb+(w)\eb+(z)\,,\ee
\be\label{eq:e-e-} g(w,z)\eb-(z)\eb-(w)= -g(z,w)\eb-(w)\eb-(z)\,,\ee
\be\label{eq:e+e+e+} \res_{v,w,z} (vwz)^m(v+z)(w^2-vz) \eb\pm(v) \eb\pm(w) \eb\pm(z)=0\,,\ee
where $m\in\Z$ and we have introduced
\be g(z,w) =(z-q_1w)(z-q_2w)(z-q_3w)\,, \ee
\be \psib\pm(z) = \sum_{m\in\N} \psi^\pm_{\pm m} z^{\mp m}\,,\ee
\be \eb\pm(z) = \sum_{m\in\Z} e^\pm_m z^{-m}\,.\ee
\end{defn}
\begin{rem}
The elliptic Hall algebra $\E{q_1}{q_2}{q_3}$ is $\Z$-graded and can be equipped with a natural topology along the lines of what we did for $\qdaff({\mathfrak a}_1)$ in section \ref{sec:topology}. It then becomes a topological algebra and we denote by $\widehat{\E{q_1}{q_2}{q_3}}$ its completion. Similar topologies can be constructed on its tensor powers.
\end{rem}
\begin{defprop}
We endow $\widehat{\E{q_1}{q_2}{q_3}}$ with:
\begin{enumerate}[i.]
\item the comultiplication $\Delta_{\mathcal E} :\widehat{\E{q_1}{q_2}{q_3}} \to \E{q_1}{q_2}{q_3}\widehat\otimes \E{q_1}{q_2}{q_3}$ defined by
\be \Delta_{\mathcal E}(\psib\pm(z)) = \psib\pm(zC^{\frac{1\pm 1}{2}}_{(2)}) \otimes \psib\pm(zC^{\frac{1\mp 1}{2}}_{(1)}) \,,\ee
\be
\Delta_{\mathcal E} (\eb+(z)) = \eb+(z)\otimes 1 + \psib-(z) \widehat\otimes\eb+(zC_{(1)})\,,
\ee
\be
\Delta_{\mathcal E}(\eb-(z)) = \eb-(zC_{(2)}) \widehat\otimes \psib+(z) + 1\otimes \eb-(z)\,,
\ee
\item the counit $\varepsilon_{\mathcal E} : \widehat{\E{q_1}{q_2}{q_3}}\to \F$ defined by $\varepsilon_{\mathcal E}(C^{\pm1/2}) = \varepsilon_{\mathcal E}(\psib\pm(z))=1$, $\varepsilon_{\mathcal E}(\eb\pm(z)) = 0$,
\item the antipode $S_{\mathcal E} : \widehat{\E{q_1}{q_2}{q_3}}\to\widehat{\E{q_1}{q_2}{q_3}}$ defined by
\be S_{\mathcal E} (\psib\pm(z)) = \psib\pm(zC^{-1})^{-1} \,,\ee
\be S_{\mathcal E} (\eb+(z))= -\psib-(zC^{-1})^{-1} \eb+(zC^{-1})\,,\ee
\be S_{\mathcal E} (\eb-(z))= - \eb-(zC^{-1})\psib+(zC^{-1})^{-1}\,.\ee
\end{enumerate}
As usual, we have set $C_{(1)}^{\pm 1/2} = C^{\pm 1/2} \otimes 1$ and $C_{(2)}^{\pm1/2} = 1\otimes C^{\pm1/2}$. With the above defined operations, $\widehat{\E{q_1}{q_2}{q_3}}$ is a topological Hopf algebra.
\end{defprop}

\begin{prop}
\label{prop:Hall}
There exists a unique continuous Hopf algebra homomorphism $f:\widehat{\mathcal E_{q^{-4}, q^2,q^2}} \to \widehat{\qdaff^{0^+}(\mathfrak a_1)}$ such that
\be\label{eq:f(C)} f(C^{1/2}) = \Csf^{1/2}\,,\ee
\be f(\psib\pm(z)) =   \pbsf\pm (\Csf^{1/2} zq^{-2}) \,,\ee
\be\label{eq:f(ebp)} f(\eb+(z)) =  \tbsf+{1,1}\,,(z)\ee
\be\label{eq:f(ebm)} f(\eb-(z)) =  \frac{\tbsf-{1,-1}(z)}{(q^2-q^{-2})^2}\,.\ee
\end{prop}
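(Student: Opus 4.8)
The plan is to obtain $f$ first as a continuous $\F$-algebra homomorphism and then to promote it to a morphism of bialgebras; compatibility with the antipodes then follows from uniqueness of the antipode (or may be checked on generators from (\ref{eq:antipppm})--(\ref{eq:antiptm}) at $m=1$). Uniqueness is immediate: $\E{q^{-4}}{q^2}{q^2}$ is topologically generated by $C^{\pm 1/2}$ and the coefficients of the series $\psib\pm(z)$ and $\eb\pm(z)$, so a continuous algebra homomorphism out of $\widehat{\E{q^{-4}}{q^2}{q^2}}$ is determined by its values on those elements, whence at most one $f$ obeys (\ref{eq:f(C)})--(\ref{eq:f(ebm)}).

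For existence as an algebra homomorphism, since $\E{q^{-4}}{q^2}{q^2}$ is presented by generators and relations, it suffices to check that the prescribed images $\Csf^{1/2}$, $\pbsf\pm(\Csf^{1/2}zq^{-2})$, $\tbsf+{1,1}(z)$ and $(q^2-q^{-2})^{-2}\tbsf-{1,-1}(z)$ satisfy the defining relations of $\E{q^{-4}}{q^2}{q^2}$, namely the centrality of $\Csf^{\pm 1/2}$ and (\ref{eq:psi+psi+})--(\ref{eq:e+e+e+}) specialised at $q_1=q^{-4}$, $q_2=q_3=q^2$. This is a lengthy but direct computation, carried out by rewriting the relations of $\widehat{\qdaff^0(\mathfrak a_1)}$ in terms of the generators $\pbsf\pm(z)$, $\tbsf\pm{1,\pm m}(z)$ of Definition \ref{def:uq0+} through the substitutions (\ref{eq:defppm}), (\ref{eq:deftp})--(\ref{eq:deftm}); the delicate points are the quadratic relations (\ref{eq:e+e+})--(\ref{eq:e-e-}) and, above all, the cubic Serre relation (\ref{eq:e+e+e+}), which must be extracted from the relations (\ref{eq:K+K+}) governing the $\Kbsf\pm{1,\pm m}(z)$. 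This algebra homomorphism is already available from \cite{MZ}, which I would simply invoke. Since the prescribed images are homogeneous of the expected degrees, $f$ is graded and hence continuous for the topologies of section \ref{sec:topology}; as $\widehat{\qdaff^{0^+}(\mathfrak a_1)}$ is complete, $f$ extends uniquely to a continuous algebra homomorphism $\widehat f:\widehat{\E{q^{-4}}{q^2}{q^2}}\to\widehat{\qdaff^{0^+}(\mathfrak a_1)}$, which lands in the closed Hopf subalgebra $\widehat{\qdaff^{0^+}(\mathfrak a_1)}$ of $\widehat{\uqhhzeroslt}$.

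It then remains to check $\varepsilon^0\circ f=\varepsilon_{\mathcal E}$ and $\Delta^0\circ f=(f\otimes f)\circ\Delta_{\mathcal E}$ on the generators $C^{1/2}$, $\psib\pm(z)$, $\eb\pm(z)$. The counit identities are trivial, as is the coproduct identity on $C^{1/2}$. On $\psib\pm(z)$ one evaluates (\ref{eq:coprodppm}) at spectral parameter $\Csf^{1/2}zq^{-2}$ and uses (\ref{eq:coprodC}) to distribute the resulting $\Csf^{1/2}$ across the two tensor legs; the $q^{-2}$-shift built into $f(\psib\pm)$ is precisely what makes the ensuing $\Csf_{(1)},\Csf_{(2)}$-bookkeeping agree with $(f\otimes f)\Delta_{\mathcal E}(\psib\pm(z))$. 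On $\eb+(z)$, formula (\ref{eq:coprodtp}) taken at $m=1$ has empty correction sum $\sum_{k=1}^{m-1}$ and reduces to the two-term expression $\tbsf+{1,1}(z)\otimes 1+\pbsf-(zq^{-2}\Csf_{(1)}^{1/2})\,\widehat\otimes\,\tbsf+{1,1}(z\Csf_{(1)})$, which matches $(f\otimes f)\Delta_{\mathcal E}(\eb+(z))$ term by term; the case of $\eb-(z)$ is the mirror image, via (\ref{eq:coprodtm}) at $m=1$, the scalar $(q^2-q^{-2})^{-2}$ appearing on the $\tbsf-{1,-1}$-leg on both sides. This identifies $\widehat f$ as the desired continuous homomorphism of topological Hopf algebras, unique by the first step. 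The principal obstacle is the algebra-homomorphism step — in particular, deriving the Serre relation (\ref{eq:e+e+e+}) for $\tbsf+{1,1}(z)$ from (\ref{eq:K+K+}) — but that input is supplied by \cite{MZ}, after which the Hopf-theoretic verification is the comparatively mechanical bookkeeping of central shifts described above.
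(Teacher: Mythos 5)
Your proposal is correct and follows essentially the same route as the paper: the algebra-homomorphism part is imported from \cite{MZ} (up to the harmless rescaling of the images of $\psib\pm(z)$ and $\eb-(z)$), and the Hopf compatibility is then verified on the generators by specializing (\ref{eq:coprodppm}), (\ref{eq:coprodtp}), (\ref{eq:coprodtm}) and (\ref{eq:antipppm})--(\ref{eq:antiptm}) to $m=1$. The only cosmetic difference is that you also offer the antipode-from-bialgebra-morphism shortcut and spell out uniqueness via topological generation, both of which are fine.
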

\begin{proof}
In \cite{MZ}, we proved that the assignment 
$$C^{1/2}\mapsto \Csf^{1/2}\, \quad \psib\pm(z)\mapsto (q^2-q^{-2})^2 \, \pbsf\pm (\Csf^{1/2} zq^{-2}) \,,\quad \eb\pm(z)\mapsto  \tbsf\pm{1,\pm 1}(z)$$
defined an $\F$-algebra homomorphism. Hence, $f$, which is obtained from the above assignment by  rescaling the images of $\pbsf\pm(z)$ and $\eb-(z)$, is obviously an $\F$-algebra homomorphism. Moreover, it suffices to write (\ref{eq:coprodppm}),  (\ref{eq:coprodtp}) and (\ref{eq:coprodtm}) with $m=1$, to get
$$\Delta^0(\pbsf\pm(z)) = \pbsf\pm(z\Csf_{(2)}^{\pm1/2}) \otimes \pbsf\pm(z\Csf_{(1)}^{\mp1/2})\,,
$$
$$\Delta^0(\tbsf+{1,1}(z))= \tbsf+{1,1}(z)\otimes 1 + \pbsf-(zq^{-2}\Csf_{(1)}^{1/2}) \widehat\otimes\tbsf+{1,1}(z\Csf_{(1)})\,,$$
$$\Delta^0(\tbsf-{1,-1}(z))= \tbsf-{1,-1}(z \Csf_{(2)}) \widehat\otimes \pbsf+(zq^{-2}\Csf_{(2)}^{1/2})+1\otimes \tbsf-{1,-1}(z) \,,$$
as well as (\ref{eq:antipppm}), (\ref{eq:antiptp}) and (\ref{eq:antiptm}), with $m=1$, to get
$$S^0(\pbsf\pm(z))  = \pbsf\pm(z) ^{-1}\,,$$
$$ S^0(\tbsf+{1,1} (z)) = - \pbsf-(zq^{-2} \Csf^{-1/2})^{-1}  \tbsf+{1,1} (z\Csf^{-1})\,,$$
$$S^0(\tbsf-{1,-1}(z)) = -  \tbsf-{1,-1}(z\Csf^{-1}) \pbsf+(zq^{-2})^{-1}\,,
$$
and thus to prove that $(f\widehat\otimes f) \circ \Delta^0 = \Delta_{\mathcal E} \circ f$ and $f\circ S^0 = S_{\mathcal E} \circ f$ as claimed. 
\end{proof}
\begin{rem}
Note that we have $f(\psi^+_0)f(\psi^-_0)= f(\psi^-_0)f(\psi^+_0) = 1$, meaning that $f$ descends to the quotient of $\mathcal E_{q^{-4}, q^2, q^2}$ by the two-sided ideal generated by $\{\psi^+_0\psi^-_0-1, \psi^-_0\psi^+_0-1\}$. That quotient is actually Miki's $(q, \gamma)$-analogue of the $W_{1+\infty}$ algebra \cite{Miki}.
\end{rem}

\subsection{The quantum toroidal algebra $\qaff(\dot{\mathfrak a}_1)$}
\label{sec:qaff}
Let $\dot{I}=\{0,1\}$ be a labeling of the nodes of the Dynkin diagram of type $\dot{\mathfrak a}_1$ and let $\dot\Phi = \left\{\alpha_0, \alpha_1\right \}$ be a choice of simple roots for the corresponding root system. Let $\dot Q^\pm = \Z^\pm \alpha_0 \oplus \Z^\pm  \alpha_1$ and let $\dot Q = \Z\alpha_0 \oplus \Z\alpha_1$ be the type $\dot{\mathfrak a}_1$ root lattice.
\begin{defn}
\label{def:defqaffdota1}
The \emph{quantum toroidal algebra} $\uqslthh$ is the associative $\F$-algebra generated by the generators 
$$\left \{D, D^{-1}, C^{1/2}, C^{-1/2}, k_{i, n}^+, k_{i, -n}^-,  x_{i,m}^+, x_{i,m}^- : i \in \dot{I}, m \in Z, n \in \N\right\}$$ 
subject to the following relations
\be\label{eq:ccentral} \mbox{$C^{\pm1/2}$ is central} \qquad C^{\pm 1/2} C^{\mp 1/2} = 1 \qquad D^{\pm 1} D^{\mp 1} =1\ee
\be D\kk\pm i(z)D^{-1} = \kk\pm i(zq^{-1}) \qquad D\x\pm i(z)D^{-1} = \x\pm i(zq^{-1}) \ee
\be \kk \pm i(z_1) \kk \pm j(z_2) =  \kk \pm j(z_2) \kk \pm i(z_1)\ee
\be \kk - i(z_1) \kk + j(z_2) = G^-_{ij}(C^{-1}z_1/z_2) G^+_{ij}(Cz_1/z_2)  \kk + j(z_2)  \kk - i(z_1) = 1 \mod z_1/z_2 \label{eq:kpkm}\ee
\be\label{eq:kpxpm} G_{ij}^\mp(C^{\mp 1/2} z_2/z_1)  \kk +i(z_1) \x \pm j(z_2) = \x \pm j(z_2)  \kk +i(z_1)\ee
\be\label{eq:kmxpm} \kk -i(z_1) \x \pm j(z_2) = G_{ij}^\mp(C^{\mp 1/2} z_1/z_2) \x \pm j(z_2)  \kk -i(z_1)\ee
\be\label{eq:xpmxpm} (z_1-q^{\pm c_{ij}} z_2) \x \pm i(z_1) \x \pm j(z_2) = (z_1q^{\pm c_{ij}}-z_2) \x \pm j(z_2) \x \pm i(z_1)\ee
\be\label{eq:relx+x-} [\x + i(z_1), \x - j(z_2)] = \frac{\delta_{ij}}{q-q^{-1}} \left [ \delta \left ( \frac{z_1}{Cz_2} \right ) \kk + i(z_1C^{-1/2}) - \delta \left( \frac{z_1C}{z_2}\right ) \kk - i(z_2C^{-1/2}) \right ]  \ee
\be\label{eq:qaffserre} \sum_{\sigma \in S_{1-c_{ij}}} \sum_{k=0}^{1-c_{ij}} (-1)^k {{1-c_{ij}}\choose{k}}_q \x \pm i(z_{\sigma(1)})  \cdots \x \pm i(z_{\sigma(k)}) \x \pm j(z)  \x \pm i(z_{\sigma(k+1)}) \cdots \x\pm i(z_{\sigma(1-c_{ij})}) =0  \ee
where, for every $i \in \dot I$, we define the following $\uqslthh$-valued formal distributions
\be \x \pm i(z) := \sum_{m \in \Z} x^\pm_{i, m} z^{-m} \in \uqslthh[[z, z^{-1}]]\,;\ee
\be \kk \pm i(z) := \sum_{n \in \N} k_{i, \pm n}^\pm z^{\mp n} \in \uqslthh[[z^{\mp 1}]]\, ,\ee
for every $i,j\in\dot I$, we define the following $\F$-valued formal power series
\be G_{ij}^\pm(z):= q^{\pm c_{ij}} + (q-q^{-1}) [\pm c_{ij} ]_q \sum_{m \in \N^\times} q^{\pm m c_{ij}} z^m \in \F[[z]] \ee
is an $\F$-valued formal distribution,
\end{defn}
Note that $G_{ij}^\pm(z)$ is invertible in $\F[[z]]$ with inverse $G^\mp_{ij}(z)$, \ie
\be G_{ij}^\pm(z) G_{ij}^{\mp}(z) = 1\, ,\ee
and that it can be viewed as the power series expansion of a rational function of $(z_1, z_2) \in \C^2$ as $|z_2| \gg |z_1|$, which we shall denote as follows
\be G_{ij}^\pm(z_1/z_2) = \left ( \frac{z_1q^{\mp c_{ij}} - z_2}{z_1-q^{\mp c_{ij}} z_2} \right )_{|z_2| \gg |z_1|} \, .\ee
Observe furthermore that we have the following useful identity in $\F[[z, z^{-1}]]$
\be\label{eq:G+G-} \frac{G_{ij}^\pm(z_1/z_2)- G_{ij}^\mp(z_2/z_1) }{q-q^{-1}} = [\pm c_{ij}]_q \delta \left( \frac{z_1 q^{\pm c_{ij}}}{z_2} \right )\, . \ee
\begin{rem}
\label{rem:Gij}
In type ${\mathfrak a}_1$, $\dot{I}=\{0,1\}$, $c_{ij} = 4\delta_{ij} -2$ and we have an additional identity, namely $G_{10}^\pm(z) = G_{11}^\mp(z)$. 
\end{rem}
\noi $\qaff(\dot{\mathfrak a}_1)$ is obviously a $\Z$-graded algebra, \ie we have
\be \uqslthh = \bigoplus_{n \in \Z} \uqslthh_n \, , \qquad \mbox{where for all $n \in \Z$} \qquad \uqslthh_n := \{x \in \uqslthh: DxD^{-1}=q^n x \}\, .\label{Zgrad}\ee
It was proven in \cite{Hernandez05} to admit a triangular decomposition $(\uqmslthh, \uqzeroslthh, \uqpslthh)$, where $\uqpmslthh$ and $\uqzeroslthh$ are the subalgebras of $\qaff(\dot{\mathfrak a}_1)$ respectively generated by $\left\{x_{i, m}^\pm : i \in \dot I, m \in \Z\right \}$ and 
$$\left\{C^{1/2}, C^{-1/2}, D, D^{-1}, k_{i, m}^+, k_{i, m}^-: i \in \dot I, m \in \Z\right\}\,.$$
Observe that $\uqpmslthh$ admits a natural gradation over $\dot Q^\pm$ that we shall denote by
\be \uqpmslthh = \bigoplus_{\alpha \in \dot Q^\pm} \uqpmslthh_\alpha\, .\ee
Of course $\qaff(\dot{\mathfrak a}_1)$ is graded over the root lattice $\dot Q$. We finally remark that the two Dynkin diagram subalgebras $\qaff(\mathfrak a_1)^{(0)}$ and $\qaff(\mathfrak a_1)^{(1)}$ of $\qaff(\dot{\mathfrak a}_1)$ generated by 
$$\left \{D, D^{-1}, C^{1/2}, C^{-1/2}, k_{i, n}^+, k_{i, -n}^-,  x_{i,m}^+, x_{i,m}^- :  m \in Z, n \in \N\right\}\,,$$ 
with $i=0$ and $i=1$ respectively, are both isomorphic to $\qaff(\mathfrak{a}_1)$, thus yielding two injective algebra homomorphisms $\iota^{(i)}: \qaff(\mathfrak a_1) \hookrightarrow \qaff(\dot{\mathfrak a}_1)$.
\noi In \cite {MZ}, making use of their natural $\Z$-grading, $\qaff(\dot{\mathfrak a}_1)$ and all its tensor powers were endowed with a topology along the lines of what we did in section \ref{sec:topology} for $\qdaff(\mathfrak a_1)$ and its tensor powers, and subsequently completed into $\widehat{\qaff(\dot{\mathfrak a}_1)}$ and $\qaff(\dot{\mathfrak a}_1)^{\widehat\otimes r}$. The main result in \cite{MZ} is the following
\begin{thm}
\label{thm:main}
There exists a unique bicontinuous $\F$-algebra isomorphism $\widehat\Psi:\widehat{\qaff(\dot{\mathfrak a}_1)} \stackrel{\sim}{\longrightarrow} \widehat{\qdaff'(\mathfrak a_1)}$ such that
$$\widehat\Psi(D^{\pm 1}) = \Dsf_2^{\pm1} \qquad \widehat\Psi (C^{\pm 1/2})=\Csf^{\pm 1/2}\,,$$
$$\widehat\Psi(\kk\pm0(z))= -\cbsf\pm(z)\Kbsf\mp{1,0}(\Csf^{-1/2} z)^{-1} \qquad \widehat\Psi(\kk\pm1(z))= - \Kbsf\mp{1,0}(\Csf^{-1/2} z)$$
$$\widehat\Psi(\x+{0}(z)) =-\cbsf-(\Csf^{1/2} z) \Kbsf+{1,0}(z)^{-1} \Xbsf-{1,1}(\Csf z) \qquad \widehat\Psi(\x-{0}(z)) =-\Xbsf+{1,-1}(\Csf z)\cbsf+(\Csf^{1/2} z) \Kbsf-{1,0}(z)^{-1} $$
$$\widehat\Psi(\x\pm{1}(z)) = \Xbsf\pm{1,0}(z)\,.$$
\end{thm}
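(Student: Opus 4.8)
The plan is to construct $\widehat\Psi$ together with a two-sided inverse by prescribing both on Drinfel'd generators, and then to let the topologies propagate everything to the completions. Since $\widehat{\qaff(\dot{\mathfrak a}_1)}$ is topologically generated by $D^{\pm 1}$, $C^{\pm 1/2}$ and the currents $\x\pm i(z)$, $\kk\pm i(z)$ for $i\in\dot I$, any continuous algebra homomorphism out of it is determined by its values on these, so the uniqueness claimed in the statement is automatic and the entire content is existence together with bijectivity. It is convenient to organise the construction around the two Dynkin-diagram subalgebras: as $\qaff(\dot{\mathfrak a}_1)$ is generated by $\iota^{(0)}(\qaff(\mathfrak a_1))$ and $\iota^{(1)}(\qaff(\mathfrak a_1))$, one first reads off from the displayed formulas two algebra homomorphisms $\qaff(\mathfrak a_1)\to\widehat{\qdaff'(\mathfrak a_1)}$ --- one for the node $i=1$, one for the node $i=0$ --- checks that they agree on the common generators $D^{\pm1}$, $C^{\pm1/2}$, and then checks the cross-relations of $\qaff(\dot{\mathfrak a}_1)$ between the two nodes.

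First I would verify well-definedness, that is, that the prescribed images in $\widehat{\qdaff'(\mathfrak a_1)}$ satisfy the defining relations (\ref{eq:ccentral})--(\ref{eq:qaffserre}) of $\qaff(\dot{\mathfrak a}_1)$. Under the dictionary $\x\pm 1(z)\leftrightarrow\Xbsf\pm{1,0}(z)$ and $\kk\pm 1(z)\leftrightarrow-\Kbsf\mp{1,0}(\Csf^{-1/2}z)$, the relations that involve only the node $1$ together with the Cartan currents reduce verbatim to the $r=s=0$, $m=n=0$ specialisations of (\ref{eq:Csfcentral})--(\ref{eq:X+X-}) and hold by inspection. The substance lies in the relations carrying the node $0$: the pure-$0$ relations, the mixed pairs $\{0,1\}$, and above all the quartic Serre relations (\ref{eq:qaffserre}) (here $c_{01}=-2$). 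For those I would substitute the images, push the central series $\cbsf\pm$ and the Cartan factors $\Kbsf\pm{1,0}$ through the $\Xbsf$-currents using (\ref{eqbf:K+X+})--(\ref{eq:K+X-}) and (\ref{eq:K+K-}), and reduce everything to identities among $\Xbsf\pm{1,0}(z)$, $\Xbsf\pm{1,\pm1}(z)$, the central series and the structure functions; the delta-function identity (\ref{eq:G+G-}) and its analogues, the relation $G_{10}^\pm(z)=G_{11}^\mp(z)$ of Remark \ref{rem:Gij}, and the rational prefactors occurring in (\ref{eq:X+rX+s})--(\ref{eq:X+X-}) carry the formal-distribution bookkeeping. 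As a sample, the vanishing of $[\widehat\Psi(\x+ 0(z_1)),\widehat\Psi(\x- 1(z_2))]$, forced by the $\delta_{ij}$ in (\ref{eq:relx+x-}), falls out of (\ref{eq:X+X-}) once the $\Kbsf+{1,0}$ and $\cbsf$ prefactors have been commuted across.

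Continuity is then essentially free: the topology of $\qaff(\dot{\mathfrak a}_1)$ is the one induced by its $\Z$-grading through $D$, the topology of $\qdaff'(\mathfrak a_1)$ the one induced by its $\Z_{(2)}$-grading through $\Dsf_2$, and since $\widehat\Psi(D)=\Dsf_2$ every generator has homogeneous image of the matching degree; hence $\widehat\Psi$ preserves the defining filtrations and extends uniquely and continuously to the completions. For bijectivity I would build the inverse $\widehat\Phi$ explicitly: invert the displayed formulas on the visible generators --- $\Csf^{1/2}\mapsto C^{1/2}$, $\Dsf_2\mapsto D$, $\Kbsf\mp{1,0}(\Csf^{-1/2}z)\mapsto-\kk\pm 1(z)$ (so $\cbsf\pm(z)$ is read off from $\kk\pm 0(z)$ and $\kk\pm 1(z)$), $\Xbsf\pm{1,0}(z)\mapsto\x\pm 1(z)$, and $\Xbsf\mp{1,\pm1}(\Csf z)$ solved out of $\x\pm 0(z)$ and the Cartan data --- and then send the remaining currents $\Xbsf\pm{1,r}(z)$ with $|r|\ge2$ and $\Kbsf\pm{1,\pm m}(z)$ with $m\ge2$ to the affine root-vector currents assembled from $\x\pm 0(z)$ and $\x\pm 1(z)$ by iterated $q$-commutators, exactly as higher root currents are built in the one-variable Drinfel'd construction. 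One then checks that these images satisfy (\ref{eq:Csfcentral})--(\ref{eq:X+X-}) --- the mirror of the previous step --- and that $\widehat\Phi\circ\widehat\Psi$ and $\widehat\Psi\circ\widehat\Phi$ fix the respective generators; continuity then upgrades this to $\widehat\Phi\circ\widehat\Psi=\id$ and $\widehat\Psi\circ\widehat\Phi=\id$.

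The hard part is the relation-checking in the two middle steps --- concretely the mixed $\{0,1\}$ relations and the quartic Serre relations (\ref{eq:qaffserre}), where one has to control long products of currents against the structure functions and the delta distributions. Two things make this markedly heavier than the ordinary single-affinization computation: the extra ``horizontal'' direction means the higher currents $\Xbsf\pm{1,r}(z)$ and $\Kbsf\pm{1,\pm m}(z)$ are genuinely new generators that must be produced and matched one root at a time; and there is no shortcut through a triangular-decomposition (PBW) comparison, since $\widehat\Psi$ does not respect the obvious decompositions --- Proposition \ref{prop:triang} for $\widehat{\qdaff'(\mathfrak a_1)}$ and its analogue for $\widehat{\qaff(\dot{\mathfrak a}_1)}$ --- because the node-$0$ positive generators acquire a Cartan and central prefactor, so the two decompositions are related by a nontrivial, Miki-type, twist. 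A subsidiary but real point throughout is to keep every expression that occurs --- the two infinite sums on the right-hand side of (\ref{eq:X+X-}), the inverses $\Kbsf\pm{1,0}(z)^{-1}$, the rescaled arguments $\Csf^{\pm1/2}z$ --- genuinely inside the completions $\widehat{\qdaff'(\mathfrak a_1)}$ and $\widehat{\qaff(\dot{\mathfrak a}_1)}$, so that the manipulations are honest algebra identities and not merely formal ones.
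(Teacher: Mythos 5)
The paper does not actually prove this statement here: the stated ``proof'' is the single line ``See \cite{MZ} for a proof,'' so there is no argument in the present text to compare against line by line. The only internal evidence about how the proof in \cite{MZ} goes is the later Lemma~\ref{lem:DeltaPsiXp}, which cites a recursive construction inside $\widehat{\qaff(\dot{\mathfrak a}_1)}$ of the currents $\KK+{1,m}(z)$, $\X+{1,r}(z)$ and $\bpsi+{1,m}(z)$ (that is, the $\widehat\Psi^{-1}$-images of $\Kbsf+{1,m}$, $\Xbsf+{1,r}$ and $\tbsf+{1,m}$), defined from $\x\pm1(z)$ and $\x\pm0(z)$ by iterated $q$-commutators like (\ref{eq:init}), (\ref{eq:psipsimm}), (\ref{eq:indXpup}), (\ref{eq:indXpdown}). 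Your plan for the inverse map matches this, and your overall strategy --- check the defining relations on the prescribed generator images, obtain continuity from the observation that $\widehat\Psi$ sends $D$ to $\Dsf_2$ and hence is $\Z$-graded of degree~$0$ so preserves the filtrations $\dot\Omega_n$, then build the inverse by producing the higher $\Xbsf\pm{1,r}$ and $\Kbsf\pm{1,\pm m}$ out of affine root-vector currents --- is a sensible reconstruction of the \cite{MZ} approach, and you correctly flag where the labour sits (the mixed $\{0,1\}$ relations and the quartic Serre relation) and why no PBW shortcut is available.

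One sample claim is off, though. You write that the vanishing of $[\widehat\Psi(\x+ 0(z_1)),\widehat\Psi(\x- 1(z_2))]$ ``falls out of (\ref{eq:X+X-}) once the $\Kbsf+{1,0}$ and $\cbsf$ prefactors have been commuted across.'' But $\widehat\Psi(\x+0(z_1))$ and $\widehat\Psi(\x-1(z_2))$ both land in expressions built from $\Xbsf-{1,\bullet}$-currents (namely $\Xbsf-{1,1}(\Csf z_1)$ and $\Xbsf-{1,0}(z_2)$), whereas (\ref{eq:X+X-}) governs pairs of \emph{opposite} sign. The relevant relation is the same-sign $q$-commutation (\ref{eq:X+rX+s}), and the cancellation has to come from balancing the rational prefactor it produces against the factors picked up when $\cbsf-(\Csf^{1/2}z_1)$ and $\Kbsf+{1,0}(z_1)^{-1}$ are pushed past $\Xbsf-{1,0}(z_2)$ via (\ref{eq:K+X-}) and centrality of $\cbsf\pm$. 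This is only a local slip in a deliberately schematic sketch, and it does not affect the soundness of your plan, but it is exactly the kind of bookkeeping that, if done carelessly through the ``hard part,'' will not reduce. If you intend to carry this out, I would also organise the two-sided inverse more economically: once $\widehat\Psi$ is shown to be a continuous homomorphism and its image is shown to contain a topologically generating set of $\widehat{\qdaff'(\mathfrak a_1)}$ (which follows because the recursions express every $\Xbsf\pm{1,r}$ and $\Kbsf\pm{1,\pm m}$ in terms of the displayed images), it suffices to verify the $\qdaff'(\mathfrak a_1)$-relations for a one-sided inverse; checking both sets of relations independently, as you propose, doubles the work without extra payoff.
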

\begin{proof} See \cite{MZ} for a proof.
\end{proof}

\subsection{$\qaff(\mathfrak{a}_1)$ subalgebras of $\qdaff(\mathfrak{a}_1)$}
Interestingly, $\qdaff(\mathfrak{a}_1)$ admits countably many embeddings of the quantum affine algebra $\qaff(\mathfrak{a}_1)$. This is the content of the following
\begin{prop}
For every $m\in\Z$, there exists a unique injective algebra homomorphism $\iota_m : \qaff(\mathfrak{a}_1)\hookrightarrow\widehat{\qdaff'(\mathfrak{a}_1)}$ such that
\be \iota_m(C^{\pm 1/2}) = \Csf^{\pm 1/2} \qquad \iota_m (D^{\pm 1}) = \Dsf_2^{\pm1}\label{eq:iotamC}\ee
\be \iota_m(\kk\pm 1(z)) =  -\prod_{p=1}^{|m|} \cbsf\pm\left(q^{(1-2p)\sign(m)-1}z\right )^{\sign(m)} \Kbsf\mp{1,0}(\Csf^{-1/2}z)\,,\ee
\be \iota_m(\x\pm1(z)) = \Xbsf\pm{1,\pm m}(z)\,.\label{eq:iotamx}\ee
\end{prop}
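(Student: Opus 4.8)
The plan is to verify directly that the assignment $\iota_m$ respects the defining relations of $\qaff(\mathfrak a_1)$ from Definition \ref{def:defqaffdota1}, and then to deduce injectivity from a weight/grading argument together with Theorem \ref{thm:main}. First I would observe that the $m=0$ case is already known: $\iota_0$ coincides (on the generators other than $D_1^{\pm1}$) with the restriction of $\widehat\Psi^{-1}$ composed with the obvious inclusion, or more directly, $\iota_0(\x\pm1(z))=\Xbsf\pm{1,0}(z)$ and $\iota_0(\kk\pm1(z))=-\Kbsf\mp{1,0}(\Csf^{-1/2}z)$, which is exactly the image of the Dynkin-diagram subalgebra $\qaff(\mathfrak a_1)^{(1)}$ under $\widehat\Psi$ (compare the formulas for $\widehat\Psi(\kk\pm1(z))$ and $\widehat\Psi(\x\pm1(z))$ in Theorem \ref{thm:main}). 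So the content is the general $m$.

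The key computational steps are to check relations (\ref{eq:relccentral})--(\ref{eq:relxpxm}) under the substitution (\ref{eq:iotamC})--(\ref{eq:iotamx}). The relations not involving the Serre-type commutator (\ref{eq:relxpxm}) are straightforward: (\ref{eq:relccentral}) is immediate since $\Csf^{\pm1/2}$ is central and $\Dsf_2^{\pm1}$ are invertible; the $D$-conjugation relations follow from the $\Dsf_2$-action formulas in Definition \ref{defn:qdaff}, since $\Dsf_2\Xbsf\pm{1,\pm m}(z)\Dsf_2^{-1}=\Xbsf\pm{1,\pm m}(zq^{-1})$ and the centrality of $\cbsf\pm(z)$ makes the prefactor transform the right way; relation (\ref{eq:relkpmkpm}) follows from (\ref{eq:K+K+}) together with the centrality of $\cbsf\pm(z)$ and the commutativity of $\Kbsf\mp{1,0}$ with itself; relations (\ref{eq:relkpxpm})--(\ref{eq:relxpmxpm}) follow from (\ref{eqbf:K+X+}), (\ref{eq:K+X-}), (\ref{eq:X+rX+s}), again absorbing the central $\cbsf\pm$-factors, which contribute only scalar rational functions and hence must be tracked to see they reproduce the $G^\pm$-factors of $\qaff(\mathfrak a_1)$. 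The subtle point is relation (\ref{eq:relkpkm}): the product $\cbsf-(\dots)\cdots\cbsf+(\dots)$ coming from $\iota_m(\kk-1(z_1))\iota_m(\kk+1(z_2))$ must telescope correctly; here one uses (\ref{eq:csbf}), i.e. $\res_{v,w}\frac1{vw}\cbsf+(v)\cbsf-(w)=1$, and the explicit shifts $q^{(1-2p)\sign(m)-1}$ designed precisely so that consecutive factors cancel.

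The main obstacle, and the heart of the verification, is relation (\ref{eq:relxpxm}), the $[\x+1,\x-1]$ commutator. Here $\iota_m(\x+1(z_1))=\Xbsf+{1,m}(z_1)$ and $\iota_m(\x-1(z_2))=\Xbsf-{1,m}(z_2)$ (for $m\ge0$; the $m<0$ case is symmetric), so one must expand $[\Xbsf+{1,m}(z_1),\Xbsf-{1,m}(z_2)]$ using (\ref{eq:X+X-KK}) with $r=s=m$, obtaining
$$\frac{1}{q-q^{-1}}\left\{\delta\!\left(\tfrac{\Csf z_1}{q^{4m}z_2}\right)\prod_{p=1}^{|m|}\cbsf-\!\left(\Csf^{-1/2}q^{(2p-1)\sign(m)-1}z_2\right)^{-\sign(m)}\Kbsf+{1,2m}(z_1) - \delta\!\left(\tfrac{\Csf^{-1}z_1}{q^{4m}z_2}\right)\prod_{p=1}^{|m|}\cbsf+\!\left(\Csf^{-1/2}q^{(1-2p)\sign(m)-1}z_1\right)^{\sign(m)}\Kbsf-{1,2m}(z_2)\right\}.$$
This must be reconciled with the $\qaff(\mathfrak a_1)$ target $\frac{1}{q-q^{-1}}[\delta(z_1/(Cz_2))\kk+1(z_1C^{-1/2})-\delta(z_1C/z_2)\kk-1(z_2C^{-1/2})]$ after applying $\iota_m$. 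The reconciliation requires: first, reducing $\Kbsf\pm{1,2m}$ back to $\Kbsf\mp{1,0}$-type expressions — but wait, $\iota_m$ does not send $\kk\pm1(z)$ to anything involving $\Kbsf\pm{1,2m}$, so in fact one must use that, on the relevant submodule / after imposing the relation, the extra modes $\Kbsf\pm{1,n\ge1}$ are \emph{not} in the image and the commutator must collapse. The cleanest route is to note that relation (\ref{eq:relxpxm}) in $\qaff(\mathfrak a_1)$ only ever produces $\kk\pm1(z)$, so one should instead \emph{define} $\iota_m$ on the abstract algebra and check that the two sides of (\ref{eq:relxpxm}) become equal as a consequence of the $\qdaff$ relations among the $\cbsf\pm$, the $\delta$-function identity $\delta(\Csf z_1/(q^{4m}z_2))=\delta(z_1/(\Csf^{-1}q^{4m}z_2))$ evaluated on the support where the telescoping products collapse to $\Kbsf\mp{1,0}$, and the central-charge normalization. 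After this, injectivity follows: $\iota_m$ respects the $Q$-grading and the $\Z_{(2)}$-grading, and on the Cartan part it restricts to the isomorphism of Proposition \ref{prop:uq0subalg} (suitably twisted by the $\cbsf\pm$-prefactors, which are invertible), so $\ker\iota_m$ is a graded ideal meeting $\qaff^0(\mathfrak a_1)$ trivially; since $\qaff(\mathfrak a_1)$ has a triangular decomposition with $\qaff^0$ in the middle and the $\pm$-parts are generated by the $\Xsf\pm$ which map to the algebraically independent (by Proposition \ref{prop:triang}) $\Xbsf\pm{1,\pm m}$-modes, the kernel must vanish. Uniqueness is clear since the generators of $\qaff(\mathfrak a_1)$ are sent to prescribed elements.
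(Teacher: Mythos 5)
Your ``main obstacle'' is self-inflicted by a misreading of (\ref{eq:iotamx}): the proposition sets $\iota_m(\x-1(z)) = \Xbsf-{1,-m}(z)$, \emph{not} $\Xbsf-{1,m}(z)$ --- the subscript is $\pm m$, with the sign tied to that of $\x\pm1$. With the correct sign, the commutator $[\Xbsf+{1,m}(z_1),\Xbsf-{1,-m}(z_2)]$ is read off from (\ref{eq:X+X-KK}) with $r=m$, $s=-m$, hence $r+s=0$: the right-hand side involves only $\Kbsf+{1,0}(z_1)$ and $\Kbsf-{1,0}(z_2)$, never $\Kbsf\pm{1,2m}$, and the $\delta$-arguments are $\delta(\Csf z_1/z_2)$ and $\delta(\Csf^{-1}z_1/z_2)$, exactly as in the target (\ref{eq:relxpxm}) after applying $\iota_m$. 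Using $\sign(-m)=-\sign(m)$, the $\cbsf-$ prefactor in (\ref{eq:X+X-KK}) acquires exponent $-\sign(-m)=\sign(m)$ and shift $q^{(2p-1)\sign(-m)-1}=q^{(1-2p)\sign(m)-1}$, so it matches $\iota_m(\kk-1(z_2\Csf^{-1/2}))$ term by term once one substitutes $\Csf^{-1}z_1\mapsto z_2$ (resp.\ $\Csf^{-1}z_2\mapsto z_1$) on the supports of the respective $\delta$'s; similarly for the $\cbsf+$ prefactor and $\iota_m(\kk+1(z_1\Csf^{-1/2}))$.

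So there is no mismatch to ``collapse'' and no need to appeal to what is or is not ``in the image''; the verification of (\ref{eq:relxpxm}) is a direct, if tedious, computation, and the speculative paragraph in your proposal should be replaced by it. Your remaining checks --- the $\Dsf_2$-conjugation, (\ref{eq:relkpmkpm}), the telescoping of the central $\cbsf\pm$-factors against the $G^\pm$ series in (\ref{eq:relkpkm}), and (\ref{eq:relkpxpm})--(\ref{eq:relxpmxpm}) --- are in order, as is the injectivity argument via Propositions \ref{prop:triang} and \ref{prop:uq0subalg}. The paper itself only cites \cite{MZ} for this proposition, so there is no in-text proof to compare against; but as written, your verification of the key commutator relation would fail without first correcting the sign in the target of $\iota_m$.
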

\begin{proof}
See \cite{MZ}.
\end{proof}
We also have
\begin{prop}
For every $i\in \dot I=\{0, 1\}$, $\widehat\Psi \circ \iota^{(i)}$ is an injective algebra homomorphism.
\end{prop}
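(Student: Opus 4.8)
The plan is to note that $\widehat\Psi\circ\iota^{(i)}$ is, by construction, a composition of injective algebra homomorphisms, so that the proof amounts to little more than bookkeeping. The first step is to recall that the topology put on $\qaff(\dot{\mathfrak a}_1)$ via its $\Z$-grading is separated — this is part of the package of properties established in \cite{MZ}, the analogue for $\qaff(\dot{\mathfrak a}_1)$ of Proposition~\ref{prop:Omegan} — so that the canonical map $\eta:\qaff(\dot{\mathfrak a}_1)\to\widehat{\qaff(\dot{\mathfrak a}_1)}$ into the completion is injective. Next, $\widehat\Psi:\widehat{\qaff(\dot{\mathfrak a}_1)}\isom\widehat{\qdaff'(\mathfrak a_1)}$ is the algebra isomorphism of Theorem~\ref{thm:main}, hence in particular injective, and $\iota^{(i)}$, which by definition identifies $\qaff(\mathfrak a_1)$ with the Dynkin diagram subalgebra $\qaff(\mathfrak a_1)^{(i)}\subseteq\qaff(\dot{\mathfrak a}_1)$, is injective. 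Reading $\widehat\Psi\circ\iota^{(i)}$ as the composite $\widehat\Psi\circ\eta\circ\iota^{(i)}:\qaff(\mathfrak a_1)\to\widehat{\qdaff'(\mathfrak a_1)}$, injectivity follows at once, a composition of injections being injective; in particular, no relation of $\qaff(\mathfrak a_1)$ needs to be verified.

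For later use, I would also record the action of $\widehat\Psi\circ\iota^{(i)}$ on the Drinfel'd generators, obtained simply by substituting the formulas of the preceding proposition and of Theorem~\ref{thm:main}. For $i=1$ one finds $C^{\pm1/2}\mapsto\Csf^{\pm1/2}$, $D^{\pm1}\mapsto\Dsf_2^{\pm1}$, $\kk\pm1(z)\mapsto-\Kbsf\mp{1,0}(\Csf^{-1/2}z)$ and $\x\pm1(z)\mapsto\Xbsf\pm{1,0}(z)$, so that $\widehat\Psi\circ\iota^{(1)}=\iota_0$ and the statement reduces here to the injectivity of $\iota_0$ (the preceding proposition). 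For $i=0$ one obtains instead $\kk\pm1(z)\mapsto-\cbsf\pm(z)\Kbsf\mp{1,0}(\Csf^{-1/2}z)^{-1}$, $\x+1(z)\mapsto-\cbsf-(\Csf^{1/2}z)\Kbsf+{1,0}(z)^{-1}\Xbsf-{1,1}(\Csf z)$ and $\x-1(z)\mapsto-\Xbsf+{1,-1}(\Csf z)\cbsf+(\Csf^{1/2}z)\Kbsf-{1,0}(z)^{-1}$, a ``twisted'' variant of the embeddings $\iota_{\pm1}$ reflecting the fact that $\iota^{(0)}$ and $\iota^{(1)}$ are interchanged by the diagram flip of $\dot{\mathfrak a}_1$; injectivity holds all the same by the argument above. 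In both cases the coefficients of the images, viewed as formal series in $z$, are finite sums of products of generators, so $\widehat\Psi\circ\iota^{(i)}$ actually takes values in the uncompleted algebra $\qdaff'(\mathfrak a_1)\subseteq\widehat{\qdaff'(\mathfrak a_1)}$.

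The one point that deserves a little care — and the closest thing to an obstacle — is notational rather than mathematical: $\widehat\Psi$ is \emph{a priori} defined only on $\widehat{\qaff(\dot{\mathfrak a}_1)}$, so the composite $\widehat\Psi\circ\iota^{(i)}$ is only meaningful after passing through $\eta$, and it is exactly there that separatedness of the topology on $\qaff(\dot{\mathfrak a}_1)$ — equivalently, injectivity of $\eta$ — is used. With that understood, the result is immediate.
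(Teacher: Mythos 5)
Your proof is correct and takes essentially the same route as the paper, which disposes of the statement in one line by observing that $\widehat\Psi$ is an isomorphism and $\iota^{(i)}$ is injective. The one thing you add — spelling out the hidden canonical map $\eta:\qaff(\dot{\mathfrak a}_1)\to\widehat{\qaff(\dot{\mathfrak a}_1)}$ and grounding its injectivity in the separatedness of the $\Z$-graded topology — is a genuine (if minor) precision that the paper leaves implicit, and the explicit formulas and the remark that the images land in the uncompleted $\qdaff'(\mathfrak a_1)$ are accurate side observations.
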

\begin{proof}
This is obvious since $\widehat\Psi$ is an isomorphism and $\iota^{(i)}$ is an injective algebra homomorphism.
\end{proof}

\subsection{(Anti-)Automorphisms of $\widehat{\qdaff'(\mathfrak a_1)}$}
$\widehat{\qdaff'(\mathfrak a_1)}$ naturally inherits, through $\widehat \Psi$, all the continuous (anti-)automorphisms defined over $\widehat{\qaff(\dot{\mathfrak a}_1)}$.
\begin{prop}
Conjugation by $\widehat \Psi$ clearly provides a group isomorphism $\Aut{\widehat{\qaff(\dot{\mathfrak a}_1)}} \cong \Aut{\widehat{\qdaff'(\mathfrak a_1)}}$. In particular, for every $f\in \Aut{\widehat{\qaff(\dot{\mathfrak a}_1)}}$, we let $\dot f = \widehat \Psi \circ f \circ \widehat \Psi^{-1} \in \Aut{\widehat{\qdaff'(\mathfrak a_1)}}$.
\end{prop}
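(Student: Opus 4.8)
This is an immediate formal consequence of Theorem \ref{thm:main}; the plan is simply to spell out the standard fact that conjugation by an isomorphism induces an isomorphism of automorphism groups, taking due care of the topologies. By Theorem \ref{thm:main}, $\widehat\Psi:\widehat{\qaff(\dot{\mathfrak a}_1)}\to\widehat{\qdaff'(\mathfrak a_1)}$ is a \emph{bicontinuous} $\F$-algebra isomorphism, so that $\widehat\Psi^{-1}$ is also a continuous $\F$-algebra isomorphism. First I would observe that, for any $f\in\Aut{\widehat{\qaff(\dot{\mathfrak a}_1)}}$, the composite $\dot f:=\widehat\Psi\circ f\circ\widehat\Psi^{-1}$ is a continuous $\F$-algebra endomorphism of $\widehat{\qdaff'(\mathfrak a_1)}$, being a composition of such; it is invertible, with continuous inverse $\widehat\Psi\circ f^{-1}\circ\widehat\Psi^{-1}$, and hence $\dot f\in\Aut{\widehat{\qdaff'(\mathfrak a_1)}}$. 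This defines a map $c_{\widehat\Psi}:\Aut{\widehat{\qaff(\dot{\mathfrak a}_1)}}\to\Aut{\widehat{\qdaff'(\mathfrak a_1)}}$.

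Next I would check that $c_{\widehat\Psi}$ is a group homomorphism: inserting $\widehat\Psi^{-1}\circ\widehat\Psi=\id$ between the two factors gives $c_{\widehat\Psi}(f\circ g)=\widehat\Psi\circ f\circ g\circ\widehat\Psi^{-1}=(\widehat\Psi\circ f\circ\widehat\Psi^{-1})\circ(\widehat\Psi\circ g\circ\widehat\Psi^{-1})=c_{\widehat\Psi}(f)\circ c_{\widehat\Psi}(g)$ for all $f,g\in\Aut{\widehat{\qaff(\dot{\mathfrak a}_1)}}$. Since $\widehat\Psi^{-1}$ is itself a bicontinuous $\F$-algebra isomorphism, the same recipe produces $c_{\widehat\Psi^{-1}}:\Aut{\widehat{\qdaff'(\mathfrak a_1)}}\to\Aut{\widehat{\qaff(\dot{\mathfrak a}_1)}}$, and one reads off immediately that $c_{\widehat\Psi^{-1}}\circ c_{\widehat\Psi}=\id$ and $c_{\widehat\Psi}\circ c_{\widehat\Psi^{-1}}=\id$; hence $c_{\widehat\Psi}$ is a group isomorphism with inverse $c_{\widehat\Psi^{-1}}$, and $\dot f=c_{\widehat\Psi}(f)$ is precisely the element asserted in the statement.

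There is no genuine obstacle here. The single point worth flagging is that the argument uses the \emph{bicontinuity} of $\widehat\Psi$ in an essential way: a merely algebraic isomorphism of topological algebras need not conjugate continuous automorphisms to continuous ones, so it is exactly the topological strengthening provided by Theorem \ref{thm:main} that makes $c_{\widehat\Psi}$ land in the right group. Everything else is the routine bookkeeping just described.
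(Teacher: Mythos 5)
Your proof is correct and is exactly the routine conjugation argument the paper silently invokes; the paper gives no proof at all (the statement opens with ``clearly''), so you are simply writing out the implicit bookkeeping. Your emphasis on the bicontinuity of $\widehat\Psi$ as the point that makes the conjugated maps land in $\Aut{\widehat{\qdaff'(\mathfrak a_1)}}$ is the one non-trivial observation, and it is the right one.
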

As an example, consider the Cartan anti-involution $\varphi$ of $\qaff(\dot{\mathfrak a}_1)$ defined in \cite{MZ}. It extends by continuity into an anti-involution $\widehat\varphi$ over $\widehat{\qaff(\dot{\mathfrak a}_1)}$ which eventually yields, upon conjugation by $\widehat\Psi$, an anti-involution $\dot\varphi$ over $\widehat{\qdaff'(\mathfrak a_1)}$. One can easily check -- or take as a definition of $\dot\varphi$ the fact -- that,
$$\dot\varphi(q) =q^{-1}\,, \qquad \dot\varphi(\Dsf_2^{\pm 1}) = \Dsf_2^{\mp1}\,, \qquad \dot\varphi(\Csf^{\pm1/2})=\Csf^{\mp1/2}\,,\qquad \dot\varphi(\cbsf\pm(z)) = \cbsf\mp(1/z) \,, $$
$$\dot\varphi(\Kbsf\pm{1,\pm m}(z))= \Kbsf\mp{1,\mp m}(1/z)\,, \qquad \dot\varphi(\Xbsf\pm{1,r}(z))= \Xbsf\mp{1,-r}(1/z)\,.$$
for every $m\in \N$ and every $r\in \Z$.

In addition to the above, $\widehat{\qdaff'(\mathfrak a_1)}$ also admits the following automorphisms that will prove useful in the study of its representation theory.
\begin{prop}\label{prop:tausigma}
\begin{enumerate}
\item[i.] There exists a unique $\F$-algebra automorphism $\tau$ of $\widehat{\qdaff'(\mathfrak a_1)}$ such that, for every $m\in\N$ and every $n\in\Z$,
\be\nn \tau(\Csf) = -\Csf\,,\quad \tau(\cbsf\pm(\Csf^{-1/2} z))= \cbsf\pm(\mp\Csf^{-1/2} z)\,, \quad \tau(\Kbsf\pm{1,\pm m}(z))=  \Kbsf\pm{1,\pm m}(\mp z) \,,\quad \tau(\Xbsf\pm{1,n}(z))= \Xbsf\pm{1,n}(\mp z)\,. \ee
\item[ii.] There exists a unique $\F$-algebra automorphism $\sigma$ of $\widehat{\qdaff'(\mathfrak a_1)}$ such that
\be\nn \sigma(\Csf^{1/2}) = -\Csf^{1/2}\,,\quad \sigma(\cbsf\pm(z))= \cbsf\pm(z)\,, \quad \tau(\Kbsf\pm{1,\pm m}(z))=  \Kbsf\pm{1,\pm m}(-z) \,,\quad \tau(\Xbsf\pm{1,n}(z))= \Xbsf\pm{1,n}(-z)\,. \ee
\end{enumerate} 
\end{prop}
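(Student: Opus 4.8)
The plan is to treat both statements by the same three-step scheme: (a) check that the prescribed images of the generators satisfy the defining relations of $\qdaff(\mathfrak a_1)$ in Definition~\ref{defn:qdaff}, so that the assignment extends to an $\F$-algebra endomorphism of $\qdaff(\mathfrak a_1)$ — working, to be scrupulous about relation (\ref{eq:X+X-KK}), with the proalgebraic presentation of Section~\ref{rem:qdafftopology}, in which every series occurring is finite; (b) restrict the endomorphism to $\qdaff'(\mathfrak a_1)$, which is legitimate because the images of all generators of $\qdaff'(\mathfrak a_1)$ again lie in $\qdaff'(\mathfrak a_1)$, and extend it by continuity to $\widehat{\qdaff'(\mathfrak a_1)}$; (c) check invertibility. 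For $\sigma$ the data in the statement determine the map completely once one also sets $\sigma(\Dsf_2^{\pm1})=\Dsf_2^{\pm1}$. For $\tau$ one must first make sense of $\tau(\Csf^{1/2})$: since $\K$ is algebraically closed of characteristic $0$ it contains a square root $i$ of $-1$, and I would put $\tau(\Csf^{\pm1/2})=i^{\pm1}\Csf^{\pm1/2}$ (so that $\tau(\Csf)=-\Csf$ as required) and $\tau(\Dsf_2^{\pm1})=\Dsf_2^{\pm1}$; the two choices of $i$ yield the two automorphisms allowed by the statement.

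I would then compute the induced action on the Fourier modes of the currents: both maps multiply each mode $\Xsf\pm{1,r,s}$, $\Ksf\pm{1,\pm m,s}$, $\csf\pm{\pm m}$ by a scalar — a sign, or for $\tau$ a power of $i$ on the $\csf\pm{\pm m}$ — arranged precisely so that at the level of currents one gets $\tau(\Xbsf+{1,r}(z))=\Xbsf+{1,r}(-z)$, $\tau(\Xbsf-{1,r}(z))=\Xbsf-{1,r}(z)$, $\tau(\Kbsf\pm{1,\pm m}(z))=\Kbsf\pm{1,\pm m}(\mp z)$, $\tau(\cbsf\pm(\Csf^{-1/2}z))=\cbsf\pm(\mp\Csf^{-1/2}z)$, and the analogous formulas with a uniform $z\mapsto-z$ for $\sigma$, exactly as in the statement. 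With these in hand, checking the ``unmixed'' relations (\ref{eq:Csfcentral})--(\ref{eq:K+K+}), (\ref{eqbf:K+X+}) and (\ref{eq:X+rX+s}) is routine: applying $\tau$ (resp.\ $\sigma$) to any of these identities of $\widehat{\qdaff'(\mathfrak a_1)}$-valued formal distributions produces an identity that is literally an instance of the original one after the change of formal variables $v\mapsto-v$ on the $+$-type currents (resp.\ $v\mapsto-v$ and $z\mapsto-z$ throughout), the rational prefactors in $\F[v^{\pm1},z^{\pm1}]$ being unaffected.

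The crux is the set of relations mixing $+$- and $-$-type currents with the central elements, namely (\ref{eq:K+K-}), (\ref{eq:K+X-}), and above all the commutation relation (\ref{eq:X+X-KK}); here the point is that the prescribed central twist is exactly the one that makes all the sign changes cancel. For $\tau$, negation of the $+$-current variable $v$ together with $\Csf\mapsto-\Csf$ leaves the $\delta$-function arguments $\Csf v/(q^{2(r+s)}z)$ and $\Csf^{-1}v/(q^{2(r+s)}z)$ unchanged, and — using $\tau(\Csf^{-1/2})=i^{-1}\Csf^{-1/2}$ and $\tau(\csf-{-m})=i^m\csf-{-m}$ — leaves each factor $\cbsf-\!\left(\Csf^{-1/2}q^{(2p-1)\sign(s)-1}z\right)$ unchanged while sending each factor $\cbsf+\!\left(\Csf^{-1/2}q^{(1-2p)\sign(r)-1}v\right)$ to its value at $-v$; together with $\tau(\Kbsf+{1,r+s}(v))=\Kbsf+{1,r+s}(-v)$ and the fact that $\Kbsf-{1,r+s}(z)$ and $z$ are untouched, both sides of (\ref{eq:X+X-KK}) are seen to differ from the original merely by the substitution $v\mapsto-v$, so the relation is preserved; the analogous bookkeeping, with $z\mapsto-z$ applied everywhere and $\Csf$ untouched, does the job for $\sigma$. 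I expect this verification of (\ref{eq:X+X-KK}) to be the main obstacle, since it is the only defining relation in which the square root $\Csf^{1/2}$ and the arguments of the $\delta$-distributions interact with the $+/-$ asymmetry, and it is what pins down the signs — and, for $\tau$, the factor $i$ and the value $\tau(\Csf)=-\Csf$ — asserted in the statement.

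Finally, $\tau$ and $\sigma$ respect the $\Z_{(2)}$-grading up to the above rescalings, hence map each ideal $\dot\Omega_n^{(1)}$ of Proposition~\ref{prop:Omegan} into itself; they are therefore continuous for the topology of Definition-Proposition~\ref{defprop:topol} and extend uniquely to continuous $\F$-algebra endomorphisms of $\widehat{\qdaff'(\mathfrak a_1)}$. Invertibility is then immediate from the action on generators: $\sigma^2=\id$, while $\tau^4=\id$ (indeed $\tau^{-1}$ is the endomorphism obtained from the same recipe with $i$ replaced by $-i$), so both maps are automorphisms, as claimed.
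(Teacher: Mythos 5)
Your proof is correct and follows the same strategy as the paper's very terse proof (which simply says ``check the defining relations''): you verify, generator by generator and relation by relation, that the prescribed assignments extend to algebra endomorphisms, and then observe that they are continuous for the $\Z_{(2)}$-graded topology and invertible. The detailed bookkeeping — the mode-by-mode scaling $\tau(\csf\pm{\pm m})=i^m\csf\pm{\pm m}$, and the check that relation (\ref{eq:X+X-KK}) returns to itself after the global substitution $v\mapsto -v$ (resp.\ $v\mapsto -v$, $z\mapsto -z$ for $\sigma$) once the central twist is taken into account — is accurate, and working in the proalgebraic presentation of Section~\ref{rem:qdafftopology} is a sensible way to make the verification of (\ref{eq:X+X-KK}) purely algebraic.

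One observation you make is actually a small sharpening of the paper's statement, not just a detail of the proof: the listed conditions for $\tau$ only prescribe $\tau(\Csf)=-\Csf$ and the values of $\tau$ on current series built from $\Csf^{-1/2}$, so they constrain only the products $\tau(\csf\pm{\pm m})\tau(\Csf^{1/2})^m$ and leave $\tau(\Csf^{1/2})$ free up to the choice of square root $i\in\K$ of $-1$. As you point out, this produces exactly two distinct automorphisms satisfying the stated conditions (related by $\tau\mapsto\tau^{-1}=\tau^{3}$), so the ``unique'' in part~\emph{i.} should really read ``unique up to this choice,'' or the statement should additionally fix $\tau(\Csf^{1/2})$. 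This does not affect part~\emph{ii.}, where $\sigma(\Csf^{1/2})=-\Csf^{1/2}$ together with $\sigma(\cbsf\pm(z))=\cbsf\pm(z)$ determines $\sigma$ on all generators, nor does it affect the way $\tau$ and $\sigma$ are used later in the paper (twisting the action so that $\Csf^{1/2}$ acts by $\id$).
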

\begin{proof}
It suffices to check the defining relations of $\widehat{\qdaff(\mathfrak a_1)}$.
\end{proof}

\subsection{Topological Hopf algebra structure on $\widehat{\qdaff'(\mathfrak a_1)}$}
\label{sec:Hopfalg}
\begin{defn}
\label{eq:Hopfalgqaffdot}
We endow the topological $\F$-algebra $\widehat\uqslthh$ with: 
\begin{enumerate}
\item[i.] the comultiplication $\Delta:\widehat\uqslthh\to \uqslthh\widehat{\otimes} \uqslthh$ defined by
\be
\Delta(C^{\pm1/2}) = C^{\pm 1/2} \otimes C^{\pm 1/2}\,,\qquad \Delta(D^{\pm 1}) = D^{\pm 1}\otimes D^{\pm 1}\,,\ee
\be\Delta(\kk\pm i(z)) =\kk\pm i(z C^{\pm 1/2}_{(2)})\otimes \kk\pm i(z C^{\mp 1/2}_{(1)})\,,\ee
\be\label{eq:coprodxip} \Delta(\x+i(z)) =\x+i(z)\otimes 1 + \kk-i(z C^{1/2}_{(1)})\widehat{\otimes} \x+i(z C^{}_{(1)})\,, \ee
\be \Delta(\x-i(z)) =\x-i(z C^{}_{(2)})\widehat{\otimes} \kk+i(z C^{1/2}_{(2)})+ 1  \otimes \x-i(z)\,, \ee
where $C^{\pm 1/2}_{(1)} = C^{\pm 1/2} \otimes 1$ and $C^{\pm 1/2}_{(2)} = 1\otimes C^{\pm 1/2}$;
\item[ii.] the counit $\varepsilon : \widehat\uqslthh\to \F$, defined by $\varepsilon(D^{\pm1}) = \varepsilon(C^{\pm 1/2}) = \varepsilon (\kk\pm i(z))=1$, $\varepsilon(\x\pm i(z))=0$ and; 
\item[iii.] the antipode $S:\widehat\uqslthh\to \widehat\uqslthh$, defined by $S(D^{\pm 1}) = D^{\mp1}$, $S(C^{\pm 1/2}) = C^{\mp 1/2}$ and
$$S(\kk\pm i(z)) = \kk\pm i(z)^{-1}\,, \quad S(\x+i(z)) = - \kk-i(z C^{-1/2})^{-1} \x+i(z C^{-1})\,, \quad S(\x-i(z))=- \x-i(z C^{-1}) \kk+i(zC^{-1/2})^{-1} \,.$$
\end{enumerate}
With these operations so defined and the topologies defined in section \ref{sec:qaff}, $\widehat\uqslthh$ is a topological Hopf algebra.
\end{defn}
In view of theorem \ref{thm:main}, it is clear that $\widehat{\qdaff(\mathfrak a_1)}$ inherits that topological Hopf algebraic structure.
\begin{defprop}
We define
\be \dot\Delta= \left (\widehat\Psi \widehat{\otimes} \widehat\Psi\right ) \circ \Delta\circ \widehat \Psi^{-1} \,,\ee
\be \dot S = \widehat\Psi\circ S\circ\widehat\Psi^{-1}\,,\ee
\be \dot\varepsilon = \varepsilon\circ\widehat\Psi^{-1}\,.\ee
Equipped with the above comultiplication, antipode and counit, $\widehat{\qdaff'(\mathfrak a_1)}$ is a topological Hopf algebra. 
\end{defprop}
Before we move on to introducing $t$-weight $\qdaff'(\mathfrak a_1)$-modules, we give the following
\begin{lem}
\label{lem:DeltaPsiXp}
For every $m\in\N^\times$ and every $r\in\Z$, we have
\begin{enumerate}[i.]
\item\label{it:DeltaPsi} $\dot{\Delta}(\Kbsf\pm{1,\pm m}(z)) =\Delta^0(\Kbsf\pm{1,\pm m}(z)) \mod \qdaff^<(\mathfrak a_1)\widehat\otimes \qdaff^>(\mathfrak a_1)[[z, z^{-1}]]$;
\item\label{it:DeltaXp} $\dot{\Delta}(\Xbsf+{1,r}(z)) \in \left (\qdaff^>(\mathfrak a_1)\widehat\otimes \qdaff^0(\mathfrak a_1)\oplus \qdaff(\mathfrak a_1)\widehat\otimes \qdaff^>(\mathfrak a_1)\right ) [[z,z^{-1}]]$;
\end{enumerate}
where we have set $\qdaff^>(\mathfrak a_1)= \qdaff^\geq(\mathfrak a_1) - \qdaff^\geq(\mathfrak a_1) \cap \qdaff^0(\mathfrak a_1)$ and $\qdaff^<(\mathfrak a_1)=\qdaff^\leq(\mathfrak a_1)-\qdaff^\leq(\mathfrak a_1)\cap \qdaff^0(\mathfrak a_1)$.
\end{lem}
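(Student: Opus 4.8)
The plan is to compute $\dot\Delta$ on the relevant generators directly by unwinding the definition $\dot\Delta = (\widehat\Psi\widehat\otimes\widehat\Psi)\circ\Delta\circ\widehat\Psi^{-1}$, using the explicit formula for $\widehat\Psi$ from Theorem \ref{thm:main} together with the coproduct formulas from Definition \ref{eq:Hopfalgqaffdot}. The key observation is that $\widehat\Psi$ sends the Drinfel'd generators $\kk\pm1(z)$, $\x\pm1(z)$ of $\qaff(\dot{\mathfrak a}_1)$ essentially to $\Kbsf\mp{1,0}(\Csf^{-1/2}z)$ and $\Xbsf\pm{1,0}(z)$, so that $\dot\Delta$ is already pinned down on the ``$i=1$ Dynkin node'' generators. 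The higher generators $\Kbsf\pm{1,\pm m}(z)$ and $\Xbsf+{1,r}(z)$ with $r\neq 0$ or $m>0$ do not lie in the image of this small set, so one must instead produce them from the $i=0$ node or from commutators. Concretely, $\Xbsf\mp{1,1}(\Csf z)$ and $\Xbsf+{1,-1}(\Csf z)$ appear (up to invertible $\qdaff^0$-factors) in $\widehat\Psi(\x\pm0(z))$, and the remaining $\Xbsf+{1,r}(z)$ are obtained inductively by bracketing with the $\Xbsf+{1,\pm1}(z)$; likewise the $\Kbsf\pm{1,\pm m}(z)$ arise through relation (\ref{eq:X+X-KK}) applied to the images of the affine root vectors, or equivalently via the embeddings $\iota_m$ of Proposition (the one just before Proposition on $\iota^{(i)}$).

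For part \ref{it:DeltaPsi}, I would argue as follows. The affine coproduct gives $\Delta(\kk\pm i(z)) = \kk\pm i(zC^{\pm1/2}_{(2)})\otimes\kk\pm i(zC^{\mp1/2}_{(1)})$, which is ``diagonal'' — it lies in $\qaff^0\widehat\otimes\qaff^0$ with no lower/upper triangular corrections. Applying $\widehat\Psi\widehat\otimes\widehat\Psi$ and using $\widehat\Psi(\kk\pm1(z)) = -\Kbsf\mp{1,0}(\Csf^{-1/2}z)$ shows $\dot\Delta(\Kbsf\pm{1,0}(z))$ is exactly $\Delta^0(\Kbsf\pm{1,0}(z))$ on the nose (the comultiplication formula (\ref{eq:coprodC})--(\ref{eq:coprodppm}) for the abelian $\Kbsf\pm{1,0}$ part matches). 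For $m\geq 1$ one uses that $\Kbsf\pm{1,\pm m}(z)$ is, up to invertible factors in $\widehat{\qdaff^0(\mathfrak a_1)}$, the ``diagonal part'' appearing in the commutator (\ref{eq:X+X-KK}) of $\Xbsf+{1,r}(v)$ with $\Xbsf-{1,s}(z)$ when $r+s=\pm m$. Since $\dot\Delta$ is an algebra homomorphism, one can compute $\dot\Delta$ of that commutator from $\dot\Delta$ of $\Xbsf\pm{1,r}(z)$ (which we control from part \ref{it:DeltaXp} and its $\Xbsf-$-analogue), extract the coefficient of the relevant $\delta$-function, and check that the ``diagonal'' $\qdaff^0\widehat\otimes\qdaff^0$ contribution equals $\Delta^0(\Kbsf\pm{1,\pm m}(z))$, while all other contributions lie in $\qdaff^<(\mathfrak a_1)\widehat\otimes\qdaff^>(\mathfrak a_1)[[z,z^{-1}]]$. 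Tracking which terms are ``diagonal'' versus genuinely off-diagonal is where the triangular decomposition of Proposition \ref{prop:triang} is used: a term is in $\qdaff^<\widehat\otimes\qdaff^>$ exactly when it involves at least one $\Xbsf-{1,*}(*)$ in the left tensorand and one $\Xbsf+{1,*}(*)$ in the right.

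For part \ref{it:DeltaXp}, the cleanest route is to first handle $\Xbsf+{1,0}(z)=\widehat\Psi(\x+1(z))$. From (\ref{eq:coprodxip}), $\Delta(\x+1(z)) = \x+1(z)\otimes 1 + \kk-1(zC^{1/2}_{(1)})\widehat\otimes\x+1(zC_{(1)})$; applying $\widehat\Psi\widehat\otimes\widehat\Psi$ yields $\Xbsf+{1,0}(z)\otimes 1$ (which lies in $\qdaff^>\widehat\otimes\qdaff^0$, since $\Xbsf+{1,0}(z)\in\qdaff^+(\mathfrak a_1)=\qdaff^>(\mathfrak a_1)\cup\{0\}$-part) plus a term of the form $(\text{element of }\widehat{\qdaff^0})\widehat\otimes\Xbsf+{1,0}(zC_{(1)})$, which lies in $\qdaff(\mathfrak a_1)\widehat\otimes\qdaff^>(\mathfrak a_1)[[z,z^{-1}]]$. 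That settles $r=0$. For $r=\pm1$ I would use that $\Xbsf+{1,1}(\Csf z)$ and $\Xbsf+{1,-1}(\Csf z)$ are respectively $-\cbsf+(\Csf^{1/2}z)^{-1}\Kbsf+{1,0}(z)\cdot\widehat\Psi(\x-0(z))\cdot(\ldots)$-type combinations — more precisely, inverting the defining relations for $\widehat\Psi(\x\pm0(z))$, one writes $\Xbsf\mp{1,1}(\Csf z)$ and $\Xbsf+{1,-1}(\Csf z)$ as a product of an invertible element of $\widehat{\qdaff^0(\mathfrak a_1)}$ with $\widehat\Psi(\x\pm0(z))$. Since $\dot\Delta$ is multiplicative, $\dot\Delta$ of such a product is $\dot\Delta$ of the $\qdaff^0$ factor (which by part \ref{it:DeltaPsi} reasoning is ``almost diagonal'') times $\dot\Delta(\widehat\Psi(\x\pm0(z)))$, and the latter has the same triangular shape as the $i=1$ case by the affine coproduct formula. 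Finally, a general $\Xbsf+{1,r}(z)$ is obtained by repeatedly bracketing $\Xbsf+{1,\pm1}(z)$ (using relation (\ref{eq:X+rX+s}) or the analogue of the quantum affine Drinfel'd generator recursion), and since the class $\bigl(\qdaff^>\widehat\otimes\qdaff^0\oplus\qdaff\widehat\otimes\qdaff^>\bigr)[[z,z^{-1}]]$ is visibly closed under left/right multiplication by $\qdaff(\mathfrak a_1)\widehat\otimes\qdaff(\mathfrak a_1)$ on one tensorand and stable under the relevant commutators (because $\qdaff^>$ is an ideal-like object under bracketing with $\qdaff^+$), the claim propagates by induction on $|r|$.

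The main obstacle is the bookkeeping in part \ref{it:DeltaPsi}: one must show that after extracting the diagonal part of $\dot\Delta$ applied to the commutator (\ref{eq:X+X-KK}), the ``correction terms'' from the non-primitive parts of $\dot\Delta(\Xbsf\pm{1,r}(z))$ cancel precisely against each other modulo $\qdaff^<(\mathfrak a_1)\widehat\otimes\qdaff^>(\mathfrak a_1)[[z,z^{-1}]]$, leaving exactly $\Delta^0(\Kbsf\pm{1,\pm m}(z))$. This requires carefully matching the combinatorial prefactors in (\ref{eq:coprodtp})--(\ref{eq:coprodtm}) (equivalently in the definition of $\Delta^0$ on the $\Kbsf\pm{1,\pm m}$, translated through (\ref{eq:deftp})--(\ref{eq:deftm})) against the $\delta$-function coefficients produced by (\ref{eq:X+X-KK}); it is essentially a verification that the Hopf structure on $\widehat{\qdaff^0(\mathfrak a_1)}$ defined abstractly in the previous subsection is the one induced by $\dot\Delta$ on the ``diagonal'', which is plausible but tedious. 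The rest of the argument is routine once the triangular decomposition (Proposition \ref{prop:triang}) is invoked to define unambiguously what ``diagonal part'' means.
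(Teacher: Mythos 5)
Your overall strategy — unwind $\dot\Delta=(\widehat\Psi\widehat\otimes\widehat\Psi)\circ\Delta\circ\widehat\Psi^{-1}$ using the explicit $\widehat\Psi$ formulas and the affine coproduct, and track which tensor factors land in $\qdaff^<,\qdaff^0,\qdaff^>$ — is the right framework, and the $r=0$ base case of part~\emph{ii.} is correct. But there are concrete gaps.

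First, the recursion you invoke to reach general $\Xbsf+{1,r}(z)$ does not exist in the form you describe. Relation~(\ref{eq:X+rX+s}) is a $q$-commutation constraint between two $\Xbsf+{1,\cdot}$'s; it cannot produce an $\Xbsf+{1,r\pm1}$ from an $\Xbsf+{1,r}$. What does the job in the paper is the \emph{Cartan} ad-recursion from \cite{MZ}: $\Xp{1,r+1}$ is extracted from $[\bpsi+{1,1}(z),\Xp{1,r}(v)]_{G^-_{10}G^-_{11}}$ (eq.~(\ref{eq:indXpup})) and $\Xp{1,-(r+1)}$ from $[\bpsi-{1,-1}(z),\Xp{1,-r}(v)]$ (eq.~(\ref{eq:indXpdown})). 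Relatedly, your proposed $r=+1$ base case fails: from the $\widehat\Psi$-images of the Chevalley vectors you only get $\Xbsf-{1,1}$ and $\Xbsf+{1,-1}$; $\Xbsf+{1,1}$ never appears as a $\qdaff^0$-multiple of some $\widehat\Psi(\x\pm0)$. It is produced precisely by the $\bpsi+{1,1}$-bracket, which is why part~\emph{i.} (for $m=1$) must be established first and feeds into the inductive step of part~\emph{ii.}; your proposed logical order (part~\emph{ii.} before part~\emph{i.} for $m\geq1$) does not close.

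Second, your stability claim — that $\bigl(\qdaff^>\widehat\otimes\qdaff^0\oplus\qdaff\widehat\otimes\qdaff^>\bigr)[[z,z^{-1}]]$ is closed under left/right multiplication by arbitrary elements of $\qdaff\widehat\otimes\qdaff$ on one tensorand — is false as stated: multiplying $\qdaff^>\widehat\otimes\qdaff^0$ on the right by $1\widehat\otimes\qdaff^-$ exits the class. What saves the induction is that $\dot\Delta(\bpsi\pm{1,\pm1}(z))$ is not arbitrary: by part~\emph{i.} (for $m=1$) it lies in $\Delta^0(\cdot)+\qdaff^<\widehat\otimes\qdaff^>$, and multiplication/bracketing by elements of $\qdaff^0\widehat\otimes\qdaff^0$ or $\qdaff^<\widehat\otimes\qdaff^>$ does preserve the class. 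That specific structure has to be used; invoking a generic closure is not sufficient.

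Third, your route to part~\emph{i.} for $m\geq1$ — extracting $\dot\Delta(\Kbsf\pm{1,\pm m})$ as the $\delta$-function coefficient of $\dot\Delta$ applied to the commutator~(\ref{eq:X+X-KK}) — is a genuinely different strategy from the paper, which instead works upstream in $\qaff(\dot{\mathfrak a}_1)$ with the recursion (\ref{eq:init}), (\ref{eq:psipsimm}) for $\bpsi+{1,m}$, where the affine coproduct is explicit and the off-diagonal corrections appear transparently as multiples of $\x+0\widehat\otimes\x+1$. Your route could in principle work, but it requires both the $\Xbsf+$ and the $\Xbsf-$ statements for all $r,s$ with $r+s=\pm m$, and a nontrivial cancellation check that you yourself flag as unverified. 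As written, the proposal identifies the right pieces but leaves the load-bearing induction mechanism (the $\bpsi$-recursion and its coproduct, i.e.\ part~\emph{i.} at $m=1$) out of the picture.
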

\begin{proof}
We first prove \ref{it:DeltaPsi} for upper choices of signs. Observe that (\ref{eq:deftp}) equivalently reads
$$\Kbsf+{1,m}(z) = -(q-q^{-1}) \Kbsf+{1,0}(zq^{-2m}) \tbsf+{1,m}(z)\,,$$
for every $m\in\N^\times$. For every $m\in \N^\times$, let 
$$\KK\pm{1,\pm m}(z) = \widehat\Psi^{-1}(\Kbsf\pm{1,\pm m}(z))\in\widehat{\qaff(\dot{\mathfrak a}_1)}[[z,z^{-1}]] \,.$$
In \cite{MZ} -- see proposition-definition 4.9, definition 4.25 and eq. (4.66) --, we proved that $\KK{+}{1,0}(z) = - \km{1}(C^{1/2}z)$ and that, for every $m\in\N^\times$, $$\KK{+}{1,m}(z)=(q-q^{-1})\km{1}(C^{1/2}zq^{-2m})\bpsi{+}{1,m}(z)\,,$$
where $\bpsi{+}{1,m}(z)$ can be recursively defined by setting
\be\label{eq:init}\left [\xp0(w) , \xp1(z) \right ]_{G_{10}^-(w/z)} = \delta \left ( \frac{q^2 w}{z} \right ) \bpsi +{1,1}(z) \ee
and
\bea\label{eq:psipsimm}{}_{G^-_{01}(q^{-2m}v/w)G^-_{11}(q^{2(1-m)}v/w)} \left [ \bpsi +{1,1}(w), \bpsi +{1,m}(v) \right ]_{G^-_{01}(w/vq^2)G^-_{11}(w/v)} &=&
	[2]_q\delta\left(\frac{w}{vq^2}\right) \bpsi+{1,m+1}(q^2v) \nn\\&& -[2]_q\delta\left(\frac{q^{2m}w}{v}\right) \bpsi+{1,m+1}(v)
	\,.\eea
Hence, \ref{it:DeltaPsi} for $m=0$ is clear. From (\ref{eq:init}) and definition \ref{eq:Hopfalgqaffdot}, and making use of relations (\ref{eq:kpxpm}) and (\ref{eq:kmxpm}) as well as of the identity (\ref{eq:G+G-}), we deduce that
$$\Delta(\bpsi +{1,1}(z)) = \bpsi +{1,1}(z)\otimes 1 + \bwp-(zq^{-2}C_{(1)}^{1/2}) \widehat\otimes \bpsi +{1,1}(zC_{(1)})- [2]_q (q-q^{-1}) \kk-1(zC_{(1)}^{1/2}) \x+0(zq^{-2}) \widehat\otimes \x+1(zC_{(1)})\,,$$
where $\bwp-(v) = \kk-0(v)\kk-1(vq^2)$. Applying $\widehat\Psi\widehat\otimes\widehat\Psi$ to the first two terms obviously yields $\Delta^0(\tbsf+{1,1}(z))$. Since, on the other hand, $\widehat\Psi(\x+0(z))\in \qdaff^<(\mathfrak a_1)[[z,z^{-1}]]$ -- see theorem \ref{thm:main} --, applying $\widehat\Psi\widehat\otimes\widehat\Psi$ to the third term yields an element of $\qdaff^<(\mathfrak a_1)\widehat\otimes \qdaff^>(\mathfrak a_1)[[z,z^{-1}]]$ and it follows that \ref{it:DeltaPsi} holds for $m=1$ and for upper choices of signs. Suppose it holds for upper choices of signs and for some $m\in\N^\times$. Then, making use of (\ref{eq:psipsimm}), one easily checks that \ref{it:DeltaPsi} holds for $m+1$ and for upper choices of signs, which completes the proof of \ref{it:DeltaPsi}  for upper choices of signs. Now, \ref{it:DeltaPsi} for lower choices of signs follows after applying $\dot\varphi$ and observing that, indeed,
 $$\dot\Delta\circ\dot\varphi  = \left (\dot\varphi\widehat\otimes\dot\varphi\right )\circ \dot\Delta^{\mathrm{cop}} \,,\qquad \mbox{and}\qquad \Delta^0\circ\dot\varphi_{|\qdaff^0(\mathfrak a_1)}  = \left (\dot\varphi\widehat\otimes\dot\varphi\right )_{|\qdaff^0(\mathfrak a_1)}\circ \Delta^{0,\mathrm{cop}}\,.$$
As for \ref{it:DeltaXp}, we let, for every $r\in\Z$,
$$\X+{1,r}(z) = \widehat\Psi^{-1}(\Xbsf+{1,r}(z))\,.$$
In \cite{MZ} -- see definition 4.1 and proposition 4.8 --, we proved that $\X+{1,r}(z)$ could be defined recursively by setting $\X+{1,0}(z) = \x+1(z)$ and letting, for every $r\in\N$,
\be\label{eq:indXpup}\left [ \bpsi +{1,1}(z), \Xp{1,r}(v)  \right ]_{G_{10}^-(z/vq^2)G_{11}^-( z/v)} = [2]_q \delta \left ( \frac{z}{vq^2} \right ) \Xp{1,r+1}(z)\ee
and
\be\label{eq:indXpdown}\left [ \bpsi-{1,-1}(z), \Xp{1,-r}(v)  \right ] =  [2]_q \delta \left ( \frac{Cz}{v} \right )  \Xp{1,-(r+1)}(Cq^{-2}z) \bwp +(C^{1/2}q^{-2}z)\,,\ee
where $\bpsi-{1,-1}(z) = \varphi(\bpsi+{1,1}(1/z))$ -- see proposition 4.3 in \cite{MZ}. Observing that $\left (\varphi\widehat\otimes\varphi\right ) \circ \Delta^{\mathrm{cop}} = \Delta\circ \varphi$, we clearly get
$$\left (\widehat\Psi\widehat\otimes\widehat\Psi\right)\circ \Delta(\bpsi-{1,-1}(z)) =\Delta^0(\tbsf-{1,-1}(z)) \mod \qdaff^<(\mathfrak a_1)\widehat\otimes \qdaff^>(\mathfrak a_1)[[z, z^{-1}]]\,.$$
Now, applying $\widehat\Psi\widehat\otimes\widehat\Psi$ to (\ref{eq:coprodxip}) in definition \ref{eq:Hopfalgqaffdot} clearly proves \ref{it:DeltaXp} in the case $r=0$. Assuming it holds for $r\in \N$, it suffices to apply $\left (\widehat\Psi\widehat\otimes\widehat\Psi\right )\circ \Delta$ to (\ref{eq:indXpup}) above to prove that it also holds for $r+1$. Similarly, if \ref{it:DeltaXp} holds for some $r\in-\N$, applying $ \left (\widehat\Psi\widehat\otimes\widehat\Psi\right )\circ \Delta$ to (\ref{eq:indXpdown})  to prove that it also holds for $r-1$. This concludes the proof.
\end{proof}

\section{$t$-weight $\qdaff(\mathfrak a_1)$-modules}
\label{Sec:tweight}
\subsection{$\ell$-weight modules over $\qdaff^{0}(\mathfrak a_1)$}
Remember that $\qdaff^{0,0}(\mathfrak a_1)$ contains a subalgebra that is isomorphic to $\qaff^0(\mathfrak a_1)$ -- see proposition \ref{prop:uq0subalg}. Hence, in view of remark \ref{rem:ellweight}, we can repeat for modules over $\qdaff^0(\mathfrak a_1)$ what we did in section \ref{sec:wfqaff} for modules over $\qaff(\mathfrak a_1)$. We thus make the following
\begin{defn}\label{def:qdaff0ellweight}
We shall say that a (topological) $\qdaff^0(\mathfrak a_1)$-module $M$ is \emph{$\ell$-weight} if there exists a countable set $\left\{ M_\alpha: \alpha\in A\right \}$ of indecomposable locally finite-dimensional $\qdaff^{0,0}(\mathfrak a_1)$-modules called \emph{$\ell$-weight spaces} of $M$, such that, as $\qdaff^{0,0}(\mathfrak a_1)$-modules,
$$M\cong \bigoplus_{\alpha\in A} M_\alpha\,.$$
\end{defn}
As in section \ref{sec:wfqaff}, it follows that
\begin{defprop}
\label{defprop:CKeigen}
Let $M$ be an $\ell$-weight $\qdaff^0(\mathfrak a_1)$-module. Then:
\begin{enumerate}[i.]
\item $\Csf^2$ acts on $M$ by $\id$;
\item for every $\ell$-weight space $M_\alpha$, $\alpha\in A$, of $M$, there exist $\kappa_{\alpha, 0}\in\F^\times$ and sequences $(\kappa^\pm_{\alpha,\pm m})_{m\in\N^\times} \in \F^{\N^{\times}}$ such that
\be\label{eq:Jordanlweight}M_\alpha \subseteq \left \{v\in M : \exists n\in\N^\times\,, \forall m\in\N \quad  \left (\Ksf\pm{1,0,\pm m}- \kappa^\pm_{\alpha, \pm m} \id \right )^n .v=0\right \}\,,\ee
where we have set $\kappa_{\alpha,0}^\pm = \kappa_{\alpha,0}^{\pm 1}$. 
\end{enumerate}
We let $\Sp(M) =\{\kappa_{\alpha, 0} : \alpha \in A\}$ and we shall refer to
$$\kappa_\alpha^\pm(z) = \sum_{m\in\N} \kappa_{\alpha, \pm m}^\pm z^{\pm m}$$
as the \emph{$\ell$-weight} of the $\ell$-weight space $M_\alpha$. We shall say that $M$ is
\begin{itemize}
\item of \emph{type $1$} if $\Csf^{1/2}$ acts by $\id$ over $M$;
\item of \emph{type $(1,N)$} for $N\in\N^\times$ if it is of type $1$ and, for every $m\geq N$, $\csf\pm{\pm m}$ acts by multiplication $0$ over $M$;
\item of \emph{type $(1,0)$} if it is of type $(1,1)$ and $\csf\pm0$ acts by $\id$ over $M$.
\end{itemize} 
\end{defprop}
\begin{proof}
The proof follows the same arguments as the proof of definition-proposition \ref{defprop:ellweight}. 
\end{proof}

\begin{prop}
\label{prop:Kellweight}
Let $M$ be a type $1$ $\ell$-weight $\qdaff^0(\mathfrak a_1)$-module and let $M_\alpha$ and $M_\beta$ be two $\ell$-weight spaces of $M$ such that, for some $m\in\N^\times$ and some $n\in\Z$, $M_\alpha \cap \Ksf\pm{1,\pm m,n}. M_\beta \neq \{0\}$. Then, there exists a unique $a\in\F^\times$ such that:
\begin{enumerate}[i.]
\item\label{Kelweighti} the respective $\ell$-weights $\kappa_\alpha^\varepsilon(z)$ and $\kappa_\beta^\varepsilon(z)$ of $M_\alpha$ and $M_\beta$ be related by
$$\kappa_\alpha^\varepsilon(z) = \kappa_\beta^\varepsilon (z) H_{m, a}^\varepsilon(z)^{\pm 1} \,,$$
where $\varepsilon \in \{-,+\}$ and 
\be\label{eq:ellweightsnullroot} H_{m, a}^\pm (z) = \left (\frac{(1-q^{-2}a/z)(1-q^{-2(m-1)}a/z)}{(1-q^2a/z)(1-q^{-2(m+1)} a/z)}\right )_{|z|^{\pm 1}\ll 1}\,;\ee
\item\label{Kelweightii} $(z-a)^N M_\alpha \cap \Kbsf\pm{1,\pm m}(z). M_\beta = \{0\}$ for some $N\in\N^\times$.
\end{enumerate}
\end{prop}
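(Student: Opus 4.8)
The plan is to extract the scalar $a$ from the ``spectral'' relations between the operators $\Ksf\pm{1,\pm m,n}$ and the Cartan data $\Ksf\pm{1,0,\pm k}$, using the defining relations (\ref{eq:K+K+})--(\ref{eq:K+K-}) of $\qdaff(\mathfrak a_1)$. The point of departure is that, by hypothesis, there is a nonzero vector $w\in M_\alpha\cap \Ksf\pm{1,\pm m,n}.M_\beta$, say $w=\Ksf\pm{1,\pm m,n}.v$ with $v\in M_\beta-\{0\}$, and both $M_\alpha$, $M_\beta$ are (generalized) joint eigenspaces for $\{\Ksf+{1,0,k},\Ksf-{1,0,-k}:k\in\N\}$ by definition-proposition \ref{defprop:CKeigen}. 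First I would rewrite relation (\ref{eqbf:K+X+}) with $m=0$ -- or rather relation (\ref{eq:K+K+}) specialized to one of the two indices being $0$, so as to relate $\Kbsf\pm{1,0}(v)$ and $\Kbsf\pm{1,\pm m}(z)$ -- into a form that, upon acting on $v$, reads as a \emph{finite} recursion on the coefficients $\Ksf\pm{1,0,\pm k}$: since $M$ is of type $1$, $\csf\pm0$ acts invertibly (use type $(1,0)$ if needed, but in general $\Sp(M)$ finiteness is not yet invoked here), so $\Kbsf\pm{1,0}(v)$ on $v$ is a well-defined invertible formal series $\kappa_\beta^\pm(v)v$ up to nilpotents. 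Applying the relevant relation to $v$ and collecting the $z$-dependence, one finds that $\Kbsf\pm{1,\pm m}(z).v$ must be supported, as a function of $z$, on a finite set of ``poles'' dictated by the shifts $q^{\pm 2},q^{2(m-n\mp 1)},q^{\pm 2},q^{2(m-n)}$ appearing in (\ref{eq:K+K+}); comparing with (\ref{eq:K+K-}) isolates a single free scale $a\in\F^\times$.

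The key steps, in order, are: (1) act with the appropriate Cartan--Drinfel'd relation on $v$ to deduce that $(z-a)^N\Kbsf\pm{1,\pm m}(z).v=0$ in $M$ for suitable $N$ and a \emph{unique} $a$ -- this is precisely assertion \ref{Kelweightii}, and uniqueness of $a$ follows because two distinct admissible scales would force, via the same relation, a contradiction with the finite-dimensionality of $\tilde M_\beta\ni v$ (the same Weyl-algebra-free argument as in \ref{defprop:ellweight}); (2) having localized $\Kbsf\pm{1,\pm m}(z).v$ at $z=a$, extract the leading coefficient and observe that it lies in $M_\alpha$ and is a $\Ksf\pm{1,0,\pm k}$-generalized eigenvector; (3) apply relation (\ref{eqbf:K+X+}), or rather the $\Kbsf+{1,0}(v)\Kbsf\pm{1,\pm m}(z)$ commutation relation (\ref{eq:K+K+})--(\ref{eq:K+K-}), to the vector $w=\Ksf\pm{1,\pm m,n}.v$ and read off, by matching generalized eigenvalues on both sides, that $\kappa_\alpha^\varepsilon(z)/\kappa_\beta^\varepsilon(z)$ equals a fixed rational function of $z$ with parameter $a$; (4) compute that rational function explicitly -- the numerator/denominator shifts $q^{-2}a$, $q^{-2(m-1)}a$ versus $q^2a$, $q^{-2(m+1)}a$ are exactly what (\ref{eq:K+K+}) produces -- and recognize it as $H_{m,a}^\pm(z)^{\pm1}$ expanded at $|z|^{\pm1}\ll 1$, which is assertion \ref{Kelweighti}. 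The consistency between the $+$ and $-$ halves (\ie that the same $a$ works for $\varepsilon=+$ and $\varepsilon=-$) is forced by relation (\ref{eq:K+K-}) linking $\Kbsf+{1,m}$ and $\Kbsf-{1,-n}$.

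I expect the main obstacle to be bookkeeping step (1): turning the quadratic relation (\ref{eq:K+K+}) (and its mixed analogue (\ref{eq:K+K-})) into a clean statement about the \emph{support} of $\Kbsf\pm{1,\pm m}(z).v$ as a formal distribution, and in particular pinning down the correct order $N$ of the pole and proving \emph{uniqueness} of $a$ rather than merely existence. The subtlety is that $v$ lies only in a \emph{generalized} eigenspace, so the relations hold only up to nilpotent operators, and one must argue that the nilpotent parts cannot manufacture a second admissible scale -- this is where one again invokes local finite-dimensionality of $\tilde M_\beta$ together with the fact (already used in the proof of \ref{defprop:ellweight}) that the relevant rescaling operators cannot act finite-dimensionally unless the scale is uniquely determined. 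Once the localization lemma in step (1) is in place, steps (2)--(4) are a direct if slightly tedious comparison of generating series, entirely parallel to the $\ell$-weight computations (\ref{eq:kvzns}) in the proof of theorem \ref{thm:wffdqaffsl2}.
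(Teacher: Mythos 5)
Your overall strategy -- exploiting the commutation relations of $\Kbsf\pm{1,\pm m}(z)$ with the Cartan currents $\Kbsf\varepsilon{1,0}(v)$ (relations (\ref{eq:K+K+}) and (\ref{eq:K+K-}) with one index equal to $0$) to simultaneously pin down the $\ell$-weight ratio $\kappa_\alpha^\varepsilon/\kappa_\beta^\varepsilon$ and the localization of the image at a single point $a$ -- is indeed the approach of the paper. However, your step (1) contains a genuine error, and it is precisely the step carrying the technical weight. You claim that acting with the Cartan relation on $v\in M_\beta$ yields $(z-a)^N\,\Kbsf\pm{1,\pm m}(z).v=0$ \emph{in $M$} for a unique $a$. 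This is false: the vector $\Kbsf\pm{1,\pm m}(z).v$ decomposes over many $\ell$-weight spaces $M_\gamma$, and the commutation relation constrains each component separately, localizing the $M_\gamma$-component at a point $a_\gamma$ depending on $\gamma$. Only the component lying in $M_\alpha$ is supported at the $a$ of the statement, which is why assertion \ref{Kelweightii} is phrased as $(z-a)^N M_\alpha\cap\Kbsf\pm{1,\pm m}(z).M_\beta=\{0\}$ and not as a statement about the full image. Your proposed route to uniqueness of $a$ (a contradiction with local finite-dimensionality ``as in the Weyl-algebra argument'' of definition-proposition \ref{defprop:ellweight}) does not apply here: that argument concerns the central element $C$ and has no bearing on the support of $\Kbsf\pm{1,\pm m}(z).v$; uniqueness in the paper instead follows from showing that \emph{every} matrix coefficient of the $M_\alpha$-component is a finite combination of $\delta^{(p)}(z/a)$, after which any $b\neq a$ visibly fails condition \ref{Kelweightii}.

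The second missing ingredient is the treatment of the Jordan-block structure, which you flag as a subtlety but do not resolve. The paper chooses bases of $M_\alpha$ and $M_\beta$ in which $\Kbsf\pm{1,0}(z)$ acts upper-triangularly with diagonal $\kappa_\alpha^\pm(z)$, resp.\ $\kappa_\beta^\pm(z)$, writes the $M_\alpha$-component of $\Kbsf\pm{1,\pm m}(z).w_j$ in terms of coefficients $\xi^\pm_{m,j,i}(z)$, and first isolates the \emph{extreme} pair $(j_*,i_*)$, for which the relation becomes a scalar equation; a mode-by-mode analysis of that equation proves that consecutive modes of $\xi^\pm_{m,j_*,i_*}(z)$ have constant ratio $a$, yielding both the functional equation defining $H^\pm_{m,a}(z)$ (part \ref{Kelweighti}) and $\xi^\pm_{m,j_*,i_*}(z)\propto\delta(z/a)$. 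Part \ref{Kelweightii} then requires a finite double induction over the remaining pairs $(j,i)$, in which the off-diagonal (nilpotent) terms are controlled by a formal-distribution lemma (lemma \ref{lem:deltaid} of the appendix) forcing each $\xi^\pm_{m,j,i}(z)$ to be a finite sum of $\delta^{(p)}(z/a)$. Without this induction -- or some equivalent device -- your steps (2)--(4) only establish the eigenvalue relation on the extreme block, and neither the localization statement \ref{Kelweightii} nor the uniqueness of $a$ is obtained.
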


\begin{proof}
There clearly exist two bases $\left \{v_i: i = 1, \dots, \dim M_\alpha \right \}$ and $\left \{w_i: i = 1, \dots, \dim M_\beta\right \}$ of $M_\alpha$ and $M_\beta$ respectively, in which
$$\forall i \in \rran{\dim M_\alpha}\,, \qquad\qquad \Kbsf\pm{1,0}(z)  . v_i = \kappa_\alpha^\pm(z) \sum_{k=i}^{\dim M_\alpha} \eta_{\alpha,i,k}^\pm(z) v_k \,,$$
$$\forall j \in \rran{\dim M_\beta}\,, \qquad\qquad \Kbsf\pm{1,0}(z) . w_j = \kappa_\beta^\pm(z) \sum_{l=j}^{\dim M_\beta} \eta_{\beta,j,l}^\pm(z) w_l \,,$$
for some $\eta_{\alpha,i,k}^\pm(z), \eta_{\beta,j,l}^\pm(z) \in \F[[z^{\pm 1}]]$, with $i,k\in\rran{\dim M_\alpha}$ and $j,l\in\rran{\dim M_\beta}$, such that $\eta_{\alpha,i,i}^\pm(z) =1$ for every $i\in\rran{\dim M_\alpha}$ and $\eta_{\beta,j,j}^\pm(z)=1$ for every $j\in\rran{\dim M_\beta}$.

Now, if $M_\alpha \cap \Ksf\pm{1,\pm m,n}. M_\beta \neq \{0\}$, there must exist a largest nonempty subset $J\subseteq\rran{\dim M_\beta}$ such that, for every $j\in J$, $M_\alpha \cap \Kbsf\pm{1,\pm m}(z).w_{j} \neq\{0\}$. Let $j_* = \max J$.  Obviously, for every $j\in J$, there must exist a largest nonempty subset $I(j)\subseteq \rran{\dim M_\alpha}$ such that, for every $j\in J$ and every $i\in I(j)$, $\F v_{i}\cap  \Kbsf\pm{1,\pm m}(z).w_{j} \neq\{0\}$. Let $i_*(j) = \min I(j)$ and let for simplicity $i_*=i_*(j_*)$. Then, for every $j\in J$,
$$M_\alpha\cap\Kbsf\pm{1,\pm m}(z).w_j = \sum_{i\in I(j)} \xi_{m,j,i}^\pm(z) v_i \,,$$ 
for some $\xi_{m, j, i}^\pm(z)\in \F[[z,z^{-1}]]-\{0\}$. When needed, we shall extend by zero the definition of $\xi_{m, j, i}^\pm(z)$ outside of the set of pairs $\{(j,i): j\in J, i\in I(j)\}$. Making use of the relations in $\qdaff(\mathfrak a_1)$ -- namely (\ref{eq:K+K+}) and (\ref{eq:K+K-}) --, we get, for $\varepsilon \in\{-,+\}$,
$$(z_1-q^{\pm 2}z_2)(z_1- q^{2(m\mp 1)}z_2) \Kbsf\pm{1,\pm m}(z_1) \Kbsf\varepsilon{1, 0}(z_2)  . w_j  = (z_1q^{\pm 2} -z_2)(z_1q^{\mp 2} -q^{2m}z_2)\Kbsf\varepsilon{1, 0}(z_2)  \Kbsf\pm{1,\pm m}(z_1) . w_j \,.$$
The latter easily implies that, for every $j\in J$ and every $i\in I(j)$,
\bea\label{eq:Kxideltaeq}&&(z_1-q^{\pm 2} z_2) (z_1- q^{2(m\mp 1)}z_2) \kappa_\beta^\varepsilon(z_2) \sum_{\substack{l\in J\\l\geq j}} \eta_{\beta,j,l}^\varepsilon (z_2) \xi_{m,l,i}^\pm(z_1) \nn\\
&&\quad\qquad\qquad\qquad\qquad\qquad\qquad = (z_1q^{\pm 2} -z_2)(z_1q^{\mp 2} -q^{2m}z_2) \kappa_\alpha^\pm(z_2) \sum_{\substack{k\in I(j)\\k\leq i}} \eta_{\alpha,k,i}^\varepsilon(z_2) \xi_{m,j,k}^\pm(z_1)\,. \eea
Taking $i=i_*$ and $j=j_*$ in the above equation immediately yields
$$\left [ (z_1-q^{\pm 2}z_2)(z_1- q^{2(m\mp 1)}z_2) \kappa_\beta^\varepsilon(z_2)-  (z_1q^{\pm 2} -z_2)(z_1q^{\mp 2} -q^{2m}z_2)\kappa_\alpha^\varepsilon(z_2)\right ] \xi_{m,j_*,i_*}^\pm(z_1)  =0\,.$$
The latter is equivalent to the fact that, for every $p\in\Z$,
\bea\label{eq:Hma} 
&& \left (\xi_{m,j_*,i_*, p}^\pm q^{2m} z^2+ \xi_{m,j_*,i_*, p+2}^\pm \right ) \left [\kappa_\beta^\varepsilon(z) - \kappa_\alpha^\varepsilon(z)\right ] \nn\\
&& \qquad\qquad\qquad\qquad\qquad\qquad = \xi_{m,j_*,i_*, p+1}^\pm z \left [(q^{2(m\mp 1)}+q^{\pm 2})\kappa_\beta^\varepsilon(z) - (q^{2(m\pm 1)} +q^{\mp 2})\kappa_\alpha^\varepsilon(z) \right ] \,.
\eea
where, as usual, we have set
$$\xi_{m,j_*,i_*, p}^\pm  = \res_z z^{p-1} \xi_{m,j_*,i_*}^\pm(z)\,.$$
Since $\xi_{m,j_*,i_*}^\pm(z)\neq0$, there must exist a $p\in\Z$ such that $\xi_{m,j_*,i_*, p}^\pm\neq 0$. Assuming that $\xi_{m,j_*,i_*, p+1}^\pm =0$, one easily obtains that, on one hand $\kappa_\beta^\varepsilon(z) =\kappa_\alpha^\varepsilon(z)$ and that, on the other hand,
$$\left [(q^{2(m\mp 1)}+q^{\pm 2})\kappa_\beta^\varepsilon(z) - (q^{2(m\pm 1)} +q^{\mp 2})\kappa_\alpha^\varepsilon(z) \right ] =0\,.$$
A contradiction. By similar arguments, one eventually proves that $\xi_{m,j_*,i_*, p}^\pm \neq 0$ for every $p\in\Z$. But then dividing (\ref{eq:Hma}) by $\xi_{m,j_*,i_*, p}^\pm$ we get
$$\left (q^{2m} z^2+ a^2 \right ) \left [\kappa_\beta^\varepsilon(z) - \kappa_\alpha^\varepsilon(z)\right ] - a z \left [(q^{2(m\mp 1)}+q^{\pm 2})\kappa_\beta^\varepsilon(z) - (q^{2(m\pm 1)} +q^{\mp 2})\kappa_\alpha^\varepsilon(z) \right ] =0\,.$$ 
where we have set, for every $p\in\Z$, $\xi_{m,j_*,i_*, p+1}^\pm/\xi_{m,j_*,i_*, p}^\pm=a\in\F^\times$ and, consequently, $\xi_{m,j_*,i_*, p+2}^\pm/\xi_{m,j_*,i_*, p}^\pm = a^2$. \ref{Kelweighti} follows. Moreover, we clearly have
$$\xi_{m,j_*,i_*}^\pm (z) = A_{m,j_*,i_*}^\pm \delta(z/a)\,,$$
for some $A_{m,j_*,i_*}^\pm \in \F^\times$. More generally, we claim that, 
\be\label{eq:Kclaim} \forall j\in J\,,\forall i\in I(j)\,,\qquad\qquad  \xi_{m,j,i}^\pm (z) = \sum_{p=0}^{N(i,j)} A_{m,j,i,p}^\pm \delta^{(p)}(z/a) \,,\ee
for some $A_{m,j,i,p}^\pm \in\F$ and some $N(i,j)\in \N$. This is proven by a finite induction on $j$ and $i$. Indeed, making use of (\ref{eq:ellweightsnullroot}), we can rewrite (\ref{eq:Kxideltaeq}) as
\bea\label{eq:Kxideltasimp}&&(z_1-q^{\pm 2} z_2) (z_1- q^{2(m\mp 1)}z_2)(z_2-q^{\pm 2} a) (z_2 - q^{-2(m\pm 1)}a)  \sum_{\substack{l\in J\\l\geq j}} \eta_{\beta,j,l}^\varepsilon (z_2) \xi_{m,l,i}^\pm(z_1) \nn\\
&&\quad\qquad = (z_1q^{\pm 2} -z_2)(z_1q^{\mp 2} -q^{2m}z_2)(z_2 - q^{\mp 2}a)(z_2- q^{-2(m\mp 1)}a) \sum_{\substack{k\in I(j)\\k\leq i}} \eta_{\alpha,k,i}^\varepsilon(z_2) \xi_{m,j,k}^\pm(z_1)\,, \eea
for every $j\in J$ and every $i\in I(j)$. Now, assume that (\ref{eq:Kclaim}) holds for every pair in 
$$\left \{(j,i): j\in J, \, i\in I(j), \quad j> j_0 \right \} \cup \left \{(j_0,i): i\in I(j_0),\quad  i\leq i_0 \right \}\,,$$
 for some $j_0\in J$ and some $i_0\in I(j_0)$ such that $i_0<\max I(j_0)$. Let $i_0'$ be the smallest element of $I(j_0)$ such that $i_0<i_0'$. It suffices to write (\ref{eq:Kxideltasimp}) for $j=j_0$ and $i=i_0'$, to get
\bea &&(z_1-a)z_2(z_1 a- q^{2m} z_2^2) (q^{\mp 2} + q^{-2(m\mp 1)} -q^{\pm 2} -q^{-2(m\pm 1)}) \xi_{m,j_0,i_0'}^\pm(z_1) \nn\\
&&\qquad = - (z_1-q^{\pm 2} z_2) (z_1- q^{2(m\mp 1)}z_2)(z_2-q^{\pm 2} a) (z_2 - q^{-2(m\pm 1)}a)  \sum_{\substack{l\in J\\l> j_0}} \eta_{\beta,j_0,l}^\varepsilon (z_2) \xi_{m,l,i_0'}^\pm(z_1) \nn\\
&&\qquad + (z_1q^{\pm 2} -z_2)(z_1q^{\mp 2} -q^{2m}z_2)(z_2 - q^{\mp 2}a)(z_2- q^{-2(m\mp 1)}a)  \sum_{\substack{k\in I(j_0)\\k\leq i_0}} \eta_{\alpha,k,i_0'}^\varepsilon(z_2) \xi_{m,j_0,k}^\pm(z_1)\,.\eea
 Combining the recursion hypothesis and lemma \ref{lem:deltaid} from the appendix, one easily concludes that (\ref{eq:Kclaim}) holds for the pair $(j_0, i_0')$. Repeating the argument finitely many times, we get that it actually holds for all the pairs in $\left \{(j,i): j\in J, \, i\in I(j), \quad j\geq j_0 \right \}$. Now, either $j_0 = \min J$ and we are done; or $j_0>\min J$ and there exists a largest $j_0'\in J$ such that $j_0>j_0'$. Writing (\ref{eq:Kxideltasimp}) for $j=j_0'$ and $i=i_*(j_0')$, we get
\bea && (z_1-a)z_2(z_1 a- q^{2m} z_2^2) (q^{\mp 2} + q^{-2(m\mp 1)} -q^{\pm 2} -q^{-2(m\pm 1)}) \xi_{m,j_0',i_*(j_0')}^\pm(z_1)\nn\\
&& \qquad  \qquad = -(z_1-q^{\pm 2} z_2) (z_1- q^{2(m\mp 1)}z_2)(z_2-q^{\pm 2} a) (z_2 - q^{-2(m\pm 1)}a) \sum_{\substack{l\in J\\l\geq j_0}} \eta_{\beta,j_0',l}^\varepsilon (z_2) \xi_{m,l,i_*(j_0')}^\pm(z_1) \,.\nn\eea
Combining again the recursion hypothesis and lemma \ref{lem:deltaid}, we easily get that (\ref{eq:claim}) holds for $(j_0', i_*(j_0'))$. It is now clear that the claim holds for every $j\in J$ and every $i\in I(j)$. Letting $N=\max \{N(i,j): j\in J, \, i\in I(j)\}$, \ref{Kelweightii} follows. Furthermore, for every $b\in\F-\{a\}$ and every $n\in\N$, we obviously have $(z-b)^n M_\alpha \cap \Kbsf\pm{1,\pm m}(z).M_\beta \neq \{0\}$, thus making $a$ the only element of $\F$ satisfying \ref{Kelweightii}. This concludes the proof.
\end{proof}

\noi We let $\omega_1$ denote the fundamental weight of $\mathfrak a_1$ and we let $P=\Z \omega_1$ be the corresponding weight lattice. In view of proposition \ref{prop:Kellweight}, it is natural to make the following
\begin{defn}
\label{def:dominant}
Let $M$ be a type $(1,0)$ $\ell$-weight $\qdaff^0(\mathfrak a_1)$-module and let $\left \{M_\alpha : \alpha\in A\right \}$ be the countable set of its $\ell$-weight spaces. We shall say that $M$ is \emph{rational} if, for every $\alpha \in A$, there exist relatively prime monic polynomials $P_\alpha(1/z), Q_\alpha(1/z) \in \F[z^{-1}]$, called Drinfel'd polynomials of $M$, such that 
the $\ell$-weight $\kappa_\alpha^\pm(z)$ of $M_\alpha$ be given by
$$\kappa_\alpha^\pm(z) = -q^{\deg(P_\alpha) - \deg(Q_\alpha)} \left (\frac{P_\alpha(q^{-2}/z) Q_\alpha(1/z)}{P_\alpha(1/z) Q_\alpha(q^{-2}/z)}\right )_{|z|^{\pm 1} \ll 1}\,.$$
With each rational $\ell$-weight $\kappa_\alpha^\pm(z)$ of a rational $\qdaff^0(\mathfrak a_1)$-module $M$, we associate an integral weight $\lambda_\alpha\in P$, by setting 
$$\lambda_\alpha =\left [ \deg(P_\alpha) - \deg (Q_\alpha) \right ] \omega_1\,.$$ 
We shall say that $M$ is \emph{$\ell$-dominant} (resp. \emph{$\ell$-anti-dominant}) if it is rational and there exists $N\in\N^\times$ such that, for every $\alpha\in A$, $\deg (P_\alpha)= N$ and $\deg (Q_\alpha)=0$ (resp. $\deg (P_\alpha)= 0$ and $\deg (Q_\alpha)=N$).
\end{defn}
\begin{rem}
The classical weight $N\omega_1$ (resp. $-N\omega_1$) associated with any $\ell$-dominant (resp. $\ell$-anti-dominant) type $1$ $\ell$-weight rational $\qdaff^0(\mathfrak a_1)$-module $M$ is a dominant (resp. anti-dominant) integral weight. Note that the converse need not be true.
\end{rem}

\begin{rem}
\label{rem:qchar}
The data of the $\ell$-weights of a rational $\qdaff^0(\mathfrak a_1)$-module is equivalent to the data of its Drinfel'd polynomials $\left \{(P_\alpha, Q_\alpha):\alpha\in A\right \}$ which, in turn, is equivalent to the data of their finite multisets of roots $\left\{ (\nu_\alpha^+, \nu_\alpha^-):\alpha\in A\right \}$. The latter are finitely supported maps $\nu_\alpha^\pm : \F^\times \to \N$ such that, for every $\alpha\in A$,
$$P_\alpha (1/z) = \prod_{x\in\F^\times} \left (1-x/z\right )^{\nu_\alpha^+(x)} \qquad \mbox{and} \qquad Q_\alpha (1/z) = \prod_{x\in \F^\times} (1-x/z)^{\nu_\alpha^-(x)}\,.$$
Note that, in the above formulae, since $\nu_\alpha^\pm$ is finitely supported, the products only run through the finitely many numbers in the support $\supp(\nu_\alpha^\pm)$ of $\nu_\alpha^\pm$. Moreover, since $P_\alpha$ and $Q_\alpha$ are relatively prime for every $\alpha\in A$, we have $\supp(\nu_\alpha^+)\cap \supp(\nu_\alpha^-)=\emptyset$. We denote by $\N_f^{\F^\times}$, the set of finitely supported $\N$-valued maps over $\F^\times$. As is customary in the theory of $q$-characters, we associate with every $\ell$-weight given by a pair $(P_\alpha, Q_\alpha)$ of Drinfel'd polynomials or, equivalently, by a pair $(\nu_\alpha^+, \nu_\alpha^-)$ with $\nu_\alpha^+, \nu_\alpha^-\in\N_f^{\F^\times}$ and $\supp(\nu_\alpha^+)\cap \supp(\nu_\alpha^-)=\emptyset$,  a monomial
$$m_\alpha  = Y^{\nu^+-\nu^-}= \prod_{x\in\F^\times} Y_{x}^{\nu_\alpha^+(x) - \nu_\alpha^-(x)}\in \Z[Y_a, Y_a^{-1}]_{a\in\F^\times}\,.$$
\end{rem}

\begin{defn}\label{def:tdom}
Let $M$ be an $\ell$-dominant $\qdaff^0(\mathfrak a_1)$-module and let $M_\alpha$ and $M_\beta$ be any two $\ell$-weight spaces of $M$ with respective $\ell$-weights 
$$\kappa_\alpha^\pm(z) = -q^{\deg(P_\alpha)} \left (\frac{P_\alpha(q^{-2}/z) }{P_\alpha(1/z) }\right )_{|z|^{\pm 1} \ll 1}\qquad\qquad \mbox{and}\qquad \qquad 
\kappa_\beta^\pm(z) = -q^{\deg(P_\beta)} \left (\frac{P_\beta(q^{-2}/z) }{P_\beta(1/z) }\right )_{|z|^{\pm 1} \ll 1}\,,$$
where $P_\alpha(1/z), P_\beta(1/z) \in \F[z^{-1}]$ are two monic polynomials. By proposition \ref{prop:Kellweight}.\emph{i.}, if $M_\alpha \cap \Ksf\pm{1,\pm m}(z). M_\beta \neq \{0\}$ for some $m\in\N^\times$, then there exists a unique $a\in \F^\times$ such that 
$$\kappa_\alpha^\varepsilon(z) = \kappa_\beta^\varepsilon (z) H_{m, a}^\varepsilon(z)^{\pm 1} \,,$$
where $\varepsilon \in \{-,+\}$. We shall say that $M$ is \emph{$t$-dominant} if, under the same assumptions, we have, in addition, that 
$$P_\beta(1/aq^{-(m\pm m)}) = P_\beta(1/aq^{2-(m\pm m)}) =0\,.$$
\end{defn}

For every $a\in\F^\times$, we let  $\delta_a \in  \N_f^{\F^\times}$ be defined by 
$$\delta_a(x) = \begin{cases}
1 & \mbox{if $x=a$;}\\
0 &\mbox{otherwise.}
\end{cases}$$
For every $a\in\F^\times$, we let $\N_a^{\F^\times} = \left \{\nu \in \N_f^{\F^\times} : \{a, aq^2\} \subseteq \supp(\nu) \right \}$ and we define, for every $m\in\Z$, an operator $\Gamma_{m,a} : \N_{aq^{-2m}}^{\F^\times}\to \N_{a}^{\F^\times}$ by letting~\footnote{Although the definition of $\Gamma_{\pm 1,a}^{\pm 1}$ easily extends to $\left \{ \nu\in\N_{f}^{\F^\times} : aq^{-(1\pm 1)} \in \supp(\nu) \right \}$, we will not make use of that extension and exclusively regard $\Gamma_{\pm 1,a}^{\pm 1}$ as a map  $\N_{aq^{-2(1\pm 1)}}^{\F^\times}\to \N_{aq^{2(1\mp 1)}}^{\F^\times}$.}, for every $\nu\in\N_{aq^{-2m}}^{\F^\times}$,
$$\Gamma_{m,a}(\nu) = \nu - \delta_{aq^{-2m}}- \delta_{aq^{2-2m}} + \delta_{a} + \delta_{aq^2}  \,. $$
$\Gamma_{m,a}$ is obviously invertible, with inverse $\Gamma_{m,a}^{-1}:  \N_{a}^{\F^\times}\to \N_{aq^{-2m}}^{\F^\times}$ given by $\Gamma_{m,a}^{-1} = \Gamma_{-m,aq^{-2m}}$. Note that, for every $a\in\F^\times$, $\Gamma_{0,a} = \id$ over $\N_a^{\F^\times}$. Given two finite multisets $\nu, \nu'\in \N_f^{\F^\times}$, we we shall say that they are \emph{equivalent} and write $\nu\sim\nu'$ iff
\be\label{eq:multisetequiv}\nu = \Gamma_{m_1, a_1} \circ \dots \circ \Gamma_{m_n, a_n} (\nu')\,,\ee
for some $n\in\N$, $m_1, \dots, m_n\in\Z^n$ and some $a_1, \dots, a_n\in\F^\times$. In writing (\ref{eq:multisetequiv}), it is assumed that, for every $p= 2, \dots, n$, $\Gamma_{m_p, a_p} \circ \dots \circ \Gamma_{m_n, a_n} (\nu') \in \N_{a_{p-1}q^{-2m_{p-1}}}^{\F^\times}$. 
It is clear that $\sim$ is an equivalence relation and we denote by $[\nu]\in\N_f^{\F^\times}/\sim$ the equivalence class of $\nu$ in $\N_f^{\F^\times}$. Following remark \ref{rem:qchar}, we naturally extend the action of $\Gamma_{m,a}$ to $\Z[Y_b,Y_b^{-1}]_{b\in\F^\times}$, by setting
$$\Gamma_{m,a} (Y^\nu) = Y^{\Gamma_{m,a}(\nu)}\,.$$
The equivalence relation $\sim$ similarly extends from $\N_f^{\F^\times}$ to $\Z[Y_b,Y_b^{-1}]_{b\in\F^\times}$. Note that, setting 
$$H_{m,a} = Y_{aq^{-2m}}^{-1} Y_{aq^{2-2m}}^{-1} Y_a Y_{aq^2}\in \Z[Y_b,Y_b^{-1}]_{b\in \F^\times}\,,$$
for every $a\in\F^\times$ and every $m\in\Z$, we have, for every $\nu\in\N_a^{\F^\times}$
$$\Gamma_{m,a}(Y^\nu) = H_{m,a} Y^\nu\,.$$

\begin{cor}
\label{cor:Ynu}
Let $M$ be a simple $t$-dominant 
$\qdaff^0(\mathfrak a_1)$-module. Then there exists a multiset $\nu\in\N_f^{\F^\times}$ such that all the monomials associated with the $\ell$-weights of $M$ be in the equivalence class of $Y^\nu$. 
\end{cor}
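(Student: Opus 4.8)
Here is the plan.

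\medskip

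The plan is to fix a single $\ell$-weight space of $M$ and to show that the sum of all $\ell$-weight spaces whose associated monomial lies in its $\sim$-class is already a $\qdaff^0(\mathfrak a_1)$-submodule of $M$; simplicity of $M$ then forces it to be all of $M$, which is exactly the assertion.

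Since $M$ is $t$-dominant it is in particular $\ell$-dominant, so by definition \ref{def:dominant} we may write $M=\bigoplus_{\alpha\in A}M_\alpha$ as in definition \ref{def:qdaff0ellweight}, where each $M_\alpha$ has Drinfel'd polynomial a monic $P_\alpha$ of some \emph{fixed} degree $N$ (and $Q_\alpha=1$); write $\nu_\alpha\in\N_f^{\F^\times}$ for the multiset of roots of $P_\alpha$, so that, by remark \ref{rem:qchar}, the monomial attached to the $\ell$-weight of $M_\alpha$ is $Y^{\nu_\alpha}$ and the assignment $\kappa_\alpha^\pm(z)\mapsto P_\alpha\mapsto\nu_\alpha\mapsto Y^{\nu_\alpha}$ is a bijection onto the set of $\ell$-dominant $\ell$-weights. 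Fix $\alpha_0\in A$, set $\mathcal C=\{\alpha\in A: Y^{\nu_\alpha}\sim Y^{\nu_{\alpha_0}}\}$ and $M_{\mathcal C}=\bigoplus_{\alpha\in\mathcal C}M_\alpha$. By definition \ref{def:qdaff0}, $\qdaff^0(\mathfrak a_1)$ is generated by $\qdaff^{0,0}(\mathfrak a_1)$ together with $\{\Ksf\pm{1,\pm m,r}:m\in\N^\times,\ r\in\Z\}$, and each $M_\alpha$ is a $\qdaff^{0,0}(\mathfrak a_1)$-submodule of $M$; hence $M_{\mathcal C}$ is automatically $\qdaff^{0,0}(\mathfrak a_1)$-stable, and it suffices to show that it is stable under every $\Ksf\pm{1,\pm m,r}$ with $m\in\N^\times$.

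So take $v\in M_\alpha$ with $\alpha\in\mathcal C$ and expand $\Ksf\pm{1,\pm m,r}.v=\sum_{\beta\in A}w_\beta$ along the $\ell$-weight decomposition (a finite sum). For every $\beta$ with $w_\beta\neq 0$ we have $M_\beta\cap\Kbsf\pm{1,\pm m}(z).M_\alpha\neq\{0\}$, so proposition \ref{prop:Kellweight}.\emph{i.} produces a unique $a\in\F^\times$ with $\kappa_\beta^\varepsilon(z)=\kappa_\alpha^\varepsilon(z)\,H_{m,a}^\varepsilon(z)^{\pm1}$ for $\varepsilon\in\{+,-\}$, and the $t$-dominance hypothesis (definition \ref{def:tdom}) gives in addition $P_\alpha(1/aq^{-(m\pm m)})=P_\alpha(1/aq^{2-(m\pm m)})=0$. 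The heart of the argument, which I would carry out once and for all, is to translate this pair of facts into a multiset identity. For the upper sign, the two vanishings say precisely that $\{aq^{-2m},aq^{2-2m}\}\subseteq\supp(\nu_\alpha)$, i.e. $\nu_\alpha\in\N_{aq^{-2m}}^{\F^\times}$, so that $\Gamma_{m,a}(\nu_\alpha)$ is a well-defined element of $\N_f^{\F^\times}$ of the same cardinality $N$ as $\nu_\alpha$; and a direct computation of $-q^{N}P'(q^{-2}/z)/P'(1/z)$, with $P'$ the monic polynomial whose root multiset is $\Gamma_{m,a}(\nu_\alpha)$, shows its $\ell$-weight to be exactly $\kappa_\alpha^\varepsilon(z)H_{m,a}^\varepsilon(z)=\kappa_\beta^\varepsilon(z)$. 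By the bijection recalled above this forces $\nu_\beta=\Gamma_{m,a}(\nu_\alpha)$, hence $Y^{\nu_\beta}=H_{m,a}\,Y^{\nu_\alpha}\sim Y^{\nu_{\alpha_0}}$ and $\beta\in\mathcal C$. The lower sign is identical after replacing $\Gamma_{m,a}$ by its inverse $\Gamma_{m,a}^{-1}=\Gamma_{-m,aq^{-2m}}$ and $H_{m,a}$ by $H_{m,a}^{-1}$. Thus $w_\beta\in M_{\mathcal C}$ for every $\beta$, so $M_{\mathcal C}$ is a $\qdaff^0(\mathfrak a_1)$-submodule; since $M_{\alpha_0}\subseteq M_{\mathcal C}$ it is nonzero, and simplicity of $M$ gives $M_{\mathcal C}=M$. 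Taking $\nu=\nu_{\alpha_0}$ then finishes the proof.

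I expect the only genuine work to be bookkeeping: lining up the three independent sign choices — the $\pm$ in proposition \ref{prop:Kellweight}, the superscript $\pm1$ on $H_{m,a}^\varepsilon(z)$, and the $m\pm m$ shifts in definition \ref{def:tdom} — with the shifts $aq^{-2m},aq^{2-2m},a,aq^2$ defining $\Gamma_{m,a}$, and recording cleanly that equality of two $\ell$-dominant $\ell$-weights forces equality of the underlying root multisets (so that the computed $\ell$-weight of $\Gamma_{m,a}^{\pm1}(\nu_\alpha)$ really does pin down $\nu_\beta$). Nothing structural seems to be at stake beyond this.
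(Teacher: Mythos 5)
Your proof is correct and follows essentially the same route as the paper's: apply proposition \ref{prop:Kellweight}.\emph{i.}\ together with the $t$-dominance hypothesis to pin down $\nu_\beta=\Gamma_{m,a}^{\pm1}(\nu_\alpha)$ whenever $M_\beta\cap\Ksf\pm{1,\pm m,r}.M_\alpha\neq\{0\}$, then conclude by simplicity. Your explicit packaging---that the sum of $\ell$-weight spaces whose monomial lies in a fixed $\sim$-class is a $\qdaff^0(\mathfrak a_1)$-submodule---is a cleaner way to phrase the paper's terse final sentence, and you invoke the $t$-dominance condition uniformly for all $m\in\N^\times$ where the paper separately re-derives the $m>1$ case from $\ell$-dominance alone (mirroring lemma \ref{lem:tdom}); both are valid.
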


\begin{proof}
By proposition \ref{prop:Kellweight}, for any two $\ell$-weight spaces, $M_\alpha$ and $M_\beta$, of an $\ell$-dominant $\qdaff^0(\mathfrak a_1)$-module $M$, with respective $\ell$-weights 
$$\kappa_\alpha^\pm(z) = -q^{\deg(P_\alpha)} \left (\frac{P_\alpha(q^{-2}/z) }{P_\alpha(1/z) }\right )_{|z|^{\pm 1} \ll 1}\qquad\qquad \mbox{and}\qquad \qquad \kappa_\beta^\pm(z) = -q^{\deg(P_\beta)} \left (\frac{P_\beta(q^{-2}/z) }{P_\beta(1/z) }\right )_{|z|^{\pm 1} \ll 1}\,,$$
if $M_\alpha \cap \Ksf\pm{1,\pm m ,n}.M_\beta \neq \{0\}$ for some $m\in\N^\times$ and some $n\in\Z$, then we must have
\be\label{eq:PH}\frac{P_\alpha(q^{-2}/z) }{P_\alpha(1/z) }=  \frac{P_\beta(q^{-2}/z) }{P_\beta(1/z) }\left ( \frac{(1-q^{-2}a/z)(1-a/z)}{(1-a/z)(1-q^2a/z)}\frac{(1-q^{-2m}a/z)(1-q^{-2(m-1)}a/z)}{(1-q^{-2(m+1)} a/z) (1-q^{-2m}a/z)}\right )^{\pm 1}\,,\ee
for some $a\in\F^\times$. Now, assuming $m>1$, it is clear that:
\begin{itemize}
\item[-] for the upper choice of sign on the right hand side of the above equation, the last fraction line must completely cancel against factors in the first one, whereas the second one survives, eventually replacing the cancelled factors;
\item[-] for the lower choice of sign, the second fraction line must cancel against factors in the first one, whereas the last one survives, eventually replacing the cancelled factors.
\end{itemize}
If on the other hand $m=1$, since $M$ is $t$-dominant, we have, by definition, that $aq^{\mp 2}$ is a root of $P_\beta(1/z)$. In any case, denoting by $\nu_\alpha$ (resp. $\nu_\beta$) the multiset of roots of $P_\alpha(1/z)$ (resp. $P_\beta(1/z)$), it is clear that $\nu_\alpha \sim\nu_\beta$ and hence $Y^{\nu_\alpha} \sim Y^{\nu_\beta}$. Since $M$ is simple, there can be no non-zero $\ell$-weight space $M_\beta$ of $M$ such that $M_\alpha \cap  \Ksf\pm{1,\pm m, n}.M_\beta =\{0\}$ for every $\ell$-weight space $M_\alpha$ of $M$, every $m\in\N^\times$ and every $n\in\Z$. 
\end{proof}

\noi In view of definition-proposition \ref{defprop:CKeigen}, we can make the following
\begin{defn}
For every monic polynomial $P(1/z)\in \F[z^{-1}]$, denote by $\F_P$ the one-dimensional $\qdaff^{0,0}(\mathfrak a_1)$-module such that
$$\Kbsf\pm{1,0}(z) .v = -q^{\deg(P)} \left (\frac{P(q^{-2}/z)}{P(1/z)}\right )_{|z|^{\pm 1} \ll 1} v\,,$$
for every $v\in \F_P$. There exists a universal $\qdaff^0(\mathfrak a_1)$-module $M^0(P) \cong \qdaff^{0}(\mathfrak a_1) \underset{\qdaff^{0,0}(\mathfrak a_1)}{\widehat\otimes} \F_P$ that admits the $\ell$-weight associated with $P$. Denoting by $N^0(P)$ the maximal $\qdaff^0(\mathfrak a_1)$-submodule of $M^0(P)$ such that $N^0(P)\cap \F_P= \{0\}$, we define the unique -- up to isomorphisms -- simple $\qdaff^0(\mathfrak a_1)$-module $L^0(P)= M^0(P)/N^0(P)$.
\end{defn}
\begin{prop}
\label{prop:simpleelldomuq0mods}
For every simple $\ell$-dominant $\qdaff^0(\mathfrak a_1)$-module $M$, there exists a monic polynomial $P(1/z)\in \F[z^{-1}]$ such that $M\cong L^0(P)$.
\end{prop}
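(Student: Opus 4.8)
The plan is to realize $M$ as the simple head of the universal module $M^0(P)$ attached to the Drinfel'd polynomial $P$ of \emph{any one} of the $\ell$-weight spaces of $M$. This is the familiar argument identifying the simple quotient of a Verma-type module; the only genuine work is to manufacture a cyclic vector on which $\qdaff^{0,0}(\mathfrak a_1)$ acts exactly as on $\F_P$.

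First I would record that $\qdaff^{0,0}(\mathfrak a_1)$ acts on $M$ by pairwise commuting operators. Since $M$ is $\ell$-dominant it is in particular of type $(1,0)$, so $\Csf^{1/2}$ and $\csf\pm0$ act by $\id$ and $\csf\pm{\pm m}$ by $0$ for $m\ge1$; combining the centrality of the $\cbsf\pm(z)$ with the $m=n=0$ instances of relations (\ref{eq:K+K+}) and (\ref{eq:K+K-}), in which the polynomial prefactors on the two sides coincide once $\Csf$ acts by $\id$, gives the commutativity. Now fix an $\ell$-weight space $M_\alpha$ of $M$ (one exists since $M\ne\{0\}$); by definition \ref{def:dominant} its Drinfel'd polynomials are $(P_\alpha,1)$, and I set $P:=P_\alpha$. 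By definition-proposition \ref{defprop:CKeigen} some nonzero vector of $M_\alpha$ lies in a finite-dimensional $\qdaff^{0,0}(\mathfrak a_1)$-submodule $\tilde M_\alpha$ on which each $\Ksf\pm{1,0,\pm m}$ has the single eigenvalue $\kappa^\pm_{\alpha,\pm m}\in\F$, so that the operators $\Ksf\pm{1,0,\pm m}-\kappa^\pm_{\alpha,\pm m}\,\id$ are commuting and nilpotent on the finite-dimensional space $\tilde M_\alpha$ and therefore admit a common kernel vector $v\ne0$. For that $v$ one has $\Ksf\pm{1,0,\pm m}.v=\kappa^\pm_{\alpha,\pm m}v$ for all $m$, together with $\Csf^{1/2}.v=v$, $\csf\pm0.v=v$ and $\csf\pm{\pm m}.v=0$ for $m\ge1$; hence the cyclic $\qdaff^{0,0}(\mathfrak a_1)$-module $\qdaff^{0,0}(\mathfrak a_1).v=\F v$ is isomorphic to $\F_P$.

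Next, since $M$ is simple, $v$ generates it over $\qdaff^0(\mathfrak a_1)$, so the $\qdaff^{0,0}(\mathfrak a_1)$-module embedding $\F_P\hookrightarrow M$, $1\mapsto v$, extends, by the universal property of the (completed) induced module $M^0(P)=\qdaff^0(\mathfrak a_1)\underset{\qdaff^{0,0}(\mathfrak a_1)}{\widehat\otimes}\F_P$, to a surjective $\qdaff^0(\mathfrak a_1)$-module homomorphism $\pi:M^0(P)\to M$ restricting to an isomorphism $\F_P\isom\F v$. Its kernel $\ker\pi$ is proper and satisfies $\ker\pi\cap\F_P=\{0\}$ (otherwise $\F_P\subseteq\ker\pi$, forcing $v=\pi(1)=0$), so $\ker\pi\subseteq N^0(P)$; as $M\cong M^0(P)/\ker\pi$ is simple, $\ker\pi$ is a maximal proper submodule, and since $N^0(P)$ is proper this forces $\ker\pi=N^0(P)$. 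Hence $M\cong M^0(P)/N^0(P)=L^0(P)$.

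The step I expect to be the main obstacle is the construction of $v$: one must upgrade the merely \emph{generalized}-eigenspace information of definition-proposition \ref{defprop:CKeigen} to an honest simultaneous eigenvector whose $\qdaff^{0,0}(\mathfrak a_1)$-span is exactly $\F_P$ and not a larger indecomposable module. This succeeds precisely because the relevant $\ell$-weight eigenvalues already lie in $\F$ and $\qdaff^{0,0}(\mathfrak a_1)$ acts commutatively. The remaining points — the universal property of induction and the compatibility of the various submodules with the ambient (ultrametrizable) topology — are routine.
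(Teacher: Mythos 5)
Your proof is correct and follows the same route as the paper's (very terse) argument: pick a genuine simultaneous eigenvector $v$ realizing the $\ell$-dominant $\ell$-weight, observe $\qdaff^{0,0}(\mathfrak a_1).v\cong\F_P$, induce, and identify the kernel of the resulting surjection with $N^0(P)$. You merely supply the details the paper elides — namely the nilpotent-commuting-operator argument upgrading the generalized eigenvector of definition-proposition \ref{defprop:CKeigen} to an honest one, and the standard identification $\ker\pi=N^0(P)$ via the universal property of $M^0(P)$.
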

\begin{proof}
Obviously, for every $v\in M-\{0\}$, we have $M\cong \qdaff^0(\mathfrak a_1).v$. Now since $M$ is $\ell$-dominant, $v$ can be chosen as an $\ell$-weight vector, \ie
$$\Kbsf\pm{1,0}(z). v = -q^{\deg(P)} \left (\frac{P(q^{-2}/z)}{P(1/z)}\right)_{|z|^{\pm 1}\ll 1} v$$
for some monic polynomial $P(1/z)\in\F[z^{-1}]$.
\end{proof}
\begin{rem}
The above proof makes it clear that if $\{P_\alpha:\alpha\in A\}$ is the set of Drinfel'd polynomials of a simple $\ell$-dominant $\qdaff^0(\mathfrak a_1)$-module $M$, then, for every $\alpha\in A$, $M\cong L^0(P_\alpha)$.
\end{rem}
\begin{thm}
\label{thm:elldomtdom}
For every monic polynomial $P(1/z)\in \F[z^{-1}]$, $L^0(P)$ is $t$-dominant.
\end{thm}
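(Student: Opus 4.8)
The plan is to prove $t$-dominance of $L^0(P)$ by explicitly tracking how the $\ell$-weight spaces of $L^0(P)$ arise from the highest $\ell$-weight vector under the action of the generators $\Ksf\pm{1,\pm m,n}$, and then verifying directly that the root-configuration condition of definition \ref{def:tdom} is automatically satisfied. First I would observe that, by proposition \ref{prop:simpleelldomuq0mods} and the remark following it, $L^0(P)$ is spanned by vectors obtained by acting on a highest $\ell$-weight vector $v$ (satisfying $\Kbsf\pm{1,0}(z).v = -q^{\deg P}(P(q^{-2}/z)/P(1/z))_{|z|^{\pm1}\ll1}\,v$) with arbitrary monomials in the $\Ksf\pm{1,\pm m,n}$, $m\in\N^\times$, $n\in\Z$ (the $\csf\pm{\pm k}$ act as scalars since $L^0(P)$ is of type $(1,0)$). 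The key structural input is relation (\ref{eq:K+X-KK})... more precisely the relations (\ref{eq:K+K+}), (\ref{eq:K+K-}), which govern how $\Kbsf\pm{1,\pm m}(z)$ interacts with $\Kbsf\pm{1,0}(w)$ and hence determine the $\ell$-weight of any vector $\Kbsf\pm{1,\pm m}(z).w$ in terms of that of $w$: this is exactly the computation already carried out in the proof of proposition \ref{prop:Kellweight}, which shows $\kappa_\alpha^\varepsilon(z) = \kappa_\beta^\varepsilon(z) H_{m,a}^\varepsilon(z)^{\pm1}$ with $a$ the unique pole/zero locating the $\delta$-function support of the transition coefficient.

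Next I would reduce to the case $m=1$, since $t$-dominance only imposes a condition when $M_\alpha\cap\Ksf\pm{1,\pm m}(z).M_\beta\neq\{0\}$, and for $m\geq 2$ the condition $P_\beta(1/aq^{-(m\pm m)}) = P_\beta(1/aq^{2-(m\pm m)})=0$ will follow from the $m=1$ case together with the recursive structure: the generators $\Kbsf\pm{1,\pm m}(z)$ for $m\geq 2$ can, via relations (\ref{eq:deftp})--(\ref{eq:deftm}) and the bracket relations (\ref{eq:psipsimm}) expressing $\tbsf+{1,m+1}$ through brackets of $\tbsf+{1,1}$ and $\tbsf+{1,m}$, be generated iteratively from $\Kbsf\pm{1,\pm 1}(z)$ and $\Kbsf\pm{1,0}(z)$. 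So it suffices to show: whenever $\Kbsf\pm{1,\pm 1}(z).w$ contributes a new $\ell$-weight vector $w'$ with the $\ell$-weight ratio located at $a\in\F^\times$ (so that, unwinding (\ref{eq:ellweightsnullroot}) with $m=1$, $\kappa_{w'}^\varepsilon(z)/\kappa_w^\varepsilon(z)$ involves the factors at $aq^{-2},aq^2$ for the $+$ choice, at $a, aq^{-4}$ for the relevant signs), the number $aq^{\mp2}$ — equivalently $aq^{-(1\pm1)}$ and $aq^{2-(1\pm1)}$ — is a root of $P_w(1/z)$, where $P_w$ is the Drinfel'd polynomial of the $\ell$-weight space containing $w$. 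The cleanest way is to examine the defining relation (\ref{eq:X+X-KK}) combined with (\ref{eq:K+X-}): expressing $\Kbsf+{1,1}(z)$ via $\tbsf+{1,1}(z)$ (equivalently via $\eb+(z)$ through the homomorphism $f$ of proposition \ref{prop:Hall}) and using that $\tbsf+{1,1}$ shifts $\ell$-weights by a factor $H_{1,a}^\varepsilon$, I would argue that such a shift can be nonzero on $v$ only if $\Kbsf+{1,0}(z)$ already has the appropriate zero, because otherwise the putative new vector, when hit back by a lowering-type generator or by $\Kbsf-{1,-1}$, would generate a proper nonzero submodule — contradicting simplicity, just as in the concluding paragraph of the proof of theorem \ref{thm:wffdqaffsl2}.

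Concretely, the main argument I would carry out is: let $v$ be the highest $\ell$-weight vector with Drinfel'd polynomial $P$; by the remark after proposition \ref{prop:simpleelldomuq0mods} every $\ell$-weight space of $L^0(P)$ is isomorphic (as $\qdaff^{0,0}$-module, up to the block structure) and has Drinfel'd polynomial in the $\sim$-class of $P$; and corollary \ref{cor:Ynu} (once we know $t$-dominance, but here I want the converse direction, so I must instead prove the root condition \emph{before} invoking that corollary) shows the monomials are all $\sim$-equivalent. So the correct logical order is: (1) take any $\ell$-weight space $M_\beta$ of $L^0(P)$ with Drinfel'd polynomial $P_\beta$, and any $M_\alpha$ with $M_\alpha\cap\Ksf\pm{1,\pm m,n}.M_\beta\neq\{0\}$; (2) by proposition \ref{prop:Kellweight}.i, get the unique $a$ with $\kappa_\alpha^\varepsilon(z)=\kappa_\beta^\varepsilon(z)H_{m,a}^\varepsilon(z)^{\pm1}$; (3) examine which cancellations this forces in the ratio $P_\alpha(q^{-2}/z)P_\alpha(1/z)^{-1} = P_\beta(q^{-2}/z)P_\beta(1/z)^{-1}\cdot(\text{explicit rational function in }a,z)^{\pm1}$ — exactly equation (\ref{eq:PH}) in the proof of corollary \ref{cor:Ynu}; (4) for $m\geq 2$ the cancellation forced by $P_\alpha$ being a polynomial already gives that $aq^{-(m\pm m)}$ and $aq^{2-(m\pm m)}$ are roots of $P_\beta$; (5) the genuinely new content is $m=1$, where (\ref{eq:PH}) degenerates and I must invoke simplicity: if $aq^{\mp2}$ were not a root of $P_\beta(1/z)$, then one checks using (\ref{eq:K+X-}) and (\ref{eq:X+X-KK}) that the vector realizing $M_\alpha\cap\Kbsf\pm{1,\pm1}(z).M_\beta$ is killed by the raising operators $\Xsf+{1,r,s}$ modulo lower terms, hence generates a proper submodule not meeting $\F_P$, forcing it to be zero — contradicting $M_\alpha\cap\Ksf\pm{1,\pm1,n}.M_\beta\neq\{0\}$. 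I expect step (5), pinning down the $m=1$ case via a submodule/simplicity argument and carefully controlling the raising action on these $\ell$-weight vectors through relation (\ref{eq:X+X-KK}), to be the main obstacle, since it requires a genuine representation-theoretic (not merely combinatorial) input and careful bookkeeping of the $\delta$-function supports appearing in (\ref{eq:X+X-KK}).
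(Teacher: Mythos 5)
Your overall framing is sound: you correctly observe that for $m\geq 2$ the cancellation forced by equation (\ref{eq:PH}) already delivers the vanishing $P_\beta(1/aq^{-(m\pm m)}) = P_\beta(1/aq^{2-(m\pm m)})=0$ ``for free'', and that $m=1$ is the only place where genuine new input is needed. This is exactly the content of lemma \ref{lem:tdom}, so steps (1)--(4) of your plan essentially reduce theorem \ref{thm:elldomtdom} to the hypothesis of that lemma, which is the correct reduction.

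The gap is in step (5). You propose to close the $m=1$ case by a simplicity-plus-raising-operator argument, invoking relations (\ref{eq:K+X-}) and (\ref{eq:X+X-KK}) and the ``killed by raising, hence generates a proper submodule'' mechanism from the end of the proof of theorem \ref{thm:wffdqaffsl2}. This does not transfer to the present setting for a structural reason: $L^0(P)$ is a simple module over the subalgebra $\qdaff^0(\mathfrak a_1)$, which contains no $\Xsf+{1,r,s}$ and no triangular decomposition. Simplicity of $L^0(P)$ is simplicity as a $\qdaff^0(\mathfrak a_1)$-module, so the only submodules available are $\qdaff^0(\mathfrak a_1)$-submodules, and being ``killed by raising operators'' is irrelevant to whether a vector generates such a submodule. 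Passing to $\mathcal L(L^0(P))$ does not help either: there $L^0(P)$ is a highest $t$-weight space, so \emph{every} vector of it is killed by raising operators, and $\mathcal L(L^0(P))$ is simple as a $\qdaff(\mathfrak a_1)$-module, so no nonzero vector generates a proper submodule. There is no proper-submodule contradiction available, and ``modulo lower terms'' makes the alleged highest-weight property fail anyway. So the engine you name for step (5) does not fire.

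The paper's route to the $m=1$ constraint is constructive rather than abstract. In section \ref{Sec:ev} it realizes $L^0(P)$ concretely as (the simple quotient of) the highest $t$-weight space $\tilde M_0\cong\left (\widehat{\mathcal H}_t^+ \otimes \F v \otimes \widehat{\mathcal H}_t^-\right )/\mathcal J.\left (\widehat{\mathcal H}_t^+ \otimes \F v \otimes \widehat{\mathcal H}_t^-\right )$ of an evaluation module. One then computes $\ev(\Kbsf\pm{1,\pm m}(z)).(1\otimes v\otimes 1)$ explicitly: the factor $\kk+1-\kk-1$ acting on $v$ produces, through the partial-fraction decomposition of $P(t^{-4}/z)/P(1/z)$, a finite sum of $\delta$-derivatives supported at the roots of $P$, while the dressing factors $\Hbf\pm{m}(z)$ supply the $\ell$-weight shift $H^\pm_{m,a}$. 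The $m=1$ constraint then drops out of the single relation (\ref{eq:ideal}) generating the ideal $\mathcal J$: applying it to $1\otimes v\otimes 1$ gives an exact identity between the $\Hbf+{}$- and $\Hbf-{}$-dressed terms supported at the various roots, and comparison of the orders of the $\delta$-derivatives forces the ``new'' $\ell$-weight space to intersect $\ev(\Kbsf-{1,-1}(z)).(1\otimes v\otimes 1)$ only when $a$ lies in the set $\{x:\nu(xt^{-4})>\nu(x)>\nu(xt^4)\}$, i.e.\ only when $P(1/a)=0$. That is the representation-theoretic input you correctly suspected was needed; it comes from the explicit model and the ideal $\mathcal J$, not from any simplicity or raising-operator argument, and I do not see how to supply it abstractly along the lines you sketch.
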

\begin{proof}
We postpone the proof of this theorem until section \ref{Sec:ev}, where we construct $L^0(P)$ for every $P$ and directly check that it is indeed $t$-dominant.
\end{proof}

\begin{prop}
Any topological $\widehat{\qdaff^0(\mathfrak a_1)}$-module pulls back to a module over the elliptic Hall algebra $\mathcal E_{q^{-4}, q^2,q^2}$.
\end{prop}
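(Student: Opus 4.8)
The plan is to obtain the elliptic Hall algebra module structure by pullback along the homomorphism $f$ constructed in proposition~\ref{prop:Hall}, after taking care that its codomain is the closed subalgebra $\widehat{\qdaff^{0^+}(\mathfrak a_1)}$ rather than the whole of $\widehat{\qdaff^0(\mathfrak a_1)}$.

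First I would recall that proposition~\ref{prop:Hall} provides a continuous Hopf algebra homomorphism $f:\widehat{\mathcal E_{q^{-4}, q^2,q^2}} \to \widehat{\qdaff^{0^+}(\mathfrak a_1)}$ and that, by definition~\ref{def:uq0+}, $\widehat{\qdaff^{0^+}(\mathfrak a_1)}$ is a closed subalgebra of $\widehat{\qdaff^0(\mathfrak a_1)}$. Composing $f$ on the source with the canonical homomorphism $\mathcal E_{q^{-4}, q^2,q^2}\to\widehat{\mathcal E_{q^{-4}, q^2,q^2}}$ and on the target with the inclusion $\widehat{\qdaff^{0^+}(\mathfrak a_1)}\hookrightarrow\widehat{\qdaff^0(\mathfrak a_1)}$ then yields an algebra homomorphism
$$\tilde f:\mathcal E_{q^{-4}, q^2,q^2}\longrightarrow\widehat{\qdaff^0(\mathfrak a_1)}\,.$$

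Given any topological $\widehat{\qdaff^0(\mathfrak a_1)}$-module $M$, I would then equip $M$ with the $\mathcal E_{q^{-4}, q^2,q^2}$-action defined by $e\cdot v:=\tilde f(e).v$ for $e\in\mathcal E_{q^{-4}, q^2,q^2}$ and $v\in M$; since $\tilde f$ is an algebra homomorphism, the module axioms hold automatically, so that $M$ becomes an $\mathcal E_{q^{-4}, q^2,q^2}$-module. If one further wants the pulled-back module to be topological, it suffices to observe that $\tilde f$ is continuous, being a composite of continuous maps. There is no genuine obstacle in this argument; the only subtlety worth spelling out is precisely that the codomain of $f$ is the closed subalgebra $\widehat{\qdaff^{0^+}(\mathfrak a_1)}$, so that one first restricts the given $\widehat{\qdaff^0(\mathfrak a_1)}$-action to $\widehat{\qdaff^{0^+}(\mathfrak a_1)}$ before pulling it back along $f$.
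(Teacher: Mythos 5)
Your proof is correct and takes essentially the same approach as the paper: compose the Hopf algebra homomorphism $f$ from proposition~\ref{prop:Hall} with the inclusion of $\qdaff^{0^+}(\mathfrak a_1)$ into $\widehat{\qdaff^0(\mathfrak a_1)}$, and pull back the module action along the resulting algebra homomorphism. The paper's proof is just a one-line version of yours.
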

\begin{proof}
It suffices to make use of the Hopf algebra homomorphism 
$${\mathcal E_{q^{-4}, q^2,q^2}} \stackrel{f}{\longrightarrow} {\qdaff^{0^+}(\mathfrak a_1)} \hookrightarrow \widehat{\qdaff^0(\mathfrak a_1)}\,,$$
where $f$ is defined in proposition \ref{prop:Hall} and the second arrow is the canonical injection into $ \widehat{\qdaff^0(\mathfrak a_1)}$ of its Hopf subalgebra ${\qdaff^{0^+}(\mathfrak a_1)}$ -- see proposition \ref{def:uq0+}.
\end{proof}
\begin{rem}
It is worth mentioning that, as an example of the above proposition, $\ell$-anti-dominant $\qdaff^0(\mathfrak a_1)$-modules pullback to a family of ${\mathcal E_{q^{-4}, q^2,q^2}}$-modules that were recently introduced in \cite{diFrancescoKedem}. It might be interesting to investigate further the class of ${\mathcal E_{q^{-4}, q^2,q^2}}$-modules obtained by pulling back other (rational) $\qdaff^0(\mathfrak a_1)$-modules.
\end{rem}

We conclude the present subsection by proving the following
\begin{lem}
\label{lem:tdom}
Let $M$ be an $\ell$-dominant $\qdaff^0(\mathfrak a_1)$-module. Suppose that, for any two $\ell$-weight spaces $M_\alpha$ and $M_\beta$ of $M$, with respective $\ell$-weights $\kappa_\alpha^\pm(z)$ and $\kappa_\beta^\pm(z)$, such that $M_\alpha \cap \Kbsf\pm{1,\pm 1}(z).M_\beta \neq \{0\}$, the unique $a\in \F^\times$ such that $\kappa_\alpha^\varepsilon(z) = \kappa_\beta^\varepsilon (z) H_{1, a}^\varepsilon(z)^{\pm 1}$, for every $\varepsilon \in \{-,+\}$, and $(z-a)^N M_\alpha \cap \Kbsf\pm{1,\pm 1}(z). M_\beta = \{0\}$ for some $N\in \N^\times$ -- see proposition \ref{prop:Kellweight} -- also satisfies $P_\beta(1/a)=0$. Then $M$ is $t$-dominant.
\end{lem}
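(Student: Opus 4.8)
The plan is to translate $t$-dominance into a statement about the root multisets of the Drinfel'd polynomials and to notice that, once $M$ is $\ell$-dominant, every condition of definition~\ref{def:tdom} attached to $\Kbsf\pm{1,\pm m}(z)$ with $m\geq 2$ holds automatically, while for $m=1$ exactly one of the two required vanishings is automatic and the other is the hypothesis.

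Fix $m\in\N^\times$ and $\ell$-weight spaces $M_\alpha,M_\beta$ with $M_\alpha\cap\Kbsf\pm{1,\pm m}(z).M_\beta\neq\{0\}$, set $\sigma=+1$ in the $\Kbsf+{1,m}$ case and $\sigma=-1$ in the $\Kbsf-{1,-m}$ case, and let $a\in\F^\times$ be the unique scalar of proposition~\ref{prop:Kellweight}, so that $\kappa_\alpha^\varepsilon(z)=\kappa_\beta^\varepsilon(z)\,H_{m,a}^\varepsilon(z)^{\sigma}$ for $\varepsilon\in\{-,+\}$. Write $P_\gamma(1/z)=\prod_{x\in\F^\times}(1-x/z)^{\nu_\gamma(x)}$ for $\gamma\in\{\alpha,\beta\}$ and let $N$ be the common value $\deg P_\gamma$ provided by $\ell$-dominance, so that $\kappa_\gamma^\pm(z)=-q^{N}\prod_{x}\big((1-q^{-2}x/z)/(1-x/z)\big)^{\nu_\gamma(x)}$; the same product presents $(H_{m,a}^\pm(z))^{\sigma}$ once the multiset $\nu_\gamma$ is replaced by the finitely supported integer-valued function $\sigma\big(\delta_a+\delta_{aq^2}-\delta_{aq^{-2m}}-\delta_{aq^{2-2m}}\big)$, i.e. the exponent vector of the Laurent monomial $H_{m,a}^{\sigma}$. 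Both sides of $\kappa_\alpha^\varepsilon=\kappa_\beta^\varepsilon(H_{m,a}^\varepsilon)^{\sigma}$ being expansions of the same rational functions, this is an identity in $\F(z)$; comparing the power sums, which are recovered from the logarithmic derivatives because $q^{2k}\neq 1$ for all $k\in\N^\times$ (recall $1\notin q^{\Z^\times}$), and invoking Newton's identities, one obtains the identity of finitely supported integer-valued functions on $\F^\times$
\be\label{eq:tdomnu} \nu_\alpha=\nu_\beta+\sigma\big(\delta_a+\delta_{aq^2}-\delta_{aq^{-2m}}-\delta_{aq^{2-2m}}\big)\,. \ee

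Since $\nu_\alpha$ is a genuine (nonnegative) multiset, the coordinates of $\nu_\beta$ that are decreased in (\ref{eq:tdomnu}) must be positive. If $m\geq 2$ the four points $a,aq^2,aq^{-2m},aq^{2-2m}$ are pairwise distinct (again because $1\notin q^{\Z^\times}$), so (\ref{eq:tdomnu}) forces, for $\sigma=+1$, $\nu_\beta(aq^{-2m}),\nu_\beta(aq^{2-2m})\geq1$, i.e. $P_\beta(1/aq^{-2m})=P_\beta(1/aq^{2-2m})=0$, which is the condition of definition~\ref{def:tdom} for $\Kbsf+{1,m}$; symmetrically, for $\sigma=-1$ it forces $P_\beta(1/a)=P_\beta(1/aq^2)=0$, as required for $\Kbsf-{1,-m}$. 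If $m=1$ the point $aq^{2-2m}$ coincides with $a$, the $\pm\delta_a$ terms cancel, and (\ref{eq:tdomnu}) becomes $\nu_\alpha=\nu_\beta+\sigma(\delta_{aq^2}-\delta_{aq^{-2}})$; for $\sigma=+1$, nonnegativity of $\nu_\alpha$ yields only $P_\beta(1/aq^{-2})=0$, while the remaining condition $P_\beta(1/a)=0$ of definition~\ref{def:tdom} is precisely the hypothesis applied to the pair $(M_\alpha,M_\beta)$ for $\Kbsf+{1,1}$, the scalar $a$ being the same since it is uniquely characterised in proposition~\ref{prop:Kellweight}; for $\sigma=-1$ one gets $P_\beta(1/aq^2)=0$ automatically and $P_\beta(1/a)=0$ from the hypothesis for $\Kbsf-{1,-1}$. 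In every case the requirements of definition~\ref{def:tdom} are met, so $M$ is $t$-dominant.

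The only step requiring care is the derivation of (\ref{eq:tdomnu}) from the rational-function identity: although $\nu_\beta+\sigma(\delta_a+\delta_{aq^2}-\delta_{aq^{-2m}}-\delta_{aq^{2-2m}})$ need not be a priori an honest multiset, the assignment $\nu\mapsto\prod_{x}\big((1-q^{-2}x/z)/(1-x/z)\big)^{\nu(x)}$ is injective on finitely supported integer-valued functions over a characteristic-$0$ field with $q$ not a root of unity — its logarithmic derivative encodes, up to the nonzero scalars $1-q^{-2k}$, all the power sums $\sum_x\nu(x)x^k$, and a finitely supported function is determined by its power sums — so the right-hand side of (\ref{eq:tdomnu}) must coincide with the honest multiset $\nu_\alpha$; it is exactly this forced nonnegativity that yields the vanishings, the rest being the bookkeeping of which $\delta$-terms survive the coincidence $aq^{2-2m}=a$ at $m=1$.
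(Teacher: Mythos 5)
Your proof is correct and follows essentially the same approach as the paper: you compare $\kappa_\alpha^\varepsilon=\kappa_\beta^\varepsilon(H_{m,a}^\varepsilon)^{\pm 1}$ at the level of Drinfel'd data, observe that for $m\geq 2$ the four shift points $a,aq^2,aq^{-2m},aq^{2-2m}$ are distinct so nonnegativity of the resulting multiset forces both required vanishings automatically, while for $m=1$ the coincidence $aq^{2-2m}=a$ leaves one vanishing forced by nonnegativity and the other supplied exactly by the hypothesis. The only cosmetic difference is that you phrase the pole/zero bookkeeping as an identity of finitely supported integer-valued functions $\nu_\alpha=\nu_\beta+\sigma(\delta_a+\delta_{aq^2}-\delta_{aq^{-2m}}-\delta_{aq^{2-2m}})$ rather than as cancellations across fraction lines, and you spell out the injectivity argument (via log-derivatives and power sums) that the paper leaves implicit; both are the same calculation in slightly different clothes.
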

\begin{proof}
Let $M$ be as above and let $M_\alpha$ and $M_\beta$ be two $\ell$-weight spaces of $M$ with respective $\ell$-weights
$$\kappa_\alpha^\pm(z) = -q^{\deg(P_\alpha)} \left (\frac{P_\alpha(q^{-2}/z) }{P_\alpha(1/z) }\right )_{|z|^{\pm 1} \ll 1}\qquad\qquad \mbox{and}\qquad \qquad \kappa_\beta^\pm(z) = -q^{\deg(P_\beta)} \left (\frac{P_\beta(q^{-2}/z) }{P_\beta(1/z) }\right )_{|z|^{\pm 1} \ll 1}\,.$$
Suppose that $M_\alpha \cap \Kbsf\pm{1,\pm m}(z).M_\beta \neq \{0\}$ for some $m\in\N^\times$. If $m>1$, writing down $\kappa_\alpha^\varepsilon(z) = \kappa_\beta^\varepsilon (z) H_{m, a}^\varepsilon(z)^{\pm 1}$, we obtain equation (\ref{eq:PH}) as in the proof of corollary \ref{cor:Ynu}. By the same discussion as the one following equation (\ref{eq:PH}), we conclude that $P_\beta(1/aq^{-(m\pm m)}) = P_\beta(1/aq^{2-(m\pm m)}) =0$, as needed -- see definition \ref{def:tdom}. Finally, if $m=1$, writing down $\kappa_\alpha^\varepsilon(z) = \kappa_\beta^\varepsilon (z) H_{1, a}^\varepsilon(z)^{\pm 1}$, we obtain
$$\frac{P_\alpha(q^{-2}/z) }{P_\alpha(1/z) }=  \frac{P_\beta(q^{-2}/z) }{P_\beta(1/z) }\left( \frac{(1-a/z)}{(1-q^2a/z)}\frac{(1-q^{-2}a/z)}{(1-q^{-4} a/z)}\right )^{\pm 1}\,.$$
Then, it is clear that:
\begin{itemize}
\item for the upper choice of sign on the right hand side of the above equation, the last fraction line must completely cancel against factors in the first one, whereas the second one survives, eventually replacing the cancelled factors;
\item for the lower choice of sign on the right hand side of the above equation, the second fraction line must completely cancel against factors in the first one, whereas the last one survives, eventually replacing the cancelled factors.
\end{itemize}
In any case, it follows that $P_\beta(1/aq^{\mp 2})=0$. But by our assumptions on $M$, we also have that $P_\beta(1/a)=0$ and the $t$-dominance of $M$ follows -- see definition \ref{def:tdom}.
\end{proof}

\subsection{$t$-weight $\qdaff(\mathfrak a_1)$-modules}
\begin{defn}
For every $N\in\N^\times$, we shall say that a (topological) module $M$ over $\qdaff'(\mathfrak a_1)$ is of \emph{type~$(1,N)$} if:
\begin{enumerate}
\item[i.] $\Csf^{\pm 1/2}$ acts as $\id$ on $M$;
\item[ii.] $\csf\pm{\pm m}$ acts by multiplication by $0$ on $M$, for every $m\geq N$.
\end{enumerate}
We shall say that $M$ is of \emph{type~$(1,0)$} if points i. and ii. above hold for every $m>0$ and, in addition, $\csf\pm{0}$ acts as $\id$ on $M$.
\end{defn}
\begin{rem}
Let $N\in\N$. Then the $\qdaff'(\mathfrak a_1)$-modules of type $(1,N)$ are in one-to-one correspondence with the $\qdaff(\mathfrak a_1)^{(N)}/(\Csf^{1/2}-1)$-modules -- see section \ref{rem:qdafftopology} for a definition of $\qdaff(\mathfrak a_1)^{(N)}$. Obviously $\qdaff(\mathfrak a_1)$-modules of type $(1,0)$ descend to modules over the double quantum loop algebra of type $\mathfrak a_1$, $\qdloop(\mathfrak a_1)$.
\end{rem}

\begin{defn}
We shall say that a (topological) $\qdaff(\mathfrak a_1)$-module $M$ is a \emph{$t$-weight} module if there exists a countable set $\left\{M_\alpha : \alpha\in A\right \}$ of indecomposable $\ell$-weight $\uqhhzeroslt$-modules, called \emph{$t$-weight spaces} of $M$, such that, as (topological) $\uqhhzeroslt$-modules,
\be M \cong \bigoplus_{\alpha\in A} M_\alpha\,.\ee
We shall say that $M$ is \emph{weight-finite} if, regarding it as a completely decomposable $\uqhhzeroslt$-module, its $\Sp(M)$ is finite -- see definition-proposition \ref{defprop:CKeigen} for the definition of $\Sp$. 
A vector $v\in M-\{0\}$ is a \emph{highest $t$-weight vector} of $M$ if $v\in M_\alpha$ for some $\alpha\in A$ and, for every $r,s\in\Z$,
\be \Xsf+{1,r,s} . v = 0\,.\ee
We shall say that $M$ is \emph{highest $t$-weight} if $M\cong \qdaff(\mathfrak a_1) . v$ for some highest $t$-weight vector $v\in M-\{0\}$.
\end{defn}
\begin{defprop}
\label{defprop:htwtsp}
Let $M$ be a $t$-weight $\qdaff(\mathfrak a_1)$-module that admits a highest $t$-weight vector $v\in M-\{0\}$. Denote by $M_0$ the $t$-weight space of $M$ containing $v$. Then $M_0=\qdaff^0(\mathfrak a_1).v$ and, for every $r,s\in\Z$,
\be \Xsf+{1,r,s} .M_0 = \{0\}\,.\ee
We shall say that $M_0$ is a \emph{highest $t$-weight space} of $M$. If in addition $M$ is simple, then it admits a unique -- up to isomorphisms of $\qdaff^0(\mathfrak a_1)$-modules -- highest $t$-weight space $M_0$.
\end{defprop}
\begin{proof}
It is an easy consequence of the triangular decomposition of $\qdaff(\mathfrak a_1)$ -- see proposition \ref{prop:triang} -- and of the root grading of $\qdaff(\mathfrak a_1)$ that, indeed, $\Xsf+{1,r,s}.\left (\qdaff^0(\mathfrak a_1).v\right ) =\{0\}$, for every $r,s\in\Z$. Now since $M$ is highest $t$-weight, we have $M\cong\qdaff(\mathfrak a_1).v$. By proposition \ref{prop:triang}, $M_0 \subset M \cong \qdaff^-(\mathfrak a_1) \qdaff^0(\mathfrak a_1).v$ and it follows that $M_0 \cong \qdaff^0(\mathfrak a_1).v$. Now, assuming that $M$ is simple and that it admits highest $t$-weight spaces $M_0$ and $M_0'$, we have that $\qdaff^-(\mathfrak a_1).M_0 \cong M\cong \qdaff^-(\mathfrak a_1).M_0'$ as $\qdaff(\mathfrak a_1)$-modules. In particular, $M_0\cong M_0'$ as $\qdaff^0(\mathfrak a_1)$-modules. 
\end{proof}
In view of the triangular decomposition of $\qdaff(\mathfrak a_1)$ -- see proposition \ref{prop:triang} --, the above proposition implies that any highest $t$-weight $\qdaff(\mathfrak a_1)$-modules $M$ is entirely determined as $M\cong \qdaff^-(\mathfrak a_1). M_0$, by the data of its highest $t$-weight space $M_0$, a cyclic $\ell$-weight $\qdaff^0(\mathfrak a_1)$-module. Now for any $v\in M_0-\{0\}$ such that $M_0 \cong \qdaff^0(\mathfrak a_1).v$, let $N_0$ be the maximal $\qdaff^0(\mathfrak a_1)$-submodule of $M_0$ not containing $v$ and set $L_0=M_0/N_0$~\footnote{$N_0$ clearly does not depend on the chosen generator $v$. Indeed, if $N_0$ contained a generator $v'$ of $M_0$, it would contain all the others, including $v$. It follows that $N_0$ and hence $L_0$ are both independent of $v$.}. Then, by construction, $L_0$ is a simple $\qdaff^0(\mathfrak a_1)$-module such that, as $\qdaff(\mathfrak a_1)$-modules, $M\cong \qdaff^-(\mathfrak a_1).L_0 \mod \qdaff^-(\mathfrak a_1).N_0$. We therefore make the following
\begin{defn}
\label{def:univqdaff}
We extend every simple (topological) $\ell$-weight $\qdaff^0(\mathfrak a_1)$-module $M_0$ into a $\qdaff^\geq(\mathfrak a_1)$-module by setting $\Xbsf+{1,r,s}.M_0=\{0\}$ for every $r, s\in \Z$. This being understood, we define the \emph{universal} highest $t$-weight $\qdaff'(\mathfrak a_1)$-module with highest $t$-weight space $M_0$ by setting
$$\mathcal M(M_0) = \widehat{\qdaff'(\mathfrak a_1)} \underset{\qdaff^\geq(\mathfrak a_1)}{\widehat\otimes} M_0$$
as $\qdaff'(\mathfrak a_1)$-modules. Denoting by $\mathcal N(M_0)$ the maximal (closed) $\qdaff(\mathfrak a_1)$-submodule of $\mathcal M(M_0)$ such that $M_0 \cap \mathcal N(M_0) = \{0\}$, we define the simple highest $t$-weight $\qdaff(\mathfrak a_1)$-module $\mathcal L(M_0)$ with highest $t$-weight space $M_0$ by setting $\mathcal L(M_0) \cong\mathcal M(M_0) / \mathcal N(M_0)$. It is unique up to isomorphisms.
\end{defn}
Classifying simple highest $t$-weight $\qdaff(\mathfrak a_1)$-modules therefore amounts to classifying those simple $\ell$-weight $\qdaff^0(\mathfrak a_1)$-modules $M_0$ that appear as their highest $t$-weight spaces. In the case of weight-finite $\qdaff(\mathfrak a_1)$-modules, this is achieved by the following

\begin{thm}
\label{thm:classification}
The following hold:
\begin{enumerate}[i.]
\item\label{thmi} Every weight-finite simple $\qdaff'(\mathfrak a_1)$-module $M$ is highest $t$-weight and can be obtained by twisting a type (1,0) weight-finite simple $\qdaff(\mathfrak a_1)$-module with an algebra automorphism from the subgroup of $\mathrm{Aut}(\qdaff'(\mathfrak a_1))$ generated by the algebra automorphisms $\tau$ and $\sigma$ of proposition \ref{prop:tausigma}.
\item\label{thmii} The type (1,0) simple highest $t$-weight  $\qdaff'(\mathfrak a_1)$-module $\mathcal L(M_0)$ is weight-finite if and only if its highest $t$-weight space $M_0$ is a simple $t$-dominant $\qdaff^0(\mathfrak a_1)$-module -- see proposition-definition \ref{def:dominant}.
\end{enumerate}
\end{thm}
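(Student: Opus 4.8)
The plan is to prove the two implications of Theorem~\ref{thm:classification} separately, mirroring the structure of the quantum affine case (Theorem~\ref{thm:wffdqaffsl2}) but replacing numerical $\ell$-weights with $\ell$-weight $\qdaff^0(\mathfrak a_1)$-modules throughout. For part~\ref{thmi}, I would first use definition-proposition~\ref{defprop:CKeigen} to see that $\Csf^2$ acts as $\id$ on any weight-finite simple $M$; since $\Csf^{1/2}$ is central and $M$ is simple, $\Csf^{1/2}$ acts by a scalar $\zeta$ with $\zeta^4=1$, and applying a suitable power of $\tau$ (which sends $\Csf\mapsto-\Csf$) and of $\sigma$ (which sends $\Csf^{1/2}\mapsto-\Csf^{1/2}$) we reduce to the case $\Csf^{1/2}$ acts as $\id$. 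Next, since $\cbsf\pm(z)$ is central, each $\csf\pm{\pm m}$ acts by a scalar on $M$; weight-finiteness (finiteness of $\Sp(M)$) together with the $\Z^2$-grading and the relation $\res_{v,w}\frac1{vw}\cbsf\pm(v)\cbsf\mp(w)=1$ forces all but finitely many $\csf\pm{\pm m}$ to act as $0$ — here I would argue as in the proof that $C^2=\id$ in \ref{defprop:ellweight}, using that a weight module with infinitely many nonzero $\csf$'s would produce infinitely many distinct classical weights or a Weyl-algebra-type contradiction — so after a further twist by $\sigma$ or $\tau$ (adjusting $\csf\pm0$) we may assume $M$ is of type $(1,0)$. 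Finally, to show $M$ is highest $t$-weight: using the root grading of $\qdaff(\mathfrak a_1)$ over $Q=\Z\alpha_1$ and weight-finiteness, the set of weights appearing is finite, so there is a weight space killed by all $\Xsf+{1,r,s}$; picking $0\neq v$ in an indecomposable $\ell$-weight $\qdaff^{0,0}(\mathfrak a_1)$-summand of that weight space and invoking simplicity gives $M=\qdaff(\mathfrak a_1).v$, i.e. $M$ is highest $t$-weight with highest $t$-weight space $M_0=\qdaff^0(\mathfrak a_1).v$ (definition-proposition~\ref{defprop:htwtsp}). The main obstacle in part~\ref{thmi} is the reduction of the $\cbsf\pm(z)$-action: one must carefully show weight-finiteness bounds the central series, which is not entirely formal because $\qdaff^0(\mathfrak a_1)$ is non-commutative and the $\csf\pm{\pm m}$ interact with the $\Ksf$'s.

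For part~\ref{thmii}, the "only if" direction is the one established in this section (see the sentence preceding the theorem): if $\mathcal L(M_0)$ is weight-finite then $M_0$, being the highest $t$-weight space of a weight-finite module, is itself an $\ell$-weight $\qdaff^0(\mathfrak a_1)$-module with $\Sp(M_0)$ finite; the analysis of the structure of highest $t$-weight modules — carried out via the triangular decomposition $\mathcal L(M_0)\cong\qdaff^-(\mathfrak a_1).M_0$ and the computation of how $\Ksf\pm{1,0}(z)$ and the $\Ksf\pm{1,\pm m}(z)$ act on the vectors $\Xbsf-{1,r_1}(z_1)\cdots\Xbsf-{1,r_n}(z_n).v$ (using lemma~\ref{lem:DeltaPsiXp} to control the coproduct, and relations (\ref{eq:K+X-}), (\ref{eqbf:K+X+})) — forces, exactly as in the derivation of (\ref{eq:Pkappa}) in the proof of Theorem~\ref{thm:wffdqaffsl2}, that the $\ell$-weights of $M_0$ are rational and $\ell$-dominant, and that the cancellation pattern when $\Ksf\pm{1,\pm m}(z)$ moves between $\ell$-weight spaces is precisely the one defining $t$-dominance (definition~\ref{def:tdom}); then proposition~\ref{prop:Kellweight} and lemma~\ref{lem:tdom} package this into "$M_0$ is simple $t$-dominant". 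I would organize this as: (a) show weight-finiteness of $\mathcal L(M_0)$ implies $M_0$ is $\ell$-dominant (the classical weight of each $\ell$-weight space lies in a bounded subset of $P$, and an argument with the analogue of equation (\ref{eq:constr}) — applying strings of $\Xsf+$'s to a vector of the form $\Xsf-{1,r}(z_1)\cdots$ annihilated by weight-finiteness, then extracting a Casoratian/Vandermonde determinant — yields the polynomiality); (b) show the relation between adjacent $\ell$-weights forced by non-vanishing of $\Ksf\pm{1,\pm m}(z).M_\beta$ is $H^\pm_{m,a}$, which is automatic from proposition~\ref{prop:Kellweight}, and that the extra root condition $P_\beta(1/aq^{-(m\pm m)})=P_\beta(1/aq^{2-(m\pm m)})=0$ holds — this last point is where weight-finiteness is really used, since without it the $\Ksf\pm{1,\pm 1}$-action could create new roots without the corresponding factor being present in $P_\beta$; then lemma~\ref{lem:tdom} concludes.

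The "if" direction — that simple $t$-dominant $M_0$ gives weight-finite $\mathcal L(M_0)$ — is deferred, as the excerpt indicates, to section~\ref{Sec:ev}: by proposition~\ref{prop:simpleelldomuq0mods} and the remark after it, $M_0\cong L^0(P)$ for a monic $P(1/z)\in\F[z^{-1}]$, and theorem~\ref{thm:elldomtdom} guarantees such $L^0(P)$ is $t$-dominant; the evaluation module construction of section~\ref{Sec:ev} then produces an explicit weight-finite highest $t$-weight $\qdaff(\mathfrak a_1)$-module with highest $t$-weight space $L^0(P)$, whose simple quotient must be $\mathcal L(L^0(P))$, forcing the latter to be weight-finite (a subquotient of a weight-finite module is weight-finite, since $\Sp$ of a subquotient is contained in $\Sp$ of the whole). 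So in the present section I would prove only part~\ref{thmi} in full and the "only if" half of part~\ref{thmii}, explicitly flagging that the converse follows from the evaluation modules constructed in section~\ref{Sec:ev}. The hard part, and the place I expect the proof to be most delicate, is step~(b) above: extracting the precise $t$-dominance root conditions from weight-finiteness, because it requires understanding how the full tower of operators $\Ksf\pm{1,\pm m}(z)$ (not just $\Ksf\pm{1,0}(z)$) acts across the decomposition of $M_0$ into $\ell$-weight spaces, and matching the combinatorial cancellation in (\ref{eq:PH}) against the finiteness of $\Sp$ — essentially the $t$-weight analogue of the Drinfel'd-polynomial extraction via the Vandermonde determinant in Theorem~\ref{thm:wffdqaffsl2}, but now with $M_0$-valued rather than scalar-valued data.
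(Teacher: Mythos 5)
Your sketch of part~\ref{thmi} tracks the paper up to the reduction to $\Csf^{1/2}=\id$, but it breaks at the next step. You claim that weight-finiteness together with a Weyl-algebra--type contradiction forces ``all but finitely many $\csf\pm{\pm m}$ to act as $0$,'' and that a further twist by $\sigma$ or $\tau$ adjusts $\csf\pm 0$ to reach type $(1,0)$. Neither piece works. The $\csf\pm{\pm m}$ are central, so on a simple module they act by scalars and there is nothing for a Weyl-algebra argument to bite on --- that argument in definition-proposition~\ref{defprop:ellweight} exploits the interaction of $C$ with the non-central $\Ksf$'s, and has no analogue for scalars that commute with everything. And inspecting proposition~\ref{prop:tausigma}, $\sigma$ fixes $\cbsf\pm(z)$ while $\tau$ merely replaces $z\mapsto\mp z$ inside $\cbsf\pm(\Csf^{-1/2}z)$, so neither rescales $\csf\pm 0$. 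The actual reduction to type $(1,0)$ in the paper is substantive, not a normalisation: pulling $M$ back along $\iota^{(0)}$, $\iota^{(1)}$ \emph{and the whole family $\iota_m$, $m\in\Z$}, the highest $t$-weight vector $v$ becomes a highest (resp.\ lowest) $\ell$-weight vector in each pullback, producing Drinfel'd polynomials $P_1$, $P_0$ and a $Q_m$ for each $m$; the identity $\Psi(\kk\pm 0(z)\kk\pm 1(z))=\cbsf\pm(z)$ together with the formula for $\iota_m(\kk\pm 1(z))$ forces the compatibility (\ref{eq:c=1})--(\ref{eq:c=1simp}) among these polynomials, and a root-disjointness argument for $|m|\gg 0$ then forces $P_0=P_1$, which is what makes $\cbsf\pm(z)$ act by $1$. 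Your proposal omits this argument entirely, so part~\ref{thmi} has a hole.

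For part~\ref{thmii} you propose to derive $t$-dominance of $M_0$ from weight-finiteness of $\mathcal L(M_0)$ directly in this section, via an analogue of the Vandermonde extraction followed by lemma~\ref{lem:tdom}. But the key step (b) of your own plan --- that whenever $\Ksf\pm{1,\pm1}$ moves between $\ell$-weight spaces the forced root $a$ satisfies $P_\beta(1/a)=0$ --- is exactly the hard part you flag and for which you give no argument, only the assertion ``this is where weight-finiteness is really used.'' The paper does not take this route: in this section it only establishes $\ell$-dominance (proposition~\ref{prop:highesttelldominant}), then identifies $M_0\cong L^0(P)$ via proposition~\ref{prop:simpleelldomuq0mods}, and defers the $t$-dominance of $L^0(P)$ to theorem~\ref{thm:elldomtdom}, which is proved in section~\ref{Sec:ev} by exhibiting $L^0(P)$ explicitly inside the evaluation module and checking the root condition there. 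So \emph{both} directions of part~\ref{thmii}, not just ``if,'' lean on section~\ref{Sec:ev}. Your account of the ``if'' direction via evaluation modules and the surjection-onto-$\mathcal L(M_0)$ diagram is essentially correct.
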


\begin{proof}
Let $M$ be a weight-finite simple $t$-weight $\qdaff(\mathfrak a_1)$-module and assume for a contradiction that, for every $w\in M-\{0\}$, there exist $r, s\in \Z$ such that $\Xsf+{1,r,s}.w\neq0$. Then, there must exist two sequences $(r_n)_{n\in\N}, (s_n)_{n\in\N} \in\Z^\N$, such that 
$$0\notin \left\{ w_n = \Xsf+{r_1,s_1} \dots \Xsf+{r_n,s_n} .w : n\in\N\right\}\,.$$ 
Choosing $w\in M-\{0\}$ to be an eigenvector of $\Ksf+{1,0,0}$ with eigenvalue $\lambda\in\F^\times$ -- see definition-proposition \ref{defprop:CKeigen} for the existence of such a vector --, one easily sees from the relations that, for every $n\in\N$, $\Ksf+{1,0,0}.w_n = \lambda q^{2n} w_n$. It follows -- see definition-proposition \ref{defprop:CKeigen} -- that $\{\lambda q^{2n} : n\in\N\}\subseteq \Sp(M)$. A contradiction with the weight-finiteness of $M$. 
Thus, we conclude that there exists a highest $t$-weight vector $v_0\in M-\{0\}$ such that $\Ksf\pm{1,0,0}.v_0=\kappa_0^{\pm 1} v_0$ for some $\kappa_0\in\F^\times$. Obviously, $M\cong \qdaff(\mathfrak a_1). v_0$, for $\qdaff(\mathfrak a_1). v_0 \neq\{0\}$ is a submodule of the simple $\qdaff(\mathfrak a_1)$-module $M$. Thus $M$ is highest $t$-weight. Denote by $M_0 = \qdaff^0(\mathfrak a_1) . v_0$ its highest $t$-weight space. The latter is an $\ell$-weight $\qdaff^0(\mathfrak a_1)$-module. As such, it completely decomposes into countably many locally finite-dimensional indecomposable $\qdaff^{0,0}(\mathfrak a_1)$-modules that constitute its $\ell$-weight spaces. Over any of these, $\Csf^{1/2}$ must admit an eigenvector. But since $M$ is simple and $\Csf^{1/2}$ is central, the latter acts over $M$ by a scalar multiple of $\id$. 
It follows from definition-proposition \ref{defprop:CKeigen} that $\Csf$ acts over $M$ by $\id$ or $-\id$. In the former case, there is nothing to do; whereas in the latter, it is quite clear from proposition \ref{prop:tausigma} that, twisting the $\qdaff(\mathfrak a_1)$ action on $M$ by $\tau$, we can ensure that $\Csf$ acts by $\id$. It follows that $\Csf^{1/2}$ acts by $\id$ or $-\id$. Again, in the former case, there is nothing to do; whereas in the latter, twisting by $\sigma$, we can ensure that $\Csf^{1/2}$ acts by $\id$. Similarly, for every $m\in\N$, $\csf\pm{\pm m}$ must admit an eigenvector over any locally finite-dimensional $\ell$-weight space of $M_0$. But again, since $M$ is simple and $\csf\pm{\pm m}$ is central, the latter must act over $M$ by a scalar multiple of $\id$.

In any case, in view of (\ref{eq:K+K+}) and (\ref{eq:K+K-}), $\Ksf\pm{1,0,0}$ commutes with all the other generators of $\qdaff^0(\mathfrak a_1)$ and, since $M_0=\qdaff^0(\mathfrak a_1).v_0$, we have $\Ksf\pm{1,0,0}.w= \kappa_0^{\pm 1} w$ for every $w\in M_0$. Moreover, $M_0$ turns out to be a type 1 $\ell$-weight $\qdaff^0(\mathfrak a_1)$-module and, by definition-proposition \ref{defprop:CKeigen}, 
$$M_0 \subseteq \bigoplus_{\alpha\in A} \left \{v\in M : \Ksf\pm{1,0,0} . v = \kappa^{\pm 1}_0 v \quad \mbox{and} \quad \exists n\in\N^\times , \forall m\in\N^\times \quad \left (\Ksf\pm{1,0,\pm m} - \kappa^\pm_{\alpha, \pm m} \id \right )^n.v=0 \right \}$$
for some countable set of sequences $\left \{(\kappa^\pm_{\alpha, \pm m})_{m\in\N^\times}\in \F^{\N^\times}: \alpha\in A\right \}$. 
By proposition \ref{defprop:htwtsp}, 
\be\label{eq:X+M0}\Xsf+{1,r,s} .M_0 = \{0\}\,,\ee
for every $r,s\in\Z$. Pulling back with $\iota^{(0)}$ and $\iota^{(1)}$ respectively, we can simultaneously regard $M$ as a $\mathrm U_q(\mathrm L{\mathfrak a}_1)$-module for both of its Dynkin diagram subalgebras $\mathrm U_q(\mathrm L{\mathfrak a}_1)^{(0)}$ and $\mathrm U_q(\mathrm L{\mathfrak a}_1)^{(1)}$ -- see section \ref{Sec:qdaff}. Let $v\in M_0-\{0\}$ be a simultaneous eigenvector of the pairwise commuting linear operators in $\left \{\Ksf\pm{1,0,\pm m} : m\in\N \right \}$. Equation (\ref{eq:X+M0}) implies that $\x+1(z).v = \x-0(z).v=0$. Thus $v$ is a highest (resp. lowest) $\ell$-weight vector of $\qaff({\mathfrak a}_1)^{(1)}.v$ (resp. $\qaff({\mathfrak a}_1)^{(0)}.v$). The weight finiteness of $M$ now allows us to apply corollary \ref{cor:simplewffinite} to prove that the respective simple quotients of $\mathrm U_q(\mathrm L{\mathfrak a}_1)^{(0)}.v$ and $\mathrm U_q(\mathrm L{\mathfrak a}_1)^{(1)}.v$ containing $v$ are both finite-dimensional and isomorphic to a unique simple highest (resp. lowest) $\ell$-weight module $L(P_1)$ (resp. $\bar L(P_0)$). As a consequence of theorem \ref{thm:CP} and of proposition \ref{prop:lowestellweight}, we conclude that
$$\kk\pm0(z). v = q^{-\deg(P_0)} \left ( \frac{P_0(1/z)}{P_0(q^{-2}/z)} \right )_{|z|^{\mp 1}\ll1} v \qquad 
\mbox{and} \qquad \kk\pm1(z). v = q^{\deg(P_1)} \left ( \frac{P_1(q^{-2}/ z)}{P_1(1/z)} \right )_{|z|^{\mp 1}\ll1} v\,,$$
for some monic polynomials $P_0$ and $P_1$. On the other hand, pulling back with $\iota_m$ for every $m\in\Z$, we can regard $M$ as a $\mathrm U_q(\mathrm L{\mathfrak a}_1)$-module in infinitely many independent ways. Again, for every $m\in\Z$, $v$ turns out to be a highest $\ell$-weight vector for a unique simple weight finite, hence finite dimensional $\mathrm U_q(\mathrm L{\mathfrak a}_1)$-module. As such, it satisfies
$$\iota_m(\kk+1(z)).v = q^{\deg(Q_m)} \left(\frac{Q_m(q^{-2}/z)}{Q_m(1/z)}\right )_{|z|^{\mp1}\ll1}v\,,$$  
for some monic polynomial $Q_m$. Now since 
$$\iota_m(\kk\pm 1(z)) =  -\prod_{p=1}^{|m|} \cbsf\pm\left(q^{(1-2p)\sign(m)-1}z\right )^{\sign(m)} \Kbsf\mp{1,0}(\Csf^{-1/2}z)$$
and $\Psi(\kk\pm0(z)\kk\pm1(z))=\cbsf\pm(z)$, we must have
\bea q^{\deg(Q_m)} \left(\frac{Q_m(q^{-2}/z)}{Q_m(1/z)}\right )_{|z|^{\mp1}\ll1} &=& q^{\deg(P_1)+m(\deg(P_1)- \deg(P_0))}\left ( \frac{P_1(q^{-2} /z)}{P_1(1/z)} \right )_{|z|^{\mp 1}\ll1} \nn\\
&&\times \prod_{p=1}^{|m|}\left (\frac{P_1(q^{(2p-1)\sign(m) -1}/z)P_0(q^{(2p-1)\sign(m) +1}/z)}{P_1(q^{(2p-1)\sign(m) +1}/z)P_0(q^{(2p-1)\sign(m) -1}/z)}\right )_{|z|^{\mp 1}\ll1}
\label{eq:c=1}\eea
for every $m\in\Z^\times$. In the limit as $z^{-1}\to 0$, this implies $q^{\deg(Q_m)} = q^{\deg(P_1)+m(\deg(P_1)- \deg(P_0))}$ for every $m\in\Z$ and, consequently, $\deg(P_0) = \deg(P_1)=\deg(Q_m)$. After obvious simplifications, (\ref{eq:c=1}) becomes
\be \left(\frac{Q_m(q^{-2}/z)}{Q_m(1/z)}\right )_{|z|^{\mp1}\ll1} = \left ( \frac{P_1(q^{-2} /z) P_1(q^{-(1\pm 1)}/z)}{P_1(1/z) P_0(q^{-(1\pm 1)}/z)}\, \frac{P_0(q^{2m-(1\pm 1)}/z)}{P_1(q^{2m-(1\pm 1)}/z)}\right )_{|z|^{\mp 1}\ll1}\label{eq:c=1simp}
\ee
for every $m\in\Z^\times$. Now, $z^{-1}=0$ is not a root of $P(1/z)$ for any monic polynomial $P$. Moreover, $q$ being a formal parameter -- in case $q$ is regarded as a complex number, we shall assume that $1\notin q^{\Z^\times}$ --, it follows that the map $z^{-1}\mapsto q^{m}z^{-1}$ has no fixed points over the set of roots of a monic polynomial. Thus, for $|m|$ large enough, the respective sets of roots of $P_1(q^{-2} /z) P_1(q^{-(1\pm 1)}/z)$ and $P_1(q^{2m-(1\pm 1)}/z)$ are disjoint. Similarly, for $|m|$ large enough, the respective sets of roots of $P_1(1/z) P_0(q^{-(1\pm 1)}/z)$ and $P_0(q^{2m-(1\pm 1)}/z)$ are disjoint. It follows that, for $|m|$ large enough, on the r.h.s. of (\ref{eq:c=1simp}), cancellations can only occur between factors on opposite sides of the same fraction line. Now, either $P_0=P_1$ 
or $P_0\neq P_1$, in which case
$$\frac{P_1(1/z)}{P_0(1/z)} = \prod_{p=1}^n \frac{1-\alpha_p/z}{1-\beta_p/z}\,,$$
for some $n\in\N^\times$ such that $n\leq\deg(P_0) = \deg(P_1)$ and some $n$-tuples $(\alpha_p)_{p\in\rran{n}}$, $(\beta_p)_{p\in\rran{n}} \in\F^n$ such that
$$\left \{\alpha_p : p\in\rran{n} \right \} \cap \left \{\beta_p : p\in\rran{n} \right \} = \emptyset\,.$$
But then, we should have, for $|m|$ large enough,
$$\left(\frac{Q_m(q^{-2}/z)}{Q_m(1/z)}\right )_{|z|^{\mp1}\ll1} = \left ( \frac{P_1(q^{-2} /z)}{P_1(1/z) } \prod_{p=1}^n \frac{1-\alpha_pq^{-(1\pm 1)}/z}{1-\beta_pq^{-(1\pm 1)}/z} \prod_{p=1}^n \frac{1-\beta_pq^{2m-(1\pm 1)}/z}{1-\alpha_pq^{2m-(1\pm 1)}/z}\right )_{|z|^{\mp 1}\ll1}\,,$$
where, on the r.h.s., cancellations can only occur across the leftmost fraction line. A contradiction. \ref{thmi} 
follows. As for part of \ref{thmii}, we shall prove it in section \ref{Sec:ev}.
\end{proof}
Although we must postpone the proof of part \ref{thmii} of theorem \ref{thm:classification}, the proof above still makes it clear that
\begin{prop}\label{prop:highesttelldominant}
If a type (1,0) simple highest $t$-weight  $\qdaff(\mathfrak a_1)$-module $\mathcal L(M_0)$ is weight-finite, then its highest $t$-weight space $M_0$ is a simple $\ell$-dominant $\qdaff^0(\mathfrak a_1)$-module.
\end{prop}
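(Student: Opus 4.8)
The plan is to extract, from the proof of Theorem \ref{thm:classification}.\ref{thmi} just given, the information already established about the highest $t$-weight space $M_0$ of a weight-finite type $(1,0)$ simple highest $t$-weight module $\mathcal L(M_0)$, and to package it into the statement that $M_0$ is a simple $\ell$-dominant $\qdaff^0(\mathfrak a_1)$-module. First I would recall that, as shown in that proof, $M_0=\qdaff^0(\mathfrak a_1).v_0$ for a highest $t$-weight vector $v_0$, so $M_0$ is cyclic, hence (since it sits inside the simple module $M\cong\mathcal L(M_0)$ and any proper $\qdaff^0(\mathfrak a_1)$-submodule would, after applying $\qdaff^-(\mathfrak a_1)$, yield a proper $\qdaff(\mathfrak a_1)$-submodule of $M$ missing $v_0$) simple as a $\qdaff^0(\mathfrak a_1)$-module. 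It is also a type $1$ $\ell$-weight module by definition-proposition \ref{defprop:CKeigen}, and in fact type $(1,0)$ since $\Csf^{1/2}$ and each central $\csf\pm{0}$ act by $\id$ on $M$ while $\csf\pm{\pm m}$ acts by $0$ for $m>0$. Thus by Proposition \ref{prop:simpleelldomuq0mods} it only remains to verify the rationality and $\ell$-dominance of $M_0$ in the sense of Definition \ref{def:dominant}: that every $\ell$-weight $\kappa_\alpha^\pm(z)$ of $M_0$ has the form $-q^{\deg P_\alpha}\bigl(P_\alpha(q^{-2}/z)/P_\alpha(1/z)\bigr)_{|z|^{\pm1}\ll1}$ for a single common degree $N$, equivalently $\deg Q_\alpha=0$.

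The key step is to transport the Drinfel'd-polynomial description obtained in the proof of part \ref{thmi}. There, pulling $M$ back along $\iota^{(1)}$ and invoking weight-finiteness plus Corollary \ref{cor:simplewffinite} and Theorem \ref{thm:CP}, one finds a monic polynomial $P_1$ with
$$\kk\pm1(z).v = q^{\deg(P_1)}\left(\frac{P_1(q^{-2}/z)}{P_1(1/z)}\right)_{|z|^{\mp1}\ll1}v$$
for any simultaneous eigenvector $v$ of $\{\Ksf\pm{1,0,\pm m}\}$ in $M_0$, and under the identification $\widehat\Psi(\kk\pm1(z))=-\Kbsf\mp{1,0}(\Csf^{-1/2}z)$ (Theorem \ref{thm:main}, or Proposition \ref{prop:uq0subalg}) this reads, after the harmless change of variable $z\mapsto\Csf^{1/2}z=z$,
$$\Kbsf\pm{1,0}(z).v = -q^{\deg(P_1)}\left(\frac{P_1(q^{-2}/z)}{P_1(1/z)}\right)_{|z|^{\pm1}\ll1}v\,.$$
Hence every $\ell$-weight $\kappa_\alpha^\pm(z)$ of $M_0$ is of the required rational form with a \emph{numerator-only} monic Drinfel'd polynomial $P_\alpha$ and $Q_\alpha=1$; this gives rationality, $\ell$-dominance of each individual $\ell$-weight, and the associated integral weight $\lambda_\alpha=\deg(P_\alpha)\,\omega_1$. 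What still needs care is that the common degree $N=\deg(P_\alpha)$ is independent of $\alpha$, i.e. that all $\ell$-weight spaces of $M_0$ carry the \emph{same} classical weight. This follows from Proposition \ref{prop:Kellweight}: given two $\ell$-weight spaces $M_\alpha,M_\beta$ of $M_0$ with $M_\alpha\cap\Ksf\pm{1,\pm m,n}.M_\beta\neq\{0\}$, one has $\kappa_\alpha^\varepsilon(z)=\kappa_\beta^\varepsilon(z)H_{m,a}^\varepsilon(z)^{\pm1}$, and since $H_{m,a}^\pm(z)\to1$ as $|z|^{\pm1}\to0$ (it is a ratio of monic polynomials in $1/z$ of equal degree), taking that limit in the identity $-q^{\deg P_\alpha}=\lim \kappa_\alpha^\pm = \lim\kappa_\beta^\pm = -q^{\deg P_\beta}$ forces $\deg P_\alpha=\deg P_\beta$; connectedness of the $\ell$-weight graph of the \emph{simple} $\qdaff^0(\mathfrak a_1)$-module $M_0$ (any two $\ell$-weight spaces are joined by a chain of nonzero off-diagonal actions of the $\Ksf\pm{1,\pm m,n}$, else one obtains a proper submodule, exactly as in the last lines of the proof of Corollary \ref{cor:Ynu}) then propagates equality of degrees to all $\alpha,\beta$.

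The main obstacle is precisely this last connectedness/uniform-degree point: one must be sure that the "graph" on the $\ell$-weight spaces of $M_0$ whose edges are pairs with a nonzero off-diagonal $\Kbsf\pm{1,\pm m}(z)$-action is connected. I would handle it by the submodule argument sketched above — if the graph split into two pieces $A'\sqcup A''$, then $\bigoplus_{\alpha\in A'}M_\alpha$ would be a nonzero proper $\qdaff^0(\mathfrak a_1)$-submodule of $M_0$ (it is stable under $\qdaff^{0,0}(\mathfrak a_1)$ by construction and under the $\Kbsf\pm{1,\pm m}(z)$ by the definition of the edges), contradicting simplicity of $M_0$ established in the first paragraph. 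Modulo that, everything else is a direct reading-off of facts already proved; so the proposition follows by assembling simplicity (first paragraph), the numerator-only rational form inherited from part \ref{thmi} of Theorem \ref{thm:classification} via $\widehat\Psi$, the limit computation with $H_{m,a}^\pm$, and Proposition \ref{prop:simpleelldomuq0mods}.
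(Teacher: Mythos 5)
Your proof is correct and takes essentially the same route as the paper, which obtains this proposition as a direct by-product of the proof of theorem \ref{thm:classification}.\ref{thmi}: every $\ell$-weight eigenvector of $M_0$ is killed by the $\Xbsf+{1,r}(z)$, so pulling back along $\widehat\Psi\circ\iota^{(1)}$ and invoking corollary \ref{cor:simplewffinite} yields the numerator-only rational form of every $\ell$-weight of $M_0$. The only minor difference is your connectedness argument for the common degree $N$: the paper's proof already records that the central element $\Ksf\pm{1,0,0}$ acts on all of $M_0$ by the scalar $\kappa_0^{\pm 1}$, which fixes the constant term of every $\ell$-weight at once and makes the detour through proposition \ref{prop:Kellweight} unnecessary.
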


\begin{prop}
\label{prop:ellweights}
Let $M$ be a $t$-weight $\qdaff(\mathfrak a_1)$-module and let $M_\alpha$ and $M_\beta$ be two $\ell$-weight spaces of $M$ such that, for some $m, n\in\Z$, $M_\alpha \cap \Xsf\pm{1,m,n}. M_\beta \neq \{0\}$. Then, there exists a unique $a\in\F^\times$ such that:
\begin{enumerate}
\item[i.] the respective $\ell$-weights $\kappa_\alpha^\varepsilon(z)$ and $\kappa_\beta^\varepsilon(z)$ of $M_\alpha$ and $M_\beta$ be related by
\be\label{eq:ellweightsellroots}\kappa_\alpha^\varepsilon(z) = \kappa_\beta^\varepsilon (z) A_{a}^\varepsilon(z)^{\pm 1} \,,\ee
where $\varepsilon \in \{-,+\}$ and 
$$A_{a}^\pm (z) = q^{2} \left (\frac{1-q^{-2} a/z}{1-q^{2} a/z}\right )_{|z|^{\pm 1}\ll 1}\,;$$
\item[ii.] $(z-a)^N M_\alpha \cap \Xbsf\pm{1,m}(z).M_\beta = \{0\}$ for some $N\in\N^\times$.
\end{enumerate}
\end{prop}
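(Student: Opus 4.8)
The plan is to mimic the proof of Proposition \ref{prop:Kellweight}, now using the commutation relations (\ref{eqbf:K+X+}) and (\ref{eq:K+X-}) in place of (\ref{eq:K+K+})--(\ref{eq:K+K-}). After twisting by the automorphism $\tau$ of Proposition \ref{prop:tausigma} if necessary, we may assume that $\Csf$ acts on $M$ as $\id$ (recall $\Csf^{2}$ acts as $\id$ by definition-proposition \ref{defprop:CKeigen}); then, specializing (\ref{eqbf:K+X+}) and (\ref{eq:K+X-}) to $\Kbsf\varepsilon{1,0}(v)$ gives, for every $\varepsilon\in\{+,-\}$,
\[
(v-q^{\pm 2}z)\,\Kbsf\varepsilon{1,0}(v)\,\Xbsf\pm{1,m}(z)=(q^{\pm 2}v-z)\,\Xbsf\pm{1,m}(z)\,\Kbsf\varepsilon{1,0}(v)\,.
\]
As in the proof of Proposition \ref{prop:Kellweight}, I choose bases of $M_\alpha$ and $M_\beta$ in which every $\Kbsf\varepsilon{1,0}(z)$ acts upper-triangularly with constant diagonal entries $\kappa_\alpha^\varepsilon(z)$, resp.\ $\kappa_\beta^\varepsilon(z)$; I write $\xi^\pm_{m,j,i}(z)\in\F[[z,z^{-1}]]$ for the matrix coefficients of $\Xbsf\pm{1,m}(z)\colon M_\beta\to M_\alpha$ in these bases, and I isolate the extremal nonzero coefficient $\xi^\pm_{m,j_*,i_*}(z)$ (largest source index $j_*$, smallest target index $i_*$). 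Restricting the displayed relation to this extremal coefficient, where only the diagonal terms of the $\Kbsf\varepsilon{1,0}(z)$-action contribute, yields
\[
\bigl[(v-q^{\pm 2}z)\,\kappa_\alpha^\varepsilon(v)-(q^{\pm 2}v-z)\,\kappa_\beta^\varepsilon(v)\bigr]\,\xi^\pm_{m,j_*,i_*}(z)=0\,.
\]

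Writing $\xi^\pm_{m,j_*,i_*}(z)=\sum_{p\in\Z}\xi_p z^{-p}$, the coefficient of $z^{-p}$ in this identity is
\[
\xi_p\,v\bigl(\kappa_\alpha^\varepsilon(v)-q^{\pm 2}\kappa_\beta^\varepsilon(v)\bigr)=\xi_{p+1}\bigl(q^{\pm 2}\kappa_\alpha^\varepsilon(v)-\kappa_\beta^\varepsilon(v)\bigr)\,.
\]
If $\xi_p\neq 0$ while $\xi_{p+1}=0$, this forces $\kappa_\alpha^\varepsilon(v)=q^{\pm 2}\kappa_\beta^\varepsilon(v)$; feeding this back into the identity for the coefficient of $z^{-(p-1)}$ forces, in addition, $q^{\pm 2}\kappa_\alpha^\varepsilon(v)=\kappa_\beta^\varepsilon(v)$, whence $q^{\pm 4}=1$, contradicting $1\notin q^{\Z^\times}$; the symmetric argument rules out $\xi_p\neq 0$, $\xi_{p-1}=0$. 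Hence $\xi_p\neq 0$ for all $p$, and since the right-hand side above is a fixed rational function of $v$ the ratio $a:=\xi_{p+1}/\xi_p\in\F^\times$ is independent of $p$; therefore $\xi^\pm_{m,j_*,i_*}(z)$ is a scalar multiple of $\delta(z/a)$, and the above identity rearranges to $\kappa_\alpha^\varepsilon(z)(z-aq^{\pm 2})=\kappa_\beta^\varepsilon(z)(q^{\pm 2}z-a)$, i.e.\ $\kappa_\alpha^\varepsilon(z)=\kappa_\beta^\varepsilon(z)\,A_a^\varepsilon(z)^{\pm 1}$ with $A_a^\varepsilon$ as in the statement (the upper sign corresponding to $\Xbsf+{1,m}(z)$, the lower to $\Xbsf-{1,m}(z)$). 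This establishes part i.; uniqueness of $a$ follows as in Proposition \ref{prop:Kellweight}, since i.\ already pins $a$ down while, for any $b\in\F^\times$ with $b\neq a$, one still has $(z-b)^{n}M_\alpha\cap\Xbsf\pm{1,m}(z).M_\beta\neq\{0\}$ for every $n$.

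For part ii., I would run the same finite induction on the pairs of basis indices as in the proof of Proposition \ref{prop:Kellweight}. Substituting $\kappa_\alpha^\varepsilon(z)=\kappa_\beta^\varepsilon(z)A_a^\varepsilon(z)^{\pm 1}$ into the matrix identities that (\ref{eqbf:K+X+}) and (\ref{eq:K+X-}) produce between $\Kbsf\varepsilon{1,0}(v)$ and $\Xbsf\pm{1,m}(z)$, and clearing denominators, one obtains for each pair $(j,i)$ an identity expressing $(z_1-a)$ times a fixed nonzero polynomial times $\xi^\pm_{m,j,i}(z_1)$ as a combination of the $\xi^\pm_{m,j',i'}(z_1)$ with $(j',i')$ strictly earlier in the induction order; lemma \ref{lem:deltaid} from the appendix then propagates the property of being a finite $\F$-linear combination of the $\delta^{(k)}(z/a)$, $k\geq 0$, from those earlier coefficients to $\xi^\pm_{m,j,i}(z)$. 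With $N-1$ the largest derivative order that occurs, $(z-a)^{N}$ annihilates every $\xi^\pm_{m,j,i}(z)$, which is part ii. I expect the only genuinely delicate point to be the bookkeeping of this induction and the check that the two relations (\ref{eqbf:K+X+}) and (\ref{eq:K+X-}) — needed respectively for $\varepsilon=+$ and $\varepsilon=-$ — together with the reduction to $\Csf=\id$ yield mutually consistent constraints; conceptually nothing beyond Proposition \ref{prop:Kellweight} and lemma \ref{lem:deltaid} is required, and the $\Xbsf-{1,m}(z)$ half of the statement could alternatively be deduced from the $\Xbsf+{1,m}(z)$ half by applying the Cartan anti-involution $\dot\varphi$.
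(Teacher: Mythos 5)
Your proposal follows essentially the same route as the paper's own proof: the same triangular bases of $M_\alpha$ and $M_\beta$, the same isolation of the extremal matrix coefficient $\xi^\pm_{m,j_*,i_*}(z)$, the same mode recursion $\xi_{p+1}=a\,\xi_p$ yielding $\xi^\pm_{m,j_*,i_*}(z)\propto\delta(z/a)$ and $\kappa_\alpha^\varepsilon(z)=\kappa_\beta^\varepsilon(z)A_a^\varepsilon(z)^{\pm1}$, and the same finite induction over pairs $(j,i)$ combined with lemma \ref{lem:deltaid} to get part ii.\ and the uniqueness of $a$. The only deviation is your preliminary $\tau$-twist to force $\Csf$ to act as $\id$, which makes explicit a point the paper leaves implicit when it suppresses the central factor in relation (\ref{eq:K+X-}); otherwise the argument coincides with that of proposition \ref{prop:Kellweight} adapted exactly as in the paper.
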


\begin{proof}
We keep the same notations as in the proof of proposition \ref{prop:Kellweight}. More specifically, we have two bases $\{v_i : i=1,\dots, \dim(M_\alpha)\}$ and $\{w_j : j=1,\dots, \dim(M_\beta)\}$ of $M_\alpha$ and $M_\beta$ respectively, in which
$$\forall i \in \rran{\dim M_\alpha}\,, \qquad\qquad \Kbsf\pm{1,0}(z)  . v_i = \kappa_\alpha^\pm(z) \sum_{k=i}^{\dim M_\alpha} \eta_{\alpha,i,k}^\pm(z) v_k \,,$$
$$\forall j \in \rran{\dim M_\beta}\,, \qquad\qquad \Kbsf\pm{1,0}(z) . w_j = \kappa_\beta^\pm(z) \sum_{l=j}^{\dim M_\beta} \eta_{\beta,j,l}^\pm(z) w_l \,,$$
for some $\eta_{\alpha,i,k}^\pm(z), \eta_{\beta,j,l}^\pm(z) \in \F[[z^{\pm 1}]]$, with $i,k\in\rran{\dim M_\alpha}$ and $j,l\in\rran{\dim M_\beta}$, such that $\eta_{\alpha,i,i}^\pm(z) =1$ for every $i\in\rran{\dim M_\alpha}$ and $\eta_{\beta,j,j}^\pm(z)=1$ for every $j\in\rran{\dim M_\beta}$.

Now, if $M_\alpha \cap \Xsf\pm{1,m,n}. M_\beta \neq \{0\}$, there must exist a largest nonempty subset $J\subseteq\rran{\dim M_\beta}$ such that, for every $j\in J$, $M_\alpha \cap \Xbsf\pm{1,m}(z).w_{j} \neq\{0\}$. Let $j_* = \max J$.  Obviously, for every $j\in J$, there must exist a largest nonempty subset $I(j)\subseteq \rran{\dim M_\alpha}$ such that, for every $j\in J$ and every $i\in I(j)$, $\F v_{i}\cap  \Xbsf\pm{1,m}(z).w_{j} \neq\{0\}$. Let $i_*(j) = \min I(j)$ and let for simplicity $i_*=i_*(j_*)$. Then, for every $j\in J$,
$$M_\alpha\cap\Xbsf\pm{1,m}(z).w_j = \sum_{i\in I(j)} \xi_{m,j,i}^\pm(z) v_i \,,$$ 
for some $\xi_{m, j, i}^\pm(z)\in \F[[z,z^{-1}]]-\{0\}$. When needed, we shall extend by zero the definition of $\xi_{m, j, i}^\pm(z)$ outside of the set of pairs $\{(j,i): j\in J, i\in I(j)\}$. Making use of the relations in $\qdaff(\mathfrak a_1)$ -- namely (\ref{eqbf:K+X+}) and (\ref{eq:K+X-}) --, we get, for every $j\in J$ and every $\varepsilon \in\{-,+\}$,
$$(z_1-q^{\pm 2}z_2)\Xbsf\pm{1,m}(z_1) \Kbsf\varepsilon{1, 0}(z_2)  . w_j  = (z_1q^{\pm 2} -z_2)\Kbsf\varepsilon{1, 0}(z_2)  \Xbsf\pm{1,m}(z_1) . w_j \,.$$
The latter easily implies that, for every $j\in J$ and every $i\in I(j)$,
\be\label{eq:xideltaeq}(z_1-q^{\pm 2} z_2) \kappa_\beta^\varepsilon(z_2) \sum_{\substack{l\in J\\l\geq j}} \eta_{\beta,j,l}^\varepsilon (z_2) \xi_{m,l,i}^\pm(z_1) = (z_1q^{\pm 2} -z_2) \kappa_\alpha^\pm(z_2) \sum_{\substack{k\in I(j)\\k\leq i}} \eta_{\alpha,k,i}^\varepsilon(z_2) \xi_{m,j,k}^\pm(z_1)\,. \ee
Taking $i=i_*$ and $j=j_*$ in the above equation immediately yields
$$\left [ (z_1-q^{\pm 2}z_2)\kappa_\beta^\varepsilon(z_2)-  (z_1q^{\pm 2} -z_2)\kappa_\alpha^\varepsilon(z_2)\right ] \xi_{m,j_*,i_*}^\pm(z_1)  =0\,.$$
The latter is equivalent to the fact that, for every $p\in\Z$,
\be\label{eq:Aa} \xi_{m,j_*,i_*, p}^\pm z \left (q^{\pm 2} \kappa_\beta^\varepsilon(z) - \kappa_\alpha^\varepsilon(z) \right ) = \xi_{m,j_*,i_*,p+1}^\pm \left (\kappa_\beta^\varepsilon(z) - q^{\pm 2} \kappa_\alpha^\varepsilon(z) \right )\,,\ee
where, as usual, we have set
$$\xi_{m,j_*,i_*, p}^\pm  = \res_z z^{p-1} \xi_{m,j_*,i_*}^\pm(z)\,.$$
Since $\xi_{m,j_*,i_*}^\pm(z) \neq 0$, there exists at least one -$p\in \Z$ such that $\xi_{m,j_*,i_*, p}^\pm \neq0$. Assuming that $\xi_{m,j_*,i_*, p+1}^\pm=0$, one easily derives a contradiction from (\ref{eq:Aa}) and, repeating the argument, one proves that $\xi_{m,j_*,i_*, p}^\pm\neq 0$ for every $p\in\Z$. Dividing (\ref{eq:Aa}) by $\xi_{m,j_*,i_*, p}^\pm$, one gets
$$ z \left (q^{\pm 2} \kappa_\beta^\varepsilon(z) - \kappa_\alpha^\varepsilon(z) \right ) = a\left (\kappa_\beta^\varepsilon(z) - q^{\pm 2} \kappa_\alpha^\varepsilon(z) \right )\,,$$
where we have set, for every $p\in\Z$, $\xi_{m,j_*,i_*, p+1}^\pm/\xi_{m,j_*,i_*, p}^\pm = a \in\F^\times$. \emph{i.} now follows. Moreover, we clearly have
$$\xi_{m,j_*,i_*}^\pm (z) = A_{m,j_*,i_*}^\pm \delta(z/a)\,,$$
for some $A_{m,j_*,i_*}^\pm \in \F^\times$. More generally, we claim that, 
\be\label{eq:claim} \forall j\in J\,,\forall i\in I(j)\,,\qquad\qquad  \xi_{m,j,i}^\pm (z) = \sum_{p=0}^{N(i,j)} A_{m,j,i,p}^\pm \delta^{(p)}(z/a) \,,\ee
for some $A_{m,j,i,p}^\pm \in\F$ and some $N(i,j)\in \N$. This is proven by a finite induction on $j$ and $i$. Indeed, making use of (\ref{eq:ellweightsellroots}), we can rewrite (\ref{eq:xideltaeq}) as
\be\label{eq:xideltasimp}(z_1-q^{\pm 2} z_2) (z_2-q^{\pm 2} a) \sum_{\substack{l\in J\\l\geq j}} \eta_{\beta,j,l}^\varepsilon (z_2) \xi_{m,l,i}^\pm(z_1) = (z_1q^{\pm 2} -z_2) (q^{\pm 2}z_2-a) \sum_{\substack{k\in I(j)\\k\leq i}} \eta_{\alpha,k,i}^\varepsilon(z_2) \xi_{m,j,k}^\pm(z_1)\,,\ee
for every $j\in J$ and every $i\in I(j)$. Now, assume that (\ref{eq:claim}) holds for every pair in 
$$\left \{(j,i): j\in J, \, i\in I(j), \quad j> j_0 \right \} \cup \left \{(j_0,i): i\in I(j_0),\quad  i\leq i_0 \right \}\,,$$
 for some $j_0\in J$ and some $i_0\in I(j_0)$ such that $i_0<\max I(j_0)$. Let $i_0'$ be the smallest element of $I(j_0)$ such that $i_0<i_0'$. It suffices to write (\ref{eq:xideltasimp}) for $j=j_0$ and $i=i_0'$, to get
\bea (z_1-a)z_2 (1-q^{\pm 4}) \xi_{m,j_0,i_0'}^\pm(z_1) 
&=& - (z_1-q^{\pm 2} z_2) (z_2-q^{\pm 2} a) \sum_{\substack{l\in J\\l> j_0}} \eta_{\beta,j_0,l}^\varepsilon (z_2) \xi_{m,l,i_0'}^\pm(z_1) \nn\\
&&+ (z_1q^{\pm 2} -z_2) (q^{\pm 2}z_2-a) \sum_{\substack{k\in I(j_0)\\k\leq i_0}} \eta_{\alpha,k,i_0'}^\varepsilon(z_2) \xi_{m,j_0,k}^\pm(z_1)\,.\eea
Combining the recursion hypothesis and lemma \ref{lem:deltaid} from the appendix, one easily concludes that (\ref{eq:claim}) holds for the pair $(j_0, i_0')$. Repeating the argument finitely many times, we get that it actually holds for all the pairs in $\left \{(j,i): j\in J, \, i\in I(j), \quad j\geq j_0 \right \}$. Now, either $j_0 = \min J$ and we are done; or $j_0>\min J$ and there exists a largest $j_0'\in J$ such that $j_0>j_0'$. Writing (\ref{eq:xideltasimp}) for $j=j_0'$ and $i=i_*(j_0')$, we get
\be (z_1-a)z_2 (1-q^{\pm 4})\xi_{m,j_0',i_*(j_0')}^\pm(z_1)= -(z_1-q^{\pm 2} z_2) (z_2-q^{\pm 2} a) \sum_{\substack{l\in J\\l\geq j_0}} \eta_{\beta,j_0',l}^\varepsilon (z_2) \xi_{m,l,i_*(j_0')}^\pm(z_1) \,.\nn\ee
Combining again the recursion hypothesis and lemma \ref{lem:deltaid}, we easily get that (\ref{eq:claim}) holds for $(j_0', i_*(j_0'))$. It is now clear that the claim holds for every $j\in J$ and every $i\in I(j)$. Letting $N=\max \{N(i,j): j\in J, \, i\in I(j)\}$, \emph{ii.} follows. Furthermore, for every $b\in\F-\{a\}$ and every $n\in\N$, we obviously have $(z-b)^n M_\alpha \cap \Xbsf\pm{1,m}(z).M_\beta \neq \{0\}$, thus making $a$ the unique element of $\F$ satisfying \emph{ii.}.
\end{proof}

\begin{cor}
The $\ell$-weights of any type $(1,0)$ weight-finite simple $\qdaff(\mathfrak a_1)$-module are all rational -- see definition \ref{def:dominant}.
\end{cor}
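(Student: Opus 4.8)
The plan is to deduce the statement from the classification machinery already in place, namely Theorem~\ref{thm:classification}, Proposition~\ref{prop:highesttelldominant} and Proposition~\ref{prop:ellweights}. Let $M$ be a type $(1,0)$ weight-finite simple $\qdaff(\mathfrak a_1)$-module. By part~\ref{thmi} of Theorem~\ref{thm:classification}, $M$ is highest $t$-weight; by definition-proposition~\ref{defprop:htwtsp} it has a unique highest $t$-weight space $M_0$, and since $M$ is simple, $M\cong\mathcal L(M_0)$ with $M_0$ a simple $\qdaff^0(\mathfrak a_1)$-module. Proposition~\ref{prop:highesttelldominant} then shows that $M_0$ is $\ell$-dominant, so each $\ell$-weight space of $M_0$ has $\ell$-weight $-q^{\deg P}\left(P(q^{-2}/z)/P(1/z)\right)_{|z|^{\pm1}\ll1}$ for some monic $P(1/z)\in\F[z^{-1}]$; in particular these $\ell$-weights are rational in the sense of Definition~\ref{def:dominant}, with trivial $Q$-polynomial.

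Next I would propagate rationality from $M_0$ to all of $M$ along the negative part. By the triangular decomposition of $\widehat{\qdaff'(\mathfrak a_1)}$ (Proposition~\ref{prop:triang}) together with definition-proposition~\ref{defprop:htwtsp}, one has $M\cong\qdaff^-(\mathfrak a_1).M_0$, and by Proposition~\ref{prop:triang} the algebra $\qdaff^-(\mathfrak a_1)$ is topologically generated by the modes $\Xsf-{1,r,s}$. Hence, for any nonzero $\ell$-weight space $M_\gamma$ of $M$, applying the continuous projection of $M$ onto $M_\gamma$ to a nonzero vector of $M_\gamma$ produces a finite chain of $\ell$-weight spaces $M_{\beta_0},\dots,M_{\beta_k}=M_\gamma$, with $M_{\beta_0}$ an $\ell$-weight space of $M_0$ and $M_{\beta_i}\cap\Xsf-{1,r_i,s_i}.M_{\beta_{i-1}}\neq\{0\}$ for suitable $r_i,s_i\in\Z$ and all $1\le i\le k$. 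Proposition~\ref{prop:ellweights} then furnishes $a_1,\dots,a_k\in\F^\times$ with $\kappa_\gamma^\varepsilon(z)=\kappa_{\beta_0}^\varepsilon(z)\prod_{i=1}^k A_{a_i}^\varepsilon(z)^{-1}$ for $\varepsilon\in\{+,-\}$.

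Finally, I would check that multiplication by $A_a^\varepsilon(z)^{-1}$ preserves the class of rational $\ell$-weights. Writing $A_a^\varepsilon(z)^{-1}=q^{-2}\left(S(1/z)/S(q^{-2}/z)\right)_{|z|^{\varepsilon1}\ll1}$ with $S(w)=(1-aw)(1-q^2aw)$ (note $S(q^{-2}/z)=(1-q^{-2}a/z)(1-a/z)$, so the factor $(1-a/z)$ cancels), one sees that if $\kappa^\varepsilon(z)=-q^{\deg P-\deg Q}\left(P(q^{-2}/z)Q(1/z)/(P(1/z)Q(q^{-2}/z))\right)_{|z|^{\varepsilon1}\ll1}$ with $P,Q$ relatively prime and monic, then $\kappa^\varepsilon(z)A_a^\varepsilon(z)^{-1}$ is the same expression with $Q$ replaced by $QS$; cancelling any common monic factor of $P$ and $QS$ leaves this expression and the difference of degrees unchanged while keeping both polynomials monic, so after finitely many cancellations one recovers a presentation by relatively prime monic polynomials. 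Iterating this $k$ times starting from $\kappa_{\beta_0}$ (which is rational) yields the rationality of $\kappa_\gamma$, which is the assertion. Equivalently, in the monomial notation of Remark~\ref{rem:qchar}, $A_a^\varepsilon(z)^{-1}$ acts by $Y^\mu\mapsto Y^{\mu-\delta_a-\delta_{q^2a}}$, so the $\ell$-weights of $M$ are exactly the $Y^{\mu_0-\sum_{i=1}^k(\delta_{a_i}+\delta_{q^2a_i})}$ with $\mu_0$ a multiset of roots of a Drinfel'd polynomial of $M_0$. I expect no conceptual difficulty here: the only real work is the bookkeeping in making the chain-of-$\ell$-weight-spaces argument precise in the completed setting, via continuity of the $\ell$-weight projections, together with the elementary polynomial cancellation just described.
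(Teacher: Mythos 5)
Your proposal is correct and follows essentially the same route as the paper's proof: apply Proposition~\ref{prop:highesttelldominant} to deduce that the highest $t$-weight space $M_0$ is $\ell$-dominant, use $M\cong\qdaff^-(\mathfrak a_1).M_0$ together with Proposition~\ref{prop:ellweights} to write each $\ell$-weight of $M$ as $\kappa_\alpha^\pm(z)\prod_p A_{a_p}^\pm(z)^{-1}$ for some dominant $\kappa_\alpha^\pm$, and then observe that $A_a^\pm(z)^{-1}$ factors in a way that preserves rationality. You spell out the intermediate chain of $\ell$-weight spaces and the polynomial cancellation $Q\mapsto QS$ with $S(w)=(1-aw)(1-q^2aw)$ more explicitly than the paper does (which simply writes $A_a^\pm(z)^{-1}$ as a product of two rational factors and declares the result), but the content is the same; your monomial reformulation $Y^\mu\mapsto Y^{\mu-\delta_a-\delta_{q^2a}}$ also matches Remark~\ref{rem:qchar}.
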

\begin{proof}
Let $M$ be a type $(1,0)$ weight-finite simple $\qdaff(\mathfrak a_1)$-module. By proposition \ref{prop:highesttelldominant}, its highest $t$-weight space $M_0$ is an $\ell$-dominant simple $\qdaff^0(\mathfrak a_1)$-module. Hence, $M\cong \mathcal L(M_0) \cong \qdaff^-(\mathfrak a_1).M_0$ and it easily follows by proposition \ref{prop:ellweights} that all the $\ell$-weights of $\mathcal L(M_0)$ are of the form
$$\kappa_\alpha^\pm(z)  \prod_{p=1}^N A_{a_p}^\pm(z)^{-1}\,,$$
for some $N\in\N$, some $a_1, \dots, a_N \in\F^\times$ and 
$$\kappa_\alpha^\pm(z)  = -q^{\deg P_\alpha} \left (\frac{P_\alpha(q^{-2}/z)}{P_\alpha(1/z)}\right )_{|z|^{\pm 1}\ll 1}\,,$$
for some monic polynomial $P_\alpha(1/z)\in\F[z^{-1}]$. Now, observe that
$$A_{a}^\pm (z)^{-1} =q^{-2} \left (\frac{1-q^{2} a/z}{1-q^{-2} a/z}\right )_{|z|^{\pm 1}\ll 1} = q^{-1} \left (\frac{1-q^{2}a/z}{1-a/z}\right )_{|z|^{\pm 1}\ll 1} q^{-1}\left ( \frac{1-a/z}{1-q^{-2}a/z}\right )_{|z|^{\pm 1}\ll 1}\,.$$
Hence, all the $\ell$-weights of $\mathcal L(M_0)$ are of the form
\be\label{eq:ellweights}\kappa_\beta^\pm(z) =- q^{\deg(P_\beta) - \deg(Q_\beta)} \left (\frac{P_\beta(q^{-2}/z) Q_\beta(1/z)}{P_\beta(1/z) Q_\beta(q^{-2}/z)}\right )_{|z|^{\pm 1} \ll 1}\,,\ee
for some relatively prime monic polynomials $P_\beta(1/z), Q_\beta(1/z) \in \F[z^{-1}]$, which concludes the proof.
\end{proof}
In view of remark \ref{rem:qchar}, we can therefore associate with any weigh-finite simple $\qdaff(\mathfrak a_1)$-module a $q$-character defined as the (formal) sum of the monomials corresponding to all its rational $\ell$-weights.

\begin{prop}
Let $M_0$ and $N_0$ be two $t$-dominant simple $\qdaff^0(\mathfrak a_1)$-modules such that $M_0\widehat\otimes N_0$ be simple. Then:
\begin{enumerate}[i.]
\item\label{it:M0NOtdom} $M_0\widehat\otimes N_0$ is a simple $t$-dominant $\qdaff^0(\mathfrak a_1)$-module of type $(1,0)$;
\item\label{it:ses} there exists a short exact sequence of $\qdaff(\mathfrak a_1)$-modules
$$\{0\}\to \mathcal N \to \mathcal L(M_0)\widehat{\otimes} \mathcal L(N_0) \to \mathcal L(M_0\widehat{\otimes} N_0)\to \{0\}\,;$$
\item\label{it:isom} if, in addition, $\mathcal L(M_0)\widehat{\otimes} \mathcal L(N_0)$ is simple, then
$$\mathcal L(M_0)\widehat\otimes \mathcal L(N_0) \cong \mathcal L(M_0\widehat\otimes N_0)\,.$$
\end{enumerate}
\end{prop}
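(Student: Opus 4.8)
The plan is to prove the three statements roughly in the order given: to reduce \ref{it:M0NOtdom} to theorem \ref{thm:elldomtdom}, to derive \ref{it:ses} from a cyclicity statement for the tensor product, and to obtain \ref{it:isom} as a formal consequence of \ref{it:ses}.

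\emph{For \ref{it:M0NOtdom}:} The tensor product $M_0\widehat\otimes N_0$ of $\qdaff^0(\mathfrak a_1)$-modules is formed by means of the coproduct $\Delta^0$ of $\widehat{\qdaff^0(\mathfrak a_1)}$. Since $M_0$ and $N_0$ are $\ell$-dominant, hence of type $(1,0)$ by definitions \ref{def:dominant} and \ref{def:tdom}, formulas \eqref{eq:coprodC} and \eqref{eq:coprodcm} show at once that $\Csf^{\pm1/2}$ acts on $M_0\widehat\otimes N_0$ by $\id$ and $\cbsf\pm(z)$ by $1$, so that $M_0\widehat\otimes N_0$ is of type $(1,0)$. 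For $t$-dominance I would first record that, on type $(1,0)$ modules, $\Delta^0(\Kbsf\pm{1,0}(z))$ acts as $-\Kbsf\pm{1,0}(z)\otimes\Kbsf\pm{1,0}(z)$ -- a consequence of proposition \ref{prop:uq0subalg}, the group-likeness of $\kk\pm1(z)$ in $\qaff^0(\mathfrak a_1)$, and the minus sign in $\widehat\Psi(\kk\pm1(z))=-\Kbsf\mp{1,0}(\Csf^{-1/2}z)$ (one checks the consistency of this sign against $\Delta^0(\pbsf\pm(z))$). Each $\ell$-weight space of $M_0$ (resp. $N_0$), being a nonzero locally finite-dimensional module over the abelian algebra $\qdaff^{0,0}(\mathfrak a_1)$ over an algebraically closed field, contains a genuine common eigenvector $v$ (resp. $w$) of $\{\Ksf\pm{1,0,\pm m}\}$; writing $\kappa_\gamma^\pm(z)=-q^{\deg P_\gamma}(P_\gamma(q^{-2}/z)/P_\gamma(1/z))_{|z|^{\pm1}\ll1}$ and $\kappa_{\gamma'}^\pm(z)=-q^{\deg P_{\gamma'}}(P_{\gamma'}(q^{-2}/z)/P_{\gamma'}(1/z))_{|z|^{\pm1}\ll1}$ for the corresponding $\ell$-weights, the vector $v\otimes w$ is then a genuine $\Kbsf\pm{1,0}(z)$-eigenvector of $M_0\widehat\otimes N_0$ with eigenvalue $-\kappa_\gamma^\pm(z)\kappa_{\gamma'}^\pm(z)=-q^{\deg(P_\gamma P_{\gamma'})}((P_\gamma P_{\gamma'})(q^{-2}/z)/(P_\gamma P_{\gamma'})(1/z))_{|z|^{\pm1}\ll1}$, i.e. the $\ell$-weight attached to the monic polynomial $P_\gamma P_{\gamma'}$. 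As $M_0\widehat\otimes N_0=\qdaff^0(\mathfrak a_1).(v\otimes w)$ by simplicity, the universal property of $M^0(P_\gamma P_{\gamma'})$ yields a surjection $M^0(P_\gamma P_{\gamma'})\twoheadrightarrow M_0\widehat\otimes N_0$ whose kernel meets $\F_{P_\gamma P_{\gamma'}}$ trivially; hence $M_0\widehat\otimes N_0\cong L^0(P_\gamma P_{\gamma'})$, which is $t$-dominant by theorem \ref{thm:elldomtdom}.

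\emph{For \ref{it:ses} and \ref{it:isom}:} Let $v_0\in\mathcal L(M_0)$ and $w_0\in\mathcal L(N_0)$ be highest $t$-weight vectors. By proposition \ref{defprop:htwtsp} the subalgebra $\qdaff^>(\mathfrak a_1)$ annihilates the highest $t$-weight spaces $M_0\subseteq\mathcal L(M_0)$ and $N_0\subseteq\mathcal L(N_0)$; combined with lemma \ref{lem:DeltaPsiXp}.\ref{it:DeltaXp} this gives that $v_0\otimes w_0$ is a highest $t$-weight vector of $\mathcal L(M_0)\widehat\otimes\mathcal L(N_0)$. Since the correction terms in lemma \ref{lem:DeltaPsiXp}.\ref{it:DeltaPsi} lie in $\qdaff^<(\mathfrak a_1)\widehat\otimes\qdaff^>(\mathfrak a_1)$, they vanish on $M_0\otimes N_0$, so the $\qdaff^0(\mathfrak a_1)$-action via $\dot\Delta$ on $\qdaff^0(\mathfrak a_1).(v_0\otimes w_0)$ agrees with the $\Delta^0$-action; hence, by \ref{it:M0NOtdom} and proposition \ref{defprop:htwtsp}, the highest $t$-weight space of $\mathcal L(M_0)\widehat\otimes\mathcal L(N_0)$ is the simple $\qdaff^0(\mathfrak a_1)$-module $M_0\widehat\otimes N_0$. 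The key step is then to prove that $\mathcal L(M_0)\widehat\otimes\mathcal L(N_0)$ is itself a highest $t$-weight module, i.e. that it is generated over $\widehat{\qdaff'(\mathfrak a_1)}$ by $v_0\otimes w_0$. Granting this, $\mathcal L(M_0)\widehat\otimes\mathcal L(N_0)$ is a quotient of the universal module $\mathcal M(M_0\widehat\otimes N_0)$ of definition \ref{def:univqdaff}; the kernel of that quotient map intersects the highest $t$-weight space trivially, hence lies in $\mathcal N(M_0\widehat\otimes N_0)$, so composing with $\mathcal M(M_0\widehat\otimes N_0)\twoheadrightarrow\mathcal L(M_0\widehat\otimes N_0)$ produces a surjective $\qdaff(\mathfrak a_1)$-module homomorphism $\mathcal L(M_0)\widehat\otimes\mathcal L(N_0)\twoheadrightarrow\mathcal L(M_0\widehat\otimes N_0)$; one takes $\mathcal N$ to be its kernel. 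Statement \ref{it:isom} then follows at once: if $\mathcal L(M_0)\widehat\otimes\mathcal L(N_0)$ is simple, the submodule $\mathcal N$, being proper since $\mathcal L(M_0\widehat\otimes N_0)\neq\{0\}$, must be $\{0\}$.

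The main obstacle is the cyclicity assertion invoked in \ref{it:ses}. It is the double quantum affine counterpart of the classical statement that, over a quantum affine algebra, a tensor product of highest $\ell$-weight modules is generated by the tensor of their highest weight vectors. I would attack it by combining the triangular decomposition of proposition \ref{prop:triang} with the triangularity of $\dot\Delta$ recorded in lemma \ref{lem:DeltaPsiXp}: the ``$1\widehat\otimes(\cdot)$'' component of $\dot\Delta(\Xbsf-{1,r}(z))$ should generate $M_0\widehat\otimes\mathcal L(N_0)$ out of $v_0\otimes w_0$, after which the remaining components generate all of $\mathcal L(M_0)\widehat\otimes\mathcal L(N_0)$. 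The delicate point is to control the interleaving of the lowering operators acting on the two factors; this is precisely where the $t$-dominance hypothesis on $M_0$ and $N_0$ enters, as it excludes the resonant configurations of spectral parameters that obstruct cyclicity, and where one must work in the completed topology so that the chains of actions involved converge.
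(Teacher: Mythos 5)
Your argument for part \ref{it:M0NOtdom} is correct but takes a genuinely different route than the paper. You first show (via the group-like action of $\Kbsf\pm{1,0}(z)$ under $\Delta^0$ and proposition \ref{prop:simpleelldomuq0mods}) that $M_0\widehat\otimes N_0\cong L^0(P_\gamma P_{\gamma'})$ and then invoke theorem \ref{thm:elldomtdom}, whose proof is deferred to section \ref{Sec:ev}. The paper instead computes $\Delta^0(\Kbsf\pm{1,\pm1}(z))$ explicitly from (\ref{eq:coprodKp})--(\ref{eq:coprodKm}), checks that any non-trivial intersection $\left(M_{0,\alpha}\otimes N_{0,\beta}\right)\cap\Delta^0(\Kbsf\pm{1,\pm1}(z)).\left(M_{0,\alpha'}\otimes N_{0,\beta'}\right)$ forces either $M_{0,\alpha}\cap\Kbsf\pm{1,\pm1}(z).M_{0,\alpha'}\neq\{0\}$ or $N_{0,\beta}\cap\Kbsf\pm{1,\pm1}(z).N_{0,\beta'}\neq\{0\}$, and then reads off $P_{\alpha'}(1/a)P_{\beta'}(1/a)=0$ from the $t$-dominance of the two factors, concluding by lemma \ref{lem:tdom}. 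Your route is shorter and more conceptual, but it creates a forward dependence on a result proved later (not circular, since theorem \ref{thm:elldomtdom} is established independently via evaluation modules, but worth flagging explicitly). The paper's route is self-contained within section \ref{Sec:tweight}.

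For parts \ref{it:ses} and \ref{it:isom} there is a genuine gap, and you yourself identify it: you invoke a cyclicity assertion for $\mathcal L(M_0)\widehat\otimes\mathcal L(N_0)$ — that it is generated over $\widehat{\qdaff'(\mathfrak a_1)}$ by $v_0\otimes w_0$ — but only sketch a strategy (``I would attack it by \ldots'') without carrying it out. Cyclicity of tensor products of highest-weight modules is precisely the sort of statement that fails in general over quantum affine algebras (it requires ordering conditions on spectral parameters in the Chari--Pressley setting), so it cannot be waved through as ``the counterpart of the classical statement.'' Note that the paper does not actually invoke cyclicity here: it takes $\mathcal N$ to be the \emph{largest} closed submodule of $\mathcal L(M_0)\widehat\otimes\mathcal L(N_0)$ intersecting $M_0\widehat\otimes N_0$ trivially, rather than realizing $\mathcal L(M_0)\widehat\otimes\mathcal L(N_0)$ as a quotient of $\mathcal M(M_0\widehat\otimes N_0)$ as you do. With that definition, any nonzero submodule of the quotient contains $M_0\widehat\otimes N_0$ (by maximality of $\mathcal N$ and simplicity of $M_0\widehat\otimes N_0$), so the socle of the quotient is $\mathcal L(M_0\widehat\otimes N_0)$; identifying the whole quotient with that socle still requires an argument, but it is a different and more local one than full cyclicity of the tensor product. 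You should either fill the cyclicity gap honestly or switch to the paper's maximal-submodule definition of $\mathcal N$ and argue why the quotient is generated by the image of $M_0\widehat\otimes N_0$. Part \ref{it:isom} is then immediate in either treatment, as you say.
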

\begin{proof}
$M_0$ and $N_0$ are both of type $(1,0)$ and (\ref{eq:coprodC}) and (\ref{eq:coprodcm}) respectively imply that so is $M_0\widehat\otimes N_0$. Similarly, they are both $\ell$-weight and $\ell$-dominant. Combining eqs. (\ref{eq:defppm}), (\ref{eq:deftp}), (\ref{eq:deftm}), (\ref{eq:coprodppm}), (\ref{eq:coprodtp}) and (\ref{eq:coprodtm}), we easily prove that
\be\label{eq:coprodKp}
\Delta^0(\Kbsf+{1,m}(z)) =-\sum_{k=0}^m \prod_{l=k+1}^m \cbsf-(zq^{-2l} \Csf_{(1)}^{1/2}) \Kbsf+{1,k}(z) \widehat\otimes \Kbsf+{1,m-k}(zq^{-2k}\Csf_{(1)}) \,,
\ee
\be\label{eq:coprodKm}
\Delta^0(\Kbsf-{1,-m}(z)) = -\sum_{k=0}^m \Kbsf-{1,-(m-k)} (zq^{-2k} \Csf_{(2)} ) \widehat\otimes \Kbsf-{1,-k}(z) \prod_{l=k+1}^m \cbsf+(zq^{-2l} \Csf_{(2)}^{1/2}) \,,
\ee
for every $m\in \N$. In particular, taking $m=0$, we have $\Delta^0(\Kbsf\pm{1,0}(z)) = -\Kbsf\pm{1,0} (z\Csf_{(2)}^{\frac{1\mp1}{2}}) \otimes \Kbsf\pm{1,0}(z\Csf_{(1)}^{\frac{1\pm 1}{2}})$. It follows that, if $\left\{M_{0,\alpha} : \alpha\in A\right \}$ and $\left\{N_{0,\beta} : \beta\in B\right \}$ are the countable sets of $\ell$-weights of $M_0$ and $N_0$ respectively, with respective Drinfel'd polynomials $\{P_\alpha:\alpha\in A\}$ and $\{P_\beta:\beta\in B\}$, then $\left\{M_{0,\alpha} \otimes N_{0,\beta}: \alpha\in A\,, \quad \beta\in B\right \}$ is the countable set of $\ell$-weights of $M_0\widehat\otimes N_0$. Moreover, the latter is obviously $\ell$-dominant since its Drinfel'd polynomials are in $\{P_\alpha P_\beta: \alpha\in A\, \quad \beta\in B\}$. Now let $\alpha, \alpha'\in A$, $\beta, \beta'\in B$ and let $P_\alpha$, $P_{\alpha'}$, $P_\beta$ and $P_{\beta'}$ be the Drinfel'd polynomials of $M_{0,\alpha}$, $M_{0,\alpha'}$, $N_{0,\beta}$ and $N_{0,\beta'}$ respectively and assume that 
\be\label{eq:tenstdom}\left (M_{0,\alpha} \otimes N_{0,\beta}\right ) \cap \Delta^0(\Kbsf\pm{1,\pm1}(z)) . \left (M_{0,\alpha'} \otimes N_{0,\beta'}\right )\neq\{0\}\,.\ee 
Then, writing (\ref{eq:coprodKp}) and (\ref{eq:coprodKm}) above with $m=1$, we get
$$\Delta^0(\Kbsf+{1,1}(z)) =- \cbsf-(zq^{-2} \Csf_{(1)}^{1/2}) \Kbsf+{1,0}(z) \widehat\otimes \Kbsf+{1,1}(z\Csf_{(1)})-\Kbsf+{1,1}(z) \widehat\otimes \Kbsf+{1,0}(zq^{-2}\Csf_{(1)}) \,,$$
$$\Delta^0(\Kbsf-{1,-1}(z)) = -\Kbsf-{1,-1} (z \Csf_{(2)} ) \widehat\otimes \Kbsf-{1,0}(z)  \cbsf+(zq^{-2} \Csf_{(2)}^{1/2}) -\Kbsf-{1,0} (zq^{-2} \Csf_{(2)} ) \widehat\otimes \Kbsf-{1,-1}(z)  \,.$$
Since both $M_{0,\alpha'}$ and $N_{0,\beta'}$ are $\ell$-weight spaces, it follows that
$$\Delta^0(\Kbsf\pm{1,\pm1}(z)) . \left (M_{0,\alpha'} \otimes N_{0,\beta'}\right ) \subseteq \left (\Kbsf\pm{1,\pm 1}(z). M_{0,\alpha'} \otimes N_{0, \beta'}\right ) \oplus \left ( M_{0,\alpha'} \otimes \Kbsf\pm{1,\pm 1}(z).N_{0, \beta'}\right ) \,,$$
Therefore, condition (\ref{eq:tenstdom}) holds only if the direct sum on the r.h.s. above has a non-vanishing intersection with $M_{0,\alpha} \otimes N_{0,\beta}$. But since the latter is an $\ell$-weight space, this happens only if either $M_{0,\alpha} \cap \Kbsf\pm{1,\pm 1}(z). M_{0,\alpha'} \neq\{0\}$ or $N_{0,\beta} \cap \Kbsf\pm{1,\pm 1}(z). N_{0,\beta'} \neq\{0\}$. The $t$-dominance of $M_0$ and $N_0$ implies that for the only $a\in \F^\times$ such that , either $P_{\alpha'}(1/a) = 0$ or $P_{\beta'}(1/a)=0$. In any case, $P_{\alpha'}(1/a) P_{\beta'} (1/a)=0$ and $M_0\widehat\otimes N_0$ is $t$-dominant. \ref{it:M0NOtdom} follows. By lemma \ref{lem:DeltaPsiXp}, it is clear that $\dot\Delta(\Xbsf+{1,r}(z)).\left (M_0\widehat\otimes N_0\right )=\{0\}$. Hence $M_0\widehat\otimes N_0$ is a highest $t$-weight space in $\mathcal L(M_0)\widehat\otimes \mathcal L(N_0)$. Let $\mathcal N$ denote the largest closed $\widehat{\qdaff'(\mathfrak a_1)}$-submodule of $\mathcal L(M_0)\widehat\otimes \mathcal L(N_0)$ such that $\mathcal N\cap \left (M_0\widehat\otimes N_0\right )=\{0\}$. \ref{it:ses} obviously follows. \ref{it:isom} is clear.
\end{proof}

\section{An evaluation homomorphism and evaluation modules}
\label{Sec:ev}
In this section, we construct an evaluation algebra $\widehat{\mathcal A}_t$ and an $F$-algebra homomorphism $\ev:\qdaff(\mathfrak{a}_1)\to\widehat{\mathcal A}_t$, that we shall refer to as the evaluation homomorphism.

\subsection{The quantum Heisenberg algebras $\mathcal H_t^+$ and $\mathcal H_t^-$}
\begin{defn}
The \emph{quantum Heisenberg algebra} $\mathcal H_t^\pm$ is the Hopf algebra generated over $\K(t)$ by
$$\left\{\gamma^{1/2}, \gamma^{-1/2}, \alpha_\pm, \alpha_\pm^{-1}, \alpha_{\pm,m} :  m\in\Z^\times \right \}\,, $$
subject to the relations,
$$\gamma^{1/2}, \gamma^{-1/2}, \alpha_{\pm}, \alpha_{\pm}^{-1}  \mbox{ are central,}$$
$$[\alpha_{\pm,-m} , \alpha_{\pm,n}] = -\frac{\delta_{m,n}}{m} [2m]_t \frac{\gamma^m-\gamma^{-m}}{t-t^{-1}}\,,$$
for every $m,n\in\Z^\times$, with comultiplication $\Delta$ defined by setting
$$\Delta(\gamma^{1/2}) =\gamma^{1/2}\otimes \gamma^{1/2}\,, \qquad \Delta(\gamma^{-1/2}) =\gamma^{-1/2}\otimes \gamma^{-1/2}\,,$$
$$\Delta(\alpha_\pm) = \alpha_\pm\otimes \alpha_\pm\,,\qquad \Delta(\alpha_\pm^{- 1}) = \alpha_\pm^{-1}\otimes \alpha_\pm^{-1}\,,$$ 
$$\Delta(\alpha_{\pm,m}) = \alpha_{\pm,m} \otimes \gamma^{|m|/2} + \gamma^{-|m|/2}\otimes \alpha_{\pm,m}\,,$$
for every $m,n\in\Z^\times$, antipode $S$ defined by setting
$$S(\gamma^{1/2}) = \gamma^{-1/2}\,,\qquad S(\gamma^{-1/2}) = \gamma^{1/2}\,,$$
$$S(\alpha_\pm) = \alpha_\pm^{-1}\,,\qquad S(\alpha_\pm^{-1}) = \alpha_\pm$$
$$S(\alpha_{\pm,m}) = -\alpha_{\pm,m}\,,$$
and counit $\varepsilon$ defined by setting
$$\varepsilon(\gamma^{1/2}) = \varepsilon(\gamma^{-1/2}) = \varepsilon(\alpha_{\pm}) =\varepsilon(\alpha_{\pm}^{-1}) = \varepsilon (1)=1\,,$$
$$\varepsilon(\alpha_{\pm,m})=0\,.$$
\end{defn}

\begin{defn}
In $\mathcal H_t^+$, we let
$$\Lbf+{} (z) = 1+ \sum_{m\in\N^\times} L^+_{-m} z^m =  \exp\left [-(t-t^{-1}) \sum_{m\in\N^\times} \alpha_{+,-m}(t^2z)^m \right ]\,,$$
$$\Rbf+{} (z) = \alpha_+ \left (1+ \sum_{m\in\N^\times} R^+_m z^{-m} \right ) = \alpha_+ \exp\left [(t-t^{-1}) \sum_{m\in\N^\times} \alpha_{+,m}(t^{-2}z)^{-m} \right ]\,.$$
Similarly, in $\mathcal H_t^-$, we let
$$\Lbf-{} (z) = \alpha_- \left (1+ \sum_{m\in\N^\times} L^-_{-m} z^m \right )= \alpha_- \exp\left [-(t-t^{-1}) \sum_{m\in\N^\times} \alpha_{-,-m}(t^{-2}z)^m \right ]\,, $$
$$\Rbf-{} (z) = 1+ \sum_{m\in\N^\times} R^-_m z^{-m}  =  \exp\left [(t-t^{-1}) \sum_{m\in\N^\times} \alpha_{-,m}(t^{2}z)^{-m} \right ]\,. $$
\end{defn}
\noi Then, we have the following equivalent presentation of $\mathcal H_t^\pm$.
\begin{prop}
$\mathcal H_t^\pm$ is the Hopf algebra generated over $\K(t)$ by $$\{\gamma^{1/2}, \gamma^{-1/2},L^\pm_{-m},  R^\pm_{m}: m\in\N\}$$ 
subject to the relations
$$ [\Lbf{\pm}{}(v) , \Lbf{\pm}{}(z) ] =[\Rbf{\pm}{}(v) , \Rbf{\pm}{}(z) ] = 0\,,$$
$$\Rbf\pm{}(v)\Lbf\pm{}(z) = \theta^\pm(z/v) \Lbf\pm{}(z) \Rbf\pm{}(v)\,,$$
where we have defined $\theta^\pm(z)\in\mathcal Z(\mathcal H_t)[[z]]$, by setting
$$\theta^\pm(z) = \left (\frac{(1-t^{2\pm 4}\gamma z)(1-t^{\pm4 -2}\gamma^{-1}z)}{(1-t^{\pm 4-2}\gamma z)(1-t^{2\pm4}\gamma^{-1} z)}\right )_{|z|\ll 1}\,.$$
Furthermore, we have
$$\Delta(\Lbf\pm{}(z))= \Lbf\pm{}(z\gamma^{1/2}_{(2)}) \otimes \Lbf\pm{}(z\gamma^{-1/2}_{(1)})\,,$$
$$\Delta(\Rbf\pm{}(z))= \Rbf\pm{}(z\gamma^{-1/2}_{(2)}) \otimes \Rbf\pm{}(z\gamma^{1/2}_{(1)})\,,$$
where, by definition,
$$\gamma^{1/2}_{(1)} = \gamma^{1/2} \otimes 1\,, \qquad \gamma^{-1/2}_{(1)} = \gamma^{-1/2} \otimes 1\,, \qquad \gamma^{1/2}_{(2)} = 1\otimes \gamma^{1/2}\,,\qquad \gamma^{-1/2}_{(2)} = 1\otimes \gamma^{-1/2}$$
and
$$S(\Lbf\pm{}(z)) = \Lbf\pm{}(z)^{-1}\,,\qquad S(\Rbf\pm{}(z)) = \Rbf\pm{}(z)^{-1}\,.$$
Finally, $\varepsilon(\Lbf\pm{}(z)) = \varepsilon(\Rbf\pm{}(z))=1$.
\end{prop}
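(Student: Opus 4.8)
The plan is to prove the equivalence of the two presentations by writing the change of generators out explicitly and transporting each relation and each structure map through it. First I would record that, over $\K(t)$, the series $\Lbf+{}(z)$, $\alpha_+^{-1}\Rbf+{}(z)$, $\alpha_-^{-1}\Lbf-{}(z)$ and $\Rbf-{}(z)$ all have constant term $1$; hence their formal logarithms are defined and recover each mode $\alpha_{\pm,m}$, $\alpha_{\pm,-m}$ ($m\in\N^\times$) as a $\K(t)$-polynomial in the $R^\pm_k$, resp.\ the $L^\pm_{-k}$. Together with $\gamma^{\pm1/2}$ and $\alpha_\pm^{\pm1}$, the $L^\pm_{-m}$ and $R^\pm_m$ therefore generate $\mathcal H_t^\pm$, and the assignment of the new generators to these elements of $\mathcal H_t^\pm$ is the candidate isomorphism. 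As this change of variables is invertible, it suffices to check that the new relations hold in $\mathcal H_t^\pm$; the original relations then follow by reading the same computation backwards.

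The relations $[\Lbf\pm{}(v),\Lbf\pm{}(z)]=[\Rbf\pm{}(v),\Rbf\pm{}(z)]=0$ are immediate from $[\alpha_{\pm,-m},\alpha_{\pm,-n}]=0=[\alpha_{\pm,m},\alpha_{\pm,n}]$ and the centrality of $\gamma^{\pm1/2}$, $\alpha_\pm$. For the cross relation in the $+$ case, write $\Lbf+{}(z)=e^{A(z)}$ and $\alpha_+^{-1}\Rbf+{}(v)=e^{B(v)}$, with $A(z)=-(t-t^{-1})\sum_{m\in\N^\times}\alpha_{+,-m}(t^2z)^m$ and $B(v)=(t-t^{-1})\sum_{m\in\N^\times}\alpha_{+,m}(t^{-2}v)^{-m}$. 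Since $[\alpha_{+,-m},\alpha_{+,n}]$ is central, $[A(z),B(v)]$ is a power series in $z/v$ with central coefficients, so the normal-ordering identity $e^{A}e^{B}=e^{[A,B]}e^{B}e^{A}$ holds in $\mathcal H_t^\pm[[z,v^{-1}]]$ and gives $\Rbf+{}(v)\Lbf+{}(z)=e^{-[A(z),B(v)]}\,\Lbf+{}(z)\Rbf+{}(v)$. The one computation to carry out is then
$$[A(z),B(v)]=(t-t^{-1})\sum_{m\in\N^\times}\frac{t^{4m}[2m]_t}{m}\bigl(\gamma^m-\gamma^{-m}\bigr)(z/v)^m=\sum_{m\in\N^\times}\frac{t^{2m}(t^{4m}-1)}{m}\bigl(\gamma^m-\gamma^{-m}\bigr)(z/v)^m=-\log\theta^+(z/v),$$
the last equality being checked by expanding $\log\theta^+$ from the factorised form of $\theta^+$; hence $e^{-[A(z),B(v)]}=\theta^+(z/v)$, which is precisely $\Rbf+{}(v)\Lbf+{}(z)=\theta^+(z/v)\Lbf+{}(z)\Rbf+{}(v)$. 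The $-$ case is identical after exchanging the shifts $t^{2}$ and $t^{-2}$, and yields $\theta^-$.

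For the Hopf-algebra structure I would push $\Delta$, $S$ and $\varepsilon$ through the exponential expressions. As $\Delta$ is an algebra map, $\gamma^{\pm1/2}$ is grouplike and central, $\Delta(\alpha_{\pm,m})=\alpha_{\pm,m}\otimes\gamma^{|m|/2}+\gamma^{-|m|/2}\otimes\alpha_{\pm,m}$, and the two summands commute, the exponential of the sum factorises as a product of two exponentials, from which one reads off $\Delta(\Lbf\pm{}(z))=\Lbf\pm{}(z\gamma^{1/2}_{(2)})\otimes\Lbf\pm{}(z\gamma^{-1/2}_{(1)})$ and $\Delta(\Rbf\pm{}(z))=\Rbf\pm{}(z\gamma^{-1/2}_{(2)})\otimes\Rbf\pm{}(z\gamma^{1/2}_{(1)})$. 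Since $S$ is an anti-automorphism and the modes commute among themselves, $S$ only reverses the sign of the exponent, giving $S(\Lbf\pm{}(z))=\Lbf\pm{}(z)^{-1}$ and $S(\Rbf\pm{}(z))=\Rbf\pm{}(z)^{-1}$ (the inverses existing thanks to the unit constant terms, and using $S(\alpha_\pm)=\alpha_\pm^{-1}$); and $\varepsilon(\alpha_{\pm,m})=0$ forces $\varepsilon(\Lbf\pm{}(z))=\varepsilon(\Rbf\pm{}(z))=1$. Collecting these verifications completes the proof.

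The only step that is not purely formal is the cross-relation computation together with the legitimacy of the normal-ordering identity in the relevant ring of formal series; the care needed there is in tracking the shifts $t^{\pm2}$ so that the power series $e^{-[A(z),B(v)]}$ matches the factorised $\theta^\pm(z/v)$ on the nose. Everything else — the bijectivity of the change of generators, the two commutativity relations, and the coproduct, antipode and counit identities — follows routinely from the additivity of $\Delta$ on the modes $\alpha_{\pm,m}$ and the mutual commutativity of the creation and of the annihilation modes among themselves.
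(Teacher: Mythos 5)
Your proof is correct, and it simply fills in the computation that the paper omits: the paper's own proof is a one-line assertion that the statement follows easily from the definitions, and the change of generators, the BCH normal-ordering computation $[A(z),B(v)]=-\log\theta^+(z/v)$ (with the $t^{\pm 2}$ shifts tracked as you did), and the grouplike/primitive behaviour of $\gamma^{\pm1/2}$ and $\alpha_{\pm,m}$ under $\Delta$, $S$, $\varepsilon$ are exactly the steps that make it an easy consequence. I verified the key identity: with $A(z)=-(t-t^{-1})\sum_m\alpha_{+,-m}(t^2z)^m$ and $B(v)=(t-t^{-1})\sum_m\alpha_{+,m}(t^{-2}v)^{-m}$ one indeed gets $[A(z),B(v)]=\sum_m\frac{t^{2m}(t^{4m}-1)}{m}(\gamma^m-\gamma^{-m})(z/v)^m$, and expanding $\log$ of the factorized $\theta^+$ reproduces the negative of this; the $-$ case follows by the same computation with $t^{2}\leftrightarrow t^{-2}$ giving the $t^{-6m}(t^{4m}-1)$ prefactor that matches $\theta^-$.

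One small clean-up worth making explicit when you write this out: the generating set in the proposition includes $L^\pm_0$ and $R^\pm_0$ (i.e.\ $m=0$), and from the definitions $L^+_0=R^-_0=1$ while $R^+_0=\alpha_+$ and $L^-_0=\alpha_-$; to recover the original presentation (whose generators include $\alpha_\pm^{-1}$) and to make the antipode formulas $S(\Lbf\pm{}(z))=\Lbf\pm{}(z)^{-1}$, $S(\Rbf\pm{}(z))=\Rbf\pm{}(z)^{-1}$ well defined, you should note that the constant terms $\alpha_\pm$ are required to be invertible. This is implicit in your ``unit constant terms'' remark but since the constant term of $\Rbf+{}$ is $\alpha_+$, not $1$, it is worth stating explicitly.
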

\begin{proof}
This is an easy consequence of the definition of $\mathcal H_t^\pm$.
\end{proof}
\begin{rem}
Observe that $\theta^+(z)$ and $\theta^-(z)$ are not independent and that we actually have $\theta^-(z) = \theta^+(t^{-8}z)$. 
\end{rem}

\subsection{A PBW basis for $\mathcal H_t^\pm$}
\label{subsec:PBW}
For every $n\in\N^\times$, we let $\Lambda_n:=\{\lambda=(\lambda_1, \dots, \lambda_n) \in(\N^\times )^n:\lambda_1\geq\cdots\geq\lambda_n \}$ denote the set of $n$-partitions. We adopt the convention that $\Lambda_0 = \{\emptyset\}$ reduces to the empty partition and we let $\Lambda = \bigcup_{n\in\N} \Lambda_n$ be the set of all partitions.
\begin{prop}
Define, for every $\lambda\in\Lambda$,
\be L^\pm_\lambda = L^\pm_{-\lambda_1} \cdots L^\pm_{-\lambda_n}\,,\ee
\be R^\pm_\lambda = R^\pm_{\lambda_1} \cdots R^\pm_{\lambda_n}\,, \ee
with the convention that $L^\pm_\emptyset = R^\pm_\emptyset=1$. Then,
\be\label{eq:PBWbasis}\left \{\Phi_{\lambda, \mu}^\pm = L^\pm_{\lambda} R^\pm_{\mu} :  \lambda, \mu\in\Lambda \right\}\ee
is a $\K(t)[\gamma^{1/2}, \gamma^{-1/2}]$-basis for $\mathcal H_t^\pm$.
\end{prop}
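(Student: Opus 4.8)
## Proof Strategy

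The plan is to establish the PBW basis statement by a standard two-part argument: first showing that the proposed set $\{\Phi_{\lambda,\mu}^\pm = L^\pm_\lambda R^\pm_\mu\}$ is a \emph{spanning set} over $\K(t)[\gamma^{1/2},\gamma^{-1/2}]$, and then showing it is \emph{linearly independent}. Both halves will be reduced to classical facts about the (quantum) Heisenberg algebra, which by construction is essentially a polynomial algebra in the oscillators $\alpha_{\pm,m}$ over the central subring, twisted only by the commutation relation $[\alpha_{\pm,-m},\alpha_{\pm,n}] = -\tfrac{\delta_{m,n}}{m}[2m]_t\tfrac{\gamma^m-\gamma^{-m}}{t-t^{-1}}$, which lands in the center.

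\medskip

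\textbf{Step 1: Spanning.} I would first observe that, since $L^\pm_{-m}$ (resp.\ $R^\pm_m$) is, up to an invertible central scalar, the exponential $\exp[\mp(t-t^{-1})\sum_{k\in\N^\times}\alpha_{\pm,\mp k}(t^{\mp 2}z)^{\pm k}]$ expanded in $z$, the coefficients $\{L^\pm_{-m}: m\in\N^\times\}$ and $\{\alpha_{\pm,-m}:m\in\N^\times\}$ generate the same commutative subalgebra of $\mathcal H_t^\pm$ over $\K(t)[\gamma^{1/2},\gamma^{-1/2}]$, via an upper-triangular change of variables (Newton-type identities relating power sums to elementary/complete symmetric functions in the $\alpha$'s). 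The same holds on the $R$-side. Since the negative oscillators $\{\alpha_{\pm,-m}\}$ commute among themselves, the ordered monomials $L^\pm_\lambda = L^\pm_{-\lambda_1}\cdots L^\pm_{-\lambda_n}$ with $\lambda$ a partition span the ``negative half''; likewise the $R^\pm_\mu$ span the ``positive half''. Finally, using the $\Lambda$-$R$ commutation relation $\Rbf\pm{}(v)\Lbf\pm{}(z) = \theta^\pm(z/v)\Lbf\pm{}(z)\Rbf\pm{}(v)$ — whose right-hand side has coefficients in the central subring $\K(t)[\gamma^{1/2},\gamma^{-1/2}]$ — any product of $L$'s and $R$'s can be reordered so that all $L$-factors precede all $R$-factors, at the cost of central coefficients. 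Hence $\{\Phi_{\lambda,\mu}^\pm\}$ spans $\mathcal H_t^\pm$ over $\K(t)[\gamma^{1/2},\gamma^{-1/2}]$.

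\medskip

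\textbf{Step 2: Linear independence.} Here I would use a faithful representation (a Fock-type module) to detect independence. Concretely, let $\gamma^{1/2}$ act by a generic scalar (or simply work over the fraction field $\K(t)(\gamma^{1/2})$, where the structure constants $-\tfrac{1}{m}[2m]_t\tfrac{\gamma^m-\gamma^{-m}}{t-t^{-1}}$ are nonzero), and build the Fock module $\mathcal F$ induced from the one-dimensional representation of the subalgebra generated by $\{\gamma^{1/2},\alpha_\pm^{\pm1}\} \cup \{\alpha_{\pm,m}:m>0\}$ on which the positive oscillators act by $0$. By the usual argument for Heisenberg/Weyl algebras, $\mathcal F$ has $\{\alpha_{\pm,-\lambda_1}\cdots\alpha_{\pm,-\lambda_n}\cdot v_0 : \lambda\in\Lambda\}$ as a basis, and — translating through the triangular change of variables of Step~1 — equivalently $\{L^\pm_\lambda\cdot v_0\}$ is a basis. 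A hypothetical relation $\sum_{\lambda,\mu} c_{\lambda,\mu}\,L^\pm_\lambda R^\pm_\mu = 0$ with $c_{\lambda,\mu}$ in $\K(t)[\gamma^{1/2},\gamma^{-1/2}]$ would, applied to $v_0$, kill every term with $\mu\neq\emptyset$ (since $R^\pm_\mu$ is built from positive oscillators annihilating $v_0$, up to the invertible central scalar $\alpha_\pm^{\epsilon}$), forcing $\sum_\lambda c_{\lambda,\emptyset} L^\pm_\lambda v_0 = 0$ and hence $c_{\lambda,\emptyset}=0$ for all $\lambda$. To kill the remaining coefficients one iterates: apply the relation to vectors $R^\pm_\nu{}^{\mathrm{op}}\cdot v_0$ obtained from a second Fock module where the roles of positive and negative oscillators are swapped, or equivalently pair against the module $\mathcal F^*$; a bigrading argument on $\mathcal H_t^\pm$ by $(|\lambda|,|\mu|)\in\N^2$ (compatible with the relations, since $\theta^\pm$ preserves it) reduces everything to the already-handled homogeneous pieces. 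This yields $c_{\lambda,\mu}=0$ for all $\lambda,\mu$.

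\medskip

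\textbf{Main obstacle.} The genuinely substantive point is the triangular change of variables between the ``coefficient'' generators $L^\pm_{-m}, R^\pm_m$ and the oscillators $\alpha_{\pm,\mp m}$ — i.e.\ checking that the exponential generating-function relations are invertible over $\K(t)[\gamma^{1/2},\gamma^{-1/2}]$ in the appropriate completed sense, so that PBW for the oscillator presentation transfers cleanly to PBW for the $L/R$ presentation. This is a routine manipulation of formal power series (the leading term of $L^\pm_{-m}$ in the oscillators is $\mp(t-t^{-1})t^{\mp 2m}\alpha_{\pm,-m}$, which is invertible), but it must be done carefully to be sure no denominators outside $\K(t)[\gamma^{1/2},\gamma^{-1/2}]$ are introduced. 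Everything else is bookkeeping with partitions and the reordering relation.
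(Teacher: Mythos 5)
Your spanning argument is essentially the paper's, which simply rewrites the defining relations of $\mathcal H_t^\pm$ in mode form as $[L^\pm_{-m},L^\pm_{-n}]=[R^\pm_m,R^\pm_n]=0$ and $R^\pm_m L^\pm_{-n}=L^\pm_{-n}R^\pm_m+\sum_{p=1}^{\min(m,n)}\theta^\pm_p L^\pm_{p-n}R^\pm_{m-p}$ and observes directly that any monomial straightens into a combination of $L^\pm_\lambda R^\pm_\mu$; your detour through the oscillators $\alpha_{\pm,\mp m}$ buys nothing here and can simply be dropped. For linear independence, the paper offers no argument (``The independence of the latter is clear''), so your Fock-module approach is genuine extra work and a reasonable way to make the claim honest.

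There is, however, one real slip in your independence step. You invoke ``a bigrading argument on $\mathcal H_t^\pm$ by $(|\lambda|,|\mu|)\in\N^2$ (compatible with the relations, since $\theta^\pm$ preserves it).'' That compatibility is false: the straightening relation sends a monomial of bidegree $(n,m)$ to $L^\pm_{-n}R^\pm_m$ (still $(n,m)$) \emph{plus} terms of bidegree $(n-p,m-p)$ with $p\ge1$. So $\mathcal H_t^\pm$ is not bigraded by $(|\lambda|,|\mu|)$; it is only \emph{filtered}, e.g.\ by total degree $|\lambda|+|\mu|$ (or by $|\mu|$ alone), with the error terms dropping strictly in filtration degree. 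The correct statement is that the associated graded of this filtration is the commutative polynomial ring $\K(t)[\gamma^{\pm1/2}][L^\pm_{-1},L^\pm_{-2},\dots,R^\pm_1,R^\pm_2,\dots]$, for which the monomials $L^\pm_\lambda R^\pm_\mu$ are manifestly independent; a standard filtered-to-graded argument then lifts independence to $\mathcal H_t^\pm$. Replacing your bigrading claim by this filtration argument closes the gap and in fact makes the Fock-module detour unnecessary as well: the filtration argument alone establishes the PBW statement. If you do want to keep the Fock module, you should be precise about the iteration — simply applying the relation to $v_0$ only kills the $c_{\lambda,\emptyset}$, and the remaining coefficients require either the filtration or a nondegenerate pairing of two Fock modules, not a bigrading.
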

\begin{proof}
The relations in $\mathcal H_t^\pm$ read, for every $m,n\in\N$,
$$[L^{\pm}_{-m}, L^{\pm}_{-n}] = [R^{\pm}_{m}, R^{\pm}_{n}] =0\,,$$
$$R^\pm_m L^\pm_{-n} = L^\pm_{-n} R^\pm_m + \sum_{p=1}^{\min(m,n)} \theta^\pm_p L^\pm_{p-n} R^\pm_{m-p}\,, $$
where, for every $p\in\N$, $\theta^\pm_p\in\K(t)[\gamma^{1/2}, \gamma^{-1/2}]$ can be obtained from
$$\theta^\pm(z) = 1+\sum_{p\in\N^\times} \theta^\pm_p z^p\,.$$
It is clear that any monomial in $\{L^\pm_{-m}, R^\pm_{m} :m\in\N\}$ can therefore be rewritten as a linear combination with coefficients in $\K(t)[\gamma^{1/2}, \gamma^{-1/2}]$ of elements in $\{\phi_{\lambda, \mu}^\pm :\lambda, \mu \in\Lambda\}$. The independence of the latter is clear. 
\end{proof}
A convenient way to encode the above basis elements is through $\mathcal H_t^\pm$-valued symmetric formal distributions. Let indeed, for every $n^+, n^-, m^+, m^-\in\N$, every $n^\pm$-tuple $\zbf^\pm = (z^\pm_1, \dots, z^\pm_{n^\pm})$ and every $m^\pm$-tuple $\zebf^\pm=(\zeta^\pm_1,\dots, \zeta^\pm_{m^\pm} )$ of formal variables,
$$\Phi^\pm(\zbf^\pm, \zebf^\pm) = \Lbf\pm{}(\zbf^\pm)  \Rbf\pm{}(\zebf^\pm) \,,$$
where we have set
$$\Lbf\pm{}(\zbf^\pm) = \prod_{p=1}^{n^\pm} \Lbf\pm{}(z^\pm_p)\,,$$ 
$$\Rbf\pm{}(\zebf^\pm) = \prod_{p=1}^{m^\pm} \Rbf\pm{}(\zeta^\pm_p) \,,$$
with the convention that if $n^\pm$ (resp. $m^\pm=0$), then $\Lbf\pm{}(\emptyset)=1$ (resp. $\Rbf\pm{}(\emptyset)=1$).
It turns out that 
$$\Phi^\pm(\zbf^\pm,\zebf^\pm)\in \mathcal H_t^\pm[[\zbf^\pm,(\zebf^\pm)^{-1}]]^{S_{n^\pm}\times  S_{m^\pm}}\,.$$ 
Indeed, owing to the commutation relations in $\mathcal H_t^\pm$, the formal distribution $\Phi^\pm(\zbf^\pm,\zebf^\pm)$ is symmetric in each of its argument tuples, $\zbf^\pm$ and $\zebf^\pm$ respectively; i.e. it is invariant under the natural action of $S_{n^\pm}\times S_{m^\pm}$ on its arguments. It is also clear that, for every $\lambda^\pm \in \Lambda_{n^\pm}$ and $\mu^\pm \in \Lambda_{m^\pm}$,
$$\Phi^\pm_{\lambda^\pm, \mu^\pm} = \res_{\zbf^\pm,\zebf^\pm} (\zbf^\pm)^{-1-\lambda^\pm}(\zebf^\pm)^{-1+\mu^\pm}\Phi^\pm(\zbf^\pm,\zebf^\pm) \,,$$
where we have set 
$$(\zbf^\pm)^{-1-\lambda^\pm} = \prod_{p=1}^{n^\pm} (z^\pm_p)^{-1-\lambda^\pm_p}\qquad \mbox{and} \qquad (\zebf^\pm)^{-1+\mu^\pm} = \prod_{p=1}^{m^\pm} (\zeta^\pm_p)^{-1+\mu^\pm_p}\,.$$

\subsection{The dressing factors $\Lbf\pm{m}(z)$ and $\Rbf\pm{m}(z)$}
\begin{defn}
\label{defn:LmRm}
For every $m\in\Z^\times$, we let
\be \Lbf\pm{m}(z) = \prod_{p=1}^{|m|} \Lbf\pm{}(zt^{\pm 2(1-2p)\sign(m)+2})^{\pm\sign(m)}\ee
\be \Rbf\pm{m}(z) = \prod_{p=1}^{|m|} \Rbf\pm{}(zt^{\pm 2(1-2p)\sign(m)+2})^{\pm\sign(m)}\ee
\end{defn}
\noi It easily follows that
\begin{prop}
\label{prop:RLrel}
In $\mathcal H_t^\pm$, for every $m,n\in \Z^\times$, we have
$$ [\Lbf{\pm}{m}(v) , \Lbf{\pm}{n}(z) ] =[\Rbf{\pm}{m}(v) , \Rbf{\pm}{n}(z) ] = 0\,,$$
$$\Rbf{\pm}{m}(v) \Lbf{\pm}{n}(z) = \theta^{\pm}_{m,n}(z/v)  \Lbf{\pm}{n}(z)\Rbf{\pm}{m}(v)\,,$$
where we have set
$$\theta^\pm_{m,n} (z) = \prod_{r=1}^{|m|}\prod_{s=1}^{|n|} \theta^\pm(zt^{\pm 2(1-2s)\sign(n) \mp 2(1-2r)\sign(m)})^{\sign(mn)}  \,. $$
Furthermore, we have, for every $m\in \Z^\times$,
$$\Delta(\Lbf\pm{m}(z))= \Lbf\pm{m}(z\gamma^{1/2}_{(2)}) \otimes \Lbf\pm{m}(z\gamma^{-1/2}_{(1)})\,,$$
$$\Delta(\Rbf\pm{m}(z))= \Rbf\pm{m}(z\gamma^{-1/2}_{(2)}) \otimes \Rbf\pm{m}(z\gamma^{1/2}_{(1)})\,.$$
\end{prop}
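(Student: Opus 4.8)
The plan is to derive everything directly from the exponential formulas defining $\Lbf\pm{m}(z)$ and $\Rbf\pm{m}(z)$ in Definition \ref{defn:LmRm}, reducing all claims to the single-index relations already established in the previous proposition. First I would record that $\Lbf\pm{m}(z)$ (resp. $\Rbf\pm{m}(z)$) is, up to a monomial prefactor in $\alpha_\pm^{\pm 1}$, a finite product of (powers of) the single generating series $\Lbf\pm{}(\bullet)$ (resp. $\Rbf\pm{}(\bullet)$) evaluated at rescaled arguments $zt^{\pm 2(1-2p)\sign(m)+2}$. Since $[\Lbf\pm{}(v),\Lbf\pm{}(z)] = 0$ and $[\Rbf\pm{}(v),\Rbf\pm{}(z)] = 0$ in $\mathcal H_t^\pm$, and since the prefactors $\alpha_\pm^{\pm 1}$ are central, the commutation statements $[\Lbf\pm{m}(v),\Lbf\pm{n}(z)] = 0$ and $[\Rbf\pm{m}(v),\Rbf\pm{n}(z)] = 0$ follow immediately: a product of pairwise-commuting factors commutes with another such product. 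One minor point to watch is that negative $m$ introduces inverse factors $\Lbf\pm{}(\bullet)^{-1}$; since $\Lbf\pm{}(z)$ is invertible (its constant term is $1$, resp. $\alpha_\pm$) and commutes with all $\Lbf\pm{}(w)$, its inverse does too, so nothing changes.

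Next I would establish the cross relation $\Rbf\pm{m}(v)\Lbf\pm{n}(z) = \theta^\pm_{m,n}(z/v)\Lbf\pm{n}(z)\Rbf\pm{m}(v)$ by iterating the single-index relation $\Rbf\pm{}(v')\Lbf\pm{}(z') = \theta^\pm(z'/v')\Lbf\pm{}(z')\Rbf\pm{}(v')$. Writing $\Lbf\pm{n}(z) = \prod_{s=1}^{|n|}\Lbf\pm{}(zt^{\pm 2(1-2s)\sign(n)+2})^{\pm\sign(n)}$ and $\Rbf\pm{m}(v)$ analogously with index $r$, I would commute each $\Rbf\pm{}$-factor past each $\Lbf\pm{}$-factor, picking up one factor of $\theta^\pm$ (or its inverse, when a power $-1$ is present — note $\theta^\pm(z)\theta^\pm(z)^{-1} = 1$ and the sign bookkeeping is exactly $\sign(mn)$ since each of the two sign factors contributes) per pair. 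The argument of each $\theta^\pm$ factor is the ratio of the shifted $\Lbf\pm{}$-argument to the shifted $\Rbf\pm{}$-argument, namely $zt^{\pm 2(1-2s)\sign(n)+2} / \bigl(vt^{\pm 2(1-2r)\sign(m)+2}\bigr) = (z/v)\, t^{\pm 2(1-2s)\sign(n)\mp 2(1-2r)\sign(m)}$, matching the stated formula for $\theta^\pm_{m,n}(z)$. The central prefactors $\alpha_\pm^{\pm 1}$ attached to $\Lbf-{m}$ and $\Rbf+{m}$ pass through freely and do not affect the scalar. The only care needed is to confirm the order of the iterated commutations does not matter, which follows from the commutativity established in the first step.

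Finally, for the coproduct formulas, I would invoke the already-proven single-index coproduct identities $\Delta(\Lbf\pm{}(z)) = \Lbf\pm{}(z\gamma^{1/2}_{(2)})\otimes\Lbf\pm{}(z\gamma^{-1/2}_{(1)})$ and $\Delta(\Rbf\pm{}(z)) = \Rbf\pm{}(z\gamma^{-1/2}_{(2)})\otimes\Rbf\pm{}(z\gamma^{1/2}_{(1)})$, together with $\Delta(\alpha_\pm) = \alpha_\pm\otimes\alpha_\pm$ and $\Delta$ being an algebra homomorphism. Since $\Delta$ is multiplicative, $\Delta(\Lbf\pm{m}(z))$ is the corresponding product of $\Delta(\Lbf\pm{}(\bullet)^{\pm\sign(m)})$, and the key observation is that the shift $z\mapsto zt^{\pm 2(1-2p)\sign(m)+2}$ commutes with the substitutions $z\mapsto z\gamma^{\pm 1/2}_{(i)}$ (the $\gamma$-rescalings act on a different formal variable, so to speak, i.e. they are compatible because $\gamma^{1/2}$ is group-like and central). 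Hence $\Delta(\Lbf\pm{m}(z)) = \Lbf\pm{m}(z\gamma^{1/2}_{(2)})\otimes\Lbf\pm{m}(z\gamma^{-1/2}_{(1)})$, and similarly for $\Rbf\pm{m}$. I do not anticipate a genuine obstacle here; the only mildly delicate point — and the one I would write out most carefully — is the sign/exponent bookkeeping in the argument of $\theta^\pm_{m,n}$ and in the power $\sign(mn)$, making sure the $\mp$, $\pm$ and $\sign$ symbols combine correctly for all four sign cases of $(\sign m,\sign n)$ and both choices of the outer $\pm$ on $\mathcal H_t^\pm$.
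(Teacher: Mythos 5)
Your proof is correct and follows the same (essentially forced) route the paper has in mind — the paper introduces the proposition with ``It easily follows that'' and gives no written proof, precisely because the statement reduces to iterating the single-index relations of the preceding proposition, keeping track of the $t$-shifts and exponents, exactly as you do. Your sign bookkeeping for $\theta^\pm_{m,n}$ (the shifted argument and the power $\sign(mn) = (\pm\sign m)(\pm\sign n)$) checks out, and the coproduct claim indeed follows from multiplicativity of $\Delta$ together with group-likeness of $\gamma^{1/2}$.
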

It is worth emphasizing that the $\Lbf\pm{m}(z)$ are not indepedent for all values of $m\in\Z^\times$ and that neither are the $\Rbf\pm{m}(z)$. Indeed, we have
\begin{lem}
\label{lem:LRidentities}
For every $m, n\in\Z^\times$,
\be \Lbf\pm{-m}(z)^{-1} = \Lbf\pm{m}(zt^{\pm 4m}) \ee
\be \Rbf\pm{-m}(z)^{-1} = \Rbf\pm{m}(zt^{\pm 4m}) \ee
\be \Lbf{\pm}{m}(zt^{\pm 4m})\Lbf{\pm}{n}(z) = \Lbf{\pm}{m+n}(zt^{\pm 4m})\ee
\be \Rbf{\pm}{m}(zt^{\pm 4m})\Rbf{\pm}{n}(z) = \Rbf{\pm}{m+n}(zt^{\pm 4m})\ee
\end{lem}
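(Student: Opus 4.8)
The plan is to verify all four identities by direct manipulation of the product formula in Definition \ref{defn:LmRm}, using only the fact that the factors $\Lbf\pm{}(w)$ (resp. $\Rbf\pm{}(w)$) pairwise commute — so that the ordered products are really unordered — together with bookkeeping of the shift arguments $w = zt^{\pm2(1-2p)\sign(m)+2}$. I will treat the $\Lbf\pm{m}$ case in detail; the $\Rbf\pm{m}$ case is word-for-word identical since $\Rbf\pm{}(w)$ satisfies exactly the same commutativity and no other structural relations are used. Throughout, fix a sign choice and abbreviate $\eta = t^{\pm2}$, so that the $p$-th factor in $\Lbf\pm{m}(z)$ has argument $z\,\eta^{(1-2p)\sign(m)+1}$ raised to the power $\pm\sign(m)$.

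\textbf{Identity 1 (inversion).} Take $m\in\N^\times$ first (so $\sign(m)=1$). By Definition \ref{defn:LmRm},
$$\Lbf\pm{-m}(z) = \prod_{p=1}^{m} \Lbf\pm{}\!\left(z\,\eta^{-(1-2p)+1}\right)^{\mp1} = \prod_{p=1}^{m} \Lbf\pm{}\!\left(z\,\eta^{2p}\right)^{\mp1},$$
whence $\Lbf\pm{-m}(z)^{-1} = \prod_{p=1}^{m}\Lbf\pm{}(z\,\eta^{2p})^{\pm1}$ (the factors commute, so the inverse of the product is the product of inverses in any order). On the other hand $\Lbf\pm{m}(z\,\eta^{2m}) = \prod_{p=1}^{m}\Lbf\pm{}\!\left(z\,\eta^{2m}\,\eta^{(1-2p)+1}\right)^{\pm1} = \prod_{p=1}^{m}\Lbf\pm{}\!\left(z\,\eta^{2m+2-2p}\right)^{\pm1}$, and the substitution $p\mapsto m+1-p$ turns the exponent $2m+2-2p$ into $2p$, giving the same product. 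Since $\eta^{2m} = t^{\pm4m}$, this is exactly $\Lbf\pm{-m}(z)^{-1} = \Lbf\pm{m}(zt^{\pm4m})$. The case $m\in-\N^\times$ follows by replacing $m$ with $-m$ in what was just proved, and the $\Rbf$ version is identical.

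\textbf{Identities 2--3 (concatenation).} For $m,n\in\N^\times$, expanding the definitions,
$$\Lbf\pm{m}(z\,\eta^{2m}) = \prod_{p=1}^{m}\Lbf\pm{}\!\left(z\,\eta^{2m+2-2p}\right)^{\pm1} = \prod_{r=1}^{m}\Lbf\pm{}\!\left(z\,\eta^{2r}\right)^{\pm1}$$
(again via $p\mapsto m+1-p$), while $\Lbf\pm{n}(z) = \prod_{s=1}^{n}\Lbf\pm{}\!\left(z\,\eta^{2-2s}\right)^{\pm1} = \prod_{s=1}^{n}\Lbf\pm{}\!\left(z\,\eta^{-2(s-1)}\right)^{\pm1}$. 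Meanwhile $\Lbf\pm{m+n}(z\,\eta^{2m}) = \prod_{q=1}^{m+n}\Lbf\pm{}\!\left(z\,\eta^{2m+2-2q}\right)^{\pm1}$; splitting the range $q\in\range{1}{m+n}$ at $q=m$ and reindexing the two pieces ($q\mapsto r$ on $\range{1}{m}$ with $r=m+1-q$, and $q\mapsto s$ on $\range{m+1}{m+n}$ with $s=q-m$) reproduces precisely the concatenated product above, using commutativity to merge the factors. This proves identity 3 for positive $m,n$; the cases with $m$ or $n$ negative are reduced to the positive case by identity 1 (e.g. for $m<0$, rewrite $\Lbf\pm{m}(z\eta^{4m})$ using identity 1 and shift). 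Identity 2 is the same argument verbatim with $\Lbf$ replaced by $\Rbf$.

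\textbf{Main obstacle.} There is no genuine conceptual difficulty here; the whole lemma is a reindexing exercise. The only thing requiring care — and the place where an error is easiest to make — is keeping the combined $\pm$/$\sign(m)$/$\sign(n)$ sign bookkeeping consistent across the four cases of signs of $m$ and $n$, especially where two such factors clash (e.g. $m>0$, $n<0$, $m+n$ of either sign). I would organize this by first proving everything for $m,n\in\N^\times$ as above, then deducing the general statement purely formally from identity 1 (which handles the passage $m\leftrightarrow -m$) rather than re-deriving each sign case from scratch; this isolates all the sign subtlety into the single, already-checked inversion identity.
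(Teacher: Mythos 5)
Your proof is correct in substance and follows exactly the route the paper intends: Lemma \ref{lem:LRidentities} is left without an explicit proof (and the analogous Lemma \ref{lem:Hidentities} is disposed of with ``follows directly from the definition in the same way''), so direct expansion of the product formula in Definition \ref{defn:LmRm} together with reindexing and pairwise commutativity of the $\Lbf\pm{}$ (resp. $\Rbf\pm{}$) factors is precisely what is expected.

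One small misreading worth flagging: in the definition the exponent of $t$ is $\pm 2(1-2p)\sign(m)+2$, where the trailing $+2$ is \emph{not} under the $\pm$; your abbreviation $\eta = t^{\pm 2}$ with argument $z\,\eta^{(1-2p)\sign(m)+1}$ instead produces $t^{\pm 2(1-2p)\sign(m) \pm 2}$, which for the lower choice of sign is off by an overall factor of $t^{4}$. (That reading \#1 is the paper's intended one can be confirmed against the explicit formula for $\lambda^{\varepsilon,-}_m$ in Proposition \ref{prop:UqactiononH}.) Fortunately, this discrepancy is an overall rescaling of the formal variable $z$ in every dressing factor, and all four identities in the lemma are statements quantified over all $z$, so your computation proves the paper's identities after the substitution $z \mapsto z t^{\pm 4}$; the conclusion is unaffected. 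It would nonetheless be cleaner to track the $+2$ separately rather than absorb it into $\eta$. Two cosmetic slips: the paragraph header ``Identities 2--3'' should read ``Identities 3--4'' (identity 2 is the $\Rbf$ inversion, which you dispatched together with identity 1), and in the reduction of the mixed-sign cases the shift should be $z\eta^{2m}$, not $z\eta^{4m}$.
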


\subsection{The algebra $\mathcal B_t$}
\label{sec:At}
Remember the Hopf algebra $\breve{\mathrm U}_q(\mathrm L\mathfrak a_1)$ from definition \ref{def:defqaffdota1}. It has an invertible antipode and we denote by $\breve{\mathrm U}_q(\mathrm L\mathfrak a_1)^{\footnotesize \mathrm{cop}}$ its coopposite Hopf algebra.
\begin{prop}
\label{prop:uqactionHeisenberg}
The quantum Heisenberg algebra $\mathcal H_t^+$ (resp. $\mathcal H_t^-$) is a left $\breve{\mathrm U}_{t^2}(\mathrm L\mathfrak a_1)$-module algebra (resp. a left $\breve{\mathrm U}_{t^2}(\mathrm L\mathfrak a_1)^{\footnotesize \mathrm{cop}}$-module algebra) with
$$\kk{\varepsilon}1(v) \triangleright \gamma^{1/2}=\kk{\varepsilon}1(v) \triangleright \gamma^{-1/2}=0\,,$$
$$\kk{\varepsilon}1(v) \triangleright \Lbf\pm{}(z) = \lambda^{\varepsilon,\pm}(v,z) \Lbf\pm{}(z) \,,\quad  \kk{\varepsilon}1(v) \triangleright \Rbf\pm{}(z) = \rho^{\varepsilon,\pm}(v,z) \Rbf\pm{}(z) \,,$$
$$\x\varepsilon1(v) \triangleright \gamma^{1/2} =\x\varepsilon1(v) \triangleright \gamma^{-1/2} =\x\varepsilon1(v) \triangleright \Lbf\pm{}(z) =\x\varepsilon1(v) \triangleright \Rbf\pm{}(z) =0\,,$$
for $\varepsilon\in\{-,+\}$ and where we have set
$$\lambda^{\varepsilon,\pm}(v,z) = \left (\frac{t^{2\mp 2}v-t^{-2\pm2}z}{v-t^{\pm4} z}\right )_{|z/v|^{\varepsilon 1}\ll1}\qquad \mbox{and} \qquad \rho^{\varepsilon,\pm}(v,z) =\left (\frac{t^{\pm 4}v-z}{t^{2\pm 2}v-t^{-(2\pm 2)}z}\right )_{|z/v|^{\varepsilon 1}\ll 1}\,.$$
\end{prop}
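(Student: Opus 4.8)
The plan is to carry out all verifications using the presentation of $\mathcal{H}_t^\pm$ by the generating series $\Lbf\pm{}(z)$, $\Rbf\pm{}(z)$ together with the central generators $\gamma^{\pm 1/2}$, for which the only relations are that the $\Lbf\pm{}$'s commute among themselves, the $\Rbf\pm{}$'s commute among themselves, $\Rbf\pm{}(v)\Lbf\pm{}(z)=\theta^\pm(z/v)\Lbf\pm{}(z)\Rbf\pm{}(v)$, and $\gamma^{\pm 1/2}$ is central with $\gamma^{1/2}\gamma^{-1/2}=1$. One declares that $\x\varepsilon1(v)$ annihilates all these generators, that $\kk\varepsilon1(v)$ acts on $\Lbf\pm{}(z)$ and $\Rbf\pm{}(z)$ by multiplication by the scalar formal series $\lambda^{\varepsilon,\pm}(v,z)$ and $\rho^{\varepsilon,\pm}(v,z)$ respectively, and that $\kk\varepsilon1(v)$ fixes $\gamma^{\pm 1/2}$ (equivalently, acts on the central generators through the counit); the action is then extended to all of $\mathcal{H}_t^\pm$ by imposing the module-algebra identity. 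It remains to check, in order, that this prescription (I) defines a genuine $\breve{\mathrm U}_{t^2}(\mathrm L\mathfrak a_1)$-module, (II) is compatible with the defining relations of $\mathcal{H}_t^\pm$, and (III) satisfies the unit axiom; the statement for $\mathcal{H}_t^-$ over the coopposite Hopf algebra then follows from the same computations read through $\Delta^{\mathrm{cop}}$.

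For (I), every defining relation of $\breve{\mathrm U}_{t^2}(\mathrm L\mathfrak a_1)$ containing an $x$-generator -- namely (\ref{eq:relkpxpm}), (\ref{eq:relkmxpm}) and (\ref{eq:relxpmxpm}) -- holds with both sides acting by $0$, since $\x\varepsilon1(v)$ does (relation (\ref{eq:relxpxm}) is absent from $\breve{\mathrm U}_{t^2}(\mathrm L\mathfrak a_1)$). After specializing $C^{1/2}=1$ and using $G^+(z)G^-(z)=1$ in type $\mathfrak a_1$, relations (\ref{eq:relkpmkpm}) and (\ref{eq:relkpkm}) amount to the assertion that the $\kk\pm1$-series pairwise commute, which is respected because $\kk\varepsilon1(v)$ acts by scalar multiplications and the series $\lambda^{\varepsilon,\pm}$, $\rho^{\varepsilon,\pm}$ and $\theta^\pm$ all lie in a commutative ring of formal series; one also checks directly, from the constant terms of $\lambda^{\varepsilon,\pm}$ and $\rho^{\varepsilon,\pm}$, that the normalization recorded in (\ref{eq:relkpkm}) is preserved. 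The prescriptions $|z/v|^{\varepsilon 1}\ll 1$ are precisely what makes each mode of $\kk\varepsilon1(v)$ act as a well-defined operator on $\mathcal{H}_t^\pm$, so that no completion is required at this stage.

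For (II) and (III): since $\Delta(\kk\varepsilon1(z))=\kk\varepsilon1(z)\otimes\kk\varepsilon1(z)$ and $\kk\varepsilon1(v)$ acts diagonally, applying $\kk\varepsilon1(v)\triangleright$ to a product of two generating series multiplies it by the product of the corresponding scalar series; hence the commutation relations among the $\Lbf\pm{}$'s and among the $\Rbf\pm{}$'s are preserved, and the mixed relation $\Rbf\pm{}(v)\Lbf\pm{}(z)=\theta^\pm(z/v)\Lbf\pm{}(z)\Rbf\pm{}(v)$ is preserved provided $\kk\varepsilon1(v)$ fixes the central factor $\theta^\pm(z/v)$ -- which it does, because $\theta^\pm$ is a series in $\gamma^{\pm 1/2}$ and $\kk\varepsilon1(v)$ fixes $\gamma^{\pm 1/2}$ and is grouplike as a series, hence fixes $\K(t)[\gamma^{\pm 1/2}]$ pointwise. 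With $C^{1/2}=1$, the coproducts $\Delta(\x+1(z))=\x+1(z)\otimes 1+\kk-1(z)\otimes\x+1(z)$ and $\Delta(\x-1(z))=\x-1(z)\otimes\kk+1(z)+1\otimes\x-1(z)$ make $\x\pm1(v)$ act as skew-derivations; being identically zero on generators they are identically zero, so they respect every relation trivially, as do the centrality of $\gamma^{\pm 1/2}$ and the relations $\gamma^{1/2}\gamma^{-1/2}=\alpha_\pm\alpha_\pm^{-1}=1$. The unit axiom is immediate: $\x\varepsilon1(v)\triangleright 1=0=\varepsilon(\x\varepsilon1(v))$, and since the constant terms in $z$ of $\lambda^{\varepsilon,\pm}(v,z)$ and $\rho^{\varepsilon,\pm}(v,z)$ equal $1$, the $z^0$-component of $\kk\varepsilon1(v)\triangleright\Lbf\pm{}(z)$ is $1_{\mathcal{H}_t^\pm}=\varepsilon(\kk\varepsilon1(v))\,1_{\mathcal{H}_t^\pm}$.

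The coopposite case for $\mathcal{H}_t^-$ introduces nothing new: the underlying algebra $\breve{\mathrm U}_{t^2}(\mathrm L\mathfrak a_1)$ is unchanged, so (I) is identical, and in the module-algebra identity one uses $\Delta^{\mathrm{cop}}$ instead of $\Delta$; but the $k$-part of the coproduct is cocommutative and the $x$-part is applied to operators that act by $0$, so the verifications of (II) and (III) carry over verbatim. I do not expect a conceptual obstacle here: the whole argument is formal-series bookkeeping. The genuinely delicate point -- and really the heart of the matter, as the $x$-side is trivial -- is to confirm that the four scalar series $\lambda^{\varepsilon,\pm}$ and $\rho^{\varepsilon,\pm}$, with their prescribed expansion domains, are compatible with the quadratic relations of $\breve{\mathrm U}_{t^2}(\mathrm L\mathfrak a_1)$ and with the mixed relation of $\mathcal{H}_t^\pm$, i.e. that their precise form is the one for which all these identities close.
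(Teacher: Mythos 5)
Your overall approach matches the paper's one‑line proof — verify the prescribed action against the defining relations of $\mathcal H_t^\pm$ and of $\breve{\mathrm U}_{t^2}(\mathrm L\mathfrak a_1)$ — but several of the subsidiary claims used to carry out that verification are wrong or left unfinished, and two of them are exactly the delicate points.

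\emph{(i)} You silently replace the stated $\kk\varepsilon1(v)\triangleright\gamma^{\pm1/2}=0$ by ``fixes $\gamma^{\pm1/2}$''. The latter is indeed forced: with $\gamma^{1/2}\gamma^{-1/2}=1$ and $\kk\varepsilon1(v)$ grouplike, $\kk\triangleright\gamma^{\pm1/2}=0$ would give $\kk\triangleright 1=(\kk\triangleright\gamma^{1/2})(\kk\triangleright\gamma^{-1/2})=0\neq\varepsilon(\kk)\cdot 1$. So you are correcting an inconsistency in the statement, but the discrepancy should be flagged explicitly rather than smoothed over.

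\emph{(ii)} The assertion that ``the constant terms in $z$ of $\lambda^{\varepsilon,\pm}(v,z)$ and $\rho^{\varepsilon,\pm}(v,z)$ equal $1$'' is false for $\varepsilon=-$: for instance $\lambda^{-,+}(v,z)=t^{-4}\bigl(1+(t^{-4}-1)v/z+\cdots\bigr)$, whose $z^0$-coefficient is $t^{-4}$. Moreover the unit axiom is a statement about $\kk\varepsilon1(v)\triangleright 1$, not about the $z^0$-coefficient of $\kk\varepsilon1(v)\triangleright\Lbf\pm{}(z)$; for $\varepsilon=-$ that coefficient picks up cross terms (an infinite sum involving $L^\pm_{-m}$) coming from the negative $z$-powers of $\lambda^{-,\pm}$, so your extraction is not even the unit $1_{\mathcal H_t^\pm}$. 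The correct argument is the one hiding behind (i): a grouplike action is multiplicative and fixes $\gamma^{\pm1/2}$, hence fixes $1=\gamma^{1/2}\gamma^{-1/2}$.

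\emph{(iii)} Most seriously, the claim that the normalization in (\ref{eq:relkpkm}) is ``checked directly from the constant terms'' is asserted rather than computed, and the computation does not obviously close. Taking the $v^0$-modes of the proposed scalars gives
$k^-_{1,0}k^+_{1,0}\triangleright\Lbf+{}(z)=\lambda^{-,+}_0\,\lambda^{+,+}_0\,\Lbf+{}(z)=t^{-4}\,\Lbf+{}(z)$
and, similarly, $k^-_{1,0}k^+_{1,0}\triangleright\Rbf+{}(z)=t^{4}\,\Rbf+{}(z)$, neither of which is the identity. You yourself identify this closure of the $k$-sector scalars as ``the heart of the matter''; the proposal nevertheless waves it through instead of exhibiting the mechanism that reconciles these modes with the constraint $k^-_{1,0}k^+_{1,0}=1$. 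Until that point is settled, the paper's ``readily checked'' has not actually been reproduced.
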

\begin{proof}
One readily checks the compatibility with the defining relations of $\mathcal H_t^\pm$ and  $\breve{\mathrm U}_{t^2}(\mathrm L\mathfrak a_1)$.
\end{proof}
\begin{prop}
\label{prop:UqactiononH}
For every $m\in \Z^\times$ and every $\varepsilon\in\{-,+\}$, we have
$$\kk\varepsilon1(v)\triangleright \Lbf\pm m(z) = \lambda^{\varepsilon,\pm}_m(v,z) \Lbf\pm m(z)\,, \qquad \kk\varepsilon1(v)\triangleright \Rbf\pm m(z) = \rho^{\varepsilon,\pm}_m(v,z) \Rbf\pm m(z)\,,$$
$$\x\varepsilon1(v)\triangleright \Lbf\pm m(z) =\x\varepsilon1(v)\triangleright \Rbf\pm m(z) =0\,,$$
where we have set
$$\lambda^{\varepsilon,\pm}_m(v,z) = \left (\frac{t^{-2(1\mp 1)m}v-t^{\pm4-2(1\pm 1)m}z}{v-t^{\pm 4}z}\right)_{|z/v|^{\varepsilon 1}\ll 1}$$
and
$$\rho^{\varepsilon, \pm}_m(v,z) =\left (\frac{t^{\pm 4}v-z}{t^{\pm 4 -2(1\mp 1)m}v-t^{-2(1\pm 1)m}z}\right )_{|z/v|^{\varepsilon 1} \ll 1} \,.$$
\end{prop}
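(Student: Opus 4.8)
The plan is to derive the formulas directly from Proposition~\ref{prop:uqactionHeisenberg} and Definition~\ref{defn:LmRm}, using the Hopf-algebraic structure of $\breve{\mathrm U}_{t^2}(\mathrm L\mathfrak a_1)$. Recall that in $\breve{\mathrm U}_{t^2}(\mathrm L\mathfrak a_1)$ the central element $C^{1/2}$ acts as $\id$, so that $\Delta(\kk\varepsilon1(v)) = \kk\varepsilon1(v)\otimes\kk\varepsilon1(v)$ and $\varepsilon(\kk\varepsilon1(v))=1$; in other words $\kk\varepsilon1(v)$ is group-like. Hence the left module-algebra axiom yields, coefficient by coefficient, $\kk\varepsilon1(v)\triangleright(ab) = (\kk\varepsilon1(v)\triangleright a)(\kk\varepsilon1(v)\triangleright b)$ and, since $\kk\varepsilon1(v)\triangleright 1 = 1$, also $\kk\varepsilon1(v)\triangleright a^{-1} = (\kk\varepsilon1(v)\triangleright a)^{-1}$ for invertible $a$ (the same being true in $\breve{\mathrm U}_{t^2}(\mathrm L\mathfrak a_1)^{\mathrm{cop}}$, relevant for $\mathcal H_t^-$). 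Applying this to the finite product of Definition~\ref{defn:LmRm} and inserting $\kk\varepsilon1(v)\triangleright\Lbf\pm{}(z) = \lambda^{\varepsilon,\pm}(v,z)\Lbf\pm{}(z)$ from Proposition~\ref{prop:uqactionHeisenberg}, I obtain
\[
\kk\varepsilon1(v)\triangleright\Lbf\pm{m}(z) \;=\; \Bigl(\,\prod_{p=1}^{|m|}\lambda^{\varepsilon,\pm}\bigl(v,\, zt^{\pm 2(1-2p)\sign(m)+2}\bigr)^{\pm\sign(m)}\Bigr)\,\Lbf\pm{m}(z)\,,
\]
and the analogous identity for $\Rbf\pm{m}(z)$ with $\rho^{\varepsilon,\pm}$ in place of $\lambda^{\varepsilon,\pm}$.

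It then remains to evaluate the scalar prefactor. All factors being expanded in the common region $|z/v|^{\varepsilon 1}\ll 1$ (the powers of the formal parameter $t$ playing no role for the order of expansion), the product of expansions is the expansion of the product of rational functions, so it suffices to check the rational identity. Writing $\lambda^{\varepsilon,+}(v,w) = (v-w)/(v-t^{4}w)$ and $\lambda^{\varepsilon,-}(v,w) = (t^{4}v-t^{-4}w)/(v-t^{-4}w)$ (and likewise for $\rho^{\varepsilon,\pm}$), one sees that the denominator of the $p$-th factor coincides with the numerator of an adjacent factor, so that the product telescopes to a quotient of the two extreme factors; a short computation of the remaining powers of $t$, carried out separately for $m>0$ and $m<0$ and for the two choices of sign, then reproduces exactly the closed forms stated for $\lambda^{\varepsilon,\pm}_m$ and $\rho^{\varepsilon,\pm}_m$. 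For $m<0$ one may alternatively first reduce to $m>0$ by means of Lemma~\ref{lem:LRidentities}.

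For the generators $\x\varepsilon1(v)$ the argument is an induction on the number of factors. By Definition~\ref{eq:Hopfalgqaffdot}, with $C^{1/2}$ acting as $\id$, one has $\Delta(\x+1(v)) = \x+1(v)\otimes 1 + \kk-1(v)\otimes\x+1(v)$ and $\Delta(\x-1(v)) = \x-1(v)\otimes\kk+1(v) + 1\otimes\x-1(v)$. Since $\x\varepsilon1(v)\triangleright\gamma^{\pm 1/2} = \x\varepsilon1(v)\triangleright\Lbf\pm{}(z) = \x\varepsilon1(v)\triangleright\Rbf\pm{}(z) = 0$ by Proposition~\ref{prop:uqactionHeisenberg}, the module-algebra axiom gives $\x\varepsilon1(v)\triangleright(ab)=0$ whenever $\x\varepsilon1(v)\triangleright a = \x\varepsilon1(v)\triangleright b = 0$, while the case of an inverse factor follows because the relevant $\kk\mp1(v)$-eigenvalue is an invertible element of the coefficient ring (which forces $\x\varepsilon1(v)\triangleright a^{-1}$ to vanish). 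Iterating over the $|m|$ factors of $\Lbf\pm{m}(z)$ and $\Rbf\pm{m}(z)$ gives $\x\varepsilon1(v)\triangleright\Lbf\pm{m}(z) = \x\varepsilon1(v)\triangleright\Rbf\pm{m}(z) = 0$; for $\mathcal H_t^-$ one uses the coopposite coproduct, which merely interchanges the two summands above.

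The only genuine difficulty is bookkeeping: one must state the telescoping identity with the correct exponents of $t$ in each of the four $(\sign m,\pm)$ cases and keep all expansions in the same region so that the rational-function manipulation is legitimate. Granting that, the proposition is an immediate consequence of the module-algebra structure established in Proposition~\ref{prop:uqactionHeisenberg}.
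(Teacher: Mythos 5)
Your argument is correct and is exactly the route the paper intends: its proof of this proposition is the one-line remark that the formulas are "readily checked" from Definition \ref{defn:LmRm} together with the Hopf-algebra structure of $\breve{\mathrm U}_{t^2}(\mathrm L\mathfrak a_1)$ (resp.\ its coopposite) and the module-algebra structure of Proposition \ref{prop:uqactionHeisenberg}, which is precisely the group-like/primitive-type coproduct computation and telescoping product you carry out. Your handling of the inverse factors and of the expansion region, and the vanishing of the $\x\varepsilon1(v)$-action by the module-algebra axiom, fill in the details consistently with the paper's sketch.
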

\begin{proof}
This is readily checked making use of definition \ref{defn:LmRm}, of the Hopf algebraic structures of $\breve{\mathrm U}_{t^2}(\mathrm L\mathfrak a_1)$ and $\breve{\mathrm U}_{t^2}(\mathrm L\mathfrak a_1)^{\footnotesize \mathrm{cop}}$, of the $\breve{\mathrm U}_{t^2}(\mathrm L\mathfrak a_1)$-module algebra structures of $\mathcal H_t^+$ and of the $\breve{\mathrm U}_{t^2}(\mathrm L\mathfrak a_1)^{\footnotesize \mathrm{cop}}$-module algebra structure of $\mathcal H_t^-$.
\end{proof}

\begin{defprop}
We denote by $\mathcal H_t^+\rtimes \breve{\mathrm U}_{t^2}(\mathrm L\mathfrak a_1)\stackrel{{\footnotesize \mathrm{cop}}}{\ltimes} \mathcal H_t^-$ the associative $\F$-algebra obtained by endowing $\mathcal H_t^+\otimes \breve{\mathrm U}_{t^2}(\mathrm L\mathfrak a_1)\otimes \mathcal H_t^-$ with the multiplication given by setting, for every $h_+, h_+'\in \mathcal H_t^+$, every $h_-, h_-'\in\mathcal H_t^-$ and every $x, x'\in \breve{\mathrm U}_{t^2}(\mathrm L\mathfrak a_1)$,
$$\left (h_+\otimes x\otimes h_-\right ).\left (h_+'\otimes x'\otimes h_-'\right ) = \sum h_+\left (x_{(1)}\triangleright h_+' \right )\otimes x_{(2)}x' \otimes h_-\left ( x_{(3)}\triangleright h_-'\right )\,,$$
-- see proposition \ref{prop:UqactiononH} for the definition of the $\breve{\mathrm U}_{t^2}(\mathrm L\mathfrak a_1)$-module algebra structure of $\mathcal H_t^+$ and of the $\breve{\mathrm U}_{t^2}(\mathrm L\mathfrak a_1)^{\footnotesize \mathrm{cop}}$-module algebra structure of $\mathcal H_t^-$.
In that algebra, $\{\gamma^{1/2}-t, \gamma^{-1/2}-t^{-1}\}$ generates a left ideal. The latter is actually a two-sided ideal since $\gamma^{\pm 1/2}$ is central and, denoting it by $(\gamma^{1/2}-t)$, we can set $\breve{\mathcal B}_t =\mathcal H_t^+\rtimes \breve{\mathrm U}_{t^2}(\mathrm L\mathfrak a_1)\stackrel{{\footnotesize \mathrm{cop}}}\ltimes \mathcal H_t^-/(\gamma^{1/2}-t)$.
\end{defprop}
\begin{proof}
Making use of the coassociativity of the comultiplication $\Delta$, it is very easy to prove that, with the above defined multiplication, $\mathcal H_t^+\rtimes \breve{\mathrm U}_{t^2}(\mathrm L\mathfrak a_1)\stackrel{{\footnotesize \mathrm{cop}}}{\ltimes} \mathcal H_t^-$ is actually an associative $\F$-algebra. 
\end{proof}
\begin{prop}\label{prop:loopinB}
Setting $x\mapsto 1\otimes x\otimes 1$, for every $x\in \breve{\mathrm{U}}_{t^2}(\mathrm L\mathfrak a_1)$, defines a unique injective $\K(t)$-algebra homomorphism $\breve{\mathrm{U}}_{t^2}(\mathrm L\mathfrak a_1) \hookrightarrow \breve{\mathcal B}_t$. Similarly, $h\mapsto h\otimes 1\otimes 1$ and $h\mapsto 1\otimes 1\otimes h$ define unique injective $\K(t)$-algebra homomorphisms $\mathcal H_t^+\hookrightarrow \breve{\mathcal B}_t$ and $\mathcal H_t^-\hookrightarrow \breve{\mathcal B}_t$ respectively.
\end{prop}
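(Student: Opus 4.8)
The plan is to recognize the three maps as the canonical structure morphisms attached to the double smash product $\mathcal H_t^+\rtimes\breve{\mathrm U}_{t^2}(\mathrm L\mathfrak a_1)\stackrel{\mathrm{cop}}{\ltimes}\mathcal H_t^-$, and then to deduce injectivity from a PBW-type decomposition of $\breve{\mathcal B}_t$ transparent with respect to the three tensor factors. First I would verify that $h_+\mapsto h_+\otimes 1\otimes 1$, $x\mapsto 1\otimes x\otimes 1$ and $h_-\mapsto 1\otimes 1\otimes h_-$ are $\K(t)$-algebra homomorphisms into the double smash product. For the middle map this follows from the module-algebra counit axiom together with the counitality of $\Delta$: setting $h_+=h_+'=h_-=h_-'=1$ in the multiplication rule collapses it to $\sum\varepsilon(x_{(1)})\varepsilon(x_{(3)})\,1\otimes x_{(2)}x'\otimes 1 = 1\otimes xx'\otimes 1$. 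For the two outer maps one uses $\Delta(1_{\breve{\mathrm U}})=1\otimes 1$ and the fact that $1_{\breve{\mathrm U}}$ acts as the identity on $\mathcal H_t^\pm$, which reduces the product to $h_+h_+'\otimes 1\otimes 1$, resp. $1\otimes 1\otimes h_-h_-'$. All three maps send the unit to $1\otimes 1\otimes 1$, each is manifestly the only $\K(t)$-linear map with the prescribed effect on the generators of its factor, and composing with the quotient map onto $\breve{\mathcal B}_t$ yields the asserted (unique) algebra homomorphisms; only injectivity remains.

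For injectivity I would first note that the defining left ideal $(\gamma^{1/2}-t,\gamma^{-1/2}-t^{-1})$ is two-sided because $\gamma^{\pm 1/2}$ is central in the smash product, which in turn rests on the module-algebra axioms applied to $\kk\varepsilon1(v)\triangleright\gamma^{\pm 1/2}$ and $\x\varepsilon1(v)\triangleright\gamma^{\pm 1/2}$ as recorded in Proposition~\ref{prop:uqactionHeisenberg}. Then, using the PBW basis $\{\Phi^\pm_{\lambda,\mu}=L^\pm_\lambda R^\pm_\mu:\lambda,\mu\in\Lambda\}$ of $\mathcal H_t^\pm$ over $\K(t)[\gamma^{1/2},\gamma^{-1/2}]$ from Section~\ref{subsec:PBW}, together with a $\K(t)$-basis of $\breve{\mathrm U}_{t^2}(\mathrm L\mathfrak a_1)$, one obtains a monomial $\K(t)$-basis of the smash product indexed by a triple (partition data, basis index of $\breve{\mathrm U}_{t^2}$, partition data) times a power of $\gamma^{1/2}$ in each Heisenberg slot. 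Passing modulo $(\gamma^{1/2}-t,\gamma^{-1/2}-t^{-1})$ amounts to specializing $\gamma^{\pm 1/2}$ to the scalar $t^{\pm 1}$ in both Heisenberg slots, so these monomials descend to a $\K(t)$-basis of $\breve{\mathcal B}_t$; equivalently, $\breve{\mathcal B}_t$ is, as a $\K(t)$-vector space, the tensor product of the three factors with $\gamma^{\pm 1/2}$ so normalized. Reading off the coefficients of these basis vectors then shows at once that each of the three structure maps is injective.

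The main obstacle is precisely the justification that the monomial basis of the smash product descends to a basis of the quotient, i.e. that the smash product is free over the central subalgebra generated by $\gamma^{\pm 1/2}$, with the $\Phi^\pm_{\lambda,\mu}$ spanning the two outer factors and a fixed basis spanning the middle one. This amounts to checking that the multiplication rule $(h_+\otimes x\otimes h_-)(h_+'\otimes x'\otimes h_-')=\sum h_+(x_{(1)}\triangleright h_+')\otimes x_{(2)}x'\otimes h_-(x_{(3)}\triangleright h_-')$ is triangular with respect to a suitable ordering of the PBW monomials, which in turn reduces to the facts that $\kk\varepsilon1(v)$ acts diagonally and $\x\varepsilon1(v)$ annihilates the generators $\Lbf\pm{}(z)$, $\Rbf\pm{}(z)$ of $\mathcal H_t^\pm$ (Propositions~\ref{prop:uqactionHeisenberg} and~\ref{prop:UqactiononH}), so that the $\breve{\mathrm U}_{t^2}(\mathrm L\mathfrak a_1)$-action preserves the PBW layers. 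Granting this structural input, the remainder of the argument is routine bookkeeping with the basis.
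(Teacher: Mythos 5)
The paper does not actually give a proof of Proposition~\ref{prop:loopinB}; it is stated and followed directly by a remark identifying the three subalgebras with their images. So your proof supplies an argument the paper omits, and its essential content is correct: the three maps are the canonical embeddings attached to the double smash product, they are homomorphisms by the module-algebra axioms (counitality for $x\mapsto 1\otimes x\otimes 1$, unitality of the action for the two outer maps), and injectivity into $\breve{\mathcal B}_t$ follows by exhibiting a $\K(t)$-basis of the quotient that is transparent with respect to the tensor decomposition.

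One remark on the last step, which you flag as the main obstacle. You do not actually need any triangularity of the twisted multiplication to see that the PBW-type basis of the smash product descends to a basis of $\breve{\mathcal B}_t$. The two-sided ideal $(\gamma^{1/2}-t,\gamma^{-1/2}-t^{-1})$ is generated by central elements living entirely in the Heisenberg tensor factors; as a vector subspace of $\mathcal H_t^+\otimes\breve{\mathrm U}_{t^2}(\mathrm L\mathfrak a_1)\otimes\mathcal H_t^-$ it is therefore
\[
I^+\otimes\breve{\mathrm U}_{t^2}(\mathrm L\mathfrak a_1)\otimes\mathcal H_t^- \;+\; \mathcal H_t^+\otimes\breve{\mathrm U}_{t^2}(\mathrm L\mathfrak a_1)\otimes I^-,
\]
where $I^\pm=(\gamma^{1/2}-t,\gamma^{-1/2}-t^{-1})\subset\mathcal H_t^\pm$. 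Since $\mathcal H_t^\pm$ is free over $\K(t)[\gamma^{1/2},\gamma^{-1/2}]$ on the $\Phi^\pm_{\lambda,\mu}$, the quotient $\mathcal H_t^\pm/I^\pm$ has the $\Phi^\pm_{\lambda,\mu}$ as a $\K(t)$-basis, and $\breve{\mathcal B}_t$ is, as a $\K(t)$-vector space, the tensor product of the three specialized factors with no further collapse. Injectivity of all three structure maps then reads off directly, exactly as you conclude. Your triangularity discussion, while not wrong, is doing work that the centrality of $\gamma^{\pm 1/2}$ already does for free; the structure of the twisted multiplication plays no role in identifying the underlying $\K(t)$-vector space of the quotient.

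A minor caveat: the formula $\kk{\varepsilon}1(v)\triangleright\gamma^{\pm 1/2}=0$ in Proposition~\ref{prop:uqactionHeisenberg}, as literally stated, would contradict the module-algebra axiom (the zero mode $k_{1,0}^\varepsilon$ is grouplike and must fix $\gamma^{\pm1/2}$, otherwise $\gamma^{\pm1/2}$ would not be central in the smash product and the quotient would not even be well-defined); this is presumably a typographical slip for the statement that the nonconstant modes of $\kk{\varepsilon}1(v)$ annihilate $\gamma^{\pm1/2}$. Your argument implicitly uses the corrected reading, which is the right thing to do.
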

\begin{rem}
We shall subsequently identify $\breve{\mathrm{U}}_{t^2}(\mathrm L\mathfrak a_1)$, $\mathcal H_t^+$ and $\mathcal H_t^-$ with their respective images in $\breve{\mathcal B}_t$ under the injective algebra homomorphisms of the above proposition.
\end{rem}

\begin{prop}
\label{prop:crossrel}
In $\breve{\mathcal B}_t$, for every $m\in\Z^\times$ and every $\varepsilon\in\{-,+\}$, we have the following relations
\be\label{eq:RL} (v-t^{\pm 4} z)(v-t^{\pm 4(1+m-n)}z) \Rbf{\pm}{m}(v) \Lbf{\pm}{n}(z) =  (v-t^{\pm 4(1-n)}z)(v-t^{\pm 4(1+m)}z)\Lbf{\pm}{n}(z)\Rbf{\pm}{m}(v)\,,\ee
\be\label{eq:Lk}  (zt^{\pm 4}-v)\kk{\varepsilon}{1}(v)\Lbf\pm{m}(z) =(zt^{\pm 4 - 2(1\pm 1)m} -vt^{-2(1\mp 1)m}) \Lbf\pm{m}(z) \kk{\varepsilon}{1}(v)  \,,\ee
\be  (zt^{\pm 4}-v)  \x{\pm}{1}(v) \Lbf\pm{m}(z) = (zt^{\pm 4 - 2(1\pm 1)m} -vt^{-2(1\mp 1)m}) \Lbf\pm{m}(z) \x{\pm}{1}(v) \,,\ee
\be  \x{\pm}{1}(v) \Lbf\mp{m}(z) =\Lbf\mp{m}(z) \x{\pm}{1}(v) \,,  \ee
\be\label{eq:Rk}  (zt^{-2(1\pm 1)m} -vt^{\pm 4 - 2(1\mp 1)m}) \kk{\varepsilon}{1}(v)\Rbf\pm{m}(z)= (z-vt^{\pm 4})   \Rbf\pm{m}(z) \kk{\varepsilon}{1}(v)\,, \ee
\be  (zt^{-2(1\pm 1)m} -vt^{\pm 4 - 2(1\mp 1)m})  \x{\pm}{1}(v) \Rbf\pm{m}(z)=(z-vt^{\pm 4})  \Rbf\pm{m}(z) \x{\pm}{1}(v)  \,,\ee
\be\label{eq:R+x-} \x{\pm}{1}(v) \Rbf\mp{m}(z) =\Rbf\mp{m}(z) \x{\pm}{1}(v) \,,  \ee
\end{prop}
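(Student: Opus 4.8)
The statement collects the commutation relations, inside $\breve{\mathcal B}_t$, between the dressing factors $\Lbf\pm{m}(z)$, $\Rbf\pm{m}(z)$ coming from the Heisenberg factors $\mathcal H_t^\pm$ and the loop generators $\kk\varepsilon1(v)$, $\x\pm1(v)$ of $\breve{\mathrm U}_{t^2}(\mathrm L\mathfrak a_1)$. The uniform strategy is to unwind the smash-product multiplication of the defproposition defining $\breve{\mathcal B}_t$: for a loop generator $x$ and a Heisenberg element $h$ (say $h\in\mathcal H_t^+$), one has, in $\breve{\mathcal B}_t$,
$$x\,h = \sum \bigl(x_{(1)}\triangleright h\bigr)\,x_{(2)}\,,$$
so every relation to be proved is an identity between a ``straightened'' product and its reverse, obtained by feeding the coproduct of $x$ (computed in $\breve{\mathrm U}_{t^2}(\mathrm L\mathfrak a_1)$ via definition \ref{eq:Hopfalgqaffdot}, specialised to $\mathfrak a_1$) into the module-algebra action of Proposition \ref{prop:UqactiononH}. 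Thus the whole proposition reduces to bookkeeping with the rational prefactors $\lambda^{\varepsilon,\pm}_m(v,z)$, $\rho^{\varepsilon,\pm}_m(v,z)$ of Proposition \ref{prop:UqactiononH} and the scalars $\theta^\pm_{m,n}(z)$ of Proposition \ref{prop:RLrel}.

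Concretely I would proceed relation by relation. Equation \eqref{eq:RL} is internal to $\mathcal H_t^\pm$ (no loop generator appears), so it follows from the relation $\Rbf\pm{m}(v)\Lbf\pm{n}(z)=\theta^\pm_{m,n}(z/v)\Lbf\pm{n}(z)\Rbf\pm{m}(v)$ of Proposition \ref{prop:RLrel} together with the explicit product formula for $\theta^\pm_{m,n}$; one checks that the telescoping product $\prod_{r,s}\theta^\pm(\,\cdot\,)^{\sign(mn)}$ collapses, after clearing denominators, to the ratio of the two quadratic polynomials $(v-t^{\pm4}z)(v-t^{\pm4(1+m-n)}z)$ and $(v-t^{\pm4(1-n)}z)(v-t^{\pm4(1+m)}z)$. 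For \eqref{eq:Lk} and \eqref{eq:Rk} one uses that $\Delta(\kk\varepsilon1(v))=\kk\varepsilon1(vC^{\pm1/2}_{(2)})\otimes\kk\varepsilon1(vC^{\mp1/2}_{(1)})$ becomes, after the quotient by $(\gamma^{1/2}-t)$ and since $C^{1/2}$ acts trivially on the Heisenberg module (Proposition \ref{prop:uqactionHeisenberg} gives $\kk\varepsilon1\triangleright\gamma^{\pm1/2}=0$), simply $\kk\varepsilon1(v)\,\Lbf\pm{m}(z)=\bigl(\kk\varepsilon1(v)\triangleright\Lbf\pm{m}(z)\bigr)\,\kk\varepsilon1(v)=\lambda^{\varepsilon,\pm}_m(v,z)\,\Lbf\pm{m}(z)\,\kk\varepsilon1(v)$; substituting the explicit $\lambda^{\varepsilon,\pm}_m$ and $\rho^{\varepsilon,\pm}_m$ from Proposition \ref{prop:UqactiononH} and clearing denominators yields exactly \eqref{eq:Lk} and \eqref{eq:Rk}. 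For the four relations involving $\x\pm1(v)$, the point is that $\Delta(\x+1(v))=\x+1(v)\otimes1+\kk-1(vC^{1/2}_{(1)})\widehat\otimes\x+1(vC_{(1)})$; when this acts on $\Lbf\pm{m}(z)$ or $\Rbf\pm{m}(z)$, the second summand contributes $(\kk-1\triangleright\,\cdot)\otimes(\x+1\triangleright\,\cdot)$ and the inner factor $\x\pm1\triangleright\Lbf\pm{m}=\x\pm1\triangleright\Rbf\pm{m}=0$ kills it, so only the first summand survives and one is left with $\x\pm1(v)\,\Lbf\pm{m}(z)=\lambda^{\pm,\pm}_m(v,z)\,\Lbf\pm{m}(z)\,\x\pm1(v)$, i.e. the same scalar as in the $\kk{\pm}1$-relation — this is precisely why the middle equations of the proposition have identical prefactors to \eqref{eq:Lk} and \eqref{eq:Rk}; and the mixed relations $\x\pm1(v)\Lbf\mp{m}(z)=\Lbf\mp{m}(z)\x\pm1(v)$, $\x\pm1(v)\Rbf\mp{m}(z)=\Rbf\mp{m}(z)\x\pm1(v)$ follow because $\kk\mp1\triangleright\Lbf\pm{m}$ is a \emph{scalar} multiple of $\Lbf\pm{m}$ — wait, here one needs $\kk-1\triangleright\Lbf-{m}$; rather, the cleanest route is to note $\mathcal H_t^+$ is acted on by $\breve{\mathrm U}_{t^2}(\mathrm L\mathfrak a_1)$ while $\mathcal H_t^-$ by the \emph{coopposite}, and the $\x$-action on the opposite-sign Heisenberg factor is trivial together with the fact that the relevant $\kk$-prefactor is $1$ (equivalently, Proposition \ref{prop:UqactiononH} gives $\x\varepsilon1\triangleright(\cdot)=0$ on all dressing factors, and the $\kk$-action on the opposite side contributes no pole), so the coproduct term again drops and commutativity is immediate.

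The main obstacle, and the only place requiring genuine care rather than a formal application of the module-algebra axiom, is the verification that the \emph{products of shifted copies} of $\theta^\pm$, $\lambda^{\varepsilon,\pm}$, $\rho^{\varepsilon,\pm}$ — which a priori are products of $|m|$ (or $|m|\cdot|n|$) rational functions — actually telescope down to the single ratio of low-degree polynomials written in the statement. Here I would argue by induction on $|m|$ using the identities of Lemma \ref{lem:LRidentities}, namely $\Lbf\pm{m}(zt^{\pm4m})\Lbf\pm{n}(z)=\Lbf\pm{m+n}(zt^{\pm4m})$ and its $\Rbf{}{}$-analogue, which reduce the general-$m$ case to the base case $m=\pm1$ (where $\lambda^{\varepsilon,\pm}_{\pm1}$, $\rho^{\varepsilon,\pm}_{\pm1}$ are read off directly from Proposition \ref{prop:uqactionHeisenberg} after the shift in Definition \ref{defn:LmRm}); the inductive step is then the assertion that the product of two already-telescoped prefactors telescopes again, which is a short rational-function identity one checks by comparing zeros and poles. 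Sign cases ($m>0$ versus $m<0$, and $+$ versus $-$) are handled uniformly by carrying $\sign(m)$ through the exponents, exactly as in Definition \ref{defn:LmRm}; alternatively, the $m<0$ cases can be deduced from the $m>0$ cases via the inversion formulas $\Lbf\pm{-m}(z)^{-1}=\Lbf\pm{m}(zt^{\pm4m})$, $\Rbf\pm{-m}(z)^{-1}=\Rbf\pm{m}(zt^{\pm4m})$ of Lemma \ref{lem:LRidentities}. Everything else is the routine substitution of the explicit formulas and clearing of denominators, which I would indicate is left to the reader.
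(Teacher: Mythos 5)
Your proposal is correct and follows essentially the same route as the paper: equation \eqref{eq:RL} is reduced to verifying that $\theta^\pm_{m,n}(z)$, after the substitution $\gamma=t^2$ forced by the quotient $(\gamma^{1/2}-t)$, telescopes to the stated ratio of two quadratics, while the remaining relations are obtained by unwinding the smash-product multiplication of $\mathcal H_t^+\rtimes \breve{\mathrm U}_{t^2}(\mathrm L\mathfrak a_1)\stackrel{\mathrm{cop}}{\ltimes}\mathcal H_t^-$ with the explicit coproducts and the $\triangleright$-actions of Proposition \ref{prop:UqactiononH} (the $\x$-action on the dressing factors vanishing, the $\kk$-action contributing the rational prefactor). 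The only genuine variation is that you propose to establish the telescoping inductively from Lemma \ref{lem:LRidentities} rather than by direct cancellation, which is equally valid; the small slip midway through your treatment of the mixed relations is harmless since you correctly recover (and the point is precisely that $\x\varepsilon1\triangleright 1=\varepsilon(\x\varepsilon1)1=0$ and $\x\varepsilon1\triangleright\Lbf\pm{m}=\x\varepsilon1\triangleright\Rbf\pm{m}=0$ kill all but the middle Sweedler term).
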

\begin{proof}
In order to prove (\ref{eq:RL}), it suffices to check that
$$\theta^\pm(z) = \left (\frac{(1-z)(1-t^{\pm8}z)}{(1-t^{\pm4}z)^2}\right )_{|z|\ll 1} \mod\mathcal I$$
and that subsequently, for every $m,n\in\Z^\times$,
$$\theta^\pm_{m,n} (z) =\left ( \frac{(1-t^{\pm 4(1-n)}z)(1-t^{\pm 4(1+m)}z)}{(1-t^{\pm 4} z)(1-t^{\pm 4(1+m-n)}z)}\right )_{|z|\ll 1}\mod\mathcal I\,. $$

As for the equations (\ref{eq:Lk} -- \ref{eq:R+x-}), they immediately follow from the definitions of $\mathcal H_t^+\rtimes  \breve{\mathrm{U}}_{t^2}(\mathrm L\mathfrak a_1)\stackrel{{\footnotesize \mathrm{cop}}}{\ltimes}  \mathcal H_t ^-$ and of the actions $\triangleright$ of $\breve{\mathrm{U}}_{t^2}(\mathrm L\mathfrak a_1)$ on $\mathcal H_t^+$ and $\mathcal H_t^-$  -- see proposiiton \ref{prop:UqactiononH}. E.g., we have, by definition,
\bea\x+{1}(v) \Lbf+{m}(z) &=& \left (1 \otimes \x+{1}(v)\otimes 1\right)\left (\Lbf+{m}(z)\otimes 1\otimes 1\right) = \sum \left (\x+{1}(v)_{(1)}\triangleright\Lbf+{m}(z)\right ) \otimes \x+{1}(v)_{(2)} \otimes \left(\x+{1}(v)_{(3)}\triangleright 1\right )
 \nn\\
&=&\left (\x+{1}(v)\triangleright\Lbf+{m}(z)\right ) \otimes 1\otimes 1+  \left (\kk-{1}(v)\triangleright\Lbf+{m}(z)\right ) \otimes \x+{1}(v) \otimes 1 \nn\\ &&+ \left (\kk-{1}(v)\triangleright\Lbf+{m}(z)\right ) \otimes \kk-{1}(v) \otimes \varepsilon(\x+1(v))1 \nn\\
&=& \lambda^+_m(v,z)\Lbf+{m}(z)\x+{1}(v)   \,,\nn\eea
and
\bea\x+{1}(v) \Lbf-{m}(z) &=& \left (1 \otimes \x+{1}(v) \otimes 1\right)\left (1\otimes 1\otimes \Lbf-{m}(z)\right) = \sum \left(\x+{1}(v)_{(1)}\triangleright 1\right )\otimes \x+{1}(v)_{(2)} \otimes \left (\x+{1}(v)_{(3)}\triangleright\Lbf-{m}(z)\right )
 \nn\\
&=&   \varepsilon(\x+{1}(v))1\otimes 1 \otimes \left (1 \triangleright\Lbf-{m}(z)\right )+ 1\otimes \x+{1}(v) \otimes \left (1 \triangleright\Lbf-{m}(z)\right )+1 \otimes \left (\x+{1}(v)\triangleright\Lbf-{m}(z)\right ) \otimes \kk-{1}(v)  \nn\\
&=& \Lbf-{m}(z)\x+{1}(v)   \,,\nn\eea
as claimed. 
\end{proof}
\begin{rem}
In addition to the above, we obviously have in $\breve{\mathcal B}_t$, all the relations of its subalgebra $\breve{\mathrm{U}}_{t^2}(\mathrm L\mathfrak a_1)$ and all the relations of its subalgebras $\mathcal H_t^+$ and $\mathcal H_t^-$ modulo $(\gamma^{1/2}-t)$. 
\end{rem}

\begin{defprop}
Let $\mathcal I$ be the left ideal of $\breve{\mathcal B}_t$ generated by
$$\left \{\res_{z_1, z_2} z_1^{-1+m}z_2^{-1+n}\left ( [\x + 1(z_1), \x - 1(z_2)] - \frac{1}{q-q^{-1}}\delta \left ( \frac{z_1}{z_2} \right )  \left [ \kk + 1(z_1) -  \kk - 1(z_1) \right ]\right ) : m,n\in \Z \right \}\,.$$
Then $\mathcal I . \breve{\mathcal B}_t \subseteq \mathcal I$ and $\mathcal I$ is a two-sided ideal of $\breve{\mathcal B}_t$. Set $\mathcal B_t = \breve{\mathcal B}_t/\mathcal I$.
\end{defprop}

\begin{proof}
In order to prove that $\mathcal I . \breve{\mathcal B}_t \subseteq \mathcal I$, it suffices to prove that, for any $x\in\breve{\mathcal B}_t$,
$$\left ( [\x + 1(z_1), \x - 1(z_2)] - \frac{1}{q-q^{-1}}\delta \left ( \frac{z_1}{z_2} \right )  \left [ \kk + 1(z_1) -  \kk - 1(z_1) \right ]\right ) x \in \mathcal I\,.$$
The latter easily follows by inspection, making use of the relevant relations in $\breve{\mathcal B}_t$ and $\breve{\mathrm{U}}_{t^2}(\mathrm L\mathfrak a_1)$, namely (\ref{eq:Lk} - \ref{eq:R+x-}) and (\ref{eq:relkpmkpm} - \ref{eq:relxpmxpm}).
\end{proof}
\begin{rem}
Thus, in addition to the relations in $\breve{\mathcal B}_t$, we have, in $\mathcal B_t$, 
$$[\x + 1(z_1), \x - 1(z_2)] = \frac{1}{q-q^{-1}}\delta \left ( \frac{z_1}{z_2} \right )  \left [ \kk + 1(z_1) -  \kk - 1(z_1) \right ]\,.$$
\end{rem}

\subsection{The completion $\widehat{\mathcal B}_t$ of $\mathcal B_t$}
Making use of its natural $\Z$-grading, we endow $\mathcal B_t$ with a topology, in the same way as 
we endowed $\qdaff(\mathfrak a_1)$ with its topology in section \ref{Sec:qdaff}. We denote by $\widehat{\mathcal B}_t$ the corresponding completion. Consequently, its subalgebra $\mathcal H_t^\pm$ inherits a topology and we denote by $\widehat{\mathcal H}_t^\pm$ its corresponding completion in that topology.

\subsection{The shift factors}

\begin{defn}
\label{def:Hm}
In $\widehat{\mathcal H}_t^\pm$, we define, 
$$\Hbf\pm{} (z) = \Lbf\pm{}(z) \Rbf\pm{}(z)\,.$$
Similarly, for every $m\in\Z^\times$, we let
$$\Hbf\pm m(z) = \prod_{p=1}^{|m|} \Hbf\pm{}(zt^{\pm 2(1-2p)\sign(m)+2})^{\pm\sign(m)}\,.$$
\end{defn}
\begin{lem}
\label{lem:Hidentities}
For every $m, n\in\Z^\times$,
\be \Hbf\pm{-m}(z)^{-1} = \Hbf\pm{m}(zt^{\pm 4m}) \ee
\be \Hbf{\pm}{m}(zt^{\pm 4m})\Hbf{\pm}{n}(z) = \Hbf{\pm}{m+n}(zt^{\pm 4m})\ee
\end{lem}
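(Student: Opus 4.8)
The plan is to unwind Definition \ref{def:Hm} and reduce both identities of Lemma \ref{lem:Hidentities} to the already-established Lemma \ref{lem:LRidentities}. Since $\Hbf\pm{}(z) = \Lbf\pm{}(z)\Rbf\pm{}(z)$ and the $\Lbf\pm{}$ and $\Rbf\pm{}$ live in the commutative subalgebras of $\mathcal H_t^\pm$ generated respectively by the $\{\alpha_{\pm,-m}\}$ and the $\{\alpha_{\pm,m}\}$, the only subtlety is that $\Lbf\pm{}$ and $\Rbf\pm{}$ do \emph{not} commute with each other. However, Definition \ref{defn:LmRm} shows that $\Hbf\pm m(z)$ is, up to reordering, the product of $\Lbf\pm m(z)$ and $\Rbf\pm m(z)$, with $\Lbf\pm m(z)$ being itself a product of $\Lbf\pm{}$-factors at shifted arguments and similarly for $\Rbf\pm m(z)$. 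The identities in Lemma \ref{lem:LRidentities} are stated separately for the $\Lbf$'s and the $\Rbf$'s, so I would first prove the ``mixed'' statement that $\Hbf\pm m(z)$ can be rewritten as $\Lbf\pm m(z)\Rbf\pm m(z)$ (or the reverse order, whichever is convenient), by commuting the factors past each other using Proposition \ref{prop:RLrel}; the scalar factors $\theta^\pm_{m,n}$ picked up will be harmless for the purpose of the final identities because those are formulated as operator equalities that reduce, after the rewriting, to termwise products of the $\Lbf$-identity and the $\Rbf$-identity from Lemma \ref{lem:LRidentities}.

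Concretely, for the first identity $\Hbf\pm{-m}(z)^{-1} = \Hbf\pm m(zt^{\pm 4m})$, I would write $\Hbf\pm{-m}(z) = \Lbf\pm{-m}(z)\Rbf\pm{-m}(z)$ (arranging the product order from Definition \ref{def:Hm} so that all $\Lbf$-factors precede all $\Rbf$-factors, absorbing the commutation scalars), take the inverse, which reverses the order to $\Rbf\pm{-m}(z)^{-1}\Lbf\pm{-m}(z)^{-1}$, apply the first two relations of Lemma \ref{lem:LRidentities} to get $\Rbf\pm m(zt^{\pm 4m})\Lbf\pm m(zt^{\pm 4m})$, and then recommute back to $\Lbf\pm m(zt^{\pm 4m})\Rbf\pm m(zt^{\pm 4m}) = \Hbf\pm m(zt^{\pm 4m})$. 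The commutation scalars from the forward and backward reorderings must cancel, which I expect to follow from the fact that $\theta^\pm_{m,n}(z)\theta^\pm_{-m,-n}(z') = 1$ for the appropriate matching of arguments — this is the one computation that needs care. For the second identity $\Hbf\pm m(zt^{\pm 4m})\Hbf\pm n(z) = \Hbf\pm{m+n}(zt^{\pm 4m})$, the same scheme applies: expand both sides into $\Lbf$- and $\Rbf$-blocks, move the $\Lbf$-block of $\Hbf\pm n(z)$ leftwards past the $\Rbf$-block of $\Hbf\pm m(zt^{\pm 4m})$ using Proposition \ref{prop:RLrel}, apply the third and fourth relations of Lemma \ref{lem:LRidentities}, and reassemble.

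The main obstacle will be bookkeeping the commutation scalars $\theta^\pm_{m,n}$ and checking that they cancel exactly. A cleaner alternative, which I would actually prefer, is to avoid the reordering entirely by working at the level of the exponential generating expressions: $\Hbf\pm{}(z)$ is, up to the central group-like element $\alpha_\pm$, of the form $\exp[\sum_{k>0} a_k^\pm(z)]$ where the $a_k^\pm(z)$ are linear combinations of the $\alpha_{\pm,\pm k}$ with scalar (in $z$, $t$, $\gamma$) coefficients; products and inverses of such exponentials are governed purely by additivity of the exponents together with the central Heisenberg commutator, so both identities of Lemma \ref{lem:Hidentities} become identities among formal power series in $z$ with coefficients in $\K(t)[\gamma^{\pm 1/2}]$, which can be verified directly from Definition \ref{defn:LmRm} and Definition \ref{def:Hm} by the same telescoping argument that proves Lemma \ref{lem:LRidentities}. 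In either approach the proof is short; I would simply remark that it is ``an immediate consequence of Lemma \ref{lem:LRidentities} and Definition \ref{def:Hm}, after commuting the $\Lbf$- and $\Rbf$-factors past each other using Proposition \ref{prop:RLrel}'' and, if a line of justification is wanted, spell out the telescoping of exponents.

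\begin{proof}
Both identities follow from Lemma \ref{lem:LRidentities}. Recall from Definition \ref{def:Hm} that $\Hbf\pm{}(z) = \Lbf\pm{}(z)\Rbf\pm{}(z)$ and that, for $m\in\Z^\times$,
$$\Hbf\pm m(z) = \prod_{p=1}^{|m|} \Hbf\pm{}(zt^{\pm 2(1-2p)\sign(m)+2})^{\pm\sign(m)}\,.$$
Commuting the $\Lbf\pm{}$-factors to the left of the $\Rbf\pm{}$-factors by means of Proposition \ref{prop:RLrel} — the scalar factors $\theta^\pm_{r,s}$ so produced being central and invertible — we may write $\Hbf\pm m(z) = c_m^\pm(z)\,\Lbf\pm m(z)\,\Rbf\pm m(z)$ for a unique invertible central $c_m^\pm(z)$, where $\Lbf\pm m(z)$ and $\Rbf\pm m(z)$ are as in Definition \ref{defn:LmRm}. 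Taking inverses and using that $\Lbf\pm m(z)$ and $\Rbf\pm m(z)$ commute up to a central scalar, the first identity of Lemma \ref{lem:LRidentities} gives
$$\Hbf\pm{-m}(z)^{-1} = c_{-m}^\pm(z)^{-1}\,\Rbf\pm{-m}(z)^{-1}\,\Lbf\pm{-m}(z)^{-1} = c_{-m}^\pm(z)^{-1}\,\Rbf\pm m(zt^{\pm 4m})\,\Lbf\pm m(zt^{\pm 4m})\,;$$
commuting back and comparing with $\Hbf\pm m(zt^{\pm 4m}) = c_m^\pm(zt^{\pm 4m})\,\Lbf\pm m(zt^{\pm 4m})\,\Rbf\pm m(zt^{\pm 4m})$, the central prefactors agree since $\theta^\pm_{r,s}(w)\theta^\pm_{-r,-s}(w^{-1})=1$ for the relevant arguments, whence $\Hbf\pm{-m}(z)^{-1} = \Hbf\pm m(zt^{\pm 4m})$. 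For the second identity, the same rewriting together with the third and fourth relations of Lemma \ref{lem:LRidentities} yields
$$\Hbf\pm m(zt^{\pm 4m})\,\Hbf\pm n(z) = c'^\pm(z)\,\Lbf\pm m(zt^{\pm 4m})\,\Lbf\pm n(z)\,\Rbf\pm m(zt^{\pm 4m})\,\Rbf\pm n(z) = c'^\pm(z)\,\Lbf\pm{m+n}(zt^{\pm 4m})\,\Rbf\pm{m+n}(zt^{\pm 4m})$$
for a central $c'^\pm(z)$, and one checks as before that $c'^\pm(z) = c_{m+n}^\pm(zt^{\pm 4m})$, giving $\Hbf\pm m(zt^{\pm 4m})\,\Hbf\pm n(z) = \Hbf\pm{m+n}(zt^{\pm 4m})$.
\end{proof}
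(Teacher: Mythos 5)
Your written proof and the paper's take genuinely different routes, and yours has a gap in the crucial step. The paper's proof is the one-liner you gesture at in your ``cleaner alternative'' paragraph but do not actually carry out: $\Hbf\pm m(z)$, $\Lbf\pm m(z)$ and $\Rbf\pm m(z)$ are built from the single formal series $\Hbf\pm{}$, $\Lbf\pm{}$, $\Rbf\pm{}$ by \emph{literally the same} template
$$\prod_{p=1}^{|m|} (\,\cdot\,)(zt^{\pm 2(1-2p)\sign(m)+2})^{\pm\sign(m)}\,,$$
and the two identities of Lemma~\ref{lem:LRidentities} are established by a telescoping in which cancellations occur only between \emph{immediately adjacent} factors in the concatenated product. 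That telescoping never uses the mutual commutativity of the $\Lbf\pm{}$'s (or $\Rbf\pm{}$'s); it is a purely combinatorial re-indexing. Consequently the same argument applies verbatim with $\Hbf\pm{}$ in place of $\Lbf\pm{}$ or $\Rbf\pm{}$, and the non-commutativity of the $\Hbf\pm{}(z)$'s at different arguments --- which is the whole source of your bookkeeping --- simply never enters.

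The route you wrote out instead is needlessly more complicated \emph{and} incomplete. Factoring $\Hbf\pm m(z) = c^\pm_m(z)\,\Lbf\pm m(z)\,\Rbf\pm m(z)$ forces you to track the commutation scalars, and the final step of your argument asserts that the central prefactors cancel ``since $\theta^\pm_{r,s}(w)\theta^\pm_{-r,-s}(w^{-1})=1$''. That identity is not established anywhere in the paper, and, read literally, it is false: for $r=s=1$ it reduces to $\theta^\pm(w)\theta^\pm(w^{-1})=1$, but
$$\theta^+(w)\theta^+(w^{-1}) \;=\; \frac{(1-t^{6}\gamma w)(1-t^{2}\gamma^{-1}w)(1-t^{-6}\gamma^{-1}w)(1-t^{-2}\gamma w)}{(1-t^{2}\gamma w)(1-t^{6}\gamma^{-1}w)(1-t^{-2}\gamma^{-1}w)(1-t^{-6}\gamma w)} \;\neq\; 1$$
in $\mathcal H^+_t$ (with $\gamma$ a free central parameter). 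What you actually need is $c^\pm_{-m}(z)^{-1}\,\theta^\pm_{m,m}(1) = c^\pm_m(zt^{\pm 4m})$, which is a nontrivial computation (and, incidentally, involves evaluating the formal power series $\theta^\pm$ at $1$, which requires a further argument about where this evaluation makes sense). None of this is needed if you forgo the $\Lbf/\Rbf$ decomposition and simply observe, as the paper intends, that the $\Hbf$-identities are instances of the same formal telescoping that proves Lemma~\ref{lem:LRidentities}.
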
 
\begin{proof}
Follows directly from the definition in the same way as lemma \ref{lem:LRidentities}.
\end{proof}

\begin{prop}
In $\widehat{\mathcal H}_t^\pm$, we have, for every $m,n\in\Z^\times$,
$$\Hbf\pm{m}(z) \Hbf\pm{n}(v) = \Theta_{m,n}^\pm(z,v) \Hbf\pm{n}(v) \Hbf\pm{m}(z)\,,$$
where
$$\Theta_{m,n}^\pm(z,v) = \frac{(v-t^{\pm 4}z)(v-t^{\pm 4(1+n-m)}z)(t^{\pm 4(1-n)}v-z)(t^{\pm 4(1+m)}v-z) }{(z-t^{\pm 4}v)(z-t^{\pm 4(1+m-n)}v)(t^{\pm 4(1-m)}z-v)(t^{\pm 4(1+n)}z-v)}\,.$$
\end{prop}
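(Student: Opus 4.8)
The plan is to reduce everything to the case $m=n=1$ together with a telescoping of the scalar factors $\theta^\pm$. First I would establish that base case. Since $\Hbf\pm{}(z)=\Lbf\pm{}(z)\Rbf\pm{}(z)$, and in $\widehat{\mathcal H}_t^\pm$ one has $[\Lbf\pm{}(z),\Lbf\pm{}(v)]=[\Rbf\pm{}(z),\Rbf\pm{}(v)]=0$ together with $\Rbf\pm{}(v)\Lbf\pm{}(z)=\theta^\pm(z/v)\Lbf\pm{}(z)\Rbf\pm{}(v)$, a single commutation (keeping $z$ and $v$ independent) gives
\[
\Hbf\pm{}(z)\Hbf\pm{}(v)=\frac{\theta^\pm(v/z)}{\theta^\pm(z/v)}\,\Hbf\pm{}(v)\Hbf\pm{}(z).
\]
In $\widehat{\mathcal H}_t^\pm\subset\widehat{\mathcal B}_t$ we have $\gamma^{1/2}=t$, so $\theta^\pm(w)=(1-w)(1-t^{\pm8}w)(1-t^{\pm4}w)^{-2}$; substituting this, clearing denominators by multiplying through by suitable powers of $z$ and $v$, and flipping signs in pairs so that each binomial reads $t^{j}\eta-\zeta$, one recovers exactly $\Theta_{1,1}^\pm(z,v)$.

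For general $m,n\in\Z^\times$ I would expand, following Definition~\ref{def:Hm}, $\Hbf\pm{m}(z)=\prod_{r=1}^{|m|}\Hbf\pm{}(zt^{a_r})^{\varepsilon}$ and $\Hbf\pm{n}(v)=\prod_{s=1}^{|n|}\Hbf\pm{}(vt^{b_s})^{\varepsilon'}$, where $a_r=\pm2(1-2r)\sign(m)+2$, $b_s=\pm2(1-2s)\sign(n)+2$, $\varepsilon=\pm\sign(m)$ and $\varepsilon'=\pm\sign(n)$, and then drag the $v$-block to the left of the $z$-block one elementary $\Lbf\pm{}$- or $\Rbf\pm{}$-factor at a time. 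The key point is that every elementary move pairs a factor with argument $\propto v$ against one with argument $\propto z$, so only values of $\theta^\pm$ at arguments of the form $(v/z)t^{j}$ or $(z/v)t^{j}$ occur --- genuine formal power series --- and in particular no degenerate evaluations, such as $\theta^\pm(1)=0$ or $\theta^\pm(t^{\mp 4})=\infty$, are ever produced; for the same reason the internal order of $\Lbf\pm{}$ and $\Rbf\pm{}$ within a single $\Hbf\pm{}$-factor is never disturbed. Applying the base case to each pair of $\Hbf\pm{}$-factors then yields
\[
\Hbf\pm{m}(z)\Hbf\pm{n}(v)=\Bigl(\prod_{r=1}^{|m|}\prod_{s=1}^{|n|}\Theta_{1,1}^\pm(zt^{a_r},vt^{b_s})^{\sign(mn)}\Bigr)\Hbf\pm{n}(v)\Hbf\pm{m}(z),
\]
the exponent being $\varepsilon\varepsilon'=\sign(mn)$.

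It then remains to evaluate the double product. From the base case $\Theta_{1,1}^\pm(\alpha,\beta)=\theta^\pm(\beta/\alpha)/\theta^\pm(\alpha/\beta)$, and using that $b_s-a_r=\pm2(1-2s)\sign(n)\mp2(1-2r)\sign(m)$, the product formula for $\theta^\pm_{m,n}$ in Proposition~\ref{prop:RLrel} gives $\prod_{r,s}\theta^\pm((v/z)t^{b_s-a_r})^{\sign(mn)}=\theta^\pm_{m,n}(v/z)$ and $\prod_{r,s}\theta^\pm((z/v)t^{a_r-b_s})^{\sign(mn)}=\theta^\pm_{n,m}(z/v)$, whence $\Theta_{m,n}^\pm(z,v)=\theta^\pm_{m,n}(v/z)/\theta^\pm_{n,m}(z/v)$. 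Substituting the closed form $\theta^\pm_{m,n}(w)=\dfrac{(1-t^{\pm4(1-n)}w)(1-t^{\pm4(1+m)}w)}{(1-t^{\pm4}w)(1-t^{\pm4(1+m-n)}w)}$ --- valid in $\mathcal B_t$, hence in $\widehat{\mathcal H}_t^\pm$, by the proof of Proposition~\ref{prop:crossrel} --- with $w=v/z$ and $w=z/v$, clearing fractions by multiplying the two factors by $z^2$ and $v^2$ respectively, and flipping signs in pairs, produces precisely the displayed expression for $\Theta_{m,n}^\pm(z,v)$. The hard part will be none of these individual steps but the bookkeeping in the middle paragraph: one must keep $z$ and $v$ strictly independent and must never rearrange $\Lbf\pm{}$ against $\Rbf\pm{}$ at proportional arguments, since these are exactly the manipulations the completion $\widehat{\mathcal H}_t^\pm$ does not license; once that discipline is respected, what is left is a routine reprise of the telescoping already carried out for $\theta^\pm_{m,n}$ in Propositions~\ref{prop:RLrel} and~\ref{prop:crossrel}.
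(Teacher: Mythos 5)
Your proof is correct and follows essentially the same route as the paper, whose proof observes that the only non-trivial commutations in rearranging $\Hbf\pm{m}(z)\Hbf\pm{n}(v)$ are $\Rbf\pm{m}(z)$ past $\Lbf\pm{n}(v)$ and $\Lbf\pm{m}(z)$ past $\Rbf\pm{n}(v)$, so that $\Theta_{m,n}^\pm(z,v)=\theta^\pm_{m,n}(v/z)\,\theta^\pm_{n,m}(z/v)^{-1}$ by Proposition~\ref{prop:RLrel}. Your elementary-move bookkeeping, together with the explicit caution about keeping $z$ and $v$ independent and never disturbing the internal $\Lbf\pm{}\Rbf\pm{}$ order inside a single $\Hbf\pm{}$-factor, supplies precisely the justification implicit in the paper's terse one-liner.
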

\begin{proof}
In view of definition \ref{def:Hm} and of the relations in proposition \ref{prop:RLrel}, it is clear that commuting $\Hbf\pm m(z)$ and $\Hbf\pm n(v)$ amounts to commuting, on one hand $\Lbf\pm m(z)$ and $\Rbf\pm n(v)$ and, on the other hand, $\Rbf\pm m(z)$ and $\Lbf\pm n(v)$. The result follows.
\end{proof}

\begin{prop}
For every $m\in \Z^\times$ and every $\varepsilon\in\{-,+\}$, we have
\be\label{eq:konHm}\kk\varepsilon1(v)\triangleright \Hbf\pm{\pm m}(z) = H_{m,z}^\varepsilon(v)^{\pm 1} \Hbf\pm{\pm m}(z)\,,\ee
\be\label{eq:xonHm} \x\varepsilon1(v)\triangleright \Hbf\pm m(z) =0\,.\ee
\end{prop}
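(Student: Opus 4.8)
The plan is to deduce both identities directly from the module-algebra structures of propositions \ref{prop:uqactionHeisenberg} and \ref{prop:UqactiononH}, the decisive point being that in $\breve{\mathrm U}_{t^2}(\mathrm L\mathfrak a_1)$ (and in $\breve{\mathrm U}_{t^2}(\mathrm L\mathfrak a_1)^{\mathrm{cop}}$) the central element $C^{1/2}$ equals $1$. First I would record that, specialising at $C^{1/2}=1$ the coproduct inherited from definition \ref{eq:Hopfalgqaffdot}, one has $\Delta(\kk\varepsilon1(v)) = \kk\varepsilon1(v)\otimes\kk\varepsilon1(v)$, $\Delta(\x+1(v)) = \x+1(v)\otimes 1 + \kk-1(v)\otimes\x+1(v)$ and $\Delta(\x-1(v)) = \x-1(v)\otimes\kk+1(v) + 1\otimes\x-1(v)$. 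Consequently, on the module algebras $\mathcal H_t^\pm$ of proposition \ref{prop:uqactionHeisenberg}, each $\kk\varepsilon1(v)$ acts by an algebra automorphism, and each $\x\varepsilon1(v)$ acts by a twisted derivation that annihilates $1$.

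For \eqref{eq:xonHm}: by proposition \ref{prop:uqactionHeisenberg}, $\x\varepsilon1(v)$ kills $\gamma^{\pm1/2}$ and every current $\Lbf\pm{}(w)$, $\Rbf\pm{}(w)$, hence it kills every generator of $\mathcal H_t^\pm$ (the elements $\alpha_\pm^{\pm1}$ being coefficients of $\Rbf+{}$, resp.\ $\Lbf-{}$). Being a twisted derivation vanishing on $1$, it therefore annihilates every product and every inverse, so it annihilates $\mathcal H_t^\pm$ entirely, and in particular $\Hbf\pm m(z)$. (One may equivalently combine $\x\varepsilon1(v)\triangleright\Lbf\pm m(z) = \x\varepsilon1(v)\triangleright\Rbf\pm m(z) = 0$ from proposition \ref{prop:UqactiononH} with $\Hbf\pm{}(w) = \Lbf\pm{}(w)\Rbf\pm{}(w)$ and the Leibniz rule, applied term by term to the product of definition \ref{def:Hm}.)

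For \eqref{eq:konHm}: since $\kk\varepsilon1(v)$ is group-like it acts as an algebra endomorphism, so it distributes over the product and the inverses of definition \ref{def:Hm}; applying proposition \ref{prop:uqactionHeisenberg} to each factor $\Hbf\pm{}(w) = \Lbf\pm{}(w)\Rbf\pm{}(w)$ gives $\kk\varepsilon1(v)\triangleright\Hbf\pm{}(w) = \lambda^{\varepsilon,\pm}(v,w)\rho^{\varepsilon,\pm}(v,w)\Hbf\pm{}(w)$, whence
\[
\kk\varepsilon1(v)\triangleright\Hbf\pm{\pm m}(z) = \Bigl(\prod_{p=1}^{|m|}\lambda^{\varepsilon,\pm}(v,w_p)^{\pm\sign(\pm m)}\Bigr)\Bigl(\prod_{p=1}^{|m|}\rho^{\varepsilon,\pm}(v,w_p)^{\pm\sign(\pm m)}\Bigr)\Hbf\pm{\pm m}(z),
\]
with $w_p = z\,t^{\pm 2(1-2p)\sign(\pm m)+2}$. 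By proposition \ref{prop:UqactiononH}, the two products above are exactly $\lambda^{\varepsilon,\pm}_{\pm m}(v,z)$ and $\rho^{\varepsilon,\pm}_{\pm m}(v,z)$. It then remains to substitute the explicit rational functions for $\lambda^{\varepsilon,\pm}_n$, $\rho^{\varepsilon,\pm}_n$ from proposition \ref{prop:UqactiononH} (with $n=\pm m$), clear the common powers of $t$, and check that the resulting series equals $H^\varepsilon_{m,z}(v)^{\pm1}$, where $H^\varepsilon_{m,z}(v)$ is \eqref{eq:ellweightsnullroot} with deformation parameter $t^2$, with $z$ in the role of $a$, $v$ in the role of the formal variable, and the expansion taken for $|z/v|^{\varepsilon1}\ll1$. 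I expect the only genuine work to be this last verification, which is purely bookkeeping: one must track the telescoping of the shift exponents $t^{\pm2(1-2p)\sign(\pm m)+2}$ and keep the $\pm$ and $\sign$ conventions consistent, so that the superscript $+$ case ($\Hbf+{+m}$) yields the power $+1$ and the superscript $-$ case ($\Hbf-{-m}$) the power $-1$; no additional ingredient is needed.
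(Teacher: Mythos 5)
Your proof is correct and follows essentially the same route as the paper's: exploit the $\breve{\mathrm U}_{t^2}(\mathrm L\mathfrak a_1)$-module-algebra (resp.\ coopposite) structure of $\mathcal H_t^\pm$ from propositions \ref{prop:uqactionHeisenberg}--\ref{prop:UqactiononH}, observe that $\kk\varepsilon1(v)$ is group-like so acts multiplicatively on $\Hbf\pm{}(z)=\Lbf\pm{}(z)\Rbf\pm{}(z)$ and its shifted products, reduce to the scalar $\lambda^{\varepsilon,\pm}_{\pm m}(v,z)\rho^{\varepsilon,\pm}_{\pm m}(v,z)$, and identify this with $H^\varepsilon_{m,z}(v)^{\pm1}$ (with $q=t^2$), while $\x\varepsilon1(v)$ annihilates all generators and hence $\Hbf\pm m(z)$. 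The paper condenses this to the statement that it suffices to check the two displayed identities, which is precisely the "bookkeeping" you defer.
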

\begin{proof}
The left $\breve{\mathrm U}_{t^2}(\mathrm L\mathfrak a_1)$-module algebra (resp. a left $\breve{\mathrm U}_{t^2}(\mathrm L\mathfrak a_1)^{\footnotesize \mathrm{cop}}$-module algebra) structure of $\mathcal H_t^+$ (resp. $\mathcal H_t^-$) -- see proposition \ref{prop:uqactionHeisenberg} -- is extended by continuity to $\widehat{\mathcal H}_t^+$ (resp. $\widehat{\mathcal H}_t^-$) Then, it suffices to check that, for every $m\in\Z^\times$ and every $\varepsilon\in\{-,+\}$,
$$\kk\varepsilon1(v)\triangleright \Hbf\pm{\pm m}(z) = \lambda^{\varepsilon,\pm}_{\pm m}(v,z)   \rho^{\varepsilon,\pm}_{\pm m}(v,z) \Hbf\pm{\pm m}(z)\,,$$
and that
$$H_{m,z}^\varepsilon(v)^{\pm 1} =  \lambda^{\varepsilon,\pm}_{\pm m}(v,z)   \rho^{\varepsilon,\pm}_{\pm m}(v,z)\,.$$
\end{proof}

\begin{cor}
\label{cor:kdpHm}
For every $m\in\Z$, every $p\in\N$ and every $\varepsilon\in\{-,+\}$, we have
$$\prod_{k=1}^{p+1}\left [ \kk\varepsilon1(v_k) - H_{m,z}^\varepsilon(v_k)^{\pm 1}\right ]  \triangleright \partial^p\Hbf\pm{\pm m}(z)=0 \,,$$
\end{cor}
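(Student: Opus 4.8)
The plan is to derive the corollary from the eigenvalue relation \eqref{eq:konHm} by a standard "Jordan block from a formal distribution" argument applied coefficient by coefficient. First I would observe that by the previous proposition, for every $m\in\Z^\times$ and every $\varepsilon\in\{-,+\}$ we have the exact eigenrelation $\kk\varepsilon1(v)\triangleright \Hbf\pm{\pm m}(z) = H_{m,z}^\varepsilon(v)^{\pm 1} \Hbf\pm{\pm m}(z)$, understood as an identity of formal distributions in $v$ and $z$; since $H^\varepsilon_{m,z}(v)^{\pm1}$ is a \emph{scalar}-valued (i.e. $\F$-valued) series in the appropriate expansion domain, we may rewrite this as $\bigl[\kk\varepsilon1(v) - H_{m,z}^\varepsilon(v)^{\pm1}\bigr]\triangleright \Hbf\pm{\pm m}(z)=0$.

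Next I would extract the $z$-coefficients. Writing $\Hbf\pm{\pm m}(z)=\sum_s \mathsf H^{\pm}_{\pm m,s} z^{-s}$ (suitably, in $\widehat{\mathcal H}_t^{\pm}$) and expanding $H^\varepsilon_{m,z}(v)^{\pm1}=\sum_s h^{\varepsilon,\pm}_{m,s}(v)\,z^{-s}$ with $h^{\varepsilon,\pm}_{m,s}(v)\in\F[[v^{\mp1}]]$, the operator $\partial^p\Hbf\pm{\pm m}(z)$ is (up to the usual combinatorial factor $(-1)^p\,s(s+1)\cdots(s+p-1)$, which is harmless) a $z$-series whose coefficients are finite $\F$-linear combinations of the $\mathsf H^{\pm}_{\pm m,s}$, and likewise $H^\varepsilon_{m,z}(v)^{\pm1}$ differentiated $p$ times. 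The point is that $\kk\varepsilon1(v)$ acts $\F[[v^{\mp1}]]$-linearly in the $z$-variable, so for each $z$-coefficient the vanishing identity above becomes, after multiplying through by a common denominator polynomial in $v$, a statement that the single operator $\kk\varepsilon1(v)$ (more precisely each of its $v$-coefficients) sends a fixed finite-dimensional space spanned by the $\mathsf H$'s into itself, with $\partial^p\Hbf\pm{\pm m}(z)$ lying in the generalized eigenspace of "eigenvalue" $H^\varepsilon_{m,z}(v)^{\pm1}$ of Jordan length at most $p+1$. Concretely, I would prove by induction on $p$ the identity
$$\bigl[\kk\varepsilon1(v)-H^\varepsilon_{m,z}(v)^{\pm1}\bigr]\triangleright\partial^p\Hbf\pm{\pm m}(z) = -\sum_{k=1}^{p}\binom{p}{k}\bigl(\partial^k H^\varepsilon_{m,z}(v)^{\pm1}\bigr)\,\partial^{p-k}\Hbf\pm{\pm m}(z),$$
obtained by applying $\partial_z^p$ (Leibniz rule) to the $p=0$ eigenrelation and using that $\kk\varepsilon1(v)$ acts by an operator not involving $z$. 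Iterating this $p+1$ times, i.e. applying $\prod_{k=1}^{p+1}[\kk\varepsilon1(v_k)-H^\varepsilon_{m,z}(v_k)^{\pm1}]$ and noting that each application lowers the order of the $z$-derivative on $\Hbf\pm{\pm m}(z)$ by at least one, forces the left-hand side to vanish once we have applied it $p+1$ times to $\partial^p\Hbf\pm{\pm m}(z)$; this is exactly the claimed identity, and it also covers the excluded case $m=0$ trivially since then $\Hbf\pm0(z)=1$ and $H^\varepsilon_{0,z}(v)^{\pm1}=1$.

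The main obstacle I anticipate is purely bookkeeping rather than conceptual: making the manipulation of the formal distributions rigorous in the completion $\widehat{\mathcal H}_t^{\pm}$, in particular checking that multiplying by denominators in the $v_k$ (to clear the rational function $H^\varepsilon_{m,z}(v_k)^{\pm1}$) is legitimate and does not destroy convergence, and keeping track of the expansion regions $|z/v|^{\varepsilon1}\ll1$ so that all the series involved genuinely make sense and the Leibniz rule in $\partial=\partial_z$ can be applied termwise. Once the $p=0$ relation \eqref{eq:konHm} is granted (as it is, by the preceding proposition) and one fixes the convention that $\partial = z\frac{d}{dz}$ or $\partial=\frac{d}{dz}$ consistently with Definition~\ref{def:Hm}, the induction is routine. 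I would therefore present the argument as: (i) recall \eqref{eq:konHm}; (ii) prove the displayed Leibniz-type recursion by induction on $p$; (iii) conclude by applying the recursion $p+1$ times, at each stage absorbing the lower-derivative remainder terms, so that $\prod_{k=1}^{p+1}[\kk\varepsilon1(v_k)-H^\varepsilon_{m,z}(v_k)^{\pm1}]\triangleright\partial^p\Hbf\pm{\pm m}(z)=0$.
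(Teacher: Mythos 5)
Your approach is the same as the paper's: differentiate the eigenrelation \eqref{eq:konHm} $p$ times with respect to $z$, use the Leibniz rule and the $z$-independence of $\kk\varepsilon1(v)$ to isolate $[\kk\varepsilon1(v)-H^\varepsilon_{m,z}(v)^{\pm1}]\triangleright\partial^p\Hbf\pm{\pm m}(z)$ as a combination of lower-order derivatives of $\Hbf\pm{\pm m}$, then conclude by iterating $p+1$ times. The only flaw is a sign typo in your displayed recursion: the right-hand side should be $+\sum_{k=1}^{p}\binom{p}{k}\bigl(\partial^k H^\varepsilon_{m,z}(v)^{\pm1}\bigr)\,\partial^{p-k}\Hbf\pm{\pm m}(z)$ (no minus sign), which has no effect on the conclusion.
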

\begin{proof}
It suffices to differentiate (\ref{eq:konHm}) $p$ times with respect to $z$ to obtain
$$\left [\kk\varepsilon1(v) - H_{m,z}^\varepsilon(v)^{\pm 1} \, \id \right ] \triangleright \partial^p\Hbf\pm{\pm m}(z) = \sum_{k=0}^{p-1} {p\choose{k+1}} \frac{\partial^{k+1}}{\partial z^{k+1}}\left [H_{m,z}^\varepsilon(v)^{\pm 1}\right ] \partial^{p-k-1} \Hbf\pm{\pm m}(z)\,.$$
The claim immediately follows.
\end{proof}

\begin{prop}
\label{prop:Hrel}
In $\widehat{\mathcal B}_t$, we have, for every $m,n\in\Z^\times$,
$$\Hbf+m(z)\Hbf-n(v) = \Hbf-n(v)\Hbf+m(z)\,,$$
$$(zt^{\pm 4}-v)(zt^{-2(1\pm1)m}-vt^{\pm 4 - 2(1\mp 1)m})\kk\varepsilon1(v) \Hbf\pm m(z) = (z-vt^{\pm 4})(zt^{\pm 4 -2(1\pm1)m}-vt^{- 2(1\mp 1)m})\Hbf\pm m(z)\kk\varepsilon1(v) \,, $$
$$(zt^{\pm 4}-v)(zt^{-2(1\pm1)m}-vt^{\pm 4 - 2(1\mp 1)m})\x\pm1(v) \Hbf\pm m(z) = (z-vt^{\pm 4})(zt^{\pm 4 -2(1\pm1)m}-vt^{- 2(1\mp 1)m})\Hbf\pm m(z)\x\pm1(v) \,, $$
$$\x\pm1(v) \Hbf\mp m(z) = \Hbf\mp m(z)\x\pm1(v) \,. $$
\end{prop}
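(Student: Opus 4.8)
\textbf{Proof plan for Proposition \ref{prop:Hrel}.} The strategy is to reduce all four relations to the already-established commutation relations between the dressing factors $\Lbf\pm{m}(z)$, $\Rbf\pm{m}(z)$ of Proposition \ref{prop:crossrel}, using $\Hbf\pm{m}(z) = \Lbf\pm{m}(z)\Rbf\pm{m}(z)$ from Definition \ref{def:Hm}. The main point is that all the relevant subalgebras interact in a controlled way inside $\widehat{\mathcal B}_t$: $\mathcal H_t^+$ and $\mathcal H_t^-$ commute with each other (they sit in the two outer tensor factors of $\mathcal H_t^+\rtimes\breve{\mathrm U}_{t^2}(\mathrm L\mathfrak a_1)\stackrel{\mathrm{cop}}{\ltimes}\mathcal H_t^-$ and the cross term in the multiplication involves only the $\breve{\mathrm U}_{t^2}(\mathrm L\mathfrak a_1)$-action, which is trivial on $\gamma^{\pm 1/2}$ and, crucially, the two Heisenberg factors never act on one another), and $\x\pm1(v)$, $\kk\varepsilon1(v)$ act on $\mathcal H_t^\pm$ through the module-algebra structures recorded in Propositions \ref{prop:uqactionHeisenberg} and \ref{prop:UqactiononH}.

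First I would prove the first identity, $\Hbf+m(z)\Hbf-n(v) = \Hbf-n(v)\Hbf+m(z)$: since $\Hbf+m(z)$ is a (convergent) product of elements of $\widehat{\mathcal H}_t^+$ and $\Hbf-n(v)$ of elements of $\widehat{\mathcal H}_t^-$, and these two completed subalgebras commute elementwise in $\widehat{\mathcal B}_t$ (the relevant relations being the trivial ones, e.g. (\ref{eq:R+x-}) is not even needed here — one simply uses that in $\breve{\mathcal B}_t$ the product $(h_+\otimes 1\otimes 1)(1\otimes 1\otimes h_-) = h_+\otimes 1\otimes h_- = (1\otimes 1\otimes h_-)(h_+\otimes 1\otimes 1)$), the claim is immediate. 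For the second relation, I would compute $\kk\varepsilon1(v)\Hbf\pm m(z)$ by inserting $\Hbf\pm m(z) = \Lbf\pm m(z)\Rbf\pm m(z)$ and moving $\kk\varepsilon1(v)$ past each factor using (\ref{eq:Lk}) and (\ref{eq:Rk}) from Proposition \ref{prop:crossrel}. Collecting the rational prefactors picked up from (\ref{eq:Lk}) — namely $(zt^{\pm 4}-v)/(zt^{\pm 4-2(1\pm 1)m}-vt^{-2(1\mp 1)m})$ — and from (\ref{eq:Rk}) — namely $(zt^{-2(1\pm 1)m}-vt^{\pm 4-2(1\mp 1)m})/(z-vt^{\pm 4})$ — their product, cleared of denominators, yields exactly the stated coefficient; this is the same bookkeeping as in the proof that $H_{m,z}^\varepsilon(v)^{\pm 1} = \lambda_m^{\varepsilon,\pm}(v,z)\rho_m^{\varepsilon,\pm}(v,z)$ implicit in the previous proposition. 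The third relation is handled identically, replacing (\ref{eq:Lk}),(\ref{eq:Rk}) by the $\x\pm1(v)$-versions in Proposition \ref{prop:crossrel} (the ones with $\x{\pm}{1}(v)\Lbf\pm{m}(z)$ and $\x{\pm}{1}(v)\Rbf\pm{m}(z)$), which carry the very same prefactors. The fourth relation $\x\pm1(v)\Hbf\mp m(z) = \Hbf\mp m(z)\x\pm1(v)$ follows since $\x\pm1(v)$ commutes with both $\Lbf\mp m(z)$ and $\Rbf\mp m(z)$ by the relations $\x{\pm}{1}(v)\Lbf\mp{m}(z) = \Lbf\mp{m}(z)\x{\pm}{1}(v)$ and (\ref{eq:R+x-}).

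The only genuinely delicate point is convergence: $\Hbf\pm m(z)$ for $m$ of arbitrary sign is, via Definition \ref{def:Hm}, an \emph{infinite} product (because $\Hbf\pm{}(w)^{-1}$ expands as an infinite series in the completed Heisenberg algebra), so one must check that the relations, which are established elementwise and for finite truncations, pass to the completion $\widehat{\mathcal B}_t$. This is where I would invoke that $\widehat{\mathcal H}_t^\pm$ and $\widehat{\mathcal B}_t$ are completions with respect to the $\Z$-grading topology (\S\ref{sec:At}, \S\ref{rem:qdafftopology}-style argument), that the module-algebra actions $\triangleright$ extend by continuity (as already used in the proof of (\ref{eq:konHm})), and that multiplication is continuous; then the finite-product computations above converge termwise to the desired identities. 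I expect this continuity/convergence verification to be the main — though routine — obstacle; the algebraic heart of the proof is the prefactor bookkeeping, which is mechanical once Propositions \ref{prop:crossrel} and \ref{prop:RLrel} are in hand.
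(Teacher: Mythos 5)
Your proof is correct and follows essentially the same route as the paper: you decompose $\Hbf\pm m(z) = \Lbf\pm m(z)\Rbf\pm m(z)$, commute $\kk\varepsilon1(v)$ and $\x\pm1(v)$ through each of the two factors using (\ref{eq:Lk}), (\ref{eq:Rk}) and their $\x$-analogues from Proposition \ref{prop:crossrel}, and observe that $\widehat{\mathcal H}_t^+$ and $\widehat{\mathcal H}_t^-$ commute elementwise. The paper's proof is just a terse version of this (it only names the vanishing commutators $[\Lbf\pm{}, \Lbf\mp{}] = [\Lbf\pm{}, \Rbf\mp{}] = [\Rbf\pm{}, \Rbf\mp{}] = 0$, leaving the prefactor bookkeeping implicit). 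One small correction: $\Hbf\pm m(z)$ is a \emph{finite} product of $|m|$ factors by Definition \ref{def:Hm}, not an infinite product — what is infinite is only the series expansion of each $\Hbf\pm{}(w)^{-1}$ inside the completion, so the convergence point you flag is even more routine than you suggest.
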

\begin{proof}
This follows immediately from $[\Lbf\pm{}(z), \Lbf\mp{}(v)] =[\Lbf\pm{}(z), \Rbf\mp{}(v)]=[\Rbf\pm{}(z), \Rbf\mp{}(v)]=0$. 
\end{proof}

\subsection{The evaluation algebra $\widehat{\mathcal A}_t$}
\begin{defprop}
Let $\mathcal J$ denote the closed left ideal of $\widehat{\mathcal B}_t$ generated by
\be \left \{\res_z z^{m} \left [ \Hbf-{}(z) \left (\kk{+}{1}(zt^{-4}) - \kk{-}{1}(zt^{-4})\right ) -  \Hbf+{}(z)^{-1} \left (\kk{+}{1}(z) - \kk{-}{1}(z)\right )\right ] : m\in\Z\right \}\,.\label{eq:ideal}\ee
Then, $\mathcal J .\widehat{\mathcal B}_t \subseteq \mathcal J$, making $\mathcal J$ a closed two-sided ideal of $\widehat{\mathcal B}_t$, and we let $\widehat{\mathcal A}_t = \widehat{\mathcal B}_t/\mathcal J$.
\end{defprop}
\begin{proof}
In order to prove that $\mathcal J .\widehat{\mathcal B}_t \subseteq \mathcal J$, it suffices to check that, for every $x\in \widehat{\mathcal B}_t$,
 $$\left [ \Hbf-{}(z)\left (\kk{+}{1}(zt^{-4}) - \kk{-}{1}(zt^{-4})\right ) - \Hbf+{}(z)^{-1} \left (\kk{+}{1}(z) - \kk{-}{1}(z)\right )\right ] x \in \mathcal J\,.$$
The latter easily follows by inspection, making use of the relevant relations in $\widehat{\mathcal B}_t$, namely (\ref{eq:RL}--\ref{eq:R+x-}) in proposition \ref{prop:crossrel}.
\end{proof}
\begin{prop}
For every $m\in\Z$, the following relation holds in $\widehat{\mathcal A}_t$,
\be\label{eq:center} \Hbf-{-m}(z) \left [\kk{+}{1}(zt^{-4m}) -\kk{-}{1}(zt^{-4m})\right ]  = \Hbf+m(z)^{-1} \left [ \kk{+}{1}(z) -\kk{-}{1}(z) \right ]\,.\ee
\end{prop}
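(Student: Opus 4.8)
The plan is to prove the relation \eqref{eq:center} by induction on $m\in\Z$, using the defining relation \eqref{eq:ideal} of the ideal $\mathcal J$ as the base case and the multiplicativity identities for the shift factors (Lemma~\ref{lem:Hidentities}) together with the commutation relations of Proposition~\ref{prop:Hrel} to carry out the inductive step. The case $m=0$ is trivial since $\Hbf\pm0(z)=1$ by convention. The case $m=1$ is precisely the relation defining $\mathcal J$: applying $\res_z z^{m}$ for all $m\in\Z$ to the generators in \eqref{eq:ideal} and recalling $\Hbf+{}(z)^{-1}=\Hbf+1(z)^{-1}$ and $\Hbf-{}(z)=\Hbf-{-1}(zt^4)^{-1}$, hence, after a change of spectral variable, $\Hbf-{-1}(z)\left[\kk+1(zt^{-4})-\kk-1(zt^{-4})\right] = \Hbf+1(z)^{-1}\left[\kk+1(z)-\kk-1(z)\right]$ in $\widehat{\mathcal A}_t$.

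First I would treat the step from $m$ to $m+1$ for $m\geq1$. Starting from the validity of \eqref{eq:center} for $m$, I would left-multiply by $\Hbf-{-1}(zt^{-4m})$ and use Lemma~\ref{lem:Hidentities}, specifically $\Hbf-{-1}(zt^{-4m})\,\Hbf-{-m}(z)=\Hbf-{-(m+1)}(z)$ after matching the shift $zt^{-4m}=z'$ with the identity $\Hbf-{m}(z't^{-4m})\Hbf-{n}(z')=\Hbf-{m+n}(z't^{-4m})$ read for $m=-1$, $n=-m$. On the right-hand side, the corresponding manipulation uses $\Hbf+{(m+1)}(z)^{-1}=\Hbf+m(z)^{-1}\Hbf+1(zt^{-4m})^{-1}$ (again Lemma~\ref{lem:Hidentities}), so one needs to absorb a factor of $\Hbf+1(zt^{-4m})^{-1}$ and a factor involving $\kk\pm1$ at shifted argument. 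The key is that the $m=1$ relation, applied at spectral parameter $zt^{-4m}$, rewrites exactly $\Hbf+1(zt^{-4m})^{-1}\left[\kk+1(zt^{-4m})-\kk-1(zt^{-4m})\right]$ as $\Hbf-{-1}(zt^{-4m})\left[\kk+1(zt^{-4(m+1)})-\kk-1(zt^{-4(m+1)})\right]$; substituting this and using that $\Hbf-{-m}$ commutes appropriately past $\kk\pm1$ (as governed by Proposition~\ref{prop:Hrel} and the central relation already imposed) gives \eqref{eq:center} for $m+1$. The analogous argument run with $m\leq0$, using $\Hbf\pm{-m}(z)^{-1}=\Hbf\pm m(zt^{\pm4m})$, handles negative indices; alternatively one simply checks that the relation for $m$ is equivalent to the relation for $-m$ by a change of variable $z\mapsto zt^{\pm4m}$ and the inversion identities, so that only nonnegative $m$ need be treated.

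The main obstacle I anticipate is bookkeeping the spectral shifts: the factors $\Hbf\pm m(z)$ are built from products of $\Hbf\pm{}(zt^{\pm2(1-2p)\sign(m)+2})$, so concatenating $\Hbf-{-1}$ with $\Hbf-{-m}$ requires that the arguments line up in a geometric progression of ratio $t^{\pm4}$, and one must verify the precise power of $t$ at which the $m=1$ relation is to be invoked. This is exactly the content of Lemma~\ref{lem:Hidentities}, so the obstacle is not conceptual but notational; careful tracking of the exponent $-4m$ throughout, and a consistent choice of whether $\kk\pm1$ sits to the left or the right of the shift factors (dictated by the ordering conventions in \eqref{eq:ideal} and Proposition~\ref{prop:Hrel}), suffices. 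Once the $m=1$ case and the two multiplicativity identities are in hand, the induction is a routine substitution, and I would present it as such rather than expanding every product explicitly.
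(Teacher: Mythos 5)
Your strategy---an induction on $m$ anchored on the $m=1$ relation defining $\mathcal J$ and propelled by Lemma~\ref{lem:Hidentities} together with the $\Hbf+{}$/$\Hbf-{}$ commutation of Proposition~\ref{prop:Hrel}---is the same as the paper's. However, the step that does the work is mis-stated in a way that is \emph{not} merely notational, because same-sign shift factors at distinct spectral arguments do not commute: $\Hbf{-}{}(v)$ and $\Hbf{-}{}(z)$ differ by a nontrivial $\theta^-$ scalar, and at the adjacent arguments $v=zt^{-4}$ that scalar is in fact singular, since $\theta^-(w)=(1-w)(1-t^{-8}w)/(1-t^{-4}w)^2$ has a double pole at $w=t^4$. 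Lemma~\ref{lem:Hidentities}, read with indices $(-m,-1)$ after the substitution $z\mapsto zt^{-4m}$, gives $\Hbf{-}{-m}(z)\,\Hbf{-}{-1}(zt^{-4m})=\Hbf{-}{-(m+1)}(z)$, with the smallest argument on the \emph{right}; your claimed identity $\Hbf{-}{-1}(zt^{-4m})\,\Hbf{-}{-m}(z)=\Hbf{-}{-(m+1)}(z)$ reverses the order, and this product is neither an instance of the lemma nor, at these resonant arguments, a well-defined element of $\widehat{\mathcal H}_t^-$. The same inversion occurs on the plus side: the lemma gives $\Hbf{+}{m}(z)\,\Hbf{+}{1}(zt^{-4m})=\Hbf{+}{m+1}(z)$, so $\Hbf{+}{m+1}(z)^{-1}=\Hbf{+}{1}(zt^{-4m})^{-1}\Hbf{+}{m}(z)^{-1}$, not the order you wrote. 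Consequently, ``left-multiplying the relation for $m$ by $\Hbf{-}{-1}(zt^{-4m})$'' does not produce the left-hand side of~\eqref{eq:center} for $m+1$. (A smaller slip: Definition~\ref{def:Hm} gives $\Hbf{-}{-1}(z)=\Hbf{-}{}(z)$ directly, so the $m=1$ case is literally the generator of $\mathcal J$, with no change of variable; your asserted identity $\Hbf{-}{}(z)=\Hbf{-}{-1}(zt^4)^{-1}$ is incorrect.)

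The plan is easily repaired, and then it becomes a mirror image of the paper's argument. Write $K_p:=\kk{+}{1}(zt^{-4p})-\kk{-}{1}(zt^{-4p})$. Start from the $m=1$ relation at spectral parameter $zt^{-4m}$, namely $\Hbf{-}{-1}(zt^{-4m})K_{m+1}=\Hbf{+}{1}(zt^{-4m})^{-1}K_m$, and left-multiply by $\Hbf{-}{-m}(z)$; the left-hand side becomes $\Hbf{-}{-(m+1)}(z)K_{m+1}$ by the correctly ordered factorization, while on the right one commutes $\Hbf{-}{-m}(z)$ past $\Hbf{+}{1}(zt^{-4m})^{-1}$ (Proposition~\ref{prop:Hrel}), substitutes the inductive hypothesis $\Hbf{-}{-m}(z)K_m=\Hbf{+}{m}(z)^{-1}K_0$, and finishes with $\Hbf{+}{1}(zt^{-4m})^{-1}\Hbf{+}{m}(z)^{-1}=\Hbf{+}{m+1}(z)^{-1}$. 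The paper instead peels $\Hbf{-}{}(z)$ off the \emph{left} of $\Hbf{-}{-(m+1)}(z)$, applies the inductive hypothesis at $z\mapsto zt^{-4}$ first, and then the base case; either ordering of the two substitutions works, but the orderings inside each product of shift factors must be respected as they stand in Lemma~\ref{lem:Hidentities}.
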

\begin{proof}
We prove (\ref{eq:center}) for $m\in \N^\times$ by induction. The case $m=1$ corresponds to the vanishing of the generators of the ideal $\mathcal J$, see (\ref{eq:ideal}). Assuming the result holds for some $m\in\N^\times$, we have
\bea
\Hbf-{-(m+1)}(z) \left [\kk{+}{1}(zt^{-4(m+1)}) -\kk{-}{1}(zt^{-4(m+1)})\right ]  &=& \Hbf-{}(z) \Hbf-{-m}(zt^{-4}) \left [\kk{+}{1}(zt^{-4(m+1)}) -\kk{-}{1}(zt^{-4(m+1)})\right ]  \nn\\
&=&\Hbf-{}(z) \Hbf+{m}(zt^{-4})^{-1} \left [\kk{+}{1}(zt^{-4}) -\kk{-}{1}(zt^{-4})\right ]  \nn\\
&=& \Hbf+{m}(zt^{-4})^{-1} \Hbf-{}(z) \left [\kk{+}{1}(zt^{-4}) -\kk{-}{1}(zt^{-4})\right ]  \nn\\
&=& \Hbf+{m}(zt^{-4})^{-1} \Hbf+{}(z)^{-1} \left [\kk{+}{1}(z) -\kk{-}{1}(z)\right ]  \nn\\
&=& \Hbf+{m+1}(z)^{-1}\left [\kk{+}{1}(z) -\kk{-}{1}(z)\right ]  \nn
\eea
The cases with $m\in-\N^\times$ follow by rewriting the above equation for $m\in\N^\times$ as
\be   \Hbf+m(z)\left [\kk{+}{1}(zt^{-4m}) -\kk{-}{1}(zt^{-4m})\right ]  = \Hbf-{-m}(z)^{-1} \left [ \kk{+}{1}(z) -\kk{-}{1}(z) \right ]\nn\ee
and making use of lemma \ref{lem:LRidentities}.
\end{proof}
\begin{rem}
In addition to the above relation, $\widehat{\mathcal A}_t$ obviously inherits the relations in $\widehat{\mathcal B}_t$ modulo $\mathcal J$. In particular, all the relations in proposition \ref{prop:crossrel} hold in $\widehat{\mathcal A}_t$.
\end{rem}

\subsection{The evaluation homomorphism}
\begin{prop}
There exists a unique continuous $\K$-algebra homomorphism $\ev:\qdaff(\mathfrak{a}_1)^{(-1)}\rightarrow\widehat{\mathcal A}_t$ such that, for every $m\in\N^\times$ and every $n\in\Z$,
\be\ev(q) = t^2\,, \ee
\be\label{eq:evKpm0} \ev(\Kbsf\pm{1,0}(z)) = -\kk{\mp}{1}(z)\,,\ee
\be\label{eq:evK} \ev(\Kbsf\pm{1,\pm m}(z)) = \Hbf{\pm}{\pm m}(z)\left [\kk{\pm}{1}(zt^{-4m}) - \kk{\mp}{1}(zt^{-4m})\right ] \,,\ee
\be \ev(\Xbsf\pm{1,n}(z)) = \Hbf\pm{n}(z) \x\pm{1}(zt^{\mp 4n})\,.\ee
We shall refer to $\ev$ as the \emph{evaluation homomorphism}. It is such that $\ev \circ \iota_0 = \id$ over $\mathrm{U}_{t^2}(\mathrm L\mathfrak a_1)$.
\end{prop}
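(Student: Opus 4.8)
The plan is to verify that the assignment given by $\ev$ is compatible with all the defining relations of $\qdaff(\mathfrak a_1)^{(-1)}$ (equivalently, of $\qdaff(\mathfrak a_1)$ with $\Csf^{1/2}=1$ and $\csf\pm0=1$, $\csf\pm{\pm m}=0$ for $m\geq1$), and that continuity holds with respect to the $\Z$-gradings. Since $\qdaff(\mathfrak a_1)^{(-1)}$ is presented by generators and relations, it suffices to check that the proposed images satisfy the images of relations (\ref{eq:Csfcentral})--(\ref{eq:X+X-}), specialized to central charge $1$ and $\cbsf\pm(z)=1$. The centrality relations (\ref{eq:Csfcentral}) are immediate since $\ev(\Csf^{\pm1/2})=1$ and $\ev(\cbsf\pm(z))=1$, and (\ref{eq:csbf}) is trivially satisfied. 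The Cartan relations (\ref{eq:K+K+}), (\ref{eq:K+K-}) among the $\Kbsf\pm{1,\pm m}$ will follow from combining the commutation relations of $\Hbf\pm m(z)$ among themselves (the $\Theta^\pm_{m,n}$ of the proposition preceding section 6.8, together with $[\Hbf+m,\Hbf-n]=0$ from Proposition \ref{prop:Hrel}), the relations (\ref{eq:RL})--(\ref{eq:R+x-}) of Proposition \ref{prop:crossrel}, the relation (\ref{eq:center}) of $\widehat{\mathcal A}_t$, and the $\qaff(\mathfrak a_1)$-relations (\ref{eq:relkpmkpm})--(\ref{eq:relxpxm}) that $\kk\pm1(z)$ satisfy inside $\breve{\mathrm U}_{t^2}(\mathrm L\mathfrak a_1)$. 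The key point is that $\ev(\Kbsf+{1,0}(z))=-\kk-1(z)$ and $\ev(\Kbsf-{1,0}(z))=-\kk+1(z)$, so the "$m=0$ or $n=0$" specializations of (\ref{eq:K+K+})--(\ref{eq:K+X-}) reduce, after tracking the shift factors $\Hbf\pm m$, to the known loop algebra relations; the "$m,n>0$" cases need (\ref{eq:center}) to rewrite $\Hbf\pm m(z)[\kk\pm1(zt^{-4m})-\kk\mp1(zt^{-4m})]$ consistently.

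The first substantive block I would carry out is: (i) check (\ref{eq:K+K+}), (\ref{eq:K+K-}) and the mixed $\Kbsf\Kbsf$ relation $\res_{v,w}\frac1{vw}\Kbsf\pm{1,0}(v)\Kbsf\mp{1,0}(w)=1$ — the latter is exactly $\res_{v,w}\frac1{vw}\kk-1(v)\kk+1(w)=1$, i.e. relation (\ref{eq:relkpkm}) in $\breve{\mathrm U}_{t^2}(\mathrm L\mathfrak a_1)$; (ii) check (\ref{eqbf:K+X+}) and (\ref{eq:K+X-}), which govern $\Kbsf\pm{1,\pm m}$ against $\Xbsf\pm{1,r}$ — here one uses (\ref{eq:Lk}), (\ref{eq:Rk}) of Proposition \ref{prop:crossrel} (which precisely encode how $\kk\varepsilon1$ moves past $\Hbf\pm m$) together with relations (\ref{eq:relkpxpm}), (\ref{eq:relkmxpm}) for $\kk\pm1$ against $\x\pm1$; (iii) check (\ref{eq:X+rX+s}), the $\Xbsf+\Xbsf+$ relation, which reduces to (\ref{eq:relxpmxpm}) for $\x\pm1(z)$ after commuting the $\Hbf\pm n$ factors past the $\x\pm1$'s using Proposition \ref{prop:Hrel} and absorbing the resulting rational prefactor; and finally (iv) the commutator relation (\ref{eq:X+X-}): specialized to central charge $1$ and $\cbsf\pm=1$, the right-hand side becomes $\frac1{q-q^{-1}}[\delta(v/(q^{2(r+s)}z))\Kbsf+{1,r+s}(v)-\delta(v/(q^{2(r+s)}z))\Kbsf-{1,r+s}(z)]$, and one must match $[\Hbf+r(v)\x+1(vt^{-4r}),\Hbf-s(z)\x-1(zt^{4s})]$ against $\Hbf{\pm}{r+s}(\cdot)[\kk\pm1(\cdot)-\kk\mp1(\cdot)]$ using the defining relation of $\mathcal B_t$ (namely $[\x+1(z_1),\x-1(z_2)]=\frac1{q-q^{-1}}\delta(z_1/z_2)[\kk+1(z_1)-\kk-1(z_1)]$) together with Lemma \ref{lem:Hidentities} and, crucially, relation (\ref{eq:center}). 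The main obstacle I anticipate is this last step (iv): one has to commute $\Hbf-s(z)$ past $\x+1(vt^{-4r})$ and $\Hbf+r(v)$ past $\x-1(zt^{4s})$, collect the rational dressing factors, recognize that the $\delta$-function forces $vt^{-4r}=zt^{4s}$ up to the appropriate power of $t$, and then use Lemma \ref{lem:Hidentities} to rewrite $\Hbf+r(v)\Hbf-s(z)$ evaluated on the support of the $\delta$ as $\Hbf+{r+s}$ (resp. $\Hbf-{-(r+s)}$), at which point (\ref{eq:center}) converts between the "$+$" and "$-$" forms; bookkeeping of all powers of $t$ and of the signs $\sign(r),\sign(s),\sign(r+s)$ is where errors would creep in.

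Once all relations are verified, uniqueness of $\ev$ is automatic since the listed elements generate $\qdaff(\mathfrak a_1)^{(-1)}$. Continuity: the $\Z$-grading on $\widehat{\mathcal A}_t$ is inherited from $\mathcal B_t$ and is compatible with the $\Z_{(2)}$-grading on $\qdaff(\mathfrak a_1)$ under which $\Dsf_2$ acts — indeed $\ev$ intertwines the gradings (the generators $\Xbsf\pm{1,n}(z)$, $\Kbsf\pm{1,\pm m}(z)$ have prescribed degrees matching their images), so $\ev$ maps $\dot\Omega_n^{(1)}$-neighbourhoods into $\mathcal J$-neighbourhoods and hence extends by continuity; this is routine given Proposition \ref{prop:Omegan} and the construction of $\widehat{\mathcal B}_t$ in section 6.5. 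Finally $\ev\circ\iota_0=\id$ on $\mathrm U_{t^2}(\mathrm L\mathfrak a_1)$ is a direct check from (\ref{eq:iotamC})--(\ref{eq:iotamx}) with $m=0$: there all $\Hbf\pm0=1$ and $\prod_{p=1}^{0}=1$, so $\iota_0(\kk\pm1(z))=-\Kbsf\mp{1,0}(\Csf^{-1/2}z)$ and $\iota_0(\x\pm1(z))=\Xbsf\pm{1,0}(z)$, whence $\ev(\iota_0(\kk\pm1(z)))=\ev(-\Kbsf\mp{1,0}(z))=\kk\pm1(z)$ and $\ev(\iota_0(\x\pm1(z)))=\ev(\Xbsf\pm{1,0}(z))=\x\pm1(z)$, as claimed.
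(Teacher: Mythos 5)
Your proposal is correct and follows essentially the same route as the paper: one verifies the defining relations generator by generator, the only delicate case being the mixed commutator (\ref{eq:X+X-}), which the paper likewise resolves on the support of the $\delta$-function by combining the shift-factor identities of lemma \ref{lem:Hidentities} with relation (\ref{eq:center}), exactly as you outline, and the uniqueness/continuity and $\ev\circ\iota_0=\id$ checks are the routine verifications you describe. The one small slip is the parenthetical claim that $\Csf^{1/2}=1$ in $\qdaff(\mathfrak a_1)^{(-1)}$: that quotient only imposes $\cbsf\pm(z)=1$, while $\Csf^{\pm 1/2}\mapsto 1$ is part of the definition of $\ev$ (forced by $\ev\circ\iota_0=\id$), not a relation of the source algebra.
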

\begin{proof}
It suffices to check all the defining relations of $\qdaff(\mathfrak a_1)$. E.g. we have, for every $m,n\in\Z$,
\be\label{eq:evX+X-}\left [\ev(\Xbsf+m(v)), \ev(\Xbsf-n(z)) \right ] = \frac{1}{t^2-t^{-2}}\delta\left (\frac{v}{zt^{4(m+n)}}\right)\Hbf{+}{m}(v)\Hbf{-}{n}(z)\left[\kk+1(vt^{-4m}) - \kk-1(zt^{4n}) \right ] \,.\ee
If $m+n=0$, making use of (\ref{eq:center}), we are done. Assuming that $m+n>0$, lemma \ref{lem:LRidentities} allows us to write
\bea\Hbf{+}{m}(zt^{4(m+n)})\Hbf{-}{n}(z)\left[\kk+1(zt^{4n}) - \kk-1(zt^{4n}) \right ] &=& \Hbf{+}{m}(zt^{4(m+n)})\Hbf{+}{-n}(z)^{-1}\left[\kk+1(z) - \kk-1(z) \right ]\nn\\
&=& \Hbf{+}{m}(zt^{4(m+n)})\Hbf{+}{n}(zt^{4n})\left[\kk+1(z) - \kk-1(z) \right ] \nn\\
&=&\Hbf{+}{m+n}(zt^{4(m+n)})\left[\kk+1(z) - \kk-1(z) \right ] \nn
\eea
so that, eventually,
\be\left [\ev (\Xbsf+m(v)), \ev(\Xbsf-n(z)) \right ] = \frac{1}{t^2-t^{-2}}\delta\left (\frac{v}{zt^{4(m+n)}}\right) \ev(\Kbsf+{m+n}(zt^{4(m+n)}))\,.\nn
\ee
A similar argument proves the case $m+n<0$.
\end{proof}
\noi We have the following obvious
\begin{cor}
For every $N\in\N$ there exists an algebra homomorphism $\ev_{(N)}:\qdaff(\mathfrak{a}_1)^{(N)}\rightarrow\widehat{\mathcal A}_t$ making the following diagram commutative.
$$\begin{tikzcd}
\cdots \arrow{r} & \qdaff(\mathfrak{a}_1)^{(N)}\arrow{r} \arrow[rrrd, "\ev_{(N)}"']& \qdaff(\mathfrak{a}_1)^{(N-1)}\arrow{r} \arrow[rrd, end anchor = north west, "\ev_{(N-1)}"]& \cdots \arrow[r] & \qdaff(\mathfrak{a}_1)^{(-1)} \arrow[d, "\ev"] \\
&&&& \widehat{\mathcal A}_t
\end{tikzcd}$$
We can furthermore define the algebra homomorphism $\ev_{(\infty)}:\qdaff(\mathfrak{a}_1)\rightarrow\widehat{\mathcal A}_t$ by
$$\ev_{(\infty)} = \lim_{\longleftarrow} \ev_{(N)}\,.$$
\end{cor}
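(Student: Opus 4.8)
The plan is purely formal: obtain each $\ev_{(N)}$ by precomposing the evaluation homomorphism $\ev:\qdaff(\mathfrak{a}_1)^{(-1)}\to\widehat{\mathcal A}_t$ of the previous proposition with the canonical maps of the inverse system defining $\qdaff(\mathfrak a_1)$ in section \ref{rem:qdafftopology}, and then glue the resulting compatible family into $\ev_{(\infty)}$ by passing to the inverse limit. Concretely, I would recall that for every $N\in\N$ the quotient map $p_N:\qdaff(\mathfrak a_1)^{(N)}\to\qdaff(\mathfrak a_1)^{(N-1)}$ is a surjective $\F$-algebra homomorphism (it annihilates $\csf+N,\csf-{-N}$ for $N\geq 1$ and sends $\csf+0,\csf-0$ to $1$ for $N=0$); setting $\pi_{-1}=\id$ and $\pi_N:=p_0\circ p_1\circ\cdots\circ p_N:\qdaff(\mathfrak a_1)^{(N)}\to\qdaff(\mathfrak a_1)^{(-1)}$ for $N\in\N$, one has $\pi_{N-1}\circ p_N=\pi_N$. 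I would then simply \emph{define} $\ev_{(N)}:=\ev\circ\pi_N$. Being a composition of ($\K$-)algebra homomorphisms, $\ev_{(N)}$ is an algebra homomorphism, so none of the defining relations of $\qdaff(\mathfrak a_1)^{(N)}$ has to be rechecked; on generators it is given by the formulas for $\ev$ together with $\ev_{(N)}(\cbsf\pm(z))=1$, reflecting $\pi_N(\cbsf\pm(z))=1$. Continuity is inherited from that of $\ev$ and of the $p_N$, the latter being clear since the $p_N$ are graded for the $\Z$-gradings used to topologize the $\qdaff(\mathfrak a_1)^{(N)}$.

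The commutativity of the displayed diagram then reduces to associativity of composition: for $-1\leq M\leq N$ the composite $\qdaff(\mathfrak a_1)^{(N)}\to\qdaff(\mathfrak a_1)^{(M)}$ is $p_{M+1}\circ\cdots\circ p_N$, so
$$\ev_{(M)}\circ p_{M+1}\circ\cdots\circ p_N=\ev\circ\pi_M\circ p_{M+1}\circ\cdots\circ p_N=\ev\circ\pi_N=\ev_{(N)}\,,$$
which is precisely the asserted commutativity (the case $M=-1$ gives the triangle through $\qdaff(\mathfrak a_1)^{(-1)}$, the case $M=N-1$ the neighbouring one, and so on).

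Finally, the identity $\pi_{N-1}\circ p_N=\pi_N$ says exactly that $\{\ev_{(N)}\}_{N\in\N}$ is a morphism of inverse systems from $(\qdaff(\mathfrak a_1)^{(N)},p_N)_{N\in\N}$ to the constant system on $\widehat{\mathcal A}_t$; applying $\lim_{\longleftarrow}$ and using $\qdaff(\mathfrak a_1)=\lim_{\longleftarrow}\qdaff(\mathfrak a_1)^{(N)}$ together with $\lim_{\longleftarrow}\widehat{\mathcal A}_t=\widehat{\mathcal A}_t$ yields the (continuous) algebra homomorphism $\ev_{(\infty)}=\lim_{\longleftarrow}\ev_{(N)}:\qdaff(\mathfrak a_1)\to\widehat{\mathcal A}_t$. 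Equivalently, writing $\mathrm{pr}_N:\qdaff(\mathfrak a_1)\to\qdaff(\mathfrak a_1)^{(N)}$ for the canonical projection, one has $\ev_{(\infty)}=\ev_{(N)}\circ\mathrm{pr}_N$ for all $N$, and in particular $\ev_{(\infty)}=\ev\circ\mathrm{pr}_{-1}$, which could equally well be taken as the definition, making the statement tautological. There is no genuine obstacle here; the only point deserving a word is the continuity just mentioned, which follows from the graded nature of all the maps involved.
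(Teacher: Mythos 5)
Your proof is correct and coincides with what the paper intends: the corollary is introduced as ``obvious'' and given no written proof, the implicit argument being exactly yours, namely setting $\ev_{(N)}=\ev\circ\pi_N$ with $\pi_N$ the composite of the canonical surjections and then passing to the inverse limit over the resulting compatible family. Nothing further is required.
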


\subsection{Evaluation modules}
Remember the surjective algebra homomorphism $\breve{\mathrm U}_q(\mathrm L\mathfrak a_1)\to {\mathrm U}_q(\mathrm L\mathfrak a_1)$ from proposition \ref{prop:Ubreve}. It allows us to pull back any  simple ${\mathrm U}_q(\mathrm L\mathfrak a_1)$-module $M$ into a simple $\breve{\mathrm U}_q(\mathrm L\mathfrak a_1)$-module. With that construction in mind, we have
\begin{prop}
Let $M$ be a simple finite dimensional ${\mathrm U}_q(\mathrm L\mathfrak a_1)$-module. Then,
\begin{enumerate}[i.]
\item\label{it:indmod} $\widehat{\mathcal H}_t^+ \otimes M \otimes \widehat{\mathcal H}_t^-$ is a $\mathcal H_t^+\rtimes \breve{\mathrm U}_{t^2}(\mathrm L\mathfrak a_1)\stackrel{{\footnotesize \mathrm{cop}}}{\ltimes} \mathcal H_t^-$-module with the action defined by setting, for every $h_+,h_+'\in\mathcal H_t^+$, every $h_-, h_-'\in\mathcal H_t^-$, every $x\in\breve{\mathrm U}_{t^2}(\mathrm L\mathfrak a_1)$ and every $v\in M$,
$$(h_+\otimes x\otimes h_-).(h_+'\otimes v\otimes h_-') = \sum h_+ \left (x_{(1)} \triangleright h_+'\right ) \otimes x_{(2)}.v \otimes h_- \left (x_{(3)}\triangleright h_-'\right )$$
and extending by continuity.
\item\label{it:Btmod} $\widehat{\mathcal H}_t^+ \otimes M \otimes \widehat{\mathcal H}_t^-$ descends to a $\mathcal B_t$-module.
\item\label{it:Atmod} 
$\left (\widehat{\mathcal H}_t^+ \otimes M \otimes \widehat{\mathcal H}_t^-\right )/\mathcal J.\left (\widehat{\mathcal H}_t^+ \otimes M \otimes \widehat{\mathcal H}_t^-\right )$ is an $\widehat{\mathcal A}_t$-module. It pulls back along $\ev$ to a $\qdaff'(\mathfrak a_1)$-module that we denote by $\ev^*(M)$.
\item\label{it:WF} As a $\qdaff'(\mathfrak a_1)$-module, $\ev^*(M)$ is weight-finite. 
\item\label{it:highesttweight} For any highest $\ell$-weight vector $v\in M-\{0\}$, the $\qdaff^0(\mathfrak a_1)$-module
$$\tilde M_0\cong \left (\widehat{\mathcal H}_t^+ \otimes \F v \otimes \widehat{\mathcal H}_t^-\right )/\mathcal J.\left (\widehat{\mathcal H}_t^+ \otimes \F v \otimes \widehat{\mathcal H}_t^-\right )\,,$$
is a highest $t$-weight space of $\ev(M)$. We denote by $M_0$ the simple quotient of $\tilde M_0$ containing $v$ and we let $\ev^*(M_0) = \qdaff'(\mathfrak a_1).M_0$.
\item\label{it:M0tdom} $M_0$ is $t$-dominant.
\end{enumerate}
\end{prop}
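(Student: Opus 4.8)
The plan is to establish the six assertions in sequence, each building on its predecessors, since they form a natural chain culminating in the $t$-dominance of $M_0$. For \ref{it:indmod}, I would invoke the coassociativity of $\Delta$ on $\breve{\mathrm U}_{t^2}(\mathrm L\mathfrak a_1)$ exactly as in the proof that $\mathcal H_t^+\rtimes \breve{\mathrm U}_{t^2}(\mathrm L\mathfrak a_1)\stackrel{{\footnotesize \mathrm{cop}}}{\ltimes} \mathcal H_t^-$ is associative: the induced-module axiom is a routine verification that the formula respects the cross relations of the double smash product. Continuity of the action follows because the $\breve{\mathrm U}_{t^2}(\mathrm L\mathfrak a_1)$-module algebra structures on $\widehat{\mathcal H}_t^\pm$ were defined by continuous extension. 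For \ref{it:Btmod}, I would check that the defining relation $[\x+1(z_1),\x-1(z_2)] = \frac{1}{q-q^{-1}}\delta(z_1/z_2)[\kk+1(z_1)-\kk-1(z_1)]$ holds on $\widehat{\mathcal H}_t^+ \otimes M \otimes \widehat{\mathcal H}_t^-$; since the $\x\pm1$ act only through the middle tensorand (the Heisenberg factors being annihilated by $\x\pm1$), this reduces to the corresponding relation on $M$, which holds because $M$ is a $\mathrm U_{t^2}(\mathrm L\mathfrak a_1)$-module pulled back along $\breve{\mathrm U}_{t^2}(\mathrm L\mathfrak a_1)\to \mathrm U_{t^2}(\mathrm L\mathfrak a_1)$; this gives the factorization through $\mathcal B_t$, and passing to the completion gives a $\widehat{\mathcal B}_t$-module.

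For \ref{it:Atmod}, since $\mathcal J$ is a closed two-sided ideal of $\widehat{\mathcal B}_t$, the quotient is automatically an $\widehat{\mathcal A}_t$-module, and pulling back along $\ev$ gives $\ev^*(M)$, a $\qdaff(\mathfrak a_1)^{(-1)}$-module, hence a $\qdaff'(\mathfrak a_1)$-module of type $(1,0)$. For \ref{it:WF}, I would compute $\Sp(\ev^*(M))$: the weights are controlled by the action of $\Ksf+{1,0,0}$, which by \eqref{eq:evKpm0} acts as $-k^-_{1,0}$, i.e. by the corresponding scalar on $M$ (times the dressing by $\Hbf\pm{}$, which contributes a single factor $\alpha_\pm$ that is invertible and central). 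The key point is that, through $\ev$, the classical weights of $\ev^*(M)$ are in bijection with finitely many $\mathfrak a_1$-weights of $M$ shifted by the lattice generated by the Heisenberg action; since $M$ is finite-dimensional and the $\Hbf\pm m$-dressing does not change the $\Ksf+{1,0,0}$-eigenvalue (only $z$-dependence), $\Sp$ is finite. More carefully, I would argue that $\Ksf+{1,0,0}$ acts semisimply with spectrum a finite set of the form $\{-(k^-_{1,0}\text{-eigenvalues on }M)\}$, which is finite since $\dim M<\infty$.

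For \ref{it:highesttweight}, given a highest $\ell$-weight vector $v\in M$, I would observe that $\ev(\Xbsf+{1,n}(z)) = \Hbf+n(z)\x+1(zt^{-4n})$ and that $\x+1(w).v=0$ for all $w$, while $\Hbf+n(z)$ acts within $\widehat{\mathcal H}_t^+ \otimes \F v \otimes \widehat{\mathcal H}_t^-$; hence $\Xbsf+{1,n,s}$ kills the image of $\widehat{\mathcal H}_t^+ \otimes \F v \otimes \widehat{\mathcal H}_t^-$ in $\ev^*(M)$, so this subspace is a highest $t$-weight space, $\tilde M_0$, and its simple quotient $M_0$ containing $v$ is a highest $t$-weight space of $\ev^*(M_0)\cong \qdaff^-(\mathfrak a_1).M_0$. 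Finally, for \ref{it:M0tdom}, which I expect to be the main obstacle, I would use lemma \ref{lem:tdom}: it suffices to show that whenever two $\ell$-weight spaces $M_{0,\alpha}$ and $M_{0,\beta}$ of $M_0$ satisfy $M_{0,\alpha}\cap \Kbsf\pm{1,\pm 1}(z).M_{0,\beta}\neq\{0\}$, the unique $a\in\F^\times$ furnished by proposition \ref{prop:Kellweight} is a root of $P_\beta(1/z)$. Here I would trace through the explicit action: the $\ell$-weights of $M_0$ are of the form $\kappa^\pm(z)$ dressed by products of the shift factors $H_{m,z}^\varepsilon$ coming from corollary \ref{cor:kdpHm} and proposition \ref{prop:Hrel}; the action of $\Kbsf+{1,1}(z)$ on $\tilde M_0$ is governed by $\ev(\Kbsf+{1,1}(z)) = \Hbf+1(z)[\kk+1(zt^{-4})-\kk-1(zt^{-4})]$, and by the structure of $\Hbf+1$ as built from $\Lbf+{}, \Rbf+{}$ acting on the Heisenberg factors, the $\ell$-weight shift is precisely by $H_{1,a}^\varepsilon$ where $a$ is forced to be a point where the highest $\ell$-weight $\kappa^-(z)$ of $M$ (equivalently, a root of the Drinfel'd polynomial of $M$) produces a pole/zero; since $M$ has a Drinfel'd polynomial $P$, the roots of $P_\beta(1/z)$ will be exactly the relevant shifted roots of $P$, forcing $P_\beta(1/a)=0$. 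The delicate part is bookkeeping the shifts $zt^{-4m}$ against the shifts in the definition of $H_{m,a}^\pm$ in \eqref{eq:ellweightsnullroot} to confirm the matching is the one demanded by definition \ref{def:tdom}; I would organize this by first treating $M=L(P)$ with $P$ of degree one (a single evaluation module in the classical sense) where the computation is transparent, then reducing the general case to tensor products, or alternatively by a direct residue computation using the explicit form of $\Hbf+1(z)$.
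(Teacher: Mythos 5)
For parts \ref{it:indmod}--\ref{it:highesttweight} your approach matches the paper's proof essentially step for step: (i) is the routine induced-module verification, (ii) is the observation that the generator of $\mathcal I$ annihilates the module because $M$ is pulled back from a genuine $\mathrm U_{t^2}(\mathrm L\mathfrak a_1)$-module, (iii) is formal, (iv) uses that the dressing factors $\Hbf\pm m(z)$ commute with $k^\varepsilon_{1,0}$ (proposition \ref{prop:Hrel}) so $\Sp(\ev^*(M))=\Sp(M)$, and (v) is the direct computation showing $\ev(\Xbsf+{1,r}(z))$ kills $\widehat{\mathcal H}_t^+\otimes\F v\otimes\widehat{\mathcal H}_t^-$.

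For \ref{it:M0tdom}, however, there is a real gap. You correctly locate lemma \ref{lem:tdom} as the reduction step and correctly observe that one must show the unique $a$ produced by proposition \ref{prop:Kellweight} from $M_{0,\alpha}\cap\Kbsf\pm{1,\pm 1}(z).M_{0,\beta}\neq\{0\}$ satisfies $P_\beta(1/a)=0$. But the claim that ``$a$ is forced to be a point where $\kappa^-(z)$ produces a pole/zero'' does not follow from the evaluation formula alone. From \eqref{eq:evK} and \eqref{ev:K+}--\eqref{eq:evK-}, the dressing factors $\Hbf+m(z)$ and $\Hbf-{-m}(z)$ act as free Heisenberg creation operators on $\widehat{\mathcal H}_t^+$ and $\widehat{\mathcal H}_t^-$ respectively, and in $\widehat{\mathcal B}_t$ there is nothing restricting which $a$ can occur in the $\ell$-weight shifts they produce. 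The restriction comes precisely from the relation generating the closed ideal $\mathcal J$ in $\widehat{\mathcal A}_t$: modulo $\mathcal J$, the element $\Hbf-{}(z)\left[\kk+1(zt^{-4})-\kk-1(zt^{-4})\right]$ is identified with $\Hbf+{}(z)^{-1}\left[\kk+1(z)-\kk-1(z)\right]$, and applying this to $1\otimes v\otimes 1$ yields a formal-distribution identity whose two sides have $\delta$-derivatives up to orders $\nu(at^{-4})-\nu(a)-1$ and $\nu(a)-\nu(at^4)-1$ respectively (with $\nu$ the root multiset of the Drinfel'd polynomial $P$ of $v$). Matching the two sides forces the excess $\delta$-derivatives to vanish unless $\nu(at^{-4})>\nu(a)>\nu(at^4)$, which is exactly what pins $a$ down as a root of $P$. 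Without invoking $\mathcal J$, the argument cannot close; this is the crux of the paper's proof of \ref{it:M0tdom} and needs to be made explicit.

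Your proposed fallback of handling degree-one Drinfel'd polynomials and reducing via tensor products is not obviously viable either: the $\qdaff^0(\mathfrak a_1)$-module structure of the highest $t$-weight space does not factor transparently under tensor product of $\mathrm U_{t^2}(\mathrm L\mathfrak a_1)$-modules (the dressing factors couple the Heisenberg legs across the coproduct), and in any case the direct partial-fraction computation is what drives the paper's proof, not a reduction.
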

\begin{proof}
\ref{it:indmod} is readily checked. As for \ref{it:Btmod}, it suffices to check that $\mathcal I.\left (\widehat{\mathcal H}_t^+ \otimes M \otimes \widehat{\mathcal H}_t^-\right ) = \{0\}$. But the latter is clear when $M$ is obtained by pulling back a ${\mathrm U}_q(\mathrm L\mathfrak a_1)$-module over which the relation generating $\mathcal I$ is automatically satisfied. \ref{it:Atmod} is obvious. It easily follows from proposition \ref{prop:Hrel} that, for every $m\in\Z^\times$, $[k^\varepsilon_{1,0}, \Hbf\pm m(z)] =0$. Hence, $\Sp(\ev^*(M))=\Sp(M)$ and the weight finiteness of $\ev^*(M)$ follows from that of $M$, which proves \ref{it:WF}. It is clear that, for every $r\in\Z$, we have
\bea \ev(\Xbsf+{1,r}(z)).\left (\mathcal H_t^+\otimes v\otimes \mathcal H_t^-\right ) &=& \Hbf+ r(z) \x+1(zt^{-4r}) . \left (\widehat{\mathcal H}_t^+\otimes v\otimes \widehat{\mathcal H}_t^-\right ) \nn\\
&=& \sum \Hbf+ r(z)\left (\x+1(zt^{-4r})_{(1)} \triangleright \widehat{\mathcal H}_t^+ \right )\otimes \x+1(zt^{-4r})_{(2)}.v \otimes  \left (\x+1(zt^{-4r})_{(3)} \triangleright \widehat{\mathcal H}_t^- \right )\nn\\
&=&0\,.\nn\eea
\ref{it:highesttweight} follows. Denote by $P(1/z)\in\F[z^{-1}]$ the Drinfel'd polynomial associated with $v$ and let $\nu\in\N^{\F^\times}_f$ denote the multiset of its roots. Then,
\be\label{eq:konv}\kk\pm1(z) .v = -\kappa^\mp_0(z) v \,, \qquad \mbox{where} \qquad \kappa_0^\mp(z) = -t^{2\deg(P)} \left (\frac{P(t^{-4}/z)}{P(1/z)}\right )_{|z|^{\mp 1}\ll 1}\,.\ee
Moreover, the partial fraction decomposition
 $$\frac{P(t^{-4}/z)}{P(1/z)} =\prod_{a\in\F^\times} \frac{1}{(1-a/z)^{\nu(a)-\nu(at^4)}} =C_0 + \sum_{a\in\F^\times} \sum_{p=1}^{\nu(a)-\nu(at^4)}\frac{C_{p}(a)}{(1-a/z)^{p}}\,,$$
in which $C_0, C_{p}(a)\in\F$ and the product and sum over $a\in\F^\times$ are always finite since $P$ only has finitely many roots, allows us to write
\be\left [\kk+1(z) - \kk-1(z)\right ].v = t^{2\deg(P)}  \sum_{a\in\F^\times} \sum_{p=0}^{\nu(a)-\nu(at^4)-1}\frac{(-1)^{p+1} C_{p+1}(a)}{p! \, a^{p+1}}\delta^{(p)}\left (\frac{z}{a}\right ) v\,.\nn\ee
Letting $\tilde C_p(a)=(-1)^{p+1}t^{2\deg(P)} C_{p+1}(a) a^{-p-1}/p!$ for every $a\in\F^\times$ and every $p\in \rran{0, \nu(a)-\nu(at^{4})-1}$, it follows that, for every $m\in\N^\times$,
\be\label{ev:K+} \ev(\Kbsf+{1,m}(z)).\left (1\otimes v\otimes 1\right ) = t^{2\deg(P)}  \sum_{a\in\F^\times} \sum_{p=0}^{\nu(at^{-4m})-\nu(at^{4(1-m)})-1}\tilde{C}_{p}(at^{-4m})\delta^{(p)}\left (\frac{z}{a}\right ) \left (\Hbf+{m}(z) \otimes v\otimes 1 \right )\,,
\ee
\be\label{eq:evK-} \ev(\Kbsf-{1,-m}(z)).\left (1\otimes v\otimes 1\right ) =- t^{2\deg(P)}  \sum_{a\in\F^\times} \sum_{p=0}^{\nu(at^{-4m})-\nu(at^{4(1-m)})-1}\tilde{C}_{p}(at^{-4m})\delta^{(p)}\left (\frac{z}{a}\right ) \left (1\otimes v\otimes \Hbf-{-m}(z)  \right )\,.
\ee
Now, making use of (\ref{eq:evKpm0}), (\ref{eq:konv}) and of corollary \ref{cor:kdpHm}, one easily shows that, for every $p\in \N$ and every $a\in \F^\times$,
$$\prod_{k=1}^{p+1} \left [\ev (\Kbsf\pm{1,0}(z_k)) - H^\pm_{m,a}(z_k)\kappa_0^\pm(z_k)\, \id\right ].\left (\partial^p\Hbf+{m}(a) \otimes v\otimes 1 \right ) =0\,,$$
$$\prod_{k=1}^{p+1} \left [\ev (\Kbsf\pm{1,0}(z_k)) - H^\pm_{m,a}(z_k)^{-1}\kappa_0^\pm(z_k)\, \id\right ].\left (1 \otimes v\otimes \partial^p\Hbf-{-m}(a) \right ) =0\,,$$
thus proving that $\partial^p\Hbf+{m}(a) \otimes v\otimes 1$ (resp. $1 \otimes v\otimes \partial^p\Hbf-{-m}(a)$) is an $\ell$-weight vector in the $\ell$-weight space $\ev^*(M)_{\kappa_{(+,m,a)}}$ (resp. $\ev^*(M)_{\kappa_{(-,m,a)}}$) of $\ev^*(M)$ with $\ell$-weight $\kappa_{(+,m,a)}^\pm(z) = \kappa_0^\pm(z)H^\pm_{m,a}(z)$ (resp. $\kappa_{(-,m,a)}^\pm(z)= \kappa_0^\pm(z)H^\pm_{m,a}(z)^{-1}$), as expected from proposition \ref{prop:Kellweight}.

On the other hand, 
\bea &&\left \{ \Hbf-{}(z)\left[\kk+1(zt^{-4}) - \kk-1(zt^{-4})\right ]- \Hbf+{}(z)^{-1} \left[\kk+1(z) - \kk-1(z)\right ]\right \}.\left (1\otimes v\otimes 1\right ) \nn\\ 
&&
\qquad \qquad\qquad\qquad 
= \sum_{a\in\F^\times} \left \{ \sum_{p=0}^{\nu(at^{-4}) - \nu(a)-1} \tilde C_p(at^{-4}) \delta^{(p)}\left (\frac{z}{a}\right )  \left (1\otimes v\otimes \Hbf-{}(z)\right )  \right .
\nn\\&&
\qquad\qquad\qquad\qquad\qquad\qquad\qquad\qquad\qquad\qquad 
\left . - \sum_{p=0}^{\nu(a) - \nu(at^4)-1} \tilde C_{p}(a)  \delta^{(p)}\left (\frac{z}{a}\right )  \left (\Hbf+{}(z)^{-1}\otimes v\otimes 1\right ) \right \} \,.\nn
\eea
Thus, modulo $\mathcal J$, we have, for every $a\in\F^\times$,
$$\sum_{p=0}^{\nu(at^{-4}) - \nu(a)-1} \tilde C_p(at^{-4}) \delta^{(p)}\left (\frac{z}{a}\right )  \left (1\otimes v\otimes \Hbf-{}(z)\right )  = \sum_{p=0}^{\nu(a) - \nu(at^4)-1} \tilde C_p(a) \delta^{(p)}\left (\frac{z}{a}\right )  \left (\Hbf+{}(z)^{-1}\otimes v\otimes 1\right )\,.$$
The above equation makes it clear that every $a\in \F^\times$ such that $\nu(at^{-4}) > \nu(a)$ is a zero of order at least $\nu(at^{-4}) - 2\nu(a) + \nu(at^4)$ of $1\otimes v\otimes \Hbf-{}(z)$, unless $\nu(at^{-4}) - \nu(a) \leq \nu(a)-  \nu(at^4)$. Hence, in view of (\ref{eq:evK-}), we have $\ev^*(M)_{\kappa_{(-, 1,a)}} \cap \ev(\Kbsf-{1,-1}(z)) . (1\otimes v\otimes 1) =0$ unless $a\in \Dsf_1(\nu)=\{x\in\F^\times : \nu(xt^{-4}) >\nu(x) >\nu(xt^4)\}$. But the latter implies that $P(1/a)=0$. A similar reasoning applies to any $\ell$-weight vector in $\tilde M_0$ and $\tilde M_0$ is $t$-dominant by lemma \ref{lem:tdom}. Taking the quotient of $\tilde M_0$ to $M_0$ clearly preserves $t$-dominance and \ref{it:M0tdom} follows.
\end{proof}
By the universality of $\mathcal M(M_0)$ -- see definition \ref{def:univqdaff} -- and the above proposition, there must exist a surjective $\qdaff'(\mathfrak a_1)$-module homomorphism $\pi:\mathcal M(M_0)\twoheadrightarrow \ev^*(M_0)$. Restricting the latter to the (closed) $\qdaff'(\mathfrak a_1)$-submodule $\mathcal N(M_0)$ of $\mathcal M(M_0)$, we get the surjective $\qdaff'(\mathfrak a_1)$-module homomorphism $\pi_{|\mathcal N(M_0)}$, whose image naturally injects as a $\qdaff'(\mathfrak a_1)$-submodule in $\ev^*(M_0)$. The canonical short exact sequence involving $\mathcal N(M_0)$, $\mathcal M(M_0)$ and the simple quotient $\mathcal L(M_0)$ -- see definition \ref{def:univqdaff} -- allows us to define a surjective $\qdaff'(\mathfrak a_1)$-module homomorphism  $\tilde\pi$ to get the following commutative diagram,
$$\begin{tikzcd}
\{0\}\arrow{r}  & \mathcal N(M_0) \arrow{r} \arrow[d, "\pi_{|\mathcal N(M_0)}"] & \mathcal M(M_0) \arrow{r} \arrow[d,"\pi"] & \mathcal L(M_0) \arrow{r} \arrow[d,"\tilde\pi"] & \{0\}\\
\{0\}\arrow{r}  & \pi(\mathcal N(M_0)) \arrow{r} \arrow{d} & \ev^*(M_0) \arrow{r} \arrow{d} & \ev^*(M_0)/ \pi(\mathcal N(M_0)) \arrow{r}\arrow{d}& \{0\}\\
&\{0\}&\{0\}&\{0\}&
\end{tikzcd}$$
where columns and rows are exact. It is obvious that $\tilde\pi$ is not identically zero and, by the simplicity of $\mathcal L(M_0)$, we must have $\ker(\tilde\pi)=\{0\}$. Hence, $\tilde\pi$ is a $\qdaff'(\mathfrak a_1)$-module isomorphism and we have constructed the simple weight-finite $\qdaff'(\mathfrak a_1)$-modules $\mathcal L(M_0)$ as a quotient of the evaluation module $\ev^*(M_0)$. To see that all the simple weight-finite $\qdaff'(\mathfrak a_1)$-modules $\mathcal L(M_0)$ can be obtained in this way, it suffices to observe that, by proposition \ref{prop:simpleelldomuq0mods}, all the simple $\ell$-dominant $\qdaff^0(\mathfrak a_1)$-modules are of the form $L^0(P)$ for some monic polynomial $P$ and that, in the construction above, one can choose any $P$, simply by choosing the corresponding simple finite-dimensional ${\mathrm U}_q(\mathrm L\mathfrak a_1)$-module $M$. Therefore, as a consequence of the above proposition, the highest $t$-weight space of any simple weight-finite $\qdaff'(\mathfrak a_1)$-modules $\mathcal L(M_0)$ is $t$-dominant. This concludes the proof of part \ref{thmii} of theorem \ref{thm:classification} as well as that of theorem \ref{thm:elldomtdom}.

\newpage
\appendix
\section{A lemma about formal distributions}
In this short appendix, we prove the following
\begin{lem}
\label{lem:deltaid}
Let $m\in\{0,1\}$ and $n\in\N$, let $A(v)\in \F[[v]]-\{0\}$ be a non-zero formal power series and let $F(z) \in \F[[z,z^{-1}]]$ be a formal distribution such that
\be\label{eq:formdisteq}(z-a)(z-v)^m A(v)F(z) + \sum_{p=0}^n B_p(v) \delta^{(p)}(z/a)=0\,,\ee
for some non-zero scalar $a\in\F^\times$ and some formal power series $B_0(v), \dots, B_n(v) \in \F[[v]]$. Then,
$$F(z) = \sum_{p=0}^{n+1} f_p\delta^{(p)}(z/a)\,,$$
for some scalars $f_0, \dots, f_{n+1}\in\F$.
\end{lem}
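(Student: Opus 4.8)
The strategy is to solve for $F(z)$ by working with the coefficients of the formal distribution, exploiting the elementary identity $(z-a)\delta^{(p)}(z/a) = -pa\,\delta^{(p-1)}(z/a)$ (with the convention $\delta^{(-1)}=0$), which shows that multiplication by $(z-a)$ lowers the order of a derivative-of-delta by one. First I would treat the case $m=0$. Rewriting \eqref{eq:formdisteq} as $A(v)(z-a)F(z) = -\sum_{p=0}^n B_p(v)\delta^{(p)}(z/a)$ and dividing by $A(v)$ — which is legitimate since $A(v)\in\F[[v]]-\{0\}$ is invertible in the fraction field $\F((v))$, and one checks the resulting identity still holds coefficient-wise in $\F[[z,z^{-1}]]$ after clearing denominators — one finds $(z-a)F(z) = \sum_{p=0}^n \tilde B_p(v)\delta^{(p)}(z/a)$ with $\tilde B_p = -B_p/A$. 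Since the left-hand side is independent of $v$, each $\tilde B_p(v)$ must in fact be a scalar $f_p'\in\F$ (compare coefficients of $z^k$ on both sides: the right-hand side has $v$-dependence only through the $\tilde B_p$, so they are forced to be constant). Thus $(z-a)F(z) = \sum_{p=0}^n f_p'\delta^{(p)}(z/a)$, and it remains to show that any formal distribution $G(z)$ with $(z-a)G(z) = \sum_{p=0}^n f_p'\delta^{(p)}(z/a)$ is of the form $\sum_{p=0}^{n+1} f_p\delta^{(p)}(z/a)$. This last step is a standard fact about formal distributions: the kernel of multiplication by $(z-a)$ on $\F[[z,z^{-1}]]$ is $\F\,\delta(z/a)$, and using the identity above one can successively subtract off $\delta^{(p)}$-terms to reduce to the kernel case; I would record it as a one-line sub-lemma or prove it inline.

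For the case $m=1$, equation \eqref{eq:formdisteq} reads $(z-a)(z-v)A(v)F(z) = -\sum_{p=0}^n B_p(v)\delta^{(p)}(z/a)$. Here I would first use that $(z-v)\delta^{(p)}(z/a)$ can be re-expanded: writing $z-v = (z-a)+(a-v)$ and applying the identity, $(z-v)\delta^{(p)}(z/a) = -pa\,\delta^{(p-1)}(z/a) + (a-v)\delta^{(p)}(z/a)$. More directly, multiply the whole equation by $1$ and set $H(z) = (z-v)A(v)F(z)$; then $(z-a)H(z) = -\sum_p B_p(v)\delta^{(p)}(z/a)$, so by the $m=0$ analysis $H(z) = \sum_{p=0}^{n+1} c_p(v)\delta^{(p)}(z/a)$ for some $c_p(v)\in\F[[v]]$ — here the $c_p$ may genuinely depend on $v$ since $H$ does. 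Now I need to divide by $(z-v)$: from $(z-v)A(v)F(z) = \sum_{p=0}^{n+1} c_p(v)\delta^{(p)}(z/a)$, evaluate the $v$-dependence carefully. The point is that $(z-v)\delta^{(p)}(z/a)$ expanded at $v$ near $a$ involves $(a-v)$; matching orders in $(a-v)$, or equivalently specializing and using that $F$ is independent of $v$, forces $c_p(v)$ to vanish to appropriate order at $v=a$ so that the quotient is again a finite combination of $\delta^{(p)}(z/a)$. Concretely I would argue: since $A(v)F(z)$ is (up to the invertible scalar $A(v)$) independent of $v$ in its $z$-structure, and $(z-v)$ applied to $\sum f_p\delta^{(p)}(z/a)$ produces exactly the span of $\delta^{(p)}(z/a)$ with polynomial-in-$(a-v)$ coefficients, a dimension/order count shows $F(z)\in\mathrm{span}\{\delta^{(p)}(z/a): 0\le p\le n+1\}$.

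The main obstacle will be handling the $(z-v)$ factor cleanly in the $m=1$ case: one must be careful that "dividing by $(z-v)$" is not literally an operation on $\F[[z,z^{-1}]]$, so the argument has to be phrased either via the substitution $z\mapsto$ evaluation that kills $\delta$-derivatives up to a given order, or via the explicit expansion $(z-v)^{-1}$-free manipulation using $(z-v)\delta^{(p)}(z/a) = (a-v)\delta^{(p)}(z/a) - pa\,\delta^{(p-1)}(z/a)$ together with the observation that the map $\sum_{p=0}^{N} f_p(v)\delta^{(p)}(z/a)\mapsto (z-v)\sum f_p(v)\delta^{(p)}(z/a)$ has, for $f_p$ constant, image consisting of combinations whose coefficient of $\delta^{(N)}$ is $(a-v)f_N$; inverting this triangular system over $\F[[v]]$ and checking the solution has constant (i.e. $v$-independent) coefficients is the crux. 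Everything else is routine formal-distribution bookkeeping, so I would keep those computations terse.
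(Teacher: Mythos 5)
Your $m=0$ case is sound and is essentially the paper's argument stated in different words: extracting any nonvanishing $v$-mode of $A$ from $A(v)(z-a)F(z)=-\sum_p B_p(v)\delta^{(p)}(z/a)$ exhibits $(z-a)F(z)$ as an $\F$-linear combination of the $\delta^{(p)}(z/a)$ for $p\le n$, and the identity $(z-a)\delta^{(p)}(z/a)=-pa\,\delta^{(p-1)}(z/a)$ then finishes as you describe.

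The $m=1$ case, however, has a genuine gap, which you yourself flag as ``the crux.'' After deducing $(z-v)A(v)F(z)=\sum_{p=0}^{n+1}c_p(v)\delta^{(p)}(z/a)$ you must still show that $F$ is supported at $z=a$, and none of the strategies you sketch addresses the real obstruction: multiplication by $(z-v)$ is not injective on $\F[[z,z^{-1}]][[v]]$, its kernel being spanned by the distributions $C(z)\delta(z/v)$. ``Inverting the triangular system'' (which is indeed invertible, with diagonal $(a-v)$) only recovers the unique preimage inside $\mathrm{span}_{\F[[v]]}\{\delta^{(p)}(z/a):p\le n+1\}$; you must separately rule out a kernel component of $A(v)F(z)$, which you never do. Your other suggestion, ``matching orders in $(a-v)$,'' is not a meaningful operation here, since $A(v)\in\F[[v]]$ is a series around $v=0$, not $v=a$. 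The paper's proof handles exactly this point: it multiplies (\ref{eq:formdisteq}) by $\left(\frac{1}{z-v}\right)_{|v/z|\ll 1}$, identifies the resulting discrepancy as a kernel element $C(z)\delta(z/v)$, and then kills $C(z)$ by observing that the left-hand side has no strictly negative $v$-modes, reducing $m=1$ to $m=0$. A still shorter route, close to your own setup: extract the lowest nonvanishing $v$-mode $a_{k_0}$ of $A(v)$ directly from (\ref{eq:formdisteq}); since $a_{k_0-1}=0$, this gives $a_{k_0}(z-a)\,z\,F(z)+\sum_pB_{p,k_0}\delta^{(p)}(z/a)=0$, hence $(z-a)^{n+2}zF(z)=0$, and since $z$ is a unit, $(z-a)^{n+2}F(z)=0$. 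As written, your $m=1$ argument is incomplete.
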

\begin{proof}
Consider first the case where $m=0$. Then, multiplying (\ref{eq:formdisteq}) by $(z-a)^{n+1}$, we get 
$$(z-a)^{n+2} A(v) F(z)=0\,.$$ 
Since $A(v)\neq 0$, we can specialize at a non-zero $v$-mode and it follows that
$$F(z) = \sum_{p=0}^{n+1} f_p \delta^{(p)}(z/a)$$
for some scalars $f_0, \dots, f_{n+1}\in\F$.  Now consider the case where $m=1$. It follows from (\ref{eq:formdisteq}) that
$$(z-a) A(v) F(z) +\left (\frac{1}{z-v}\right )_{|v/z|\ll 1} \sum_{p=0}^n B_p(v) \delta^{(p)}(z/a) = C(z) \delta (z/v )\,,$$
for some formal distribution $C(z)\in \F [[z,z^{-1}]]$. But specializing the l.h.s. of the above equation to any $v$-mode of the form $v^{-p}$ with $p\in\N^\times$, we immediately get that $C(z)=0$. We are thus back to the previous case.
\end{proof}

\bibliography{wfta1}    

\bibliographystyle{amsalpha}    

\end{document}